\def\@oddhead{\hfill \shorttitle \hfill \thepage}
\def\@evenhead{\thepage \hfill \shortauthor \hfill}
\def\@oddfoot{}
\def\@evenfoot{}
\newtheorem{df}{Definition}[section]
\newtheorem{remark}[df]{Remark}
\newtheorem{theorem}[df]{Theorem}
\newtheorem{lemma}[df]{Lemma}
\newtheorem{question}[df]{Question}
\newtheorem{cor}[df]{Corollary}
\newcommand{\R}{\mathbb{R}}
\renewcommand{\H}{\mathbb{H}}
\newcommand{\I}{\mathrm{I}}
\newcommand{\II}{I\hspace{-0.1cm}I}
\newcommand{\III}{I\hspace{-0.1cm}I\hspace{-0.1cm}I}
\newcommand{\opdl}{\operatorname{dl}}
\newcommand{\opPSL}{\operatorname{PSL}}
\newcommand{\opLab}{\operatorname{Lab}}
\newcommand{\opCos}{{\operatorname{cos}}}
\newcommand{\opSin}{{\operatorname{sin}}}
\newcommand{\opTan}{{\operatorname{tan}}}
\newcommand{\opArcTan}{{\operatorname{arctan}}}
\def\Thol{\operatorname{T}_{\operatorname{hol}}[S]}
\def\Trep{\operatorname{T}_{\operatorname{rep}}[S]}
\def\Thyp{\operatorname{T}_{\operatorname{hyp}}[S]}
\def\opML{{\operatorname{ML}[S]}}
\def\opRT{{\operatorname{RT}[S]}}
\def\opE{\operatorname{E}}
\def\opTr{\operatorname{Tr}}
\def\opdVol{\operatorname{dVol}}
\def\opEnd{\operatorname{End}}
\def\opT{\operatorname{T}}
\def\opDet{\operatorname{Det}}
\def\opO{\operatorname{O}}
\def\opsl{{\mathfrak{sl}}}
\def\opAd{\operatorname{Ad}}
\def\opC{\operatorname{C}}
\def\opPC{\operatorname{PC}}
\def\opA{\operatorname{A}}
\def\opAdS{\operatorname{AdS}}
\def\Id{\operatorname{Id}}
\def\opI{\operatorname{I}}
\def\opFuc{\operatorname{Fuc}}
\def\opII{\operatorname{II}}
\def\opIII{\operatorname{III}}
\def\opPO{\operatorname{PO}}
\def\opRP{\operatorname{RP}}
\def\opL{\operatorname{L}}
\def\opGHMC{\operatorname{GHMC}}
\def\opdS{\operatorname{dS}}
\def\opId{\operatorname{Id}}
\def\opPO{\operatorname{PO}}
\def\opPSO{\operatorname{PSO}}
\def\opH{\operatorname{H}}
\def\opqf{{\operatorname{qf}}}
\def\opK{\operatorname{K}}
\author{Fran\c{c}ois Fillastre  and Graham Smith}
\title{Group actions and scattering problems in Teichm\"uller theory}
\begin{document}

\maketitle

\epigraph{\sl It was six men of Indostan\\
To learning much inclined,\\
Who went to see the Elephant\\
(Though all of them were blind),\\
That each by observation\\
Might satisfy his mind.}
{John Godfrey Saxe}

\tableofcontents

\section{Introduction}
In recent years, Teichm\"uller theory, which is the study of moduli spaces of marked Riemann surfaces, has come to be considered more and more from the point of view of actions of surface groups inside certain semi-simple Lie groups. In particular, we consider the case where the Lie groups in question have symmetric spaces which are lorentzian spacetimes. Indeed, this can be considered as the starting point of Mess' seminal work, which led to the development of new and strikingly simpler proofs of many results of Teichm\"uller theory by considering them in terms of geometric objects inside these symmetric spaces. Our aim is to provide a brief and straightforward introduction to this approach, whilst developing what we consider to be a useful mental framework for organising known results and open problems.
\paragraph{Group actions and Teichm\"uller theory} We first recall how Teichm\"uller theory is described in terms of group actions. Throughout this paper, $S$ will denote a compact, oriented surface of hyperbolic type, $\mathfrak{g}$ will denote its genus, and $\Pi_1$ will denote its fundamental group. Recall that the {\sl Teichm\"uller space} of $S$ can be defined as the space of marked hyperbolic metrics over $S$, modulo diffeomorphisms which preserve the marking.\footnote{Recall that a marking is defined to be an ordered set, $(a_1,...,a_\mathfrak{g},b_1,...,b_\mathfrak{g})$, of generators of $\Pi_1$ such that $[a_1,b_1][a_2,b_2]\cdots[a_\mathfrak{g},b_\mathfrak{g}]=\opId$. Two markings are identified whenever there exists an element of $\Pi_1$ which conjugates the one into the other. Distinct points of Teichm\"uller space with inequivalent markings may correspond to the same hyperbolic metric, so that Teichm\"uller space is a ramified cover of the moduli space of hyperbolic metrics. Markings were introduced into Teichm\"uller theory in order to resolve the singular points of moduli spaces, allowing Teichm\"uller space to be furnished with a smooth manifold structure (c.f.  \cite{tromba}). However, throughout much of Teichm\"uller theory, they are often used without explicit reference being made to them.} Marked hyperbolic metrics are associated to group actions of $\Pi_1$ as follows. Let $\Bbb{H}^2$ denote $2$-dimensional hyperbolic space. Recall that the identity component of its isometry group is identified with $\opPSL(2,\Bbb{R})$. Now, given a marked hyperbolic metric, $g$, over $S$, there exists a homomorphism, $\theta:\Pi_1\rightarrow\opPSL(2,\Bbb{R})$, which is injective and discrete, and which is unique up to conjugation, such that $g$ is equivalent to the quotient, $\Bbb{H}^2/\theta(\Pi_1)$. In this manner, the Teichm\"uller space of $S$ is parametrised by conjugacy classes of certain types of homomorphisms of $\Pi_1$ into $\opPSL(2,\Bbb{R})$.
\paragraph{GHMC spacetimes} We now recall the definition of ghmc spacetimes (c.f. \cite{oneill,ringstrom}), which will play a central role throughout the rest of the paper. First, a (lorentzian) spacetime is a smooth manifold, $M$, furnished with a smooth, semi-riemannian metric of signature $(n,1)$, along with spatial and temporal orientations. Lorentz\-ian spacetimes arise in a natural manner, and are rarely studied in mathematics without the assumption of some additional structure which reflects their motivations from theoretical physics. In order to understand this, consider first vectors and curves in $M$. A tangent vector, $X$, of $M$ is said to be {\sl timelike}, {\sl lightlike} or {\sl spacelike} depending on whether its Lorentz norm squared has positive, zero or negative sign respectively. A differentiable curve, $c$, in $M$ is said to be {\sl causal} whenever its tangent vector is never spacelike. The spacetime, $M$, is then said to be {\sl causal} whenever it contains no closed causal curves, reflecting the physical idea that one cannot return to the past by travelling into the future. Next, given two points, $p$ and $q$, inside $M$, $q$ is said to lie in the {\sl past} of $p$, and $p$ is said to lie in the {\sl future} of $q$, whenever there exists a future oriented, causal curve starting at $q$ and ending at $p$. A causal spacetime is then said to be {\sl globally hyperbolic} whenever the intersection of the past of any point, $p$, with the future of any other point, $q$, is compact (c.f. \cite{BS03,BS07}).
\par
Global hyperbolicity is equivalent to the existence in $M$ of a {\sl Cauchy hypersurface}, which is a smooth, embedded, spacelike hypersurface, $N$, which intersects every inextensible causal curve in $M$ once and only once. Although the Cauchy hypersurfaces in $M$ are trivially not unique, they are all diffeomorphic, and, given a Cauchy hypersurface, $N$, the spacetime itself is diffeomorphic to $N\times\Bbb{R}$.
\par
A globally hyperbolic spacetime is said to be {\sl maximal} whenever it cannot be embedded isometrically into a larger spacetime of the same dimension in such a manner that the image of a Cauchy hypersurface is still a Cauchy hypersurface. To illustrate this, consider the {\sl Minkowski space}, $\Bbb{R}^{d,1}$, which is defined to be the space of all real $(d+1)$-tuplets, $x:=(x_1,...,x_{d+1})$, furnished with the metric
\begin{equation*}
\langle x,y\rangle_{d,1} := -x_1y_1- \cdots -x_dy_d+x_{d+1}y_{d+1}~.
\end{equation*}
This space is trivially a maximal, globally hyperbolic spacetime, with Cauchy hypersurface given by the horizontal hyperplane, $\Bbb{R}^d\times\left\{0\right\}$. Consider now its future oriented light cone,
\begin{equation*}
C^+ := \left\{x\ |\ \langle x,x\rangle_{d,1} =0,\ x_{d+1}\geq 0\right\}~.
\end{equation*}
The interior, $\operatorname{Int}(C^+)$, of this cone is also a maximal, globally hyperbolic spacetime, with Cauchy hypersurface given by
\begin{equation*}
H^+ := \left\{x\ |\ \langle x,x\rangle_{d,1}=1,\ x_{d+1}>0\right\}~.
\end{equation*}
However, the fact that $\operatorname{Int}(C^+)$ embeds isometrically into $\Bbb{R}^{d,1}$, does not contradict maximality, since $H^+$ is not a Cauchy hypersurface of the latter spacetime.
\par
Finally, a globally hyperbolic spacetime is said to be {\sl Cauchy compact} whenever its Cauchy hypersurface is compact. Spacetimes possessing all these properties are described as {\sl ghmc} (Globally Hyperbolic, Maximal, Cauchy Compact), and it is these that will be studied in this paper.
\paragraph{GHMC Minkowski spacetimes} We henceforth focus on $(2+1)$-dimensional ghmc spacetimes of constant sectional curvature. In order to form a good intuition for what follows, it is worth reviewing first in some detail the case of flat spacetimes, which we henceforth refer to as {\sl Minkowski} spacetimes. The simplest maximal, $(2+1)$-dimensional, globally hyperbolic Minkowski spacetime is, of course, the Minkowski space, $\Bbb{R}^{2,1}$, itself. More generally, consider a convex, open subset, $\Omega$, of $\Bbb{R}^{2,1}$, formed by taking the intersection of the future sides of all elements of some family of null planes, no two of which are parallel. Every such $\Omega$ is a maximal, globally hyperbolic, Minkowski spacetime with a Cauchy surface which is complete with respect to the induced metric. Up to reflection in the horizontal plane, $\Bbb{R}^2\times\left\{0\right\}$, all maximal, globally hyperbolic, Minkowski spacetimes with complete Cauchy surfaces arise in this manner (c.f. \cite{Bar05}).
\par
Consider now the compact case, and let $\opGHMC_0:=\opGHMC_0(S)$ denote the space of marked ghmc Minkowski spacetimes with Cauchy surface diffeomorphic to $S$. Elements of $\opGHMC_0$ are constructed by taking suitable quotients of convex, open sets of the type constructed in the preceding paragraph. To see this, observe first that the identity component of the isometry group of $\Bbb{R}^{2,1}$ is given by $\opPSL(2,\Bbb{R})\ltimes\Bbb{R}^{2,1}$, where the first component acts linearly, and the second by translation. Consider now a homomorphism, $\theta:\Pi_1\rightarrow\opPSL(2,\Bbb{R})\ltimes\Bbb{R}^{2,1}$. When the linear component, $\theta_0:\Pi_1\rightarrow\opPSL(2,\Bbb{R})$, of $\theta$ is injective and discrete, there exists a unique open subset, $\Omega$, of $\Bbb{R}^{2,1}$, of the type described above, over which $\theta(\Pi_1)$ acts properly discontinuously (c.f. Theorem~\ref{mess mink 23}). The quotient of $\Omega$ by this action is then an element of $\opGHMC_0$. Furthermore, up to reversal of the time orientation, this accounts for all ghmc Minkowski spacetimes, so that $\opGHMC_0$ is actually parametrised by the set of all homomorphisms, $\theta:\Pi_1\rightarrow\opPSL(2,\Bbb{R})\ltimes\Bbb{R}^{2,1}$, whose linear component is injective and discrete (c.f. Theorem~\ref{thm:mess bij mink}). In particular, this parametrisation furnishes the space, $\opGHMC_0$, with the structure of a $(12\mathfrak{g}-12)$-dimensional real algebraic variety.
\par
In particular, this parametrisation of $\opGHMC_0$ shows that ghmc Minkowski spacetimes naturally identify with tangent vectors over Teichm\"uller space. Indeed, by considering the derivatives of smooth families of homomorphisms of $\Pi_1$ into $\opPSL(2,\Bbb{R})$, we see that every homomorphism, $\theta:\Pi_1\rightarrow\opPSL(2,\Bbb{R})\ltimes\Bbb{R}^{2,1}$, whose linear component, $\theta_0$, is injective and discrete, identifies with a unique tangent vector of Teichm\"uller space\footnote{In fact, the derivative of a smooth family of homomorphisms of $\Pi_1$ into $\opPSL(2,\Bbb{R})$ is given by a homomorphism of $\Pi_1$ into $\opPSL(2,\Bbb{R})\ltimes\opsl(2,\Bbb{R})$, where, $\opsl(2,\Bbb{R})$ is the Lie algebra of $\opPSL(2,\Bbb{R})$, over which $\opPSL(2,\Bbb{R})$ acts by the adjoint action. It is therefore homomorphisms into this group which naturally identify with tangent vectors over Teichm\"uller space. However, since $\opsl(2,\Bbb{R})$, together with its Killing form, naturally identifies with $\Bbb{R}^{2,1}$, this twisted product is really the same as the twisted product, $\opPSL(2,\Bbb{R})\ltimes\Bbb{R}^{2,1}$, justifying the identification given above.} over the point determined by $\theta_0$. The significance of this correspondence will become apparent presently.
\paragraph{GHMC anti de Sitter and de Sitter spacetimes}We now consider spacetimes of constant sectional curvature equal to $-1$, which will henceforth be referred to as {\sl anti de Sitter (AdS)} spacetimes. Let $\opGHMC_{-1}:=\opGHMC_{-1}(S)$ denote the space of marked ghmc anti de Sitter spacetimes with Cauchy surface diffeomorphic to $S$. In order to understand the structure of this space, consider first $(2+1)$-dimensional anti de Sitter space, which we denote by $\opAdS^{3}$ (c.f. Section~\ref{sec:ads}). The identity component of the isometry group of this space is given by $\opPSL(2,\Bbb{R})\times\opPSL(2,\Bbb{R})$. Consider now a homomorphism, $\theta:=(\theta_l,\theta_r):\Pi_1\rightarrow\opPSL(2,\Bbb{R})\times\opPSL(2,\Bbb{R})$. When each of the components, $\theta_l$ and $\theta_r$, are injective and discrete, there exists a unique open subset, $\Omega$, of $\opAdS^3$, having a structure analogous to that described above for the Minkowski case, over which $\theta(\Pi_1)$ acts properly discontinuously (c.f. Theorem~\ref{messAdS2}). The quotient of $\Omega$ by
this action is then an element of $\opGHMC_{-1}$. Furthermore, this accounts for all ghmc AdS spacetimes, so that $\opGHMC_{-1}$ is parametrised by the set of homomorphisms, $\theta:\Pi_1\rightarrow\opPSL(2,\Bbb{R})\times\opPSL(2,\Bbb{R})$, both of whose components are injective and discrete. In particular, this parametrisation also furnishes $\opGHMC_{-1}$ with the structure of a $(12\mathfrak{g}-12)$-dimensional real algebraic variety.
\par
In a similar manner to the Minkowski case, ghmc anti de Sitter spacetimes naturally identify with pairs of points in Teichm\"uller space. Indeed, every homomorphism, $\theta:\Pi_1\rightarrow\opPSL(2,\Bbb{R})\times\opPSL(2,\Bbb{R})$, each of whose two components are injective and discrete, naturally identifies with a unique pair of points in Teichm\"uller space. This correspondence, together with its Minkowski analogue mentioned above, will play a key role in what follows.
\par
For completeness, we consider also spacetimes of constant sectional curvature equal to $1$, which will henceforth be referred to {\sl de Sitter (dS)} spacetimes. Let $\opGHMC_1:=\opGHMC_1(S)$ denote the space of marked ghmc de Sitter spacetimes with Cauchy surface diffeomorphic to $S$. The global structure of $\opGHMC_1$ is more complicated than in the Minkowski and anti de Sitter cases, since there exist ghmc de Sitter spacetimes which are not quotients of invariant open subsets of de Sitter space. Nonetheless, by restricting attention to what are known as ``quasi-Fuchsian'' de Sitter spacetimes, whose moduli space we denote by $\opGHMC_1^\opqf$, we recover a theory similar to that developed in the Minkowski and anti de Sitter cases, with corresponding applications to Teichm\"uller theory.
\paragraph{Teichm\"uller theory as Lorentzian geometry} The correspondences between Teich\-m\"uller space and $\opGHMC_0$ and $\opGHMC_{-1}$ indicated above lead to the following phenomena. Firstly, many constructions of classical Teichm\"uller theory involving tangent vectors of Teichm\"uller space find their counterparts in terms of geometric objects inside ghmc Minkowski spacetimes. Likewise, many constructions involving pairs of points in Teichm\"uller space find their counterparts in terms of geometric objects inside ghmc AdS spacetimes. For example, one can associate to every tangent vector over a given point of Teichm\"uller space, a unique measured geodesic lamination \cite{Ker85}. In Section~\ref{MinkowskiSpace}, it will be shown that this measured geodesic lamination is naturally constructed in terms of the past boundary of the corresponding ghmc Minkowski spacetime. Likewise, consider the earthquake theorem (Theorem~\ref{EarthquakeTheorem}), which says that given any two given points, $P_1$ and $P_2$, of Teichm\"uller space, there exists a unique left earthquake, $\mathcal{E}^l$, and a unique right earthquake, $\mathcal{E}^r$, such that $\mathcal{E}^l(P_1)=P_2$ and $\mathcal{E}^r(P_1)=P_2$. In Section~\ref{sec:ads}, it will be shown that these earthquakes correspond to the measured geodesic laminations of the two boundary components of the convex core of the corresponding ghmc AdS spacetime. Alternatively, consider the theorem of Schoen and Labourie (Theorem~\ref{MinimalLagrangianDiffeomorphisms}), which says that given any two points, $P_1$ and $P_2$, of Teichm\"uller space, there exists a unique minimal lagrangian diffeomorphism  sending $P_1$ into $P_2$. In Section~\ref{sec:ads}, it will also be shown that this map corresponds to the unique maximal surface in the fundamental class of the corresponding ghmc AdS spacetime, and so on.
\par
Indeed, we will see that ghmc Minkowski, anti de Sitter and de Sitter spacetimes possess a wealth of geometric structures which reflects the diversity of structures constructed over Teichm\"uller space. It is one of our main objectives to provide --- in Sections \ref{MinkowskiSpace},  \ref{sec:ads}
and \ref{DeSitterSpaceRiemannSphereHyperbolicSpace} --- an elementary overview of the key aspects of this geometry, and then to show how various, disparate elements of modern Teichm\"uller theory --- reviewed in Section~\ref{PathsAndMidPointTheorems} --- are elegantly unified within this framework.
\paragraph{Scattering problems and ghmc spacetimes}
By considering constant curvature ghmc spacetimes as flows inside Teichm\"uller space, we obtain a good mental framework for organising open problems and known results. Indeed, consider first a marked ghmc Minkowski spacetime, $M$. By Theorem~\ref{BBZmink}, there exists a unique foliation, $(\Sigma_{\kappa})_{\kappa\in]0,\infty[}$, of $M$ such that, for each $\kappa$, the leaf, $\Sigma_\kappa$, is a smoothly embedded, locally strictly convex, spacelike surface of constant extrinsic curvature equal to $\kappa$. Now, if, for each $\kappa$, we denote by $g_\kappa$ the intrinsic metric of the $\kappa$'th leaf, then the rescaled family, $(\kappa g_\kappa)_{\kappa\in]0,\infty[}$, consists only of hyperbolic metrics, and therefore defines a smooth curve inside Teichm\"uller space. It is then natural to ask, for given pairs, $0<\kappa_1<\kappa_2$, of positive real numbers, and, $g_1$ and $g_2$, of marked hyperbolic metrics, whether there exists a ghmc Minkowski spacetime interpolating between $(g_1,\kappa_1)$ and $(g_2,\kappa_2)$, in the sense that, for each $i$, the intrinsic metric of its $\kappa_i$'th leaf corresponds to the point, $g_i$, of Teichm\"uller space.
\par
Similar problems can be studied by taking limits as $\kappa_1$ and $\kappa_2$ converge to $0$, to infinity, or even to each other. Furthermore, other curves in Teichm\"uller space are obtained by considering anti de Sitter and de Sitter spacetimes, as well as other geometric invariants of the leaves, such as their second or third fundamental forms, and so on. Overall, we obtain a large family of maps, all sending $\opGHMC_0$, $\opGHMC_{-1}$ and $\opGHMC_1^\opqf$ into spaces of Teichm\"uller data (such as Teichm\"uller space itself, its cotangent bundle, the space of measured geodesic laminations, and so on). Suitable combinations of these maps then yield other maps which take values inside spaces of real dimension $(12\mathfrak{g}-12)$. It is then natural to ask whether these maps parametrise $\opGHMC_0$, $\opGHMC_{-1}$ and $\opGHMC_1^\opqf$. More precisely, we ask whether they are injective or surjective, and when they are bijective, what their regularity might be with respect to other parametrisations.
\par
As we will see in Sections \ref{MinkowskiSpace},  \ref{sec:ads}
and \ref{DeSitterSpaceRiemannSphereHyperbolicSpace}, these problems are merely reformulations of more classical problems of Teichm\"uller theory concerning the relationship between different parametrisations. However, this scattering perspective presents a useful tool for visualising and organising these results. Indeed, a good portion of this part of Teichm\"uller theory simply amounts to adding new spokes to ``wheels'' such as that illustrated for the Minkowski case in Figure~\ref{fig:wheel}, where, here, the centre represents the space, $\opGHMC_0$, and each spoke represents a different parametrisation.
\begin{figure}[t]
\begin{center}
\includegraphics[scale=0.7]{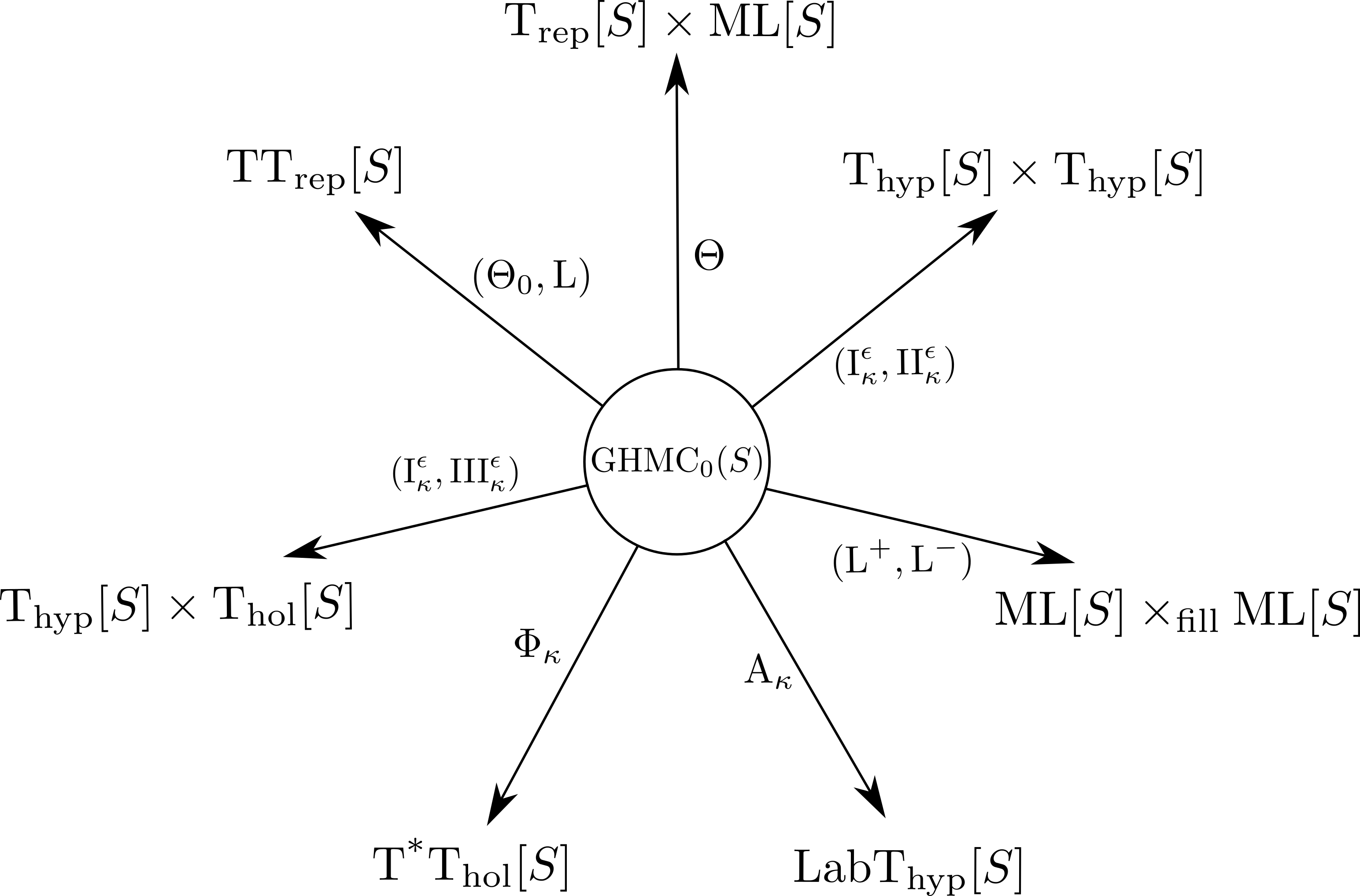}\caption{\textbf{A selection of parametrisations of $\opGHMC_0$}. Each of the spokes of the wheel represents one of the parametrisations introduced in Section~\ref{MinkowskiSpace}.}\label{fig:wheel}
\end{center}
\end{figure}
Similarly, as in Figure~\ref{fig:commutation}, we can then consider curves joining different spokes, which correspond to compositions of different parametrisations, where it is then natural to ask what the properties of these compositions might be.
\begin{figure}
\begin{center}
\includegraphics[scale=0.7]{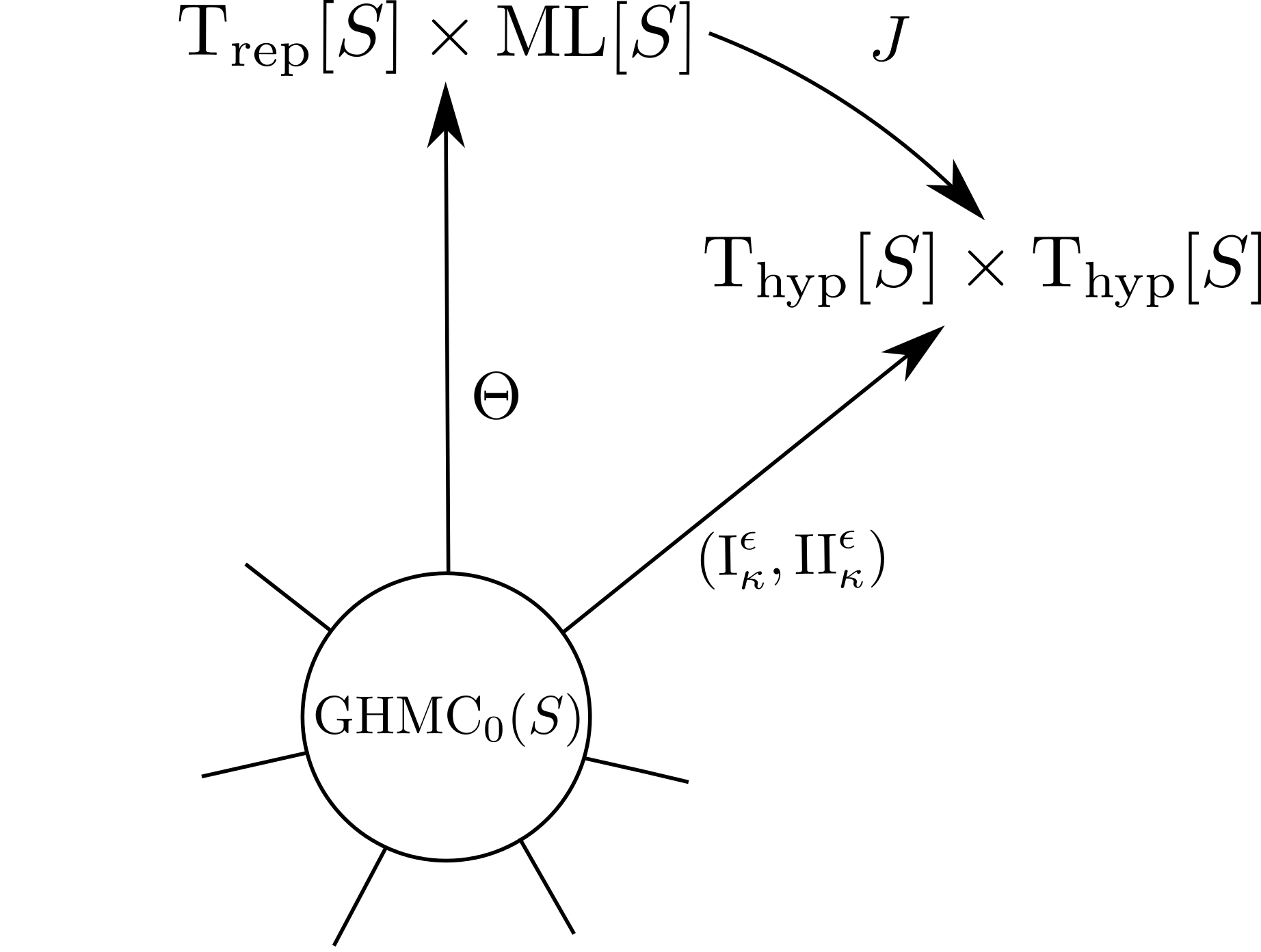}\caption{\textbf{Composing different spokes}. The parametrisations used here are discussed in detail in Section~\ref{MinkowskiSpace}. Here the map $J$ is given by the almost complex structure of $\operatorname{TT}_{\operatorname{rep}}[S]$, and the diagram commutes (c.f. \cite{BS12}).}\label{fig:commutation}
\end{center}
\end{figure}
By approaching the theory in this manner --- that is, by thinking in terms of scattering problems and parametrisations of ghmc lorentz spacetimes --- we obtain, not only new results, but also simpler proofs of known results. Various cases will be discussed in Sections~\ref{MinkowskiSpace}, \ref{sec:ads} and \ref{DeSitterSpaceRiemannSphereHyperbolicSpace}. Nevertheless, at the time of writing, we still find ourselves presented with a great number of questions, many of which remain unresolved, or, at the very least, unpublished.% (c.f. Tables [???], [???] and [???])
\paragraph{Other perspectives}
Finally, we refer the reader to the reviews \cite{mes+,ads+,guer,bar} for other approaches to this theory. Furthermore, in \cite{KS07,MS09,LS14,BS09,BBS1,BBS2,BS12,tou1,BS,Bru}  an analogous theory is developed for spacetimes with certain types of singularities which are considered as world lines of particles; in \cite{BB2,BB1,Dan13,CDW} a theory is developed to study the degeneration of de Sitter and anti de Sitter spacetimes to Minkowski space; in \cite{SS}, compositions of different parametrisations are used to develop various dualities between $\opGHMC_0$, $\opGHMC_{-1}$ and $\opGHMC_1^{\opqf}$; and in  \cite{CR} results of a similar nature are developed using riemannian ambient spaces. Indeed, in recent years, a rich and active field has blossomed out of Mess' work, but we shall not discuss these other ideas further in the current paper.
\par
We are grateful to Athanase Papadopoulos for having invited us to contribute to the present volume of the ``Handbook of group actions''. We are also grateful to Alexis Gilles for helpful comments on later drafts of this paper, and to Cyril Lecuire and to Giongling Li for fruitful conversations about the subject. The first author would also like to thank the Federal University of Rio de Janeiro (UFRJ) for their hospitality during the preparation of this work.
%
%%%%%%%%%%%%%%%%%%%%%%%%%%%%%%%%%%%%%%%%%%%%%%%%%%%%%%%%%%%%%%%%%%%%%%%%%%%%%%%%
% Paths and mid point theorems.
%%%%%%%%%%%%%%%%%%%%%%%%%%%%%%%%%%%%%%%%%%%%%%%%%%%%%%%%%%%%%%%%%%%%%%%%%%%%%%%%
%
\section{Paths and mid point theorems}\label{PathsAndMidPointTheorems}
\subsection{General}\label{General}
Let $S$ be a smooth, compact, oriented surface of hyperbolic type, let $\mathfrak{g}$ denote its genus, and let $\Pi_1$ denote its fundamental group. Let $\Thol$, $\Thyp$ and $\Trep$ denote respectively its Teichm\"uller spaces of marked holomorphic structures, marked hyperbolic metrics and properly discontinuous injective homomorphisms of $\Pi_1$ into $\opPSL(2,\R)$. These spaces are naturally endowed with the respective structures of a complex $(3\mathfrak{g}-3)$-dimensional manifold, a real analytic $(6\mathfrak{g}-6)$-dimensional manifold, and a real $(6\mathfrak{g}-6)$-dimensional algebraic variety. Significantly, although these spaces are naturally diffeomorphic, it is nonetheless often preferable to maintain the distinction, as properties that are natural for one are not necessarily natural for the others (c.f. the introduction to \cite{Ker85} for a deeper discussion of this subtle point).
\par
We first consider paths between different points of these spaces which arise from natural geometric constructions over Teichm\"uller space. It turns out that the existence of such paths can often be neatly expressed in terms of what we will call ``mid point'' theorems that will be introduced presently. We will find that these ``mid point'' theorems also arise naturally in the study of moduli spaces of ghmc Minkowski, anti de Sitter and de Sitter spacetimes, and it is the striking interplay between these two frameworks that forms the basis of this paper.
\subsection{Measured geodesic laminations}\label{MeasuredGeodesicLaminations}
The geometric constructs that will be studied in the sequel fall broadly into two classes, namely those that are real analytic, and those that are not. The non-analytic constructs will be defined using measured geodesic laminations, and for this reason, we consider it worthwhile to review this concept in a fair amount of detail (c.f. \cite{PH} for a thorough treatment).
\par
Given a hyperbolic metric, $g$, over $S$, a {\sl geodesic lamination} for $g$ is a closed subset, $L$, of $S$ that consists of a union of complete, simple, pairwise disjoint geodesics. Elementary properties of the geometry of hyperbolic metrics show that $L$ has measure zero in $S$, and that its complement consists of at most $(4\mathfrak{g}-4)$ connected components.
\par
Observe that, for a given geodesic lamination, $L$, the unit speed geodesic flow of $S$ defines everywhere locally a flow over $L$ that is unique up to a choice of direction. A {\sl measured geodesic lamination} is then defined to be a Radon measure, $\lambda$, over $S$, which is supported on some geodesic lamination, $L$, and which is everywhere locally invariant with respect to the geodesic flow of $L$. If $\lambda$ is a measured geodesic lamination, then so too is $a\lambda$ for all non-negative real $a$. Likewise, if $\lambda'$ is another measured geodesic lamination, then the sum, $\lambda+\lambda'$, is also a measured geodesic lamination if and only if the union, $L\cup L'$, of their respective underlying geodesic laminations is itself also a geodesic lamination. We therefore see that the space of measured geodesic laminations is a closed, piecewise linear subspace of the space of Radon measures over $S$, where each linear component consists of those measured geodesic laminations that are supported over a given fixed geodesic lamination (c.f. \cite{Bon01} or \cite{DW08} for a discussion of piecewise linearity). This space is denoted by $\opML_g$, and is furnished with the topology of weak convergence for Radon measures.
\par
In order to understand constructions involving measured geodesic laminations, it is standard practice to work with {\sl rational laminations}. These are measured geodesic laminations whose underlying geodesic lamination is a disjoint union of finitely many simple closed curves.\footnote{Non-rational laminations contain complete, non-compact leaves in their support sets. In order to visualise this case, it is fundamental to understand that the intersection of any non-compact leaf with any transverse curve will contain no isolated points (c.f. \cite{Bon01} for a good discussion with figures).} The simplest rational measured geodesic laminations are {\sl weighted geodesics}. These are pairs, $\lambda:=(c,a)$, where $c$ is a simple, closed geodesic, and $a$ is a non-negative real number, and the corresponding Radon measure is $a\opdl$, where $\opdl$ here denotes the $1$-dimensional Hausdorff measure of $c$. Every rational measured geodesic lamination is trivially a sum of finitely many measured geodesics, whilst the rational measured geodesic laminations themselves form a dense subset of $\opML_g$. For this reason, many constructions are first described explicitly for measured geodesics or for rational measured geodesic laminations, and are then extended by continuity to the whole of $\opML_g$.
\par
The transverse measure is a typical example of a geometric construct defined in this manner. Consider first a measured geodesic lamination, $\lambda$, with underlying geodesic lamination, $L$. A piecewise $C^1$, immersed curve $c:[0,1]\rightarrow S$ is said to be {\sl compatible} with $\lambda$ (or, equivalently, with $L$) whenever it is transverse to $L$ and, in addition, each of the end points of all of its $C^1$ components lie in the complement of $L$. When a compatible curve, $c$, is also embedded, its {\sl transverse measure} is defined by
\begin{equation}\label{TransverseMeasure}
\tau(c,\lambda) := \lim_{r\rightarrow 0}\frac{\lambda(N_r(c))}{2r}~,
\end{equation}
where $N_r(c)$ here denotes the neighbourhood of radius $r$ about $c$. This function is additive under the action of concatenation of this curves, and via this property extends to a unique function defined over the space of all immersed, compatible curves. Now, when $\lambda$ is rational, this limit is well defined, and counts with appropriate weights the number of times that $c$ crosses $L$. More generally,
\begin{theorem}\label{ExistenceOfTransverseMeasure}
\noindent The limit, $\tau(c,\lambda)$, exists for every measured geodesic lamination, $\lambda$, and for every curve, $c$, which is compatible with $\lambda$. Furthermore, if $(\lambda_m)_{m\in \mathbb{N}}$ is a sequence of measured geodesic laminations converging to $\lambda$, and if $c$ is compatible with $\lambda_m$ for all $m$, then
\begin{equation*}
\lim_{m\rightarrow+\infty}\tau(c,\lambda_m) = \tau(c,\lambda)~.
\end{equation*}
\end{theorem}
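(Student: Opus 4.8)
The plan is to reduce the statement to a local computation in a small geodesic ball, carry that out explicitly, and then deduce continuity from the resulting \emph{uniform} error estimate together with the portmanteau theorem. Since $c$ is transverse to the geodesic lamination $L$ underlying $\lambda$, the compact set $c\cap L$ has empty interior in $c$ and misses the endpoints of the $C^1$-components of $c$; hence $c$ decomposes as a concatenation $c=c_1*\cdots*c_N$ of short sub-arcs, each transverse to $L$, each contained in a small geodesic ball, and each having both endpoints at positive distance from $L$. For $r$ small one has $N_r(c)=\bigcup_j N_r(c_j)$, and the pairwise overlaps $N_r(c_j)\cap N_r(c_k)$ lie within $r$ of the finitely many subdivision points, all of which are at positive distance from $L$; so $\lambda(N_r(c))=\sum_j\lambda(N_r(c_j))$ once $r$ is small enough, which is the additivity under concatenation recorded after \eqref{TransverseMeasure}. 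This reduces everything to a single short transverse arc $c$ in a small ball with endpoints off $L$.

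For the local statement I would use the standard fact that in a small enough ball the leaves of $L$ are pairwise disjoint geodesic segments which are uniformly $C^1$-close to one another. Fixing a geodesic transversal $\sigma$ meeting each leaf exactly once, write $K:=\sigma\cap L$ and parametrise $L$ near $\sigma$ by $(x,s)\mapsto\phi_s(x)$, $x\in K$, where $\phi_s$ is the geodesic flow and $s$ is arc length along the leaf. In this flow-box chart, invariance of $\lambda$ under the geodesic flow says precisely that $\lambda=\mu(dx)\,ds$ for a measure $\mu$ supported on $K$, with $\mu(K)<\infty$ since $\lambda$ is Radon and the box relatively compact. Each leaf $\ell_x$ crosses $c$ transversally in a single point, at an angle $\theta(x)$, the function $\theta:K\to(0,\pi)$ is continuous hence bounded away from $0$ and $\pi$, and for $r$ small $\ell_x\cap N_r(c)$ is a single sub-arc of length $2r\,w(x)(1+O(r))$, where $w(x)$ is a continuous positive function of the leaf (for the metric $r$-neighbourhood, $w(x)=1/\sin\theta(x)$), the $O(r)$ being uniform in $x$ because $S$ has constant curvature, $c$ is a fixed $C^2$ arc, and $\sin\theta$ is bounded below. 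Moreover no leaf of $L$ that fails to cross $c$ comes within a fixed distance $\delta_0>0$ of $c$: the closure of such a leaf misses $c$, and were there leaves approaching $c$ arbitrarily closely, $C^1$-proximity of leaves would force them actually to cross $c$. Integrating over $x$ then gives
\begin{equation*}
\frac{\lambda(N_r(c))}{2r}=\int_K w(x)\,d\mu(x)+O\bigl(\mu(K)\,r\bigr)\qquad(r<\delta_0),
\end{equation*}
so the limit exists, equals $\int_K w\,d\mu<\infty$, and $\bigl|\tfrac{\lambda(N_r(c))}{2r}-\tau(c,\lambda)\bigr|\le C\,\mu(K)\,r$ with $C$ depending only on $c$ and the ambient geometry.

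For the continuity statement, let $\lambda_m\to\lambda$ weakly with $c$ compatible with every $\lambda_m$ and with $\lambda$. Covering $c$ as above, I would first check that the quantities entering the error estimate are controlled uniformly in $m$: the local masses $\mu_m(K_m)$ are bounded by $\lambda_m(\overline U)$ for a fixed neighbourhood $U$ of $c$, hence uniformly bounded by weak convergence, and --- this is the delicate point --- the crossing angles stay uniformly bounded away from $0$ and the distances $\delta_{0,m}$ uniformly bounded below, so that there is a single constant $C'$ with $\bigl|\tfrac{\lambda_m(N_r(c))}{2r}-\tau(c,\lambda_m)\bigr|\le C'r$ for all $m$ and all small $r$, and likewise for $\lambda$. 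Given $\varepsilon>0$, fix $r$ small enough that $C'r<\varepsilon/3$ and also so that $N_r(c)$ is a continuity set for $\lambda$; this is possible because the boundaries $\partial N_\rho(c)$ are pairwise disjoint for distinct $\rho$, so only countably many carry positive $\lambda$-mass. The portmanteau theorem then gives $\lambda_m(N_r(c))\to\lambda(N_r(c))$, whence $\tfrac1{2r}|\lambda_m(N_r(c))-\lambda(N_r(c))|<\varepsilon/3$ for $m$ large, and combining the three estimates yields $|\tau(c,\lambda_m)-\tau(c,\lambda)|<\varepsilon$.

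The main obstacle is twofold. First, making rigorous the flow-box chart and the identification $\lambda=\mu(dx)\,ds$, together with the assertions that every leaf meeting a small ball is genuinely crossed by $c$ and that no non-crossing leaf approaches $c$ too closely: this is exactly where the rigidity of geodesic laminations --- disjointness forcing $C^1$-proximity of nearby leaves, finiteness of the number of complementary regions --- is used, and it is what makes the naive Fubini argument actually work. Second, and more seriously, the uniformity in the continuity step: one must rule out, along the convergent sequence, a degeneration of the crossing geometry (leaves of $L_m$ becoming nearly tangent to $c$, or approaching $c$ without crossing it), and it is here that the hypothesis that $c$ be compatible with every $\lambda_m$ is fully exploited, again through the geometry of geodesics in the hyperbolic surface. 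One could alternatively first treat rational laminations, where everything is elementary, and then pass to the limit; but the passage to the limit requires precisely the uniform estimate above, so this does not sidestep the central difficulty.
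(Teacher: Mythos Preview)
The paper does not prove this theorem: it is stated as a background result in the survey (Section~\ref{MeasuredGeodesicLaminations}), with the reader referred implicitly to the standard references on measured geodesic laminations (e.g.\ \cite{PH}, \cite{Bon01}). There is therefore no proof in the paper to compare your proposal against.

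On its own merits, your outline is sound and your self-diagnosis is accurate. The reduction to short transverse arcs, the flow-box disintegration $\lambda=\mu\otimes ds$, and the first-order computation of $\lambda(N_r(c))$ are all correct, and they do yield existence of the limit with the value $\int_K w\,d\mu$ that you write. For the continuity statement, your scheme (uniform remainder plus portmanteau) is the natural one, and you correctly isolate the genuine difficulty: controlling, uniformly in $m$, the geometry of the crossings of $L_m$ with $c$. Transversality of $c$ to each $L_m$ gives a positive lower angle bound for each fixed $m$, but not a priori a uniform one along the sequence, and weak convergence of the $\lambda_m$ does not by itself force Hausdorff convergence of the supports. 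Closing this gap requires an additional compactness argument for geodesic laminations (the space of geodesic laminations on a closed hyperbolic surface is compact in the Chabauty/Hausdorff topology, and transversality of a fixed arc with endpoints off the lamination is an open condition there), together with a link between weak convergence of the $\lambda_m$ and subsequential Hausdorff convergence of the $L_m$. You flag exactly this point as ``the main obstacle'', so your proposal is an honest and essentially correct sketch rather than a complete proof.
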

Conversely, consider a fixed geodesic lamination, $L$, and let $c(L)$ denote the family of all immersed curves that are compatible with it. A {\sl transverse measure} over $L$ is defined to be a function, $\tau:c(L)\rightarrow[0,\infty[$, which is constant over each $C^1$ isotopy class in $c(L)$ and which is additive with respect to concatenation of curves. We now have the following converse to Theorem \ref{ExistenceOfTransverseMeasure}.
\begin{theorem}
\noindent Given a geodesic lamination, $L$, and a transverse measure, $\tau$, there exists a unique measured geodesic lamination, $\lambda$, supported on $L$ such that $\tau=\tau(\cdot,\lambda)$.
\end{theorem}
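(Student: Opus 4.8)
The plan is to reconstruct $\lambda$ from $\tau$ locally, inside flow boxes, and then to patch the local pieces together. Recall first that, since the leaves of $L$ are complete, simple and pairwise disjoint geodesics, $L$ admits a local product structure: every point of $L$ lies in a \emph{flow box} $B$ equipped with a homeomorphism $B\cong I\times(-1,1)$ under which each plaque $\{x\}\times(-1,1)$ is a unit-speed geodesic segment, the second coordinate is arc length along the leaves, and $L\cap B=K\times(-1,1)$ for some closed $K\subseteq I$. Since $S$ is compact, finitely many such boxes cover a neighbourhood of $L$, and $\lambda$ will be taken to vanish off this neighbourhood. Inside each box we shall set $\lambda|_B$ equal to the product of a suitable transverse measure on $I$ with one-dimensional Lebesgue (equivalently, Hausdorff) measure along the plaques.

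Fix a flow box $B\cong I\times(-1,1)$. For any subarc $[a,b]\subseteq I$ with endpoints in $I\setminus K$, the arc $[a,b]\times\{0\}$ is transverse to $L$ with endpoints off $L$, hence lies in $c(L)$, and $[a,b]\mapsto\tau([a,b]\times\{0\})$ is a monotone, finitely additive interval function with total mass $\tau(I\times\{0\})<\infty$. A routine compactness argument upgrades it to a premeasure on the algebra generated by these intervals, and Carath\'eodory's extension theorem then produces a unique finite Borel measure $\mu_I$ on $I$, necessarily supported on $K$. Define $\lambda_B$ by $\int_B f\,d\lambda_B:=\int_I\bigl(\int_{-1}^{1}f(x,t)\,dt\bigr)\,d\mu_I(x)$ for $f\in C_c(B)$; this is a finite Radon measure supported on $K\times(-1,1)=L\cap B$ and, the second factor being arc length along the leaves, invariant under the local geodesic flow.

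Next I would glue the measures $\lambda_B$. On the overlap of two flow boxes the change of coordinates carries plaques to plaques, hence respects the two leaf foliations and restricts to a \emph{holonomy} homeomorphism between the corresponding transversals; being realised by sliding transversal arcs along leaves, it sends each arc $[a,b]\times\{0\}$ to a $C^1$-isotopic arc in $c(L)$. Since $\tau$ is constant on $C^1$-isotopy classes, the two interval functions correspond under the holonomy map, and uniqueness of the Carath\'eodory extension forces the two transverse measures to correspond as well; as the leaf direction is parametrised by arc length in both boxes, $\lambda_B$ and $\lambda_{B'}$ agree on the overlap. Hence the local measures patch to a single Radon measure $\lambda$ on $S$, supported on $L$ and locally invariant under the geodesic flow of $L$, i.e.\ $\lambda$ is a measured geodesic lamination supported on $L$. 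To see that $\tau(\cdot,\lambda)=\tau$, observe that both sides are transverse measures on $L$ (the second by Theorem~\ref{ExistenceOfTransverseMeasure}), so it suffices to compare them on short transversal arcs $\alpha=[a,b]\times\{0\}$ inside flow boxes; there a direct computation in the product coordinates gives $\lambda(N_r(\alpha))/(2r)\to\mu_I([a,b])=\tau(\alpha)$ as $r\to0$.

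For uniqueness, let $\lambda'$ be any measured geodesic lamination supported on $L$. In each flow box $\lambda'$ is a finite measure on $K\times(-1,1)$ that is invariant under translation in the second coordinate, so disintegrating it over the projection to $I$ shows that its conditional measures on the plaques are translation invariant, whence $\lambda'|_B=\nu_I\otimes\mathrm{Leb}$ for some finite Borel measure $\nu_I$ on $I$ supported on $K$; evaluating $\tau(\cdot,\lambda')$ on transversal arcs recovers $\nu_I$ from $\tau$, so $\nu_I=\mu_I$ and $\lambda'|_B=\lambda|_B$ in every flow box, giving $\lambda'=\lambda$. I expect the patching step to be the main obstacle: one must choose the flow boxes so that the leaf direction carries a globally consistent arc-length parametrisation, and then verify that the holonomy transition maps intertwine the locally constructed transverse measures — this is exactly the point where the hypotheses on $\tau$ (additivity under concatenation and invariance under $C^1$-isotopy) are used, and it is also where the one-dimensional measure theory, namely premeasure extension and uniqueness for the construction of $\lambda$ and disintegration of flow-invariant measures for its uniqueness, must be carried out with care.
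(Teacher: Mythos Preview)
The paper does not supply a proof of this statement; it is quoted as a standard fact, with the surrounding text referring the reader to \cite{Bon01} and \cite{PH}. Your argument --- flow boxes, Carath\'eodory extension of the interval function $\tau$ induces on transversals, product with arc length along the leaves, gluing via holonomy invariance (which is exactly where the $C^1$-isotopy invariance and concatenation-additivity of $\tau$ are used), and uniqueness by disintegration of a flow-invariant measure --- is the standard construction found in those references and is correct.

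One minor caveat in the verification step: the limit $\lambda(N_r(\alpha))/(2r)\to\mu_I([a,b])$ holds as written only when the transversal $\alpha$ meets the leaves orthogonally, since otherwise each leaf meets the $r$-tube about $\alpha$ in an arc of length approximately $2r/\sin\theta$ rather than $2r$. This is harmless --- you may simply take the transversals in your flow boxes to be orthogonal to the leaves --- but it is worth noting that the paper's formula \eqref{TransverseMeasure}, read literally, carries the same angle dependence; the substantive content is the local product decomposition $\lambda|_B=\mu_I\otimes\opdl$, which your argument establishes cleanly.
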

It follows that measured geodesic laminations can equally well be defined via their transverse measures. In fact, as we will see presently, it is the transverse measures that generally arise in a natural manner from constructions of Teichm\"uller theory, and it is for this reason that the definition in terms of transverse measures is actually more standard (c.f. \cite{Bon01}).
\par
The transverse measure also serves to parametrise the space of measured geodesic laminations in a manner that does not depend on the hyperbolic metric chosen. To see this, first let $\langle\Pi_1\rangle$ denote the set of free homotopy classes in $S$. For a given measured geodesic lamination, $\lambda$, its {\sl mass function}, $M_g(\lambda):\langle\Pi_1\rangle\rightarrow[0,\infty[$, is defined by
\begin{equation*}
M_g(\lambda)(\langle\gamma\rangle) := \inf_{\eta\in\langle\gamma\rangle}\tau(\eta,\lambda)~,
\end{equation*}
where $\langle\gamma\rangle$ here denotes a free homotopy class in $\langle\Pi_1\rangle$, and $\eta$ varies over all immersed curves in $\langle\gamma\rangle$ which are compatible with $\lambda$. It turns out that the mass function uniquely defines the measured geodesic lamination. Indeed,
\begin{theorem}
\noindent $M_g$ defines a piecewise linear homeomorphism from $\opML_g$ onto a closed subset of $[0,\infty[^{\langle\Pi_1\rangle}$, where the latter is furnished with the topology of pointwise convergence.
\end{theorem}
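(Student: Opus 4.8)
The plan is to establish four properties of $M_g$: continuity for the topology of pointwise convergence, injectivity, piecewise linearity, and properness; the last of these will yield simultaneously that the image is closed and that $M_g^{-1}$ is continuous. Throughout I would make free use of Theorem~\ref{ExistenceOfTransverseMeasure}, of the density of rational measured geodesic laminations in $\opML_g$, and of the standard description of $\opML_g$, via train tracks (equivalently, Dehn--Thurston coordinates), as a locally compact Hausdorff space covered by finitely many charts of dimension $6\mathfrak{g}-6$.

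\emph{Continuity.} Fix a free homotopy class $\langle\gamma\rangle$ and let $\gamma^*$ be its $g$-geodesic representative. A standard variational argument shows that a geodesic minimises transverse measure within its homotopy class, so that $M_g(\lambda)(\langle\gamma\rangle)=\tau(\gamma^*,\lambda)$ whenever $\gamma^*$ is not contained in the support of $\lambda$, while $M_g(\lambda)(\langle\gamma\rangle)=0$ in the exceptional case, which is handled by a short separate argument for closed leaves. If $\lambda_m\to\lambda$ in $\opML_g$, then outside the exceptional set $\gamma^*$ is compatible with $\lambda$ and with every $\lambda_m$, so Theorem~\ref{ExistenceOfTransverseMeasure} gives $\tau(\gamma^*,\lambda_m)\to\tau(\gamma^*,\lambda)$; the behaviour near the exceptional set follows by squeezing $M_g(\lambda_m)(\langle\gamma\rangle)$ between $0$ and the transverse measure of a well-chosen compatible curve homotopic to $\gamma$. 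Hence $\lambda\mapsto M_g(\lambda)(\langle\gamma\rangle)$ is continuous for each $\gamma$, which is precisely continuity of $M_g$.

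\emph{Piecewise linearity and injectivity.} Here I would argue chart by chart. In a train track chart, a lamination is encoded by a tuple $w=(w_1,\dots,w_N)$ of weights lying in a rational polyhedral cone cut out by the switch conditions, and the transverse measure of any fixed compatible curve $c$ with $\lambda_w$ is a linear function of $w$ (each crossing of a branch contributes the weight of that branch). Consequently $M_g(\lambda_w)(\langle\gamma\rangle)$ depends piecewise-linearly on $w$ within the chart, being the minimum of the finitely many linear functions arising from the combinatorially distinct ways in which the geodesic representative of $\gamma$ can meet the support; this gives piecewise linearity. For injectivity it then suffices to recover $w$ from finitely many of the values $M_g(\lambda_w)(\langle\gamma_j\rangle)$, which is the classical computation expressing the Dehn--Thurston coordinates of a lamination through its intersection numbers with the pants curves and their duals; and two laminations in different charts are separated because the set of simple closed curves disjoint from $\lambda$ — detected by vanishing of the corresponding intersection numbers — determines the support of $\lambda$ up to isotopy, hence the chart.

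\emph{Properness and conclusion.} Suppose $M_g(\lambda_m)$ converges pointwise. Then the intersection numbers of $\lambda_m$ with a fixed finite filling family of simple closed curves (say the pants curves together with their duals) are bounded, which bounds the coordinates of $\lambda_m$ in the finite cover of $\opML_g$; thus $(\lambda_m)$ subconverges in $\opML_g$, and by continuity its limit maps to the given limit function, so the image is closed and $M_g$ is proper. A proper continuous injection to a locally compact Hausdorff space is a closed embedding, so $M_g$ is a homeomorphism onto a closed subset, completing the proof. The soft part is continuity, which follows almost at once from Theorem~\ref{ExistenceOfTransverseMeasure}; I expect the main obstacle to be injectivity — equivalently, the fact that the full family of geometric intersection numbers determines the lamination — together with the explicit reconstruction of the coordinates within each chart and the bookkeeping needed to make the piecewise-linear charts compatible across their boundaries.
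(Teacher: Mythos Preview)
The paper does not actually prove this theorem: it is stated in Section~\ref{MeasuredGeodesicLaminations} as part of the background review of measured geodesic laminations, with the reader referred to \cite{PH} (and \cite{Bon01}) for details. So there is no ``paper's own proof'' to compare against.

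That said, your sketch follows the standard route found in those references. The four-step decomposition (continuity, piecewise linearity, injectivity, properness) is correct, and the use of train-track charts to linearise the transverse measure and of Dehn--Thurston coordinates to recover the weights from intersection numbers is exactly the classical argument. One point where your write-up is slightly glib: in the continuity step, the case where $\gamma^*$ is a leaf of the support of $\lambda$ (or of some $\lambda_m$) needs more care than ``a short separate argument'', since $\gamma^*$ is then not compatible with $\lambda$ in the sense required by Theorem~\ref{ExistenceOfTransverseMeasure}; the usual fix is to realise $M_g(\lambda)(\langle\gamma\rangle)$ directly as a continuous function on the train-track chart rather than via a single transversal curve. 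Likewise, your injectivity argument across charts (``the set of simple closed curves disjoint from $\lambda$ determines the support'') is not quite right for irrational laminations, whose support need not contain any simple closed curve; the honest argument goes through the finite family of curves defining the Dehn--Thurston coordinates and shows that these already separate points of $\opML_g$. With those adjustments your outline is sound.
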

\noindent When $\lambda$ is rational, with underlying geodesic lamination, $L$, $M_g(\lambda)(\langle\gamma\rangle)$ counts, with appropriate weights, the infimal number of times that an element of the free homotopy class, $\langle\gamma\rangle$, crosses $L$. More precisely, if $L=c_1\cup \ldots \cup c_m$, where $c_1,\ldots,c_m$ are disjoint, simple, closed geodesics, then
\begin{equation*}
M_g(\lambda)(\langle\gamma\rangle) := \inf_{\eta\in\langle\gamma\rangle}\sum_{k=1}^m\frac{\lambda(c_k)}{l(c_k)}\#\eta^{-1}(c_k)~,
\end{equation*}
where $\lambda(c_k)$ here denotes the mass of $c_k$ with respect to the measure, $\lambda$, $l(c_k)$ denotes its length with respect to the metric, $g$, $\#\eta^{-1}(c_k)$ denotes the cardinality of its preimage under $\eta$, and $\eta$ varies over all immersed curves in the free homotopy class, $\langle\gamma\rangle$, which are compatible with $\lambda$. Significantly, the concept of mass function is purely topological in the sense that the image under $M_g$ of the set of rational measured geodesic laminations is independent of the metric chosen, and since the rational measured geodesic laminations form a dense subset of $\opML_g$, the entire image of $M_g$ is also independent of the metric chosen. Furthermore,
\begin{theorem}
\noindent Given two hyperbolic metrics, $g$ and $g'$, the composition, $M_g^{-1}\circ M_{g'}$, defines a piecewise linear homeomorphism from $\opML_{g'}$ into $\opML_g$.
\end{theorem}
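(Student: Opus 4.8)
The plan is to build on the two facts recorded immediately before the statement. First, $M_g$ and $M_{g'}$ are each piecewise linear homeomorphisms onto closed subsets of $[0,\infty[^{\langle\Pi_1\rangle}$; second, these two image sets coincide --- call the common image $\mathcal{I}$. Granting this, the composition $\Phi:=M_g^{-1}\circ M_{g'}$ is immediately a homeomorphism of $\opML_{g'}$ onto $\opML_g$ --- in particular into it, and in fact onto, a remark worth recording --- so the only substantive point is piecewise linearity.

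The first step is to identify $\Phi$ explicitly on rational laminations. Suppose $\lambda'\in\opML_{g'}$ is supported on a multicurve $c_1'\cup\cdots\cup c_m'$ of $g'$-geodesics with weights $a_1,\dots,a_m$. The explicit mass-function formula for rational laminations, together with the fact that disjoint simple closed geodesics are simultaneously in minimal position with the $g'$-geodesic in any given free homotopy class, gives $M_{g'}(\lambda')(\langle\gamma\rangle)=\sum_k a_k\, i(c_k',\gamma)$, where $i$ denotes geometric intersection number. Since $i(c_k',\gamma)$ depends only on the isotopy class of $c_k'$, it follows that, writing $c_k$ for the $g$-geodesic isotopic to $c_k'$, the weighted multicurve $\lambda:=a_1 c_1+\cdots+a_m c_m\in\opML_g$ satisfies $M_g(\lambda)=M_{g'}(\lambda')$, i.e. $\Phi(\lambda')=\lambda$. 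In words: $\Phi$ straightens each leaf of a rational lamination to its $g$-geodesic representative while leaving the weights unchanged, and in particular carries rational laminations bijectively onto rational laminations.

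The second step is to upgrade this to piecewise linearity. Here I would use that the piecewise linear structures on $\opML_{g'}$ and on $\opML_g$ are both realised through charts indexed by purely topological data --- most conveniently, branch-weight coordinates on the cells of measured laminations carried by the various maximal train tracks $\tau$ on $S$ (c.f. \cite{PH}). A train track $\tau$ is an isotopy-theoretic datum, so it organises laminations with respect to either metric, and the mass function $M_g(\lambda)(\langle\gamma\rangle)$ of a lamination carried by $\tau$ with branch weights $w$ is computed, metric-independently, as the minimal $w$-weighted number of crossings of $\gamma$ with $\tau$; hence in $\tau$-coordinates $M_{g'}$ and $M_g$ are represented by one and the same map. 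By the first step (applied on the dense locus of rational laminations, then extended by continuity), $\Phi$ sends the $\tau$-cell of $\opML_{g'}$ onto the $\tau$-cell of $\opML_g$ and is, in these common coordinates, the identity. As $\tau$ ranges over maximal train tracks these cells cover both spaces compatibly, so $\Phi$ is cellwise the identity, hence piecewise linear. (Equivalently, one may phrase the whole argument in terms of the cones of weighted multicurves, the first step already computing $M_g$ and $M_{g'}$ there by the same linear formula $(a_k)\mapsto\big(\sum_k a_k\, i(c_k,\gamma)\big)_{\langle\gamma\rangle}$.)

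Combining the steps, $\Phi=M_g^{-1}\circ M_{g'}$ is a piecewise linear homeomorphism of $\opML_{g'}$ onto --- hence into --- $\opML_g$. I expect the main obstacle to be the second step: one must check that the cell decompositions of the two spaces correspond cell by cell under $\Phi$ and that $\Phi$ is genuinely linear (indeed the identity) on each cell, as opposed to merely continuous, which is exactly where one must pin down the behaviour on non-rational laminations and interpret ``linear piece'' correctly inside the infinite-dimensional target $[0,\infty[^{\langle\Pi_1\rangle}$.
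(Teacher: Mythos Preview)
The paper does not actually supply a proof of this statement: it is recorded as a background fact in the review section on measured geodesic laminations, with the reader referred to \cite{PH} for details. So there is no ``paper's own proof'' to compare against.

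That said, your argument is correct and is essentially the standard one. The first step --- that on rational laminations $\Phi$ is geodesic straightening with weights preserved, because the mass function is computed by geometric intersection numbers --- is exactly right and is the conceptual core. The second step, passing to train-track coordinates to exhibit the PL structure and see that $\Phi$ is cellwise the identity, is also the standard route (and is precisely what \cite{PH} does). Your own caveat at the end is well placed: the genuine content lies in knowing that the PL atlases on $\opML_g$ and $\opML_{g'}$ are both induced by the \emph{same} combinatorial data (maximal train tracks up to isotopy), so that the cells match up and $\Phi$ is linear on each. Once that is granted --- and it is, by the construction of the PL structure in \cite{PH} --- your argument goes through without gaps.
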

\noindent We denote the image of $M_g$ by $\opML$. This space, which parametrises measured geodesic lamination independently of the hyperbolic metric chosen, naturally carries the structure of a piecewise linear manifold of real dimension $(6\mathfrak{g}-6)$. We consider $\opML$ as the space of abstract measured geodesic laminations.
\subsection{Earthquakes and graftings}\label{EarthquakesAndGraftings}
The two non-analytic constructs defined using measured geodesic laminations are earthquakes and graftings. We first consider earthquakes. These are parametrised by $\opML$ and act naturally on $\Thyp$. Consider first a marked hyperbolic metric, $g$, and a measured geodesic, $\lambda:=(c,a)$. Supposing that $\lambda$ is oriented, the {\sl left earthquake} of $g$ along $\lambda$, which we denote by $\mathcal{E}^l_\lambda(g):=\mathcal{E}^l(g,\lambda)$, is obtained by cutting $S$ along $c$ and regluing after rotating the left hand side along $c$ in the positive direction by a distance $a$.\footnote{This is the classical Fenchel-Nielsen deformation by a twist of length $a$ along the curve $c$.} In fact, this definition is independent of the chosen orientation of $c$, since reversing the orientation also exchanges the left and right hand sides. The operation $\mathcal{E}^l$ naturally extends to rational measured geodesic laminations, and then to a unique continuous map from the whole of $\Thyp\times\opML$ into $\Thyp$.
\begin{restatable}[Earthquake theorem,  Kerckhoff \cite{Ker85}]{theorem}{earthquake}
\label{EarthquakeTheorem}
\noindent For all fixed $\lambda$, the map $\mathcal{E}^l_\lambda$ is a real analytic diffeomorphism.
\end{restatable}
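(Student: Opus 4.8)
The plan is to reduce to the case of rational laminations, where the statement is elementary, and then to pass to the general case by density, the only non-formal ingredient being the real analyticity of geodesic length functions. Recall that rational measured geodesic laminations are dense in $\opML$, and that for a rational $\lambda=(c_1,a_1)+\cdots+(c_m,a_m)$ the left earthquake $\mathcal{E}^l_\lambda$ is the composition of the Fenchel--Nielsen twist maps $\operatorname{tw}^{a_1}_{c_1},\dots,\operatorname{tw}^{a_m}_{c_m}$, which commute since the $c_i$ are disjoint. The first step is therefore to observe that each twist flow $t\mapsto\operatorname{tw}^t_c$ is real analytic: by Wolpert's twist--length duality its infinitesimal generator is, up to a universal constant, the symplectic gradient of $\ell_c$ for the Weil--Petersson form, and since $\ell_c$ and this form are real analytic and the flow is globally defined, the flow is real analytic. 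Hence for rational $\lambda$ the map $\mathcal{E}^l_\lambda$ is a real analytic diffeomorphism, with real analytic inverse obtained by reversing each twist; that is, $(\mathcal{E}^l_\lambda)^{-1}=\mathcal{E}^r_\lambda$, where $\mathcal{E}^r$ denotes the right earthquake.

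Next I would dispose of bijectivity and of continuity of the inverse for an arbitrary $\lambda$. As recalled above, $\mathcal{E}^l$, and likewise $\mathcal{E}^r$, extend to jointly continuous maps $\Thyp\times\opML\to\Thyp$; since the identities $\mathcal{E}^r_\lambda\circ\mathcal{E}^l_\lambda=\operatorname{Id}=\mathcal{E}^l_\lambda\circ\mathcal{E}^r_\lambda$ hold for every rational $\lambda$, they hold for every $\lambda\in\opML$ by density and joint continuity. Thus, for every $\lambda$, $\mathcal{E}^l_\lambda$ is a homeomorphism of $\Thyp$ with continuous inverse $\mathcal{E}^r_\lambda$, and it only remains to upgrade this to a real analytic diffeomorphism.

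For the real analyticity, the clean target is the following: for each fixed $\lambda\in\opML$, the geodesic length function $\ell_\lambda$ is real analytic on $\Thyp$, and $\mathcal{E}^l_\lambda$ is the time-one map of the globally defined Hamiltonian flow of $\tfrac12\ell_\lambda$ for the Weil--Petersson form. Granting this --- which is Kerckhoff's extension of Wolpert's formula \cite{Ker85} --- one concludes immediately: the flow of a complete, real analytic vector field is real analytic, so its time-one map is a real analytic diffeomorphism, with inverse the time-$(-1)$ map, which here is exactly $\mathcal{E}^r_\lambda$. Equivalently, one can argue directly on the universal cover: realise $\lambda$ as a geodesic lamination for $g$, lift it to $\tilde\lambda\subset\mathbb{H}^{2}$, and form the shear cocycle $\beta(\cdot,\cdot)$ as the multiplicative integral of the elementary hyperbolic shears across the leaves of $\tilde\lambda$, with translation lengths prescribed by the transverse measure of $\lambda$; conjugating the holonomy of $g$ by the resulting $\Pi_1$-equivariant (but discontinuous) map yields the holonomy of $\mathcal{E}^l_\lambda(g)$, and for fixed $\lambda$ this depends real analytically on the holonomy of $g$.

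The main obstacle is precisely this last point when $\lambda$ is not rational. Rational laminations are dense, but a pointwise limit of real analytic maps need not be real analytic, so one genuinely needs either (a) the analytic estimates showing that $\ell_{\lambda_n}\to\ell_\lambda$ together with all derivatives, locally uniformly on $\Thyp$, whenever $\lambda_n\to\lambda$ --- whence the associated Hamiltonian vector fields, and then their flows, converge real analytically --- or (b) the convergence and estimates for the multiplicative integral defining $\beta$, which additionally require showing that the resulting sheared representation is again discrete and faithful, the substantive point in Thurston's construction of earthquakes. By contrast, the rational case and the homeomorphism statement are entirely formal, so it is in (a)/(b) that the real work of Kerckhoff's theorem resides.
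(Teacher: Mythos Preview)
The paper does not actually prove this statement. Theorem~\ref{EarthquakeTheorem} is stated in Section~\ref{EarthquakesAndGraftings} with attribution to Kerckhoff \cite{Ker85} and is used throughout as a known input; no argument for it is supplied. What the paper \emph{does} derive, via Mess' anti de Sitter construction in Section~\ref{Earthquakes}, is the closely related ``mid point'' form, Theorem~\ref{EarthquakeTheorem2}: for any pair $g_1,g_2\in\Thyp$ there is a unique $(h,\lambda)$ with $\mathcal{E}^r_\lambda(h)=g_1$ and $\mathcal{E}^l_\lambda(h)=g_2$. That follows from combining Theorems~\ref{messAdS3} and \ref{messAdS4}, and gives a bijectivity statement, but it does not yield the real analyticity of $\mathcal{E}^l_\lambda$ for fixed $\lambda$, which is the specific content of the theorem you were asked about. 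So there is no ``paper's own proof'' to compare against here.

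Your sketch is essentially Kerckhoff's line of argument, and you have correctly isolated the genuine difficulty: for rational $\lambda$ everything follows from Wolpert's twist--length duality and the real analyticity of Fenchel--Nielsen twist flows, while for irrational $\lambda$ one must establish either (a) real analyticity of $\ell_\lambda$ on $\Thyp$ together with the identification of the earthquake flow as the Hamiltonian flow of $\tfrac12\ell_\lambda$, or (b) direct analytic control of the shear cocycle and discreteness of the resulting representation. Both are substantive and are precisely what \cite{Ker85} carries out; you are right that density alone cannot close this gap. One small caution on route (a): you are also implicitly using the flow identity $\mathcal{E}_{s,\lambda}\circ\mathcal{E}_{t,\lambda}=\mathcal{E}_{s+t,\lambda}$ to identify the earthquake path with an integral curve of the Hamiltonian field of $\ell_\lambda$; for irrational $\lambda$ this identity is itself nontrivial (the lamination must be re-realised geodesically after each earthquake), though the paper records it as a fact.
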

The {\sl right earthquake} operator, $\mathcal{E}^r$, is defined in a similar manner by rotating the right hand sides instead of the left hand sides. In particular, for all $\lambda$, the map, $\mathcal{E}^r_\lambda$, is the inverse of $\mathcal{E}^l_\lambda$, and it is therefore natural to define the {\sl earthquake flow},  $\mathcal{E}:\mathbb{R}\times\Thyp\times\opML\rightarrow\Thyp$, by
\begin{equation*}
\mathcal{E}_{t,\lambda}(g):=\mathcal{E}(t,g,\lambda):=
\begin{cases}
\mathcal{E}^l_{t\lambda}(g), & \text{if}\ t\geq 0,\ \text{and}\\
\mathcal{E}^r_{\left|t\right|\lambda}(g) & \text{if}\ t<0~.
\end{cases}
\end{equation*}
For any fixed $\lambda$, the map $\mathcal{E}(\cdot,\cdot,\lambda)$ is real analytic in $\R\times\Thyp$, and, for all $s$ and for all $t$,
\begin{equation*}
\mathcal{E}_{s,\lambda}\circ\mathcal{E}_{t,\lambda} = \mathcal{E}_{s+t,\lambda}~,
\end{equation*}
so that $(\mathcal{E}_{t,\lambda})_{t\in\R}$ constitutes a real analytic group of real analytic diffeomorphisms of $\Thyp$. In particular, for any fixed $(g,\lambda)$, the earthquake flow through $g$ in the direction of $\lambda$ defines an embedded real analytic curve in $\Thyp$, and by Theorem~\ref{EarthquakeTheorem}, there is a unique such curve joining any two points of Teichm\"uller space. However, we find it more suggestive to re-express this result in terms of the mid point of this curve, and we thereby obtain our first ``mid point'' theorem, which is illustrated schematically in Figure~\ref{fig:earthquake}.
\begin{restatable}[Earthquake theorem, Thurston,  Kerckhoff \cite{Ker83}]{theorem}{earthquakebis}
\label{EarthquakeTheorem2}
\noindent Given two marked hyperbolic metrics, $g_1$ and $g_2$, there exists a unique marked hyperbolic metric, $h$, and measured geodesic lamination, $\lambda$, such that
\begin{equationarray*}{llllll}
\ g_1&=&\mathcal{E}(-1,h,\lambda)&=&\mathcal{E}^r_\lambda(h)& \text{and}\\
\ g_2&=&\mathcal{E}(1,h,\lambda)&=&\mathcal{E}^l_\lambda(h)~.
\end{equationarray*}
\end{restatable}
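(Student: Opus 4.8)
The plan is to deduce the midpoint formulation (Theorem~\ref{EarthquakeTheorem2}) from the one-sided earthquake theorem (Theorem~\ref{EarthquakeTheorem}), which we are free to assume. The key observation is that the statement we wish to prove is exactly a statement about the earthquake flow $\mathcal{E}$: we must show that the evaluation map
\begin{equation*}
F:\Thyp\times\opML\longrightarrow\Thyp\times\Thyp,\qquad
(h,\lambda)\longmapsto\bigl(\mathcal{E}^r_\lambda(h),\,\mathcal{E}^l_\lambda(h)\bigr)
\end{equation*}
is a bijection. Surjectivity is the existence part (there is a geodesic lamination $\lambda$ and a ``centre'' metric $h$ realising any prescribed pair $(g_1,g_2)$), and injectivity is the uniqueness part.

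First I would treat \emph{surjectivity}. Fix $g_1,g_2\in\Thyp$. For each fixed $\lambda\in\opML$, Theorem~\ref{EarthquakeTheorem} tells us that $\mathcal{E}^l_\lambda$ is a diffeomorphism of $\Thyp$; hence there is a unique $h_\lambda$ with $\mathcal{E}^l_\lambda(h_\lambda)=g_2$, and we must choose $\lambda$ so that in addition $\mathcal{E}^r_\lambda(h_\lambda)=g_1$, i.e. so that $\mathcal{E}^r_\lambda\circ(\mathcal{E}^l_\lambda)^{-1}(g_2)=g_1$, equivalently $\mathcal{E}^r_{2\lambda}(g_2)=g_1$ after using the flow property $\mathcal{E}_{s,\lambda}\circ\mathcal{E}_{t,\lambda}=\mathcal{E}_{s+t,\lambda}$ and $\mathcal{E}^r_\lambda=(\mathcal{E}^l_\lambda)^{-1}$. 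In other words, the whole question reduces to: for any $g_1,g_2$ there exists a (necessarily unique, by Theorem~\ref{EarthquakeTheorem} again) $\mu\in\opML$ with $\mathcal{E}^r_\mu(g_2)=g_1$; then set $\lambda=\tfrac12\mu$ and $h=\mathcal{E}^l_\lambda(g_1)=\mathcal{E}^r_\lambda(g_2)$. So the real content is the \emph{global} statement that the orbit map $\lambda\mapsto\mathcal{E}^r_\lambda(g_2)$ from $\opML$ to $\Thyp$ is a bijection. This is Thurston's earthquake theorem in its original packaging; I would prove it by the standard strategy: first establish it for rational $\lambda$ and then pass to the limit, or, more efficiently, invoke properness plus local injectivity. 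Concretely, one shows the map is continuous (from the continuity of $\mathcal{E}$), proper (a sequence $\lambda_m$ leaving every compact set of $\opML$ forces $\mathcal{E}^r_{\lambda_m}(g_2)$ to leave every compact set of $\Thyp$, e.g. because some curve's length blows up under a large-mass earthquake — this is the compactness input that makes the argument work), and a local homeomorphism where it is differentiable; since $\opML$ and $\Thyp$ are both $(6\mathfrak{g}-6)$-dimensional and $\Thyp$ is connected and simply connected, a proper local homeomorphism is a covering, hence a global homeomorphism. This gives surjectivity, and in fact bijectivity, of $\lambda\mapsto\mathcal{E}^r_\lambda(g_2)$.

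For \emph{uniqueness} in the midpoint formulation, suppose $(h,\lambda)$ and $(h',\lambda')$ both satisfy the two displayed equations with the same $(g_1,g_2)$. From $\mathcal{E}^l_\lambda(h)=g_2=\mathcal{E}^l_{\lambda'}(h')$ and $\mathcal{E}^r_\lambda(h)=g_1=\mathcal{E}^r_{\lambda'}(h')$, composing and using $\mathcal{E}^r_\lambda=(\mathcal{E}^l_\lambda)^{-1}$ together with the flow law yields $\mathcal{E}^r_{2\lambda}(g_2)=g_1=\mathcal{E}^r_{2\lambda'}(g_2)$; the bijectivity of the orbit map just established forces $2\lambda=2\lambda'$, hence $\lambda=\lambda'$, and then $h=\mathcal{E}^r_\lambda(g_1)=h'$. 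Note that throughout one should be slightly careful about the flow law when $\lambda$ and $\lambda'$ have different supports — but here all the identities used involve a single fixed lamination at a time, so only the stated flow property $\mathcal{E}_{s,\lambda}\circ\mathcal{E}_{t,\lambda}=\mathcal{E}_{s+t,\lambda}$ is needed, and this is given.

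The main obstacle is the global surjectivity (equivalently, properness) of the earthquake orbit map $\lambda\mapsto\mathcal{E}^r_\lambda(g_2)$: Theorem~\ref{EarthquakeTheorem} as quoted only asserts that each $\mathcal{E}^l_\lambda$ is a diffeomorphism for \emph{fixed} $\lambda$, which controls the flow in a single direction but says nothing a priori about how the target sweeps out all of $\Thyp$ as $\lambda$ varies. Closing this gap is precisely Thurston's theorem, and the delicate point in it is the properness estimate — showing that large laminations produce metrics that escape to infinity in $\Thyp$ — which rests on the behaviour of geodesic length functions under earthquakes (for rational $\lambda$ this is a convexity/monotonicity computation with Fenchel--Nielsen coordinates, and the general case follows by the density of rational laminations in $\opML$ together with the continuity statement in Theorem~\ref{ExistenceOfTransverseMeasure}). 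I would present the rational case in detail and then invoke continuity and density to conclude, so that the midpoint theorem falls out as a clean corollary of the orbit-map bijection.
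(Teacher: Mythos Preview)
Your reduction is correct and the overall argument is sound, but it takes a route quite different from the paper's. You collapse the midpoint statement, via the flow law $\mathcal{E}_{s,\lambda}\circ\mathcal{E}_{t,\lambda}=\mathcal{E}_{s+t,\lambda}$, to the classical orbit-map form of Thurston's theorem (bijectivity of $\lambda\mapsto\mathcal{E}^r_\lambda(g_2)$ from $\opML$ onto $\Thyp$), and then sketch a properness/degree proof of the latter. The paper instead obtains Theorem~\ref{EarthquakeTheorem2} from anti de Sitter geometry in Section~\ref{Earthquakes}: given $(g_1,g_2)$, Theorem~\ref{messAdS3} produces a unique ghmc AdS spacetime with holonomy $(\theta_l,\theta_r)$ corresponding to $(g_1,g_2)$; the future boundary $\partial\opK_\theta^+$ of its Nielsen kernel carries an intrinsic hyperbolic metric $h$ and a bending lamination $\lambda$, and the left and right generalised Gauss maps are shown to be precisely the earthquakes $\mathcal{E}^l_\lambda$ and $\mathcal{E}^r_\lambda$ carrying $h$ to $g_2$ and $g_1$. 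Uniqueness comes from Theorem~\ref{messAdS4}, and the remark after the restated theorem records that any two of Theorems~\ref{messAdS3}, \ref{messAdS4}, \ref{EarthquakeTheorem2} imply the third.

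What each buys: your approach is the classical one and is logically independent of the Lorentzian machinery, so it stands on its own; its cost is that the hard step is exactly the one you flag --- the orbit-map bijection --- and the degree-theoretic sketch you give needs tightening (the domain $\opML$ is only piecewise linear, so ``local homeomorphism where differentiable'' requires care, and Thurston's actual proof proceeds rather differently). The paper's approach is the point of the paper: it shows that the pair $(h,\lambda)$ is encoded geometrically as the intrinsic metric and bending measure of a single pleated convex surface in $\opAdS^3$, so that existence and uniqueness become statements about Mess's invariant set rather than about the dynamics of the earthquake flow.
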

\begin{figure}
\begin{center}
\includegraphics[scale=0.5]{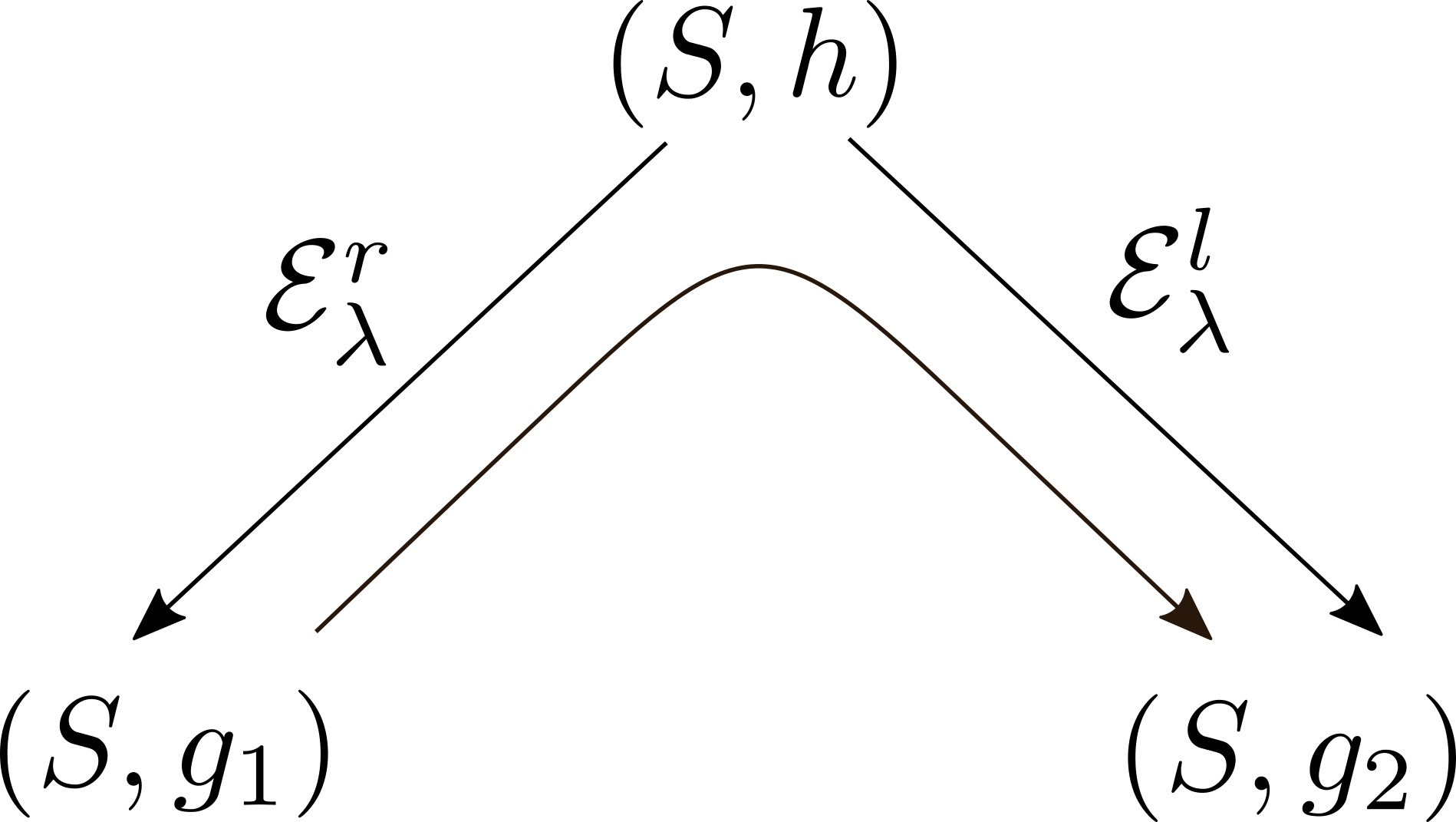}\caption{\label{fig:earthquake} \textbf{The earthquake theorem.}\ Here the upper vertex is given by a marked hyperbolic structure over $S$. The earthquake maps, $\mathcal{E}_\lambda^r$ and $\mathcal{E}_\lambda^l$, are piecewise continuous, with discontinuities along the measured geodesic lamination, $\lambda$. The curved arrow joining the lower two vertices corresponds to the earthquake $\mathcal{E}_{2\lambda}^l$.}
\end{center}
\end{figure}
\begin{remark} In fact, this is the manner in which the earthquake theorem is usually expressed.\end{remark}
Graftings are likewise parametrised by $\opML$ and map $\Thyp$ naturally into $\Thol$. They are defined as follows. Consider first a marked, hyperbolic metric, $g$, and a weighted geodesic, $\lambda:=(c,a)$. The {\sl grafting} of $g$ along $\lambda$, which we denote by $\mathcal{G}_\lambda(g):=\mathcal{G}(g,\lambda)$, is the holomorphic structure obtained by slicing $S$ along $c$ and inserting the cylinder $c\times[0,a]$. As before, this operation naturally extends to rational measured geodesic laminations, and then to a unique continuous map from the whole of $\Thyp\times\opML$ into $\Thol$. Furthermore, this map is at every point real analytic in the first variable (c.f. \cite{SW02}) and tangentiable in the second (c.f. \cite{DW08}). In fact, we have
\begin{theorem}
\noindent For all fixed $\lambda$, $\mathcal{G}_\lambda$ is a real analytic diffeomorphism. For all fixed $g$, the map $\lambda\mapsto\mathcal{G}(g,\lambda)$ is a bitangentiable homeomorphism.
\end{theorem}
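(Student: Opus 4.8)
The plan is to establish the two assertions separately, reducing each to a properness statement, an infinitesimal non-degeneracy statement, and a topological argument, and to cite the two deep inputs --- the Scannell--Wolf estimates \cite{SW02} for the first assertion, and the Dumas--Wolf analysis \cite{DW08} (resting on Thurston's parametrisation of $\CP$-structures) for the second.

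\emph{First assertion.} Fix $\lambda$. By the preceding remarks the map $\mathcal{G}_\lambda:\Thyp\rightarrow\Thol$ is real analytic, so it suffices to prove that it is a bijection whose derivative is everywhere invertible. (i) \emph{Properness:} grafting along $\lambda$ leaves the complement of the support of $\lambda$ conformally unchanged and merely inserts flat cylinders of definite moduli, so if $g_n$ leaves every compact subset of $\Thyp$ then some simple closed curve has $g_n$-length tending to $0$ or $\infty$, its extremal length in $\mathcal{G}_\lambda(g_n)$ degenerates accordingly, and $\mathcal{G}_\lambda(g_n)$ leaves every compact subset of $\Thol$. (ii) \emph{Invertibility of $d\mathcal{G}_\lambda$:} this is the analytic core and is the content of \cite{SW02}. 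For rational $\lambda$, where $\mathcal{G}_\lambda$ is smooth, one analyses the linearisation of the grafting construction --- concretely, the first-order variation of the conformal factor of the grafted surface relative to the hyperbolic metric $g$ --- and applies the maximum principle to conclude that the linearised operator has trivial kernel; the general case follows by an approximation argument with uniform estimates. (iii) \emph{Conclusion:} a proper local diffeomorphism onto a connected, simply connected manifold --- and $\Thol$ is diffeomorphic to a ball --- is a diffeomorphism, and it is real analytic together with its inverse.

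\emph{Second assertion.} Fix $g$. The grafted surface $\mathcal{G}(g,\lambda)$ carries a canonical $\CP$-structure, and $\lambda\mapsto\mathcal{G}(g,\lambda)$ factors as $\lambda\mapsto\operatorname{Gr}_\lambda(g)$ followed by the forgetful projection from the space of marked $\CP$-structures onto $\Thol$; Thurston's theorem makes the first arrow a homeomorphism of $\opML$ onto a $(6\mathfrak{g}-6)$-dimensional slice. As in step (i), large $\lambda$ pinches a curve and drives $\mathcal{G}(g,\lambda)$ to $\partial\Thol$, so the map is proper; combined with the transversality of this slice to the fibres of the projection --- equivalently, the injectivity of the tangent map, in the one-sided sense appropriate to the piecewise linear space $\opML$ --- one obtains a proper local homeomorphism onto the simply connected space $\Thol$, hence a homeomorphism. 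Bitangentiability requires confronting the non-smoothness of $\opML$ directly: in train-track charts one computes the one-sided directional derivatives of $\lambda\mapsto\mathcal{G}(g,\lambda)$ along the rays of each tangent cone --- tracking how the inserted cylinders, and in particular the sliding of leaves, deform to first order --- and shows that the resulting tangent map is itself a bitangentiable homeomorphism of tangent cones, whence the inverse is tangentiable as well. This is carried out in \cite{DW08}.

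\emph{Main obstacle.} In both parts the hard step is the infinitesimal non-degeneracy: for the first assertion, that $d\mathcal{G}_\lambda$ has no kernel (the Scannell--Wolf maximum-principle argument); for the second, the bitangentiability of the inverse of $\lambda\mapsto\mathcal{G}(g,\lambda)$, where the piecewise linear structure of $\opML$ must be handled head-on. The properness statements, the covering-space arguments using that $\Thol$ is a ball, and the passage from rational to general laminations are by comparison routine.
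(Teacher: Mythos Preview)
The paper does not supply its own proof of this theorem: it is stated as a fact, with the preceding sentence pointing to \cite{SW02} for real analyticity in the first variable and to \cite{DW08} for tangentiability in the second. Your proposal is therefore not being compared against an argument in the paper but against those references themselves, and as a summary of the Scannell--Wolf and Dumas--Wolf strategies it is broadly faithful: properness plus local invertibility plus simple connectivity of the target, with the analytic core in each case being the infinitesimal non-degeneracy.

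Two small comments on accuracy. In your properness sketch for the first assertion, the phrase ``leaves the complement of the support of $\lambda$ conformally unchanged'' is imprecise: what is unchanged is the \emph{hyperbolic metric} on the complement, not the global conformal structure, and the relevant observation is rather that the inserted cylinders have moduli bounded in terms of the fixed $\lambda$, so a pinching curve in $g_n$ cannot be rescued by grafting. In the second assertion, the bitangentiability statement is somewhat stronger than what one can read off directly from the headline theorem of \cite{DW08} (which gives the homeomorphism); the tangentiable structure on $\opML$ and the compatibility of grafting with it are treated in the surrounding literature (Thurston, Bonahon), and you are right that the piecewise linear structure must be handled directly via train-track charts. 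Your identification of the hard steps is accurate.
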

Earthquakes and graftings are unified as follows. Let $\mathbb{H}^+$ be the upper half space in $\mathbb{C}$, let $\overline{\Bbb{H}}^+$ denote its closure, and define the {\sl complex earthquake map}, $\mathcal{E}:\overline{\mathbb{H}}^+\times\Thyp\times\opML\rightarrow\Thol$ by
\begin{equation*}
\mathcal{E}(s+it,g,\lambda) := \mathcal{G}_{t,\lambda}\circ\mathcal{E}_{s,\lambda}(g)~.
\end{equation*}
\begin{theorem}[McMullen \cite{mcmullen}]\label{complexearthquake}
\noindent For all $(g,\lambda)$, the map $z\mapsto\mathcal{E}(z,g,\lambda)$ defines a holomorphic map from $\overline{\mathbb{H}}^+$ into $\Thol$.
\end{theorem}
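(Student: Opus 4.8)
The plan is to establish the holomorphy of $z\mapsto\mathcal{E}(z,g,\lambda)$ by exhibiting the associated family of marked Riemann surfaces as a holomorphic family over the open half-plane $\mathbb{H}^+$, since a marked holomorphic family of Riemann surfaces over a base induces a holomorphic map of that base into $\Thol$; continuity up to the boundary $\mathbb{R}$ is automatic, as the complex earthquake map is jointly continuous on $\overline{\mathbb{H}}^+\times\Thyp\times\opML$ (this joint continuity following from the continuity of the earthquake and grafting operators recalled in Section~\ref{EarthquakesAndGraftings}). The first reduction is to the case in which $\lambda$ is a single weighted simple closed curve. Indeed, choosing rational laminations $\lambda_m\to\lambda$, joint continuity gives $\mathcal{E}(\cdot,g,\lambda_m)\to\mathcal{E}(\cdot,g,\lambda)$ uniformly on compact subsets of $\overline{\mathbb{H}}^+$, and as a locally uniform limit of holomorphic maps into a complex manifold is again holomorphic, it suffices to treat each rational $\lambda_m$. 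A rational lamination is a finite sum of weighted simple closed curves with pairwise disjoint supports, and since the earthquake along, and the grafting along, disjoint curves are performed in disjoint annular regions of $S$, the complex earthquake along such a sum is the clutching construction described below carried out independently near each of these curves; thus the case of a single weighted simple closed curve suffices.

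For $\lambda=(c,a)$ I would argue as follows. Put $\ell:=\ell_g(c)$, and let $X$ be the bordered Riemann surface obtained by cutting the conformal surface underlying $g$ along $c$ --- two boundary circles, or two components in the separating case --- equipped with fixed conformal coordinates near the boundary (coming, say, from the hyperbolic collar of $c$). The crucial observation is that $X$, together with these coordinates, is independent of $z$: the Fenchel--Nielsen twist $\mathcal{E}^{l}_{s\lambda}$ leaves $X$ and its boundary coordinates unchanged, the twist being absorbed entirely into the regluing of the two boundary circles, while the grafting $\mathcal{G}_{t\lambda}$ does not touch $X$ at all, merely inserting an annulus between those circles. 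One then checks that $\mathcal{E}(z,g,\lambda)$ is recovered from $X$ by the standard clutching (plumbing) construction regluing its two boundary circles with parameter $q(z)=q_{0}\,e^{2\pi i a z/\ell}$, where $q_{0}\in(0,1)$ is a fixed constant recording the modulus contributed by the collar. For real $z=s$ this is the regluing rotated by $2\pi a s/\ell$, i.e.\ the twist by $as$, so $\mathcal{E}(s,g,\lambda)=\mathcal{E}^{l}_{s\lambda}(g)$; for $z=it$ the parameter stays positive with $|q|$ scaled by $e^{-2\pi a t/\ell}$, i.e.\ a flat cylinder of modulus $at/\ell$ is inserted, giving $\mathcal{G}_{t\lambda}(g)$; and the general formula follows since the twist and the inserted cylinder both act on the same annular neck. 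Now $z\mapsto q(z)$ is holomorphic from $\overline{\mathbb{H}}^+$ into $\{0<|q|\le q_{0}\}$, restricting on the interior to a holomorphic covering of the punctured disc $\{0<|q|<q_{0}\}$, while the clutching construction applied to the fixed $X$ produces a holomorphic family over the universal cover of that punctured disc (the monodromy of the marking around $q=0$ being exactly the Dehn twist about $c$). Composing, $z\mapsto\mathcal{E}(z,g,\lambda)$ is a holomorphic family over $\mathbb{H}^+$, continuous on $\overline{\mathbb{H}}^+$, hence a holomorphic map into $\Thol$. (Alternatively, one may argue through holonomies: the same construction presents $\mathcal{E}(z,g,\lambda)$ as the $\CP$-structure obtained from the Fuchsian structure of $g$ by conjugating part of its holonomy by the loxodromic element of complex translation length $az$ along a lift of $c$ --- an expression algebraic in $e^{az/2}$ --- carried by a developing map that varies holomorphically with $z$.)

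I expect the main obstacle to lie entirely in this single-curve identification: checking that the conformal structure away from the collar is genuinely preserved by the earthquake flow, that grafting along $c$ amounts conformally to the insertion of a flat cylinder of precisely the stated modulus, and --- the real crux --- that these two a priori distinct deformations fit together, with the correct normalisation, into the one holomorphic clutching family with parameter $q(z)=q_{0}e^{2\pi i a z/\ell}$, so that the ``earthquake'' boundary circle $|q|=q_{0}$ and the ``grafting'' radii merge into a single holomorphic object. Once this is in hand, the density of rational laminations, the joint continuity of the complex earthquake map, the stability of holomorphy under locally uniform limits, and the localisation over disjoint curves reduce the general case to it with no further difficulty.
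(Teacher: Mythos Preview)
The paper does not supply its own proof of this theorem: it is stated with a citation to McMullen \cite{mcmullen} and then immediately followed by an informal visualisation in terms of attaching a marked flat torus along the geodesic. There is therefore no in-paper argument to compare against.

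That said, your outline is essentially McMullen's original strategy and is sound in its architecture. The reduction to weighted simple closed curves via density of rational laminations and stability of holomorphy under locally uniform limits is exactly how the general case is handled, and the heart of the matter is indeed the single-curve case, where the complex earthquake is realised by a holomorphic plumbing parameter. Your identification $q(z)=q_0\,e^{2\pi i a z/\ell}$ captures the correct mechanism: real part contributes a twist, imaginary part contributes modulus. The alternative you mention---working through $\mathbb{CP}^1$-structures and the holonomy, where the bending cocycle depends holomorphically on the complex length parameter---is in fact closer to how McMullen phrases it, and has the advantage that the holomorphic dependence is immediate from the formula for the bending in $\opPSL(2,\mathbb{C})$, sidestepping any delicate normalisation of collar coordinates. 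The one point in your main argument that would require genuine care in a full write-up is the claim of \emph{locally uniform} (not merely pointwise) convergence of $\mathcal{E}(\cdot,g,\lambda_m)$ to $\mathcal{E}(\cdot,g,\lambda)$ on compacta of $\mathbb{H}^+$; joint continuity on $\overline{\mathbb{H}}^+\times\Thyp\times\opML$ gives this, but that joint continuity is itself a nontrivial input.
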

This map can also be visualised as follows. Consider a marked, hyperbolic metric, $g$, a compact geodesic, $c$, in $S$, and a marked torus, $T$, furnished with a flat metric. Let $c'$ be a geodesic in the homology class of the first basis element of $\pi_1(T)$ (which is well defined by the marking). Upon rescaling the metric of $T$, we may suppose that $c'$ has the same length as $c$. The complex earthquake of $g$ by $T$ is then obtained by slicing $S$ along $c$, $T$ along $c'$, and by joining the resulting two surfaces together by identifying these two geodesics. In particular, by varying $T$ over its Teichm\"uller space, we see how complex earthquakes continuously interpolate between graftings and earthquakes, and also how the space of complex earthquakes along a given geodesic identifies with the upper half space, since this is the natural parametrisation of the Teichm\"uller space of the marked torus.
\par
Theorem~\ref{EarthquakeTheorem2} is in some sense compatible with the symmetry of the domain of $\mathcal{E}$ with respect to the imaginary axis, and by the same token, we should not expect results analogous to this theorem for graftings. However, since $\mathcal{E}$ is analytic along the boundary of $\mathbb{H}^+$, it nonetheless extends analytically to a neighbourhood of $\overline{\mathbb{H}}^+$. It seems to us an interesting problem to determine how far along the negative imaginary axis this analytic continuation can be developed. Indeed, in the case where $\lambda:=(c,a)$ is a weighted geodesic, this would correspond heuristically to the supremal conformal modulus of cylinders in the homotopy class of $c$ that can be removed from $S$.
\subsection{Harmonic maps and minimal lagrangian diffeomorphisms}
\label{HarmonicMapsAndMinimalLagrangianDiffeomorphisms}
Given a holomorphic structure, $\mathcal{H}$, and a symmetric $2$-form, $a$, over $S$, the {\sl energy density} of $a$ with respect to $\mathcal{H}$ is defined by
\begin{equation*}
\opE(a|\mathcal{H}) := \opTr_h(a)\opdVol_h~,
\end{equation*}
where $h$ is any Riemannian metric conformal to $\mathcal{H}$ and $\opdVol_h$ is its volume form. We readily verify that this is independent of the metric, $h$, chosen. The {\bf energy} of $a$ with respect to $\mathcal{H}$ is then given by
\begin{equation*}
\mathcal{E}(a|\mathcal{H}) := \int_S\opE(a|\mathcal{H}) = \int_S\opTr_h(a)\opdVol_h~.
\end{equation*}
We are interested in critical points of this functional. However, since $\mathcal{E}$ is linear in $a$, this is only meaningful if we first restrict attention to some subspace. We therefore say that the $2$-form, $a$, is {\sl harmonic} whenever it is a critical point of $\mathcal{E}$ within the subspace $\left\{\phi^*a\right\}$, where $\phi$ here ranges over all smooth diffeomorphisms of $S$.
\par
Of particular interest to us is the case where $a$ is replaced by the pull-back of some fixed riemannian metric, $g$, through some $C^1$-map, $f:S\rightarrow S$. We then say that the map, $f$, is {\sl harmonic} with respect to
$(\mathcal{H},g)$ whenever the form $f^*g$ is harmonic with respect to $\mathcal{H}$.\footnote{Usually, in the literature, we say that $f$ is harmonic whenever it is a critical point of the functional $f\mapsto\mathcal{E}(\mathcal{H}|f^*g)$. Although this is more general than the condition of harmonicity of $f^*g$, in the case of interest to us, where the target manifold is $2$-dimensional, the two definitions are equivalent.} The following is a combination of many results \cite{ES64,hart67,al68,sam78,schoenyau}.
\begin{theorem}%[Sampson \cite{sam78}, Schoen--Yau\cite{schoenyau}]
\label{HarmonicDiffeomorphisms}
\noindent Given a marked holomorphic structure, $\mathcal{H}$, and a marked hyperbolic metric, $g$, there exists a unique harmonic diffeomorphism, $f:(S,\mathcal{H})\rightarrow(S,g)$, which preserves the markings.
\end{theorem}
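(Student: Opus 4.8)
The plan is to obtain Theorem~\ref{HarmonicDiffeomorphisms} by combining three classical ingredients: existence of a harmonic representative via the Eells--Sampson theorem \cite{ES64}, uniqueness via Hartman's convexity argument \cite{hart67}, and the diffeomorphism property via a Bochner-type estimate on the Jacobian due to Sampson, Al'ber and Schoen--Yau \cite{sam78,al68,schoenyau}. Before starting, three reductions. First, the Dirichlet energy $\mathcal{E}(\mathcal{H}\mid f^*g)$ depends on the domain only through its conformal class, so ``harmonic map from $(S,\mathcal{H})$'' is well posed. Second, by the footnote, since the target is two-dimensional the condition that $f^*g$ be a harmonic $2$-form for $\mathcal{H}$ coincides with $f$ being a critical point of the Dirichlet energy, so it suffices to produce a harmonic map in the usual sense. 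Third, since $S$ is a $K(\Pi_1,1)$, a homotopy class of self-maps of $S$ is determined by the induced outer automorphism of $\Pi_1$; hence ``preserving the markings'' means exactly ``homotopic to the identity'', and we work throughout in that class.

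\textbf{Existence and uniqueness.} The target $(S,g)$ has constant curvature $-1$, in particular non-positive curvature, and both domain and target are compact. The Eells--Sampson theorem then applies: the harmonic map heat flow issuing from the identity exists for all time, and a subsequence converges to a smooth harmonic map $f$ homotopic to the identity. (The non-positive curvature of the target is precisely what supplies, via the Bochner formula, the a priori bound on the energy density that prevents concentration; alternatively one minimises the Dirichlet energy directly within the homotopy class of the identity.) For uniqueness, Hartman's theorem gives uniqueness of harmonic maps into a non-positively curved closed target within a fixed homotopy class, up to a possible one-parameter family that can occur only when the image is a point or a single closed geodesic. Neither alternative is possible here, because $f$ is homotopic to the identity and hence of degree one, so it is surjective and cannot factor through a set of lower dimension. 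Thus $f$ is the unique harmonic map homotopic to the identity, and by uniqueness the entire heat flow converges to it.

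\textbf{Diffeomorphism.} Work in a local conformal coordinate $z$ for $\mathcal{H}$. The Hopf differential $\Phi:=(f^*g)^{2,0}$ is then a holomorphic quadratic differential precisely because $f$ is harmonic, so its zeros are isolated. Writing $\partial f$ and $\bar\partial f$ for the complex derivatives of $f$ with respect to conformal metrics on domain and target, the Jacobian is $J=|\partial f|^2-|\bar\partial f|^2$, and the computation of Sampson and Al'ber, sharpened by Schoen and Yau, yields a Bochner-type identity for $\log|\bar\partial f|^2$ (away from its zeros) whose zeroth-order term involves $J$ together with the curvatures of the two metrics. Feeding in the strict negativity of the curvature of $g$, a maximum-principle argument forces $J>0$ everywhere; equivalently $f$ has no critical points. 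A smooth map with nowhere-vanishing Jacobian between closed surfaces is a covering map, and being of degree one it is a global diffeomorphism. It preserves the markings since it is homotopic to the identity.

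\textbf{The main obstacle.} The delicate step is the last one: deriving the precise differential identity for $\log|\bar\partial f|^2$ (equivalently for $\log J$), controlling its behaviour near the isolated zeros of $\Phi$ where $\bar\partial f$ may vanish, and extracting strict positivity of $J$ from the maximum principle using the negativity of the target curvature. The existence and uniqueness steps are, by comparison, soft consequences of the non-positive curvature of $(S,g)$.
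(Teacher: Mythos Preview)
Your proposal is correct and follows exactly the approach indicated by the paper. In fact, the paper does not provide a proof at all: it simply states that the theorem ``is a combination of many results'' and cites \cite{ES64,hart67,al68,sam78,schoenyau}, which are precisely the ingredients you invoke (Eells--Sampson for existence, Hartman for uniqueness, and Al'ber/Sampson/Schoen--Yau for the diffeomorphism property). Your sketch therefore expands the paper's bare citation into an actual outline, and does so faithfully.
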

Symmetrizing this concept of harmonic diffeomorphisms, Schoen developed the following notion of minimal lagrangian diffeomorphisms. Given two metrics, $g_1$ and $g_2$, over $S$, a diffeomorphism $f:S\rightarrow S$ is said to be {\sl minimal lagrangian} with respect to $(g_1,g_2)$ whenever its graph is a minimal lagrangian submanifold of the product space, $S\times S$, furnished with the metric, $g_{12}:=\pi_1^*h_1+\pi_2^*h_2$, and the symplectic form, $\omega_{12}:=\pi_1^*\opdVol_{1} - \pi_2^*\opdVol_{2}$, where $\pi_1$ and $\pi_2$ are the respective projections onto the first and second components, and $\opdVol_1$ and $\opdVol_2$ are the respective volume forms of the metrics, $g_1$ and $g_2$.
\par
The relationship between minimal lagrangian diffeomorphisms and harmonic diffeomorphisms becomes clearer when we recall the classical result of minimal surface theory (c.f. \cite{osserman}) that says that if $X$ is an immersed surface in the product space $S\times S$, and if $\mathcal{H}$ is the conformal structure of the restriction of $g_{12}$ to $X$, then $X$ is minimal if and only if its coordinate functions are harmonic with respect to $\mathcal{H}$, that is, if and only if the restrictions to $X$ of $\pi_1$ and $\pi_2$ are harmonic. In the case where $X$ is a graph, it naturally identifies with $S$, so that the two projections $\pi_1$ and $\pi_2$ then yield harmonic maps from $(S,\mathcal{H})$ into $(S,g_1)$ and $(S,g_2)$ respectively. Although there may exist minimal graphs which are not lagrangian, the lagrangian condition ensures uniqueness, yielding the following ``mid point'' theorem for harmonic maps, which is illustrated schematically in Figure~\ref{fig:schoen}.
\begin{restatable}[Labourie \cite{Lab92b}, Schoen \cite{sch93}]{theorem}{MLD}
\label{MinimalLagrangianDiffeomorphisms}
\noindent Given two marked hyperbolic metrics, $g_1$ and $g_2$, there exists a unique minimal lagrangian diffeomorphism, $f:(S,g_1)\rightarrow (S,g_2)$, which preserves the markings.
\end{restatable}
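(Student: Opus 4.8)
The plan is to reduce the existence and uniqueness of the minimal lagrangian diffeomorphism to the existence and uniqueness statement for harmonic diffeomorphisms, Theorem~\ref{HarmonicDiffeomorphisms}, using the variational characterisation of minimal graphs recalled just above. First I would set up the product space $S\times S$ with the metric $g_{12}=\pi_1^*g_1+\pi_2^*g_2$ and symplectic form $\omega_{12}=\pi_1^*\opdVol_1-\pi_2^*\opdVol_2$. Given a diffeomorphism $f:S\to S$, I would write down the condition that its graph $X_f=\{(x,f(x))\}$ be lagrangian, namely that the pull-back of $\omega_{12}$ to $X_f$ vanishes; identifying $X_f$ with $S$ via $\pi_1$, this says exactly that $f^*\opdVol_2=\opdVol_1$, i.e. $f$ is area-preserving. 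Then, writing $\mathcal{H}$ for the conformal class of $g_{12}|_{X_f}=g_1+f^*g_2$ on $S$, the classical minimal-surface fact cited in the excerpt says $X_f$ is minimal if and only if $\pi_1|_{X_f}=\Id$ and $\pi_2|_{X_f}=f$ are both harmonic with respect to $\mathcal{H}$. The first condition, that $\Id:(S,\mathcal{H})\to(S,g_1)$ be harmonic, is equivalent to saying that the Hopf differential of $\Id$, i.e. the $(2,0)$-part of $g_1$ in the complex structure $\mathcal{H}$, is holomorphic; combined with area-preservation and a little linear algebra this forces $g_1+f^*g_2$ to be conformal to $\mathcal{H}$ in a way that makes the whole system symmetric in the two factors. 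The upshot I would establish is: $f$ is minimal lagrangian for $(g_1,g_2)$ if and only if there is a holomorphic structure $\mathcal{H}$ on $S$ such that $f$ is the composition $f=f_2\circ f_1^{-1}$, where $f_i:(S,\mathcal{H})\to(S,g_i)$ is the harmonic diffeomorphism of Theorem~\ref{HarmonicDiffeomorphisms}, \emph{and} the Hopf differentials of $f_1$ and $f_2$ are opposite, $\Phi(f_1)=-\Phi(f_2)$.

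With this reformulation in hand, existence and uniqueness become a fixed-point statement on the space $\Thol$ of marked holomorphic structures. For each $\mathcal{H}\in\Thol$, Theorem~\ref{HarmonicDiffeomorphisms} produces unique harmonic diffeomorphisms $f_i:(S,\mathcal{H})\to(S,g_i)$ with Hopf differentials $\Phi_i(\mathcal{H})\in H^0(S,K_\mathcal{H}^2)$, the space of holomorphic quadratic differentials. The condition to be solved is $\Phi_1(\mathcal{H})+\Phi_2(\mathcal{H})=0$. I would consider the map $\mathcal{H}\mapsto\Phi_1(\mathcal{H})+\Phi_2(\mathcal{H})$, suitably interpreted as a section of a bundle over $\Thol$ (using the identification, due to Wolf, of $\Thol$ with the space of holomorphic quadratic differentials over a fixed hyperbolic structure via $\mathcal{H}\mapsto\Phi_i$), and show it has a unique zero. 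For existence, the natural tool is a degree or continuity argument: show the map is proper and of degree one, or alternatively run a continuity method starting from the case $g_1=g_2$, where $\mathcal{H}$ equal to the hyperbolic structure of $g_1$ and $f=\Id$ is an obvious solution. For uniqueness, the cleanest route is to observe that a minimal lagrangian map is a critical point of a convex-type functional — one can use the fact that the harmonic map energy, as a function on $\Thol$, is strictly convex along Weil--Petersson geodesics (or exploit the second-variation/maximum-principle arguments of Schoen and of Labourie directly on the minimal surface), so that critical points are unique.

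The main obstacle I expect is establishing \emph{surjectivity/existence} cleanly, i.e. showing the map $\mathcal{H}\mapsto\Phi_1(\mathcal{H})+\Phi_2(\mathcal{H})$ actually vanishes somewhere. This requires a properness estimate: one must rule out that, along a divergent sequence $\mathcal{H}_n$ in $\Thol$, the sum $\Phi_1(\mathcal{H}_n)+\Phi_2(\mathcal{H}_n)$ stays bounded away from $0$ yet never hits it, which amounts to controlling the harmonic maps $f_i$ as the domain conformal structure degenerates (or escapes to infinity in moduli). Wolf's analysis of the asymptotics of $\Phi_i(\mathcal{H})$ as $\mathcal{H}\to\partial\Thol$ — where the Hopf differential blows up in a controlled direction governed by the Thurston boundary — is exactly what pins this down, and I would invoke it to get properness and hence, via the degree computation at $g_1=g_2$, existence. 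The remaining steps (the linear-algebra equivalence in the first paragraph, smoothness of $\mathcal{H}\mapsto\Phi_i$, and the uniqueness via convexity or the maximum principle) are comparatively routine. Marking-preservation is automatic throughout, since every map produced — the harmonic diffeomorphisms and their composition — is canonically associated to the marked data and hence isotopic to the identity through the given marking.
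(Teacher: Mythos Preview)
Your outline is essentially the classical analytic route (close to Schoen's original proof and to Wolf's parametrisation), and the reduction to the problem ``find $\mathcal{H}\in\Thol$ with $\Phi_1(\mathcal{H})+\Phi_2(\mathcal{H})=0$'' is correct. The properness and uniqueness arguments are only sketched, but the ingredients you name (Wolf's asymptotics for properness; convexity of energy or the maximum principle for uniqueness) are the right ones and can be made to work.

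However, this is \emph{not} how the paper recovers Theorem~\ref{MinimalLagrangianDiffeomorphisms}. The paper's whole point is to derive such Teichm\"uller-theoretic statements from Lorentzian geometry, and here the proof goes through anti de Sitter space. Given $g_1,g_2$, Mess' bijection (Theorem~\ref{messAdS3}) produces a unique ghmc AdS spacetime with holonomy pair $(\theta_l,\theta_r)$ corresponding to $(g_1,g_2)$. Theorem~\ref{bbz max} (Barbot--B\'eguin--Zeghib) then furnishes a \emph{unique} invariant spacelike maximal surface $\Sigma_{\max}$ in $\Omega_\theta$. The left and right Gauss maps $N_l,N_r$ of $\Sigma_{\max}$ are harmonic (this is the maximal/minimal-surface characterisation), and a direct calculation shows that the composition $N_l\circ N_r^{-1}:(S,g_1)\to(S,g_2)$ is minimal lagrangian. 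Both existence and uniqueness of $f$ are thus inherited from existence and uniqueness of $\Sigma_{\max}$.

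In short: your approach stays entirely within harmonic-map/Teichm\"uller theory and requires a separate properness estimate and a separate uniqueness argument; the paper's approach packages both into a single geometric object (the maximal surface in an AdS spacetime), trading the analytic estimates for the Barbot--B\'eguin--Zeghib theorem. Your route is self-contained and does not need any Lorentzian machinery; the paper's route is what illustrates its thesis that mid-point theorems in Teichm\"uller theory correspond to unique geometric objects in ghmc spacetimes.
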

\begin{figure}
\begin{center}
\includegraphics[scale=0.5]{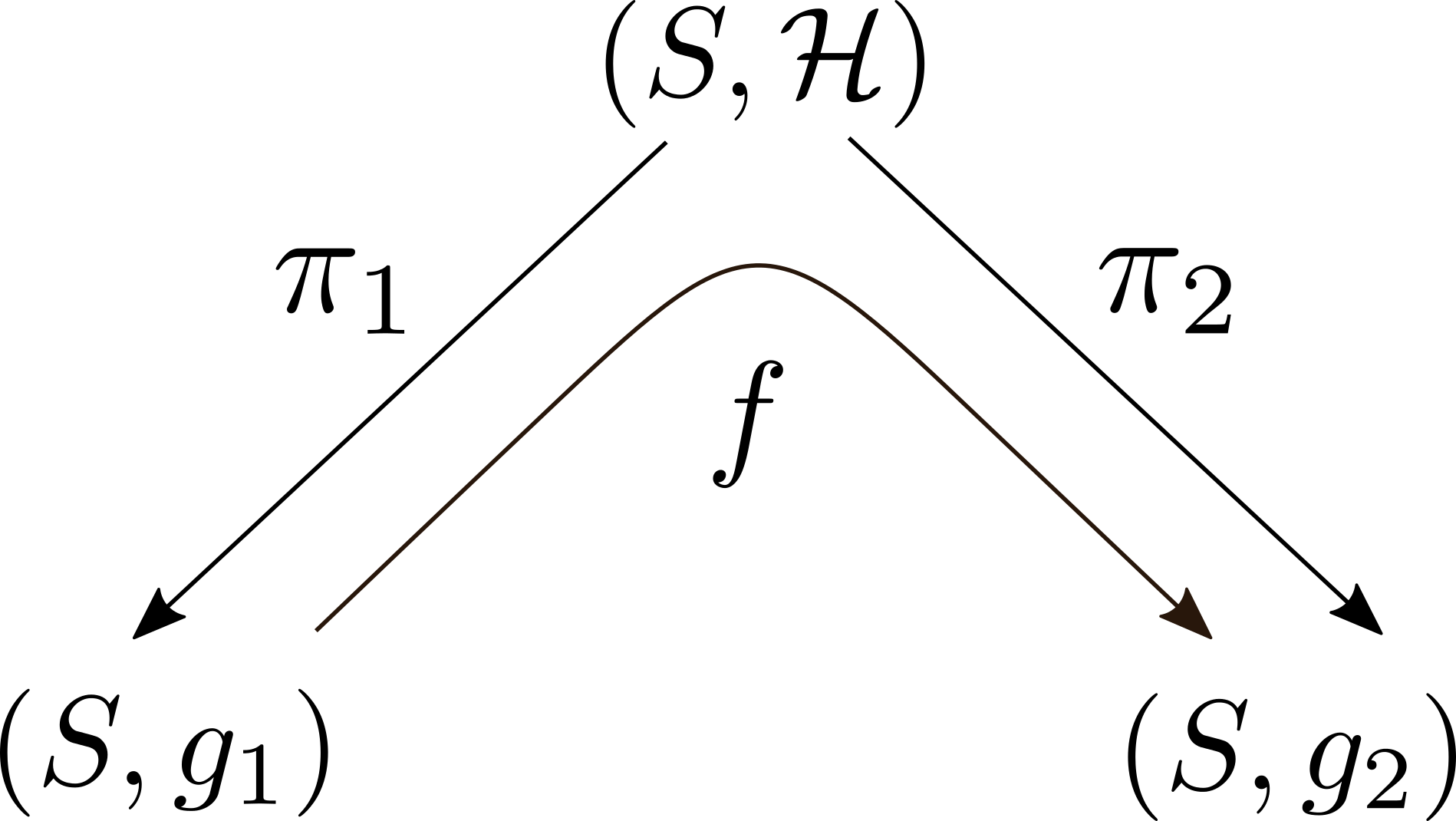}\caption{\label{fig:schoen} \textbf{Minimal lagrangian diffeomorphisms.} Here the upper vertex is given by a marked holomorphic structure over $S$, and the curved arrow joining the two lower vertices is the unique minimal lagrangian diffeomorphism between these points which preserves the markings.}
\end{center}
\end{figure}
\subsection{Hopf differentials}\label{HopfDifferentials}
Before proceeding, it is worth rephrasing Theorems \ref{HarmonicDiffeomorphisms} and \ref{MinimalLagrangianDiffeomorphisms} in a linear manner. To this end, we first introduce Hopf differentials as follows. Recall that any real, symmetric $2$-form, $a$, over $\mathbb{C}$ decomposes naturally as
\begin{equation*}
a =: \phi + \rho + \overline{\phi}~,
\end{equation*}
where $\rho$ is a real-valued $(1,1)$-form and $\phi$ is a $(2,0)$-form which we refer to as the {\sl Hopf form} of $a$ (with respect to the complex structure of $\mathbb{C}$). This concept extends to forms over bundles, so that for a given holomorphic structure, $\mathcal{H}$, and a given symmetric $2$-form, $a$, we define $\phi(a|\mathcal{H})$, the {\sl Hopf differential} of $a$ with respect to $\mathcal{H}$, to be its $(2,0)$-component. The Hopf differential is readily calculated as follows. Let $J\in\Gamma(\opEnd(TS))$ be the complex structure of $\mathcal{H}$, and for a given point $x\in S$, let $(e_1,e_2)$ be a basis of $T_xS$ such that $Je_1=e_2$. If $(e^1,e^2)$ is its dual basis, then
\begin{equation*}
\phi(a|\mathcal{H}) = \frac{1}{4}a(e_1-ie_2,e_1-ie_2)dzdz~,
\end{equation*}
where $dz:=e^1+ie^2$. Hopf differentials naturally arise in the study of harmonic forms due to the following result.
\begin{lemma}
\label{HopfDifferentialsAndHarmonicForms}
\noindent The symmetric $2$-form, $a$, is harmonic with respect to $\mathcal{H}$ if and only if the Hopf differential, $\phi(a|\mathcal{H})$, is holomorphic with respect to $\mathcal{H}$.
\end{lemma}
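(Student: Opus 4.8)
The plan is to translate the variational condition defining harmonicity of $a$ into a local differential equation, and then recognise that equation as the Cauchy--Riemann equation for the coefficient function of $\phi(a|\mathcal{H})$. First I would fix a conformal coordinate $z = x+iy$ for $\mathcal{H}$ on a coordinate patch, so that the reference metric may be taken to be the flat metric $|dz|^2$, with volume form $\opdVol = dx\wedge dy$. Writing $a = \phi + \rho + \overline{\phi}$ with $\phi = f\, dz^2$ for a complex-valued function $f$ and $\rho$ a real $(1,1)$-form, I would compute the trace $\opTr_h(a)$: since the trace is conformally invariant in the sense needed for the energy density, only the $\rho$-part contributes, and $\opE(a|\mathcal{H})$ depends only on $\rho$. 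Consequently, deforming $a$ by pullbacks $\phi_t^* a$ along a flow of diffeomorphisms generated by a compactly supported vector field $V$, the first variation of $\mathcal{E}$ is $\int_S \frac{d}{dt}\big|_{t=0}\opTr_h(\phi_t^* a)\,\opdVol_h = \int_S \opTr_h(\mathcal{L}_V a)\,\opdVol_h$, and I would integrate by parts to move the derivative off $a$.

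The key computation is to show that this first variation vanishes for all $V$ if and only if $\partial_{\bar z} f = 0$. I would expand $\mathcal{L}_V a$ in the $z$-coordinate: writing $V = v\,\partial_z + \bar v\,\partial_{\bar z}$, the $(1,1)$-component of $\mathcal{L}_V a$ picks up terms $\partial_z(\bar v f) $-type contributions from the $(2,0)$-part $f\,dz^2$ together with the genuine Lie-derivative terms of the $(1,1)$-part $\rho$; after integrating over $S$ the $\rho$-terms integrate to zero (they are exact, being a Lie derivative of a top-degree-relevant piece paired against the fixed volume form), leaving the variation proportional to $\operatorname{Re}\int_S f\,\partial_{\bar z}\bar v \; dx\wedge dy$ up to an integration by parts that converts it to $\operatorname{Re}\int_S (\partial_{\bar z} f)\,\bar v\; dx\wedge dy$. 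Since $\bar v$ (equivalently $V$) ranges over all compactly supported fields, the fundamental lemma of the calculus of variations forces $\partial_{\bar z} f \equiv 0$ on every patch, which is precisely the statement that $\phi(a|\mathcal{H}) = f\,dz^2$ is a holomorphic quadratic differential with respect to $\mathcal{H}$. The converse direction is immediate: if $\partial_{\bar z} f = 0$ everywhere, the same identity shows the first variation vanishes for every $V$, so $a$ is a critical point.

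The main obstacle will be the bookkeeping in the first-variation computation --- specifically, verifying cleanly that the $\rho$-dependent terms in $\int_S \opTr_h(\mathcal{L}_V a)\,\opdVol_h$ contribute nothing after integration over the closed surface $S$, and that what survives is exactly a pairing of $\partial_{\bar z} f$ against the test field. This requires care because $\opTr_h$ and $\opdVol_h$ are only defined up to the conformal class, so one must either fix a representative metric and track how $\mathcal{L}_V$ interacts with that choice, or argue invariantly that $\opE(a|\mathcal{H})$ genuinely depends only on the $(1,1)$-part of $a$ and that the $(1,1)$-part of $\mathcal{L}_V a$ splits into an exact piece plus a piece linear in $\partial_{\bar z} f$. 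Once this splitting is established the rest is routine. (An alternative, and perhaps cleaner, route is to compute $\mathcal{E}(\phi_t^* a|\mathcal{H})$ directly as a function of $t$ using $\phi_t^* a$ and the chain rule, observing that $\mathcal{E}(\phi_t^* a|\mathcal{H}) = \mathcal{E}(a|\phi_t^*\mathcal{H})$ reduces the problem to the standard first variation of Dirichlet-type energy under conformal change, whose Euler--Lagrange equation is classically $\partial_{\bar z} f = 0$; I would mention this as the conceptual reason but carry out the direct patch computation for self-containedness.)
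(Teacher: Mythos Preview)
Your approach is correct in outline and would yield a valid proof, but it differs from the paper's argument in a meaningful way. The paper does not derive the first variation from scratch. Instead, it fixes a metric $h$ conformal to $\mathcal{H}$, works in an exponential (normal) chart about a point, and simply \emph{states} the Euler--Lagrange equation for harmonicity in the form
\[
\omega_k := h^{ij}(2a_{ki;j} - a_{ij;k}) = 0,
\]
where the semicolons denote covariant differentiation with respect to $h$. It then computes $\overline{\partial}\phi(a|\mathcal{H})$ directly in these coordinates and checks componentwise that the coefficient $\psi$ of $dz\,dz\,d\bar z$ equals $\omega_1 - i\omega_2$. The equivalence is then immediate: $\omega_1 = \omega_2 = 0$ if and only if $\psi = 0$.

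Your route---computing $\int_S \opTr_h(\mathcal{L}_V a)\,\opdVol_h$, splitting off the $(1,1)$-contribution, and integrating by parts to isolate $\partial_{\bar z} f$---is more self-contained (it does not presuppose the form of the Euler--Lagrange operator) but correspondingly heavier on bookkeeping, exactly at the point you flag: showing that the $\rho$-terms integrate away. The paper's approach sidesteps this entirely by working pointwise with the already-known divergence-type operator $\omega_k$, at the cost of taking that formula as given. Both arguments are local in the end; the paper's is shorter because it treats the variational step as standard and focuses only on the algebraic identification $\psi = \omega_1 - i\omega_2$.
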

\begin{proof} Indeed, let $h$ be a riemannian metric conformal to $\mathcal{H}$. Consider an exponential chart about a point $x$ in $S$. Recall that the form $a$ is harmonic if and only if
\begin{equation*}
\omega_k := h^{ij}(2a_{ki;j} - a_{ij;k}) = 0~,
\end{equation*}
where the subscript ``;'' here denotes covariant differentiation with respect to the Levi-Civita covariant derivative of $h$. On the other hand, $\overline{\partial}\phi(a|\mathcal{H})(0)=\psi dzdzd\overline{z}$, where
\begin{equation*}
\psi = (a_{11;1} - a_{22;1} + 2a_{12;1}) + (a_{11;2} - a_{22;2} - 2a_{12;1})i = \omega_1 - i\omega_2~,
\end{equation*}
and the result follows.\end{proof}
Lemma~\ref{HopfDifferentialsAndHarmonicForms} allows us to restate the results of the previous section in the language of Hopf differentials. First, recall that the cotangent bundle to $\Thol$ naturally identifies with the space of all pairs $(\mathcal{H},\phi)$, where $\mathcal{H}$ is a marked holomorphic structure and $\phi$ is a quadratic holomorphic differential with respect to $\mathcal{H}$ (c.f. \cite{FK}). Theorem~\ref{HarmonicDiffeomorphisms} now yields a well defined map $\Phi:\Thol\times\Thyp\rightarrow \opT^*\Thol$. Indeed, for all $\mathcal{H}$ and for all $g$, $\Phi(\mathcal{H},g)=\phi(f^*g|\mathcal{H})$, where $f:(S,\mathcal{H})\rightarrow(S,g)$ is the unique harmonic diffeomorphism which preserves the markings.
\begin{theorem}[Wolf \cite{wol89}]\label{wolf}
\label{HopfDifferentialMapIsRealAnalyticDiffeomorphism}
\noindent $\Phi$ is a real analytic diffeomorphism.
\end{theorem}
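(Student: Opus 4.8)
The plan is to exhibit $\Phi$ as a composition of maps whose regularity is already understood, and then invoke the theory of harmonic maps between surfaces. Recall that, by a fixed choice of background hyperbolic metric $g$, the hyperbolic uniformization theorem identifies $\Thol$ with $\Thyp$ as real analytic manifolds, so it suffices to work with the map $\Psi:\Thyp\times\Thyp\rightarrow\opT^*\Thol$ sending $(g_0,g)$ to the Hopf differential, with respect to the holomorphic structure underlying $g_0$, of $f^*g$, where $f:(S,g_0)\rightarrow(S,g)$ is the harmonic diffeomorphism preserving the markings. First I would establish smoothness of $\Psi$ (equivalently of $\Phi$): the harmonic map equation is a quasilinear elliptic system whose coefficients depend real analytically on the domain conformal structure and the target metric, and Theorem~\ref{HarmonicDiffeomorphisms} provides a unique solution in each fibre. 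An implicit function theorem argument — linearizing the harmonic map operator about a solution $f$, and using that the linearized operator (a Jacobi-type operator, which on a negatively curved target has trivial kernel by a Bochner/maximum principle argument) is invertible — shows the solution, hence its Hopf differential, depends real analytically on $(g_0,g)$. One must check that the derivative in the fibre direction is injective, which is exactly the statement that distinct target hyperbolic metrics produce distinct Hopf differentials; this also follows from the uniqueness part of the Schoen-Yau/Sampson theory, but I would phrase it through the non-degeneracy of the linearized operator.

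Next I would produce the inverse. Fix a holomorphic structure $\mathcal{H}$ and a holomorphic quadratic differential $\phi$ on $(S,\mathcal{H})$. By Wolf's parametrization of Teichm\"uller space by holomorphic quadratic differentials — which is precisely the content being proved, so one cannot assume it circularly — one instead argues directly: given $(\mathcal{H},\phi)$, seek a hyperbolic metric $g$ and a harmonic diffeomorphism $f:(S,\mathcal{H})\rightarrow(S,g)$ with $\phi(f^*g\,|\,\mathcal{H})=\phi$. Writing $f^*g=\phi+\rho+\overline{\phi}$ with $\rho$ the $(1,1)$-part, the condition that $f^*g$ be the pullback of a hyperbolic metric under a harmonic diffeomorphism reduces, after a conformal change $\rho = e^{2u}\sigma$ relative to a background metric $\sigma$ conformal to $\mathcal{H}$, to a single scalar PDE for $u$ of the form
\begin{equation*}
\Delta_\sigma u = e^{2u} - \|\phi\|_\sigma^2 e^{-2u} - K_\sigma,
\end{equation*}
the standard Bochner equation governing harmonic diffeomorphisms onto hyperbolic surfaces. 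This equation has a unique smooth solution $u$ for every holomorphic $\phi$, by the method of sub- and supersolutions (a large constant is a supersolution, a suitably negative constant a subsolution) together with a maximum-principle uniqueness argument; smooth and real analytic dependence of $u$ on $(\mathcal{H},\phi)$ again follows from the implicit function theorem, the linearization $\Delta_\sigma - (2e^{2u}+2\|\phi\|_\sigma^2 e^{-2u})$ being a negative-definite, hence invertible, operator. From $u$ one recovers the metric $f^*g$, hence the hyperbolic metric $g$ (as the unique hyperbolic representative in the conformal class of the image), giving a real analytic map $(\mathcal{H},\phi)\mapsto(\mathcal{H},g)\in\Thol\times\Thyp$ inverse to $\Phi$.

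The main obstacle is the solvability and regularity of the scalar equation displayed above and, more delicately, verifying that the two constructions are genuinely inverse to one another — i.e. that the harmonic diffeomorphism whose existence is guaranteed by Theorem~\ref{HarmonicDiffeomorphisms} has Hopf differential given exactly by the prescribed $\phi$, and conversely that solving the Bochner equation reproduces that same harmonic map. This bookkeeping rests on Lemma~\ref{HopfDifferentialsAndHarmonicForms}: holomorphicity of $\phi$ is equivalent to harmonicity of $f^*g$, which is equivalent to harmonicity of $f$ since the target is $2$-dimensional. Once the correspondence $\phi \leftrightarrow (\text{harmonic } f, \text{ target } g)$ is pinned down as a bijection with real analytic dependence in both directions, the theorem follows: $\Phi$ is a real analytic bijection with real analytic inverse, hence a real analytic diffeomorphism. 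A final point to address is properness/exhaustion — that the solution $u$ exists for \emph{every} holomorphic $\phi$, so that $\Phi$ is onto all of $\opT^*\Thol$ — which the sub/supersolution argument delivers without any smallness hypothesis on $\phi$.
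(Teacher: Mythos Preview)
The paper does not supply a proof of this theorem at all: it is stated with attribution to Wolf \cite{wol89} and immediately used. So there is no ``paper's own proof'' to compare against.

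That said, your proposal is essentially a correct reconstruction of Wolf's original argument. The reduction to the scalar Bochner equation
\[
\Delta_\sigma u = e^{2u} - \|\phi\|_\sigma^2 e^{-2u} - K_\sigma
\]
is exactly the heart of \cite{wol89}, and the sub/supersolution existence, maximum-principle uniqueness, and invertibility of the linearization $\Delta_\sigma - (2e^{2u}+2\|\phi\|_\sigma^2 e^{-2u})$ are the standard ingredients. Your care in not invoking Wolf's parametrisation circularly is appropriate. One small point worth tightening: after solving for $u$ you obtain a symmetric $2$-form $\phi + e^{2u}\sigma + \overline{\phi}$ on $S$ which is a priori only a metric of curvature $-1$; you should say explicitly why this metric is realised as the pullback of a hyperbolic metric on $S$ via a diffeomorphism in the correct homotopy class (equivalently, why the identity map $(S,\mathcal{H})\to(S,\phi+e^{2u}\sigma+\overline{\phi})$ is a diffeomorphism, i.e.\ why $e^{2u}\sigma > |\phi|$ pointwise). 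This follows from the equation and the maximum principle, but it is the step that guarantees the inverse lands in $\Thol\times\Thyp$ rather than merely producing a hyperbolic metric abstractly.
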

Now let $\Psi:\opT^*\Thol\rightarrow\Thol\times\Thyp$ denote the inverse of $\Phi$. The existence part of Theorem~\ref{MinimalLagrangianDiffeomorphisms} is now restated in the form of a partial ``mid point'' theorem as follows.
\begin{theorem}
\label{SymmetricHopfDifferentials}
\noindent Given two marked hyperbolic metrics, $g_1$ and $g_2$, there exists a marked holomorphic structure, $\mathcal{H}$, and a holomorphic quadratic differential, $\phi$, such that
\begin{equation*}
\begin{cases}
\Psi(\mathcal{H},\phi) &= g_1\ \text{and}\\
\Psi(\mathcal{H},-\phi) &= g_2~.\end{cases}
\end{equation*}
\end{theorem}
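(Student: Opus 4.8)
The plan is to derive Theorem~\ref{SymmetricHopfDifferentials} directly from the existence part of Theorem~\ref{MinimalLagrangianDiffeomorphisms} by unwinding the definitions of minimal lagrangian diffeomorphism, harmonic map and Hopf differential, and then invoking Theorem~\ref{wolf} to transport the conclusion through the diffeomorphism $\Phi$. The key observation is the factorisation of a minimal lagrangian diffeomorphism through a common holomorphic structure, which is precisely the content alluded to in the paragraph preceding Theorem~\ref{MinimalLagrangianDiffeomorphisms}.

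First I would apply Theorem~\ref{MinimalLagrangianDiffeomorphisms} to obtain the unique marking-preserving minimal lagrangian diffeomorphism $f:(S,g_1)\rightarrow(S,g_2)$. Its graph $X\subseteq S\times S$ is a minimal surface for the metric $g_{12}$; let $\mathcal{H}$ denote the conformal structure induced on $X$ (equivalently, pulled back to $S$ via the graphing identification), regarded as a marked holomorphic structure. By the classical characterisation of minimal surfaces in product spaces quoted in Section~\ref{HarmonicMapsAndMinimalLagrangianDiffeomorphisms}, the projections $\pi_1|_X$ and $\pi_2|_X$ are harmonic; under the identification $X\cong S$ these become marking-preserving harmonic diffeomorphisms $f_1:(S,\mathcal{H})\rightarrow(S,g_1)$ and $f_2:(S,\mathcal{H})\rightarrow(S,g_2)$, which by the uniqueness clause of Theorem~\ref{HarmonicDiffeomorphisms} are \emph{the} harmonic diffeomorphisms attached to $(\mathcal{H},g_1)$ and $(\mathcal{H},g_2)$. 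Hence $\Phi(\mathcal{H},g_1)=\phi(f_1^*g_1|\mathcal{H})=:\phi_1$ and $\Phi(\mathcal{H},g_2)=\phi(f_2^*g_2|\mathcal{H})=:\phi_2$, and equivalently $\Psi(\mathcal{H},\phi_1)=g_1$, $\Psi(\mathcal{H},\phi_2)=g_2$.

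It then remains to show $\phi_2=-\phi_1$, so that one may set $\phi:=\phi_1$. This is the real content, and it is where the lagrangian hypothesis is used: minimality alone gives two holomorphic Hopf differentials on the same $\mathcal{H}$, but being lagrangian forces them to be opposite. Concretely, the pulled-back product metric on $X\cong S$ decomposes as $f_1^*g_1+f_2^*g_2$, whose Hopf differential is $\phi_1+\phi_2$; but this metric is conformal to $\mathcal{H}$ by construction of $\mathcal{H}$, so its $(2,0)$-part vanishes, giving $\phi_1+\phi_2=0$. (One checks that the lagrangian condition is exactly what guarantees that the induced metric is in the conformal class $\mathcal{H}$ with respect to which both Hopf differentials were computed — this is the mechanism by which ``lagrangian'' upgrades ``minimal graph'' to the symmetric statement.) With $\phi:=\phi_1$ holomorphic with respect to $\mathcal{H}$ by Lemma~\ref{HopfDifferentialsAndHarmonicForms} (each $f_i^*g_i$ being harmonic), the conclusion $\Psi(\mathcal{H},\phi)=g_1$ and $\Psi(\mathcal{H},-\phi)=g_2$ follows.

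The main obstacle is the bookkeeping in the previous paragraph: making precise that the conformal structure $\mathcal{H}$ used to define $\Phi(\mathcal{H},g_i)$ is genuinely the same as the one induced on the graph $X$, and that the sum $f_1^*g_1+f_2^*g_2$ really is the pullback of $g_{12}$ under the graphing map — i.e.\ tracking the identification $X\cong S$ carefully enough to see the cancellation $\phi_1+\phi_2=0$ rather than some twisted variant. Everything else is a direct appeal to Theorems~\ref{HarmonicDiffeomorphisms}, \ref{MinimalLagrangianDiffeomorphisms} and \ref{wolf} together with Lemma~\ref{HopfDifferentialsAndHarmonicForms}. (Note that only the existence half of Theorem~\ref{MinimalLagrangianDiffeomorphisms} is needed, consistent with the statement being labelled a \emph{partial} mid point theorem — uniqueness of the pair $(\mathcal{H},\phi)$ is not claimed, since two distinct minimal, non-lagrangian graphs over the same pair of metrics would also produce holomorphic Hopf data, and only the lagrangian one is selected.)
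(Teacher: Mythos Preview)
Your approach is correct and is precisely what the paper intends: it presents Theorem~\ref{SymmetricHopfDifferentials} without proof, merely as a restatement of the existence half of Theorem~\ref{MinimalLagrangianDiffeomorphisms}, and your argument supplies exactly the unwinding of definitions needed to justify that restatement.

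There is, however, a confusion in your parenthetical about where the lagrangian hypothesis enters. You write that ``the lagrangian condition is exactly what guarantees that the induced metric is in the conformal class $\mathcal{H}$'', but this is circular: you \emph{defined} $\mathcal{H}$ to be the conformal class of the induced metric $f_1^*g_1+f_2^*g_2$ on the graph, so the vanishing of its $(2,0)$-part --- and hence $\phi_1+\phi_2=0$ --- is automatic, for \emph{any} graph, minimal or not. Minimality is what makes each $\phi_i$ separately holomorphic (via Lemma~\ref{HopfDifferentialsAndHarmonicForms}); the lagrangian condition plays no role at all in the existence argument. Your own final paragraph gets this right: any minimal graph would furnish a pair $(\mathcal{H},\phi)$ with the required property, and the lagrangian hypothesis is relevant only to the uniqueness in Theorem~\ref{MinimalLagrangianDiffeomorphisms}, which Theorem~\ref{SymmetricHopfDifferentials} does not assert. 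So simply strike the offending parenthetical and the argument is clean.
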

\subsection{Labourie fields}\label{LabourieFields}
It is the lagrangian property in Theorem~\ref{MinimalLagrangianDiffeomorphisms} that ensures uniqueness. Since this has no straightforward interpretation in terms of Hopf differentials, we now introduce the more refined notion of Labourie fields. First, recall that if $g$ is any riemannian metric with Levi-Civita covariant derivative, $\nabla$, then a symmetric endomorphism field, $A:TS\rightarrow TS$, is said to be a {\sl Codazzi field} whenever
\begin{equation*}
(\nabla_X A)Y = (\nabla_Y A)X~,
\end{equation*}
for all vector fields $X$ and $Y$. The Codazzi field, $A$, is then said to be a {\sl Labourie field} whenever, in addition, it is positive definite, and
\begin{equation*}
\opDet(A) = 1~.
\end{equation*}
Labourie fields are characterised by the following useful result.
\begin{lemma}
\label{CharacterisationOfLabourieField}
\noindent Let $g$ be a riemannian metric, let $A:TS\rightarrow TS$ be a positive definite endomorphism field, and let $\mathcal{H}$ be the holomorphic structure of $g(A\cdot,\cdot)$. Any two of the following imply the third~:
\begin{enumerate}
\item $A$ is a Codazzi field,
\item $\opDet(A)$ is constant, and
\item $\phi(g|\mathcal{H})$ is a holomorphic quadratic differential.
\end{enumerate}
\end{lemma}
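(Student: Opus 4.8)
The plan is to work entirely in a local exponential chart about an arbitrary point $x\in S$ with respect to the metric $g$, and to express all three conditions as pointwise differential identities in the components of $A$, $g$, and their first derivatives; the claimed equivalence "any two imply the third" will then follow from a single algebraic identity relating these three expressions. First I would set up notation: let $\nabla$ be the Levi-Civita connection of $g$, let $h := g(A\cdot,\cdot)$, and let $\mathcal H$ be the conformal (holomorphic) structure of $h$. Since $A$ is positive definite and symmetric with respect to $g$, the form $h$ is a genuine Riemannian metric, so $\mathcal H$ is well defined. The key observation is that the Hopf differential $\phi(g|\mathcal H)$ is computed with respect to the complex structure $J$ induced by $h$, not by $g$, and this is exactly where $A$ enters. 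Using the formula for $\phi$ given just before Lemma~\ref{HopfDifferentialsAndHarmonicForms} — with $(e_1,e_2)$ an $h$-orthonormal-type frame adapted to $J$ — one writes $\phi(g|\mathcal H)$ in terms of $g$, $A$, and the frame, and then the $\overline\partial$-closedness condition (holomorphicity) becomes a first-order PDE in the coefficients of $g$ and $A$.

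The main computational step is to relate this holomorphicity condition to the harmonicity of a symmetric $2$-form via Lemma~\ref{HopfDifferentialsAndHarmonicForms}. The natural candidate is to observe that, up to the conformal factor relating $h$ to the flat chart metric, $\phi(g|\mathcal H)$ is the Hopf differential of a certain symmetric $2$-form built from $g$ and $A$; Lemma~\ref{HopfDifferentialsAndHarmonicForms} then says holomorphicity is equivalent to that form being harmonic, i.e. to the Codazzi-type equation $h^{ij}(2a_{ki;j}-a_{ij;k})=0$ appearing in the proof of that lemma. I would then expand the Codazzi condition $(\nabla_X A)Y = (\nabla_Y A)X$ for $A$ in the same chart: this says $A_{ki;j}$ is symmetric in $(i,j)$ (equivalently, the $2$-form $g(A\cdot,\cdot)$, regarded appropriately, is "Codazzi" as in the definition above the lemma). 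Finally, expanding $\opDet(A)$ and its derivative in the chart gives a third scalar relation. The point is that these three scalar quantities — call them $\mathcal C$ (Codazzi defect of $A$), $\mathcal D$ (derivative of $\opDet A$), and $\mathcal H\!ol$ ($\overline\partial$ of the Hopf differential) — satisfy an identity of the schematic form $\mathcal H\!ol = \alpha\,\mathcal C + \beta\,\mathcal D$ with $\alpha,\beta$ explicit functions of $A$ and the chart data, so that vanishing of any two of $\{\mathcal C,\mathcal D,\mathcal H\!ol\}$ forces the third.

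The hard part will be producing that algebraic identity cleanly, because the computation of $\overline\partial\phi(g|\mathcal H)$ requires differentiating the $h$-adapted frame $(e_1,e_2)$, and the derivatives of the frame bring in Christoffel symbols of $h$ — equivalently, first derivatives of $A$ — which must be carefully bookkept so that the $\opDet(A)$-term and the Codazzi-term separate out correctly rather than mixing. A convenient way to tame this is to choose the chart so that at the centre $x$ one has $g_{ij}(x)=\delta_{ij}$, $\partial g(x)=0$ (exponential chart), and additionally to diagonalize $A(x)$; then at $x$ all the complications reduce to the first derivatives $A_{ij;k}(x)$ and $\opDet(A)=\lambda_1\lambda_2$ with $\partial_k(\lambda_1\lambda_2)$ expressible through traces of $\nabla A$. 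Once evaluated at $x$, matching real and imaginary parts of $\overline\partial\phi$ against the two components of the Codazzi equation (as in the displayed formula $\psi = \omega_1 - i\omega_2$ in the proof of Lemma~\ref{HopfDifferentialsAndHarmonicForms}) and against $d(\opDet A)$ yields the identity; since $x$ was arbitrary, the three global statements are linked as claimed. I would close by remarking that the symmetry of the roles of the three conditions is manifest from the final identity, which is why "any two imply the third" rather than a one-directional implication.
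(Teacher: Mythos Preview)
Your plan is correct and lands on exactly the same structural identity the paper proves: the $\overline{\partial}$-defect of $\phi(g|\mathcal{H})$ equals a linear combination of the Codazzi defect of $A$ and $d(\opDet A)$, with invertible coefficients, so that any two vanishing forces the third. Where you diverge from the paper is purely computational. You propose to compute $\overline{\partial}\phi$ directly in an $h$-adapted frame in exponential coordinates for $g$, and you correctly flag the nuisance this creates --- differentiating the $h$-frame drags in derivatives of $A$ that must then be disentangled into Codazzi and determinant pieces. The paper sidesteps this entirely: it invokes Lemma~\ref{HopfDifferentialsAndHarmonicForms} to write the holomorphicity defect as $\psi=\omega_1-i\omega_2$ with $\omega_k=h^{ij}(2g_{ki;j}-g_{ij;k})$, where ``$;$'' is the Levi-Civita derivative of $h$, and then uses the Koszul formula once to rewrite these $h$-covariant derivatives of $g$ in terms of $g$-covariant derivatives of $h$. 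That single substitution produces
\[
\omega_k = -2g_{kp}h^{pq}h^{ij}(h_{iq:j}-h_{ij:q}) - g_{kp}h^{pq}h^{ij}h_{ij:q},
\]
and the two terms on the right are visibly the Codazzi defect of $A$ (since $h_{iq:j}=h_{ij:q}$ is equivalent to Codazzi) and the logarithmic derivative of $\opDet(A)$ (since $h^{ij}h_{ij:q}=(\log\opDet A)_{:q}$). No frame differentiation, no diagonalisation of $A$, no pointwise argument --- the identity is global from the start. Your approach would get there, but the Koszul shortcut is the thing to remember.
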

\begin{proof} Denote $h:=g(A\cdot,\cdot)$. Using the notation of Lemma~\ref{HopfDifferentialsAndHarmonicForms}, with $g$ in place of $a$, by the Koszul formula, we obtain
\begin{equation*}
\omega_k = - 2g_{kp}h^{pq}h^{ij}(h_{iq:j} - h_{ij:q}) - g_{kp}h^{pq}h^{ij}h_{ij:q}~,
\end{equation*}
where the subscript ``:'' here denotes covariant differentation with respect to the Levi-Civita covariant derivative of $g$. Thus,
\begin{equation*}
\psi = -2(\alpha_1 - i\alpha_2) - g_{kp}h^{pq}(\opDet(A)_{:1} - i\opDet(A)_{:2})~,
\end{equation*}
where
\begin{equation*}
\alpha_k = g_{kp}h^{pq}h^{ij}(h_{iq:j} - h_{ij:q})~.
\end{equation*}
Since $\alpha$ vanishes if and only if $A$ is a Codazzi field, the result follows.\end{proof}
\noindent In particular, Labourie fields possess the following symmetry property.
\begin{cor}\label{SymmetryOfLabourieFields}
\noindent If $A$ is a Labourie field of $g$, then $A^{-1}$ is a Labourie field of the riemannian metric $g(A\cdot,A\cdot)$.
\end{cor}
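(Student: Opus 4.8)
We must show: if $A$ is a Labourie field of $g$, then $A^{-1}$ is a Labourie field of $\tilde g := g(A\cdot, A\cdot)$.

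The plan is to verify the three defining properties of a Labourie field for the pair $(A^{-1}, \tilde g)$ — positive definiteness, unit determinant, and the Codazzi condition — and to extract the Codazzi condition from Lemma~\ref{CharacterisationOfLabourieField} rather than computing it directly. First, positive definiteness of $A^{-1}$ with respect to $\tilde g$ is immediate: $\tilde g(A^{-1}X, A^{-1}Y) = g(AA^{-1}X, AA^{-1}Y) = g(X,Y)$ — wait, that is not quite the pairing we want; rather $\tilde g(A^{-1}X, Y) = g(A\cdot A^{-1}X, AY) = g(X, AY)$, and since $A$ is $g$-symmetric and positive definite this is a symmetric positive definite bilinear form in $(X,Y)$. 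So $A^{-1}$ is $\tilde g$-symmetric and positive definite. Second, $\opDet(A^{-1}) = \opDet(A)^{-1} = 1$ since $\opDet(A)=1$; note that the determinant of an endomorphism field is metric-independent, so this holds whether computed with respect to $g$ or $\tilde g$.

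The key observation for the Codazzi condition is that the holomorphic structure $\mathcal{H}$ attached to $(A, g)$ via Lemma~\ref{CharacterisationOfLabourieField} — namely the conformal class of $h := g(A\cdot,\cdot)$ — is \emph{the same} as the one attached to $(A^{-1}, \tilde g)$: indeed $\tilde g(A^{-1}\cdot, \cdot) = g(A \cdot A^{-1}\cdot, A\cdot) = g(\cdot, A\cdot) = h$, using symmetry of $A$. So both triples $(g, A, \mathcal{H})$ and $(\tilde g, A^{-1}, \mathcal{H})$ give rise to the same metric $h$ and hence the same $\mathcal{H}$. Now apply Lemma~\ref{CharacterisationOfLabourieField} to the triple $(\tilde g, A^{-1}, \mathcal{H})$: properties (1) (Codazzi) and (2) (constant determinant) for $(A^{-1}, \tilde g)$ are what we want, and property (2) we have already checked. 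It therefore suffices to verify property (3) for this triple, i.e. that $\phi(\tilde g \mid \mathcal{H})$ is a holomorphic quadratic differential. But since $A$ is a Labourie field of $g$, Lemma~\ref{CharacterisationOfLabourieField} applied to $(g, A, \mathcal{H})$ (using (1) and (2), which hold by hypothesis) tells us that $\phi(g \mid \mathcal{H})$ is holomorphic. So I need to relate $\phi(\tilde g\mid\mathcal{H})$ to $\phi(g\mid\mathcal{H})$.

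The main point — and the step I expect to require the only real computation — is the identity $\phi(\tilde g \mid \mathcal{H}) = \phi(g \mid \mathcal{H})$ (up to a harmless nonzero constant), where $\tilde g = g(A\cdot, A\cdot)$. Heuristically, write $\tilde g = h(A\cdot,\cdot)$ since $h = g(A\cdot,\cdot)$ and $A$ is $g$-self-adjoint, so $\tilde g$ differs from $g$ by a $\mathcal{H}$-self-adjoint, trace-type modification whose $(2,0)$-part I expect to match. Concretely, using the formula $\phi(a\mid\mathcal{H}) = \tfrac14 a(e_1 - ie_2, e_1 - ie_2)\, dz\, dz$ from Section~\ref{HopfDifferentials} with a $J$-adapted basis $(e_1, e_2 = Je_1)$, and writing $A$ in this basis, one computes $\phi(\tilde g\mid\mathcal{H})$ in terms of the entries of $A$ and compares with $\phi(g\mid\mathcal{H})$; the off-diagonal and trace parts of $A$ conspire (together with $\opDet A = 1$) to make the two $(2,0)$-parts agree. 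Granting this identity, $\phi(\tilde g\mid\mathcal{H})$ is holomorphic because $\phi(g\mid\mathcal{H})$ is, so property (3) holds for $(\tilde g, A^{-1}, \mathcal{H})$; combined with property (2), Lemma~\ref{CharacterisationOfLabourieField} yields property (1), the Codazzi condition for $A^{-1}$ with respect to $\tilde g$. Together with positive definiteness and $\opDet(A^{-1}) = 1$, this shows $A^{-1}$ is a Labourie field of $\tilde g$, completing the proof. An alternative route, avoiding the Hopf-differential identity, is to verify the Codazzi equation $(\tilde\nabla_X A^{-1})Y = (\tilde\nabla_Y A^{-1})X$ directly by relating the Levi-Civita connection $\tilde\nabla$ of $\tilde g$ to that of $g$ via the standard difference-tensor formula and using that $A$ is $g$-Codazzi; but this is messier, so I would present the Hopf-differential argument.
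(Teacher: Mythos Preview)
Your approach is correct and is exactly what the paper intends: the corollary is stated without proof immediately after Lemma~\ref{CharacterisationOfLabourieField}, so the implicit argument is precisely the one you give --- observe that $\tilde g(A^{-1}\cdot,\cdot)=g(\cdot,A\cdot)=h$ determines the same holomorphic structure $\mathcal{H}$, then use the lemma with (2) and (3) to deduce (1). For the record, the computation you sketch gives $\phi(\tilde g\mid\mathcal{H}) = -\phi(g\mid\mathcal{H})$ (the constant is $-1$, coming from $\opDet(A)=1$), so holomorphicity transfers as claimed.
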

Now let $\opLab\Thyp$ be the space of all pairs $(g,A)$, where $g$ is a marked hyperbolic metric and $A$ is a Labourie field of $g$. This is a smooth, non-linear bundle over $\Thyp$ with typical fibre of real dimension $(6\mathfrak{g}-6)$ (c.f. \cite{Lab92}). The application, $\Phi$, constructed in the previous section now yields a map $\mathcal{A}:\Thol\times\Thyp\rightarrow\opLab\Thyp$ defined as follows. Given a marked holomorphic structure, $\mathcal{H}$, with complex structure, $J$, and a marked hyperbolic metric, $g$, with complex structure $J_0$, we define $\mathcal{A}(\mathcal{H},g):=-J_0f_*J$, where $f:(S,\mathcal{H})\rightarrow(S,g)$ is the unique harmonic diffeomorphism which preserves the marking, and $f_*$ denotes its push forward operation. It is relatively straightforward to verify that this is indeed a Labourie field. Theorem~\ref{HopfDifferentialMapIsRealAnalyticDiffeomorphism} is now restated as
\begin{theorem}\label{AIsARealAnalDiff}
\noindent $\mathcal{A}$ is a real analytic diffeomorphism.
\end{theorem}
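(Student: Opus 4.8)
The plan is to realise $\mathcal{A}$ as a composition of Wolf's map, $\Phi$ (Theorem~\ref{HopfDifferentialMapIsRealAnalyticDiffeomorphism}), with a suitable change of coordinates on the target side, and then to verify that this change of coordinates is a real analytic diffeomorphism. Concretely, define $\Lambda:\opLab\Thyp\rightarrow\opT^*\Thol$ by $\Lambda(g,A):=(\mathcal{H}',\phi(g|\mathcal{H}'))$, where $\mathcal{H}'$ denotes the marked holomorphic structure of the metric $g(A\cdot,\cdot)$. By Lemma~\ref{CharacterisationOfLabourieField}, applied with conditions $(1)$ and $(2)$ (which hold since $A$ is a Labourie field), the form $\phi(g|\mathcal{H}')$ is a holomorphic quadratic differential, so $\Lambda$ is well defined; and it is manifestly real analytic, being assembled from algebraic operations together with the real analytic operations of taking the conformal class of a metric and of extracting a Hopf differential. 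The map $\mathcal{A}$ itself is at least smooth, since the harmonic diffeomorphism $f$ depends smoothly on $(\mathcal{H},g)$ (a standard consequence of elliptic theory already underlying Theorem~\ref{HarmonicDiffeomorphisms}) and $\mathcal{A}(\mathcal{H},g)=-J_0f_*J$ is an algebraic expression in $f$, $J_0=J_0(g)$ and $J=J(\mathcal{H})$.

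The first step is to check that $\Lambda\circ\mathcal{A}=\Phi$. Given $(\mathcal{H},g)$, set $A:=-J_0f_*J$. Since $A$ is $g$-symmetric, positive definite and satisfies $\opDet(A)=1$, a direct computation in a $g$-orthonormal eigenbasis of $A$ gives $AJ_0A=J_0$, whence $(J_0A)^2=-\opId$; combined with the skew-symmetry of $J_0A$ with respect to $g(A\cdot,\cdot)$ and the fact that $A$ preserves orientation, this shows that $J_0A$ is the complex structure of $g(A\cdot,\cdot)$. As $J_0A=f_*J$ and $f$ is a marking-preserving diffeomorphism, the marked holomorphic structure $\mathcal{H}'$ of $g(A\cdot,\cdot)$ equals $f_*\mathcal{H}$, and hence represents the same point of $\Thol$ as $\mathcal{H}$. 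On the other hand, since $A$ is a Labourie field, $\phi(g|\mathcal{H}')$ is holomorphic by Lemma~\ref{CharacterisationOfLabourieField}, so by Lemma~\ref{HopfDifferentialsAndHarmonicForms} the identity map $(S,\mathcal{H}')\rightarrow(S,g)$ is harmonic; by the uniqueness part of Theorem~\ref{HarmonicDiffeomorphisms} it is therefore \emph{the} marking-preserving harmonic diffeomorphism from $(S,\mathcal{H}')$ to $(S,g)$, so that $\phi(g|\mathcal{H}')=\Phi(\mathcal{H}',g)=\Phi(\mathcal{H},g)$. Hence $\Lambda(\mathcal{A}(\mathcal{H},g))=\Phi(\mathcal{H},g)$.

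The second step, which I expect to be the main obstacle, is the injectivity of $\Lambda$. Suppose $\Lambda(g_1,A_1)=\Lambda(g_2,A_2)=(\mathcal{K},\psi)$. By the identification above (applied to each $(g_i,A_i)$ in place of $(g,A)$), one has $\Phi(\mathcal{K},g_i)=\psi$, so the injectivity of $\Phi$ forces $g_1=g_2=:g$. Now $A_1$ and $A_2$ are Labourie fields of $g$ for which the identity is the harmonic diffeomorphism of $(S,[g(A_i\cdot,\cdot)])$ onto $(S,g)$; since harmonicity of maps out of a surface depends only on the conformal class of the domain, and since any two conformal structures on $S$ representing $\mathcal{K}$ differ by a marking-preserving diffeomorphism, the uniqueness part of Theorem~\ref{HarmonicDiffeomorphisms} shows that there is exactly one conformal structure on $S$ within the class $\mathcal{K}$ making the identity harmonic into $(S,g)$. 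Therefore $g(A_1\cdot,\cdot)$ and $g(A_2\cdot,\cdot)$ lie in the same conformal class; as both have the same volume form as $g$ (because $\opDet(A_i)=1$), they coincide, whence $A_1=A_2$.

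Finally, the surjectivity of $\Phi$ together with $\Lambda\circ\mathcal{A}=\Phi$ forces $\Lambda$ to be surjective, hence bijective (by the previous step), and $\mathcal{A}=\Lambda^{-1}\circ\Phi$. Differentiating $\Lambda\circ\mathcal{A}=\Phi$ and using that $d\Phi$ is everywhere invertible shows that $d\Lambda$ is surjective at every point of the image of $\mathcal{A}$, which is all of $\opLab\Thyp$ by the bijectivity just established; hence $d\Lambda$ is everywhere invertible for dimensional reasons, so the real analytic bijection $\Lambda$ is a real analytic diffeomorphism. Consequently $\mathcal{A}=\Lambda^{-1}\circ\Phi$ is a composition of real analytic diffeomorphisms, and is therefore a real analytic diffeomorphism. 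The only delicate ingredients are the uniqueness argument underlying the injectivity of $\Lambda$ and the smooth dependence of harmonic diffeomorphisms on their data, both of which are already implicit in Theorems~\ref{HarmonicDiffeomorphisms} and \ref{HopfDifferentialMapIsRealAnalyticDiffeomorphism}.
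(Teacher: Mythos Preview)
Your proof is correct and follows essentially the same route as the paper's sketch: both identify $\mathcal{A}$, via Wolf's theorem, with the change-of-coordinates map $\Lambda:(g,A)\mapsto\bigl([g(A\cdot,\cdot)],\phi(g\,|\,[g(A\cdot,\cdot)])\bigr)$ between $\opLab\Thyp$ and $\opT^*\Thol$, using Lemma~\ref{CharacterisationOfLabourieField} as the key input. The paper simply writes down $\Lambda$ and its inverse and leaves the verification that they are mutual inverses implicit, whereas you establish the factorisation $\Lambda\circ\mathcal{A}=\Phi$ explicitly and then deduce bijectivity and regularity of $\Lambda$ from those of $\Phi$; this is a bit more roundabout but has the virtue of actually checking the compatibility of $\mathcal{A}$ with the change of coordinates, which the paper's sketch does not.
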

\par
\begin{proof}[Sketch of proof] It suffices to show how to map between $\opLab\Thyp$ and $\opT^*\Thol$. Given a point, $(g,A)$ in $\opLab\Thyp$, let $\mathcal{H}$ to be the marked holomorphic structure conformal to $g(A\cdot,\cdot)$, and let $\phi$ to be the Hopf differential of $g$ with respect to $\mathcal{H}$. By Lemma \ref{CharacterisationOfLabourieField}, $\phi$ is a quadratic holomorphic differential, and $(\mathcal{H},\phi)$ is the point of $\opT^*\Thol$ corresponding to $(g,A)$. Conversely, given a point $(\mathcal{H},\phi)$ in $\opT^*\Thol$, let $g$ be the unique marked hyperbolic metric such that $\Phi(\mathcal{H},g)=(\mathcal{H},\phi)$, let $f:(S,\mathcal{H})\rightarrow(S,g)$ be the unique harmonic diffeomorphism which preserves the marking, and let $A\in\Gamma(\opEnd(TS))$ be such that $\opDet(A)=1$ and $g(A\cdot,\cdot)$ is conformal to $\mathcal{H}$. By Lemma \ref{CharacterisationOfLabourieField}, $A$ is a Labourie field of $g$, and $(g,A)$ is the point of $\opLab\Thyp$ corresponding to $(\mathcal{H},\phi)$.\end{proof}
Using Labourie fields, we now transform Theorem~\ref{MinimalLagrangianDiffeomorphisms} into another ``mid-point'' theorem, which is illustrated schematically in Figure~\ref{fig:labouriefield}. Observe, in particular, that, by Corollary~\ref{SymmetryOfLabourieFields}, the following statement is in fact symmetric.
\begin{figure}
\begin{center}
\includegraphics[scale=0.5]{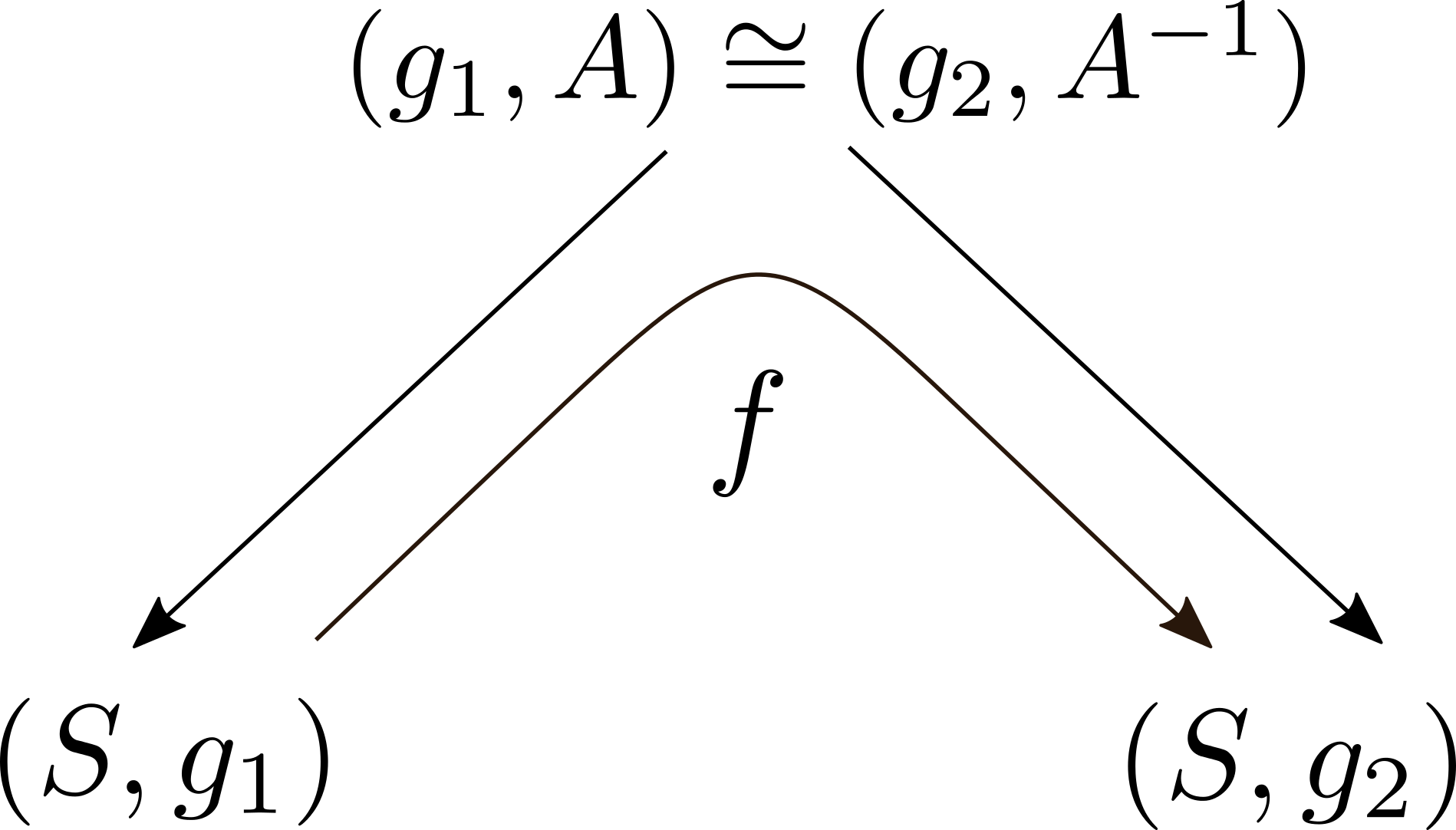}\caption{\label{fig:labouriefield} \textbf{The Labourie field theorem.} Here the upper vertex is given by the Labourie field, $A$, and the curved arrow joining the two lower  vertices is the unique diffeomorphism, $f$, which preserves the marking such that $f^*g_2=g_1(A \cdot, A\cdot )$.}
\end{center}
\end{figure}
\begin{theorem}[Labourie field theorem, Labourie \cite{Lab92b}, Schoen \cite{sch93}]\label{LabFieldthm}
\noindent Given two marked hyperbolic metrics, $g_1$ and $g_2$, there exists a unique Labourie field, $A$, and a unique diffeomorphism $f:S\rightarrow S$ which preserves the markings such that
\begin{equation*}
f^*g_2 = g_1(A\cdot,A\cdot)~.
\end{equation*}
\end{theorem}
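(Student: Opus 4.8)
The plan is to obtain Theorem~\ref{LabFieldthm} as a transcription of the minimal lagrangian theorem, Theorem~\ref{MinimalLagrangianDiffeomorphisms}, the link being the elementary remark that a marking-preserving diffeomorphism $f:(S,g_1)\to(S,g_2)$ carries a canonical endomorphism field. Indeed, $f^*g_2$ and $g_1$ are both riemannian metrics on $S$, so there is a unique $g_1$-self-adjoint, positive-definite $B\in\Gamma(\opEnd(TS))$ with $f^*g_2=g_1(B\,\cdot,\cdot)$; setting $A:=B^{1/2}$ yields a $g_1$-self-adjoint, positive-definite $A$ with $f^*g_2=g_1(A\,\cdot,A\,\cdot)$, and $f$ and $A$ plainly determine each other ($A$ being the unique positive-definite square root). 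The whole theorem therefore reduces to the claim: for a marking-preserving diffeomorphism $f$ with associated field $A$ as above, $f$ is minimal lagrangian with respect to $(g_1,g_2)$ if and only if $A$ is a Labourie field of $g_1$. Granting this, existence follows by applying the claim to the minimal lagrangian $f$ produced by Theorem~\ref{MinimalLagrangianDiffeomorphisms}, and uniqueness follows because any other admissible pair $(A',f')$ has $f'$ minimal lagrangian, hence $f'=f$ by the uniqueness clause of Theorem~\ref{MinimalLagrangianDiffeomorphisms}, whereupon $g_1((A')^2\,\cdot,\cdot)=f^*g_2=g_1(A^2\,\cdot,\cdot)$ and uniqueness of positive-definite square roots gives $A'=A$.

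To prove the claim I would identify the graph $\Gamma_f\subset S\times S$ with $S$ via $x\mapsto(x,f(x))$, so that $\pi_1|_{\Gamma_f}=\Id_S$, $\pi_2|_{\Gamma_f}=f$, and the induced metric pulls back to $g_1+f^*g_2=g_1((\Id+A^2)\,\cdot,\cdot)$; write $\mathcal{H}$ for its conformal class. The form $\pi_1^*\opdVol_{g_1}-\pi_2^*\opdVol_{g_2}$ restricts along $\Gamma_f$ to $\opdVol_{g_1}-f^*\opdVol_{g_2}=(1-\opDet(A))\opdVol_{g_1}$ (using $f^*\opdVol_{g_2}=\opdVol_{f^*g_2}=\opDet(A)\opdVol_{g_1}$, $A$ being positive-definite), so $\Gamma_f$ is lagrangian if and only if $\opDet(A)=1$. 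Granting $\opDet(A)=1$, the Cayley--Hamilton identity $A^2-\opTr(A)A+\opDet(A)\Id=0$ gives $\Id+A^2=\opTr(A)A$, hence $g_1+f^*g_2=\opTr(A)\,g_1(A\,\cdot,\cdot)$, so $\mathcal{H}$ is also the holomorphic structure of $g_1(A\,\cdot,\cdot)$. Meanwhile, by the classical characterisation of minimal graphs recalled in Section~\ref{HarmonicMapsAndMinimalLagrangianDiffeomorphisms}, $\Gamma_f$ is minimal if and only if $\Id:(S,\mathcal{H})\to(S,g_1)$ and $f:(S,\mathcal{H})\to(S,g_2)$ are both harmonic; by Lemma~\ref{HopfDifferentialsAndHarmonicForms} these amount to $\phi(g_1|\mathcal{H})$ and $\phi(f^*g_2|\mathcal{H})$ being holomorphic, and since $\phi(g_1|\mathcal{H})+\phi(f^*g_2|\mathcal{H})=\phi(g_1+f^*g_2|\mathcal{H})=0$ (as $g_1+f^*g_2$ lies in the class $\mathcal{H}$), both conditions reduce to holomorphicity of $\phi(g_1|\mathcal{H})$. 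Thus $f$ is minimal lagrangian if and only if $\opDet(A)=1$ and $\phi(g_1|\mathcal{H})$ is holomorphic. Feeding these two statements into Lemma~\ref{CharacterisationOfLabourieField} (with $g=g_1$) shows that $A$ is also Codazzi, hence a Labourie field of $g_1$; conversely, if $A$ is a Labourie field then Lemma~\ref{CharacterisationOfLabourieField} returns the holomorphicity of $\phi(g_1|\mathcal{H})$. Every implication used is reversible, which proves the claim.

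The substantive analytic input here is entirely that of Theorem~\ref{MinimalLagrangianDiffeomorphisms}; the rest of the argument merely sets up the dictionary between minimal lagrangian diffeomorphisms and Labourie fields, whose only non-formal ingredients are the Cayley--Hamilton identity above --- which crucially needs the normalisation $\opDet(A)=1$, since without it the conformal class of the graph and that of $g_1(A\,\cdot,\cdot)$ genuinely differ --- and Lemma~\ref{CharacterisationOfLabourieField}. Accordingly, I expect the only real care to be in getting that conformal identification right and in keeping track of which direction of the equivalence feeds the existence half and which feeds the uniqueness half. By Corollary~\ref{SymmetryOfLabourieFields} the resulting statement is symmetric in $g_1$ and $g_2$, so the argument could equally be run starting from $g_2$; alternatively, one could stay within the real-analytic picture, extracting the pair $(\mathcal{H},\phi)$ from Theorem~\ref{SymmetricHopfDifferentials} and transporting it through the diffeomorphism $\mathcal{A}$ of Theorem~\ref{AIsARealAnalDiff} to obtain the Labourie field directly.
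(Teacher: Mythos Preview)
Your argument is correct, and it is precisely the approach the paper has in mind: the paper does not supply a proof of Theorem~\ref{LabFieldthm} at all, but simply introduces it as the transcription of Theorem~\ref{MinimalLagrangianDiffeomorphisms} into the language of Labourie fields (``Using Labourie fields, we now transform Theorem~\ref{MinimalLagrangianDiffeomorphisms} into another `mid-point' theorem\dots''). Your write-up fills in exactly the dictionary the paper leaves implicit---the Cayley--Hamilton step identifying the conformal class of the graph with that of $g_1(A\cdot,\cdot)$ once $\opDet(A)=1$, and the two-out-of-three use of Lemma~\ref{CharacterisationOfLabourieField}---so there is nothing to add.
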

\subsection{Landslides and rotations}\label{RotationsAndLandslides}
Landslides, which were introduced by Bonsante, Mondelo \& Schlenker in \cite{BMS1}, are parametrised by $\R\times\Thyp$ and act naturally on $\Thyp$. They are defined as follows. Consider a marked hyperbolic metric, $g$. Given another marked hyperbolic metric, $h$, and a real number, $t$, define the metric $g_{t,h}$ over $S$ by
\begin{equation*}
g_{t,h}(U,V) := g\left(e^{(t/2)J_0A}U,e^{(t/2)J_0A}V\right)~,
\end{equation*}
where $J_0$ is the complex structure of $g$, and $A$ is the unique Labourie field of $h$ with respect to $g$ that is furnished by Theorem~\ref{LabFieldthm}. In fact, denoting $J:=J_0A$, we readily determine that $J^2=-\opId$, so that $g_{t,h}$ can also be expressed in the following manner.
\begin{equation*}
g_{t,h}(U,V) = g\left(\opCos(t/2)J_0 U- \opSin(t/2)AU,\opCos(t/2)J_0V - \opSin(t/2)AV\right)~.
\end{equation*}
We now have,
\begin{lemma}
\noindent For all $g$ and for all $(t,h)$, the metric $g_{t,h}$ is hyperbolic.
\end{lemma}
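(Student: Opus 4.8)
The plan is to reduce the statement to a pointwise curvature computation. Fix $g$ and $(t,h)$, and let $A$ be the Labourie field of $h$ with respect to $g$ furnished by Theorem~\ref{LabFieldthm}, so that $A$ is $g$-symmetric, positive definite, satisfies $\opDet(A)=1$, and is a Codazzi field for $g$. Write $B_t := e^{(t/2)J_0A}$, so that $g_{t,h} = g(B_t\cdot,B_t\cdot)$. The first step is to verify that $B_t$ is a well-defined automorphism of $TS$: since $A$ is $g$-symmetric with unit determinant and $J_0$ is a $g$-compatible complex structure, one computes $(J_0A)^2 = J_0AJ_0A$; using that $A$ commutes with neither $J_0$ nor anything obvious one must be more careful, but the identity $\opDet(A)=1$ together with the Cayley--Hamilton relation $A^2 = \opTr(A)A - \opId$ gives, after a short computation, that $J := J_0A$ satisfies $J^2 = -\opId$ (this is asserted in the text preceding the lemma). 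Hence $B_t = \opCos(t/2)\opId + \opSin(t/2)J$ is invertible for every $t$, and $g_{t,h}$ is a genuine Riemannian metric.

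The second and main step is to show $g_{t,h}$ has constant curvature $-1$. The cleanest route is to recognise $B_t$ as (a multiple of) the shape-operator data of an equidistant deformation, or equivalently to invoke the fundamental theorem of surfaces: I would look for a pair $(I_t, \II_t)$ — a metric and a $g_{t,h}$-symmetric Codazzi tensor — satisfying the Gauss equation $\opDet(\II_t) = K_{I_t} + 1$ with $I_t = g_{t,h}$, since a metric admitting such a pair with the Codazzi and Gauss equations has constant curvature $-1$ exactly when... more directly, the standard fact (used throughout this circle of ideas, e.g.\ in the construction of landslides in \cite{BMS1}) is that if $A$ is a Labourie field of a hyperbolic metric $g$, then for every $\theta$ the metric $g(e^{\theta J_0 A}\cdot, e^{\theta J_0 A}\cdot)$ is again hyperbolic, because $e^{\theta J_0 A}$ interpolates within the circle of metrics $\{g(\opCos\theta\,\opId + \opSin\theta\,J)\cdot,\cdot)\}$ all of which share the holomorphic structure $\mathcal H$ conformal to $g(A\cdot,\cdot)$ and have $g$ as Hopf-conjugate partner. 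Concretely: by Corollary~\ref{SymmetryOfLabourieFields} and Lemma~\ref{CharacterisationOfLabourieField}, the pair $(g, A)$ corresponds under $\mathcal A$ to a point $(\mathcal H, \phi)\in\opT^*\Thol$, and the one-parameter family $t\mapsto g_{t,h}$ corresponds to the family $(\mathcal H, e^{it}\phi)$ — i.e.\ rotating the Hopf differential by a phase. Since $\Psi(\mathcal H, e^{it}\phi)$ is, by Theorem~\ref{HopfDifferentialMapIsRealAnalyticDiffeomorphism} and the definition of $\Psi$, always a hyperbolic metric, the result follows.

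So the key computation I would carry out is the identification of $t\mapsto g_{t,h}$ with the phase-rotation family $(\mathcal H, e^{it}\phi)$ of Hopf differentials. For this: let $\mathcal H$ be the holomorphic structure conformal to $g(A\cdot,\cdot)$, and let $\phi = \phi(g|\mathcal H)$, which is holomorphic by Lemma~\ref{CharacterisationOfLabourieField}. Writing things in a local conformal chart $z$ for $\mathcal H$, the metric $g$ decomposes as $g = \phi\,dz^2 + \rho\,|dz|^2 + \bar\phi\,d\bar z^2$ with $\rho$ real; a direct computation of $g(B_t\cdot,B_t\cdot)$ then shows its Hopf differential with respect to $\mathcal H$ is $e^{it}\phi$ while its $(1,1)$-part is unchanged, because $B_t$ acts on the holomorphic tangent line by multiplication by $e^{it/2}$ (this is precisely the content of $J = J_0A$ being the complex structure acting as $+i$ on $T^{1,0}$ for $\mathcal H$). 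The main obstacle is bookkeeping: keeping straight the three complex structures ($J_0$ for $g$, the one for $\mathcal H$, and $J = J_0A$) and checking that $J$ is indeed $g_{t,h}$-compatible so that "Hopf differential with respect to $\mathcal H$" is the right invariant to track. Once that identification is in hand, hyperbolicity of every $g_{t,h}$ is immediate from Theorem~\ref{HopfDifferentialMapIsRealAnalyticDiffeomorphism}.
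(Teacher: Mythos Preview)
Your Hopf-differential computation is correct: $g_{t,h}$ does have Hopf differential $e^{it}\phi$ with respect to $\mathcal{H}$, with the $(1,1)$-part unchanged. But the appeal to Theorem~\ref{HopfDifferentialMapIsRealAnalyticDiffeomorphism} at the end is circular. Wolf's theorem says that for each holomorphic quadratic differential $\psi$ there is a unique \emph{hyperbolic} metric whose harmonic-map Hopf differential is $\psi$; it does not say that every metric whose Hopf differential happens to equal $\psi$ is hyperbolic. Plenty of non-hyperbolic metrics $m$ satisfy $\phi(m|\mathcal{H}) = e^{it}\phi$ --- by Lemma~\ref{HopfDifferentialsAndHarmonicForms}, holomorphicity of the Hopf differential only forces the identity map to be harmonic, not the curvature to be $-1$. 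So you cannot conclude $g_{t,h} = \Psi(\mathcal{H}, e^{it}\phi)$ without first knowing $g_{t,h}$ is hyperbolic, which is exactly what you are trying to prove.

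Your observation that the $(1,1)$-part is preserved could in principle be completed into a proof: one can show that the Gauss curvature of a metric of the form $\phi\,dz^2 + \rho\,|dz|^2 + \bar\phi\,d\bar z^2$ with $\phi$ holomorphic depends only on $\rho$ and $|\phi|^2$, hence is unchanged under $\phi\mapsto e^{it}\phi$. But this is itself a curvature computation you have not carried out, and it is no shorter than the direct route.

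The paper instead computes the curvature directly. Since $A$ is Codazzi and $J_0$ is $\nabla$-parallel, the endomorphism $A_t := e^{(t/2)J_0A}$ satisfies $d^\nabla A_t = 0$; this immediately identifies the Levi-Civita connection of $g_t$ as $\nabla^t_X Y = A_t^{-1}\nabla_X(A_t Y)$. The curvature tensor then reads off as $A_t^{-1}R_{XY}(A_t Z)$, and combining this with $\det A_t = 1$ and the hyperbolicity of $g$ gives sectional curvature $-1$ in a few lines. This is both shorter and logically prior to the landslide--rotation correspondence that your argument is implicitly trying to use.
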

\begin{proof} Indeed, denote $A_t:=e^{(t/2)J_0A}$. Let $\nabla$ denote the Levi-Civita covariant derivative of $g$. Since $A$ is a Codazzi field, $d^\nabla A_t=0$, and it follows that the Levi-Civita covariant derivative, $\nabla^t$, of $g_t$ is given by
\begin{equation*}
\nabla^tX = A_t^{-1}\nabla(A_t X)~.
\end{equation*}
Bearing in mind that $g$ is hyperbolic, the Riemann curvature tensor of $g_t$ is therefore given by
\begin{equation*}
R^t_{XY}Z = R_{XY}A_tZ = \langle X,A_t Z\rangle Y - \langle Y, A_t Z\rangle X~,
\end{equation*}
and the result now follows since $\opDet(A)=1$.\end{proof}
\noindent We now define $\mathcal{L}_{t,h}(g):=\mathcal{L}(t,g,h):=g_{t,h}$, and we call it the {\sl landslide} of $g$ along $(t,h)$.
\par
For any given $g$ and $h$, the orbit, $\mathcal{L}(\cdot,g,h)$, defines a closed curve in $\Thyp$ of period $2\pi$, and since $\mathcal{L}(0,g,h)=g$ and $\mathcal{L}(\pi,g,h)=h$, we consider it as a circle upon which $g$ and $h$ are diametrically opposite points. It turns out that this ``landslide flow'' shares many of the properties of the earthquake flow already discussed in Section~\ref{EarthquakesAndGraftings} (c.f. \cite{BMS1} and \cite{BMS2}). In particular, given a marked, hyperbolic metric, $h$, consider its {\sl mass function}, $M(h)\in[0,\infty[^{\Gamma_1}$, defined by
$$
M(h)(\langle\gamma\rangle):=
\inf_{\eta\in\langle\gamma\rangle}l_h(\eta)~,
$$
where $\langle\gamma\rangle$ here denotes a free homotopy class in $\langle\Pi_1\rangle$, $\eta$ varies over all closed curves in this free homotopy class, and $l_h(\eta)$ denotes its length with respect to the metric, $h$.
\begin{theorem}[Bonsante--Mondello--Schlenker \cite{BMS1}]
\noindent Let $g$ be a marked hyperbolic metric, let $(h_m)$ be a sequence of marked hyperbolic metrics, and let $(\theta_m)$ be a sequence of positive real numbers. If $(\theta_mM(h_m))_{m\in\mathbb{N}}$ converges pointwise to $M_g(\lambda)$, then
\begin{equation*}
\lim_{m\rightarrow\infty}\mathcal{L}(\theta_m,g,h_m) = \mathcal{E}^l_\lambda(g)~.
\end{equation*}
\end{theorem}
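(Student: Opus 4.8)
The plan is to prove convergence in $\Thyp$ by proving convergence of every marked length function to that of $\mathcal{E}^l_\lambda(g)$, and then invoking properness of the marked length spectrum. Recall that the assignment sending a marked hyperbolic metric to its mass function $M(\cdot)\in[0,\infty[^{\langle\Pi_1\rangle}$ is a homeomorphism of $\Thyp$ onto a closed subset of $[0,\infty[^{\langle\Pi_1\rangle}$ endowed with the topology of pointwise convergence (the classical fact that a hyperbolic metric is determined by, and varies properly with, its marked length spectrum). Consequently, to establish $\lim_m \mathcal{L}(\theta_m,g,h_m)=\mathcal{E}^l_\lambda(g)$ it suffices to prove that for every free homotopy class $\langle\gamma\rangle$ one has $M(\mathcal{L}(\theta_m,g,h_m))(\langle\gamma\rangle)\to M(\mathcal{E}^l_\lambda(g))(\langle\gamma\rangle)$, since the candidate limit is the finite, everywhere positive mass function of a genuine point of $\Thyp$, and the homeomorphism-onto-image property then forces convergence of the metrics themselves. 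The entire difficulty is thereby concentrated in the length estimate, and the point is that it must be carried out for the \emph{given} sequence $(h_m,\theta_m)$, not for a conveniently chosen one.

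The second step is to convert the hypothesis $\theta_m M(h_m)\to M_g(\lambda)$ into asymptotic control on the data actually entering the landslide. Since $M_g(\lambda)$ vanishes on classes disjoint from $\lambda$ while each $M(h_m)$ is strictly positive, the hypothesis forces $\theta_m\to 0$ and $h_m$ to leave every compact subset of $\Thyp$; in the language of Thurston's compactification, $[h_m]\to[\lambda]$ projectively, with $\theta_m$ realising the exact non-projective limit. By Theorem~\ref{LabFieldthm} the landslide $\mathcal{L}(\theta_m,g,h_m)$ depends on $h_m$ only through the unique Labourie field $A_m$ of $h_m$ relative to $g$, for which $g(A_m\cdot,A_m\cdot)$ is isotopic to $h_m$. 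I would show that the degeneration above forces the larger eigenvalue $\mu_m$ of $A_m$ to tend to infinity, with eigendirections concentrating transversally to a family of geodesics converging to the support of $\lambda$, and that the transverse measure recorded by $A_m$, rescaled by $\theta_m$, converges weakly to the transverse measure of $\lambda$. The bookkeeping here is organised by the mass-function hypothesis together with Theorem~\ref{ExistenceOfTransverseMeasure}, which provides continuity of the transverse measure under $\lambda_m\to\lambda$.

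The third, and central, step is the asymptotic evaluation of $M(\mathcal{L}(\theta_m,g,h_m))(\langle\gamma\rangle)$. Writing the landslide metric in the trigonometric form $g_{t,h}(U,V)=g(\cos(t/2)J_0 U-\sin(t/2)A U,\cos(t/2)J_0 V-\sin(t/2)A V)$ and using $\cos(\theta_m/2)\to 1$ together with $\sin(\theta_m/2)\sim\theta_m/2$, I would analyse the $g_{\theta_m,h_m}$-geodesic representative of $\gamma$. Away from the thin concentration region the metric is a small perturbation of $g$, whereas across that region the factor $\sin(\theta_m/2)A_m$ has a finite limit, because $\theta_m\mu_m$ is controlled by the finite quantity $\theta_m M(h_m)$; in the limit this produces exactly a shearing of $g$ along $\lambda$ with transverse measure $\lambda$, that is, the left earthquake. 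I would certify the convergence of the length to $M(\mathcal{E}^l_\lambda(g))(\langle\gamma\rangle)$ by matching the landslide length-variation formula of \cite{BMS1} against Kerckhoff's cosine formula \cite{Ker83} for the earthquake, the two being reconciled through the transverse-measure convergence of the previous step.

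To make this rigorous for an arbitrary sequence I would first treat the rational case, where $\lambda$ is a weighted multicurve and the shear computation above is explicit and localised in a collar of each curve, and then pass to general $\lambda$ using density of rational laminations in $\opML$ (Section~\ref{MeasuredGeodesicLaminations}) together with continuity of $\lambda\mapsto\mathcal{E}^l_\lambda(g)$ (Section~\ref{EarthquakesAndGraftings}). The main obstacle, and the precise feature that distinguishes the stated theorem from the easier existence of \emph{some} approximating sequence, is the passage from a special sequence to an arbitrary one: the hypothesis pins down only the \emph{length spectrum} of $h_m$, hence pins down $A_m$ only through the minimal Lagrangian correspondence, so one must show that the finer features of $A_m$ invisible to the mass function cannot affect the limit. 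I expect the crux to be a continuity estimate, uniform in $m$, bounding $\left|M(\mathcal{L}(\theta,g,h))(\langle\gamma\rangle)-M(\mathcal{E}^l_\lambda(g))(\langle\gamma\rangle)\right|$ by the pointwise discrepancy between $\theta M(h)$ and $M_g(\lambda)$ on a finite set of classes. Establishing this amounts to showing that $(\theta,h)\mapsto\mathcal{L}(\theta,g,h)$ extends continuously to the relevant corner of the compactified parameter space with boundary value the earthquake flow, and it is here that the substantive analytic work resides.
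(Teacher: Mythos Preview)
The paper does not provide a proof of this theorem; it is stated with attribution to \cite{BMS1} and no argument is given in the text. There is therefore no proof in the paper against which to compare your proposal.

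On its own merits, your strategy is reasonable in outline and broadly follows the approach one finds in \cite{BMS1}: reduce to length-spectrum convergence, interpret the hypothesis as projective convergence $h_m\to[\lambda]$ in Thurston's compactification with $\theta_m$ realising the rescaling, and show that the landslide length formula degenerates to the earthquake length formula. However, your proposal is explicitly incomplete: you identify the crux (a continuity estimate, uniform in $m$, bounding the discrepancy of lengths) but do not supply it, and you say yourself that ``it is here that the substantive analytic work resides.'' As written this is a plan, not a proof. One technical point to watch: your argument that $\theta_m\to 0$ because $M_g(\lambda)$ vanishes on classes disjoint from $\lambda$ presupposes that such classes exist, i.e.\ that $\lambda$ is not filling. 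The conclusion $\theta_m\to 0$ is still correct in general, but it requires the observation that a rescaled hyperbolic length spectrum cannot converge to a lamination mass function unless the scaling factor tends to zero (since hyperbolic metrics and measured laminations occupy disjoint strata of Thurston's compactification), rather than a pointwise vanishing argument.
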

In analogy to the case of earthquakes, it is natural to ask whether there exists a landslide flow between two given hyperbolic metrics, and in \cite{BMS1}, Bonsante, Mondello \& Schlenker provide the affirmative answer in the form of the following ``mid point'' theorem for landslides.
\begin{restatable}[Landslide theorem, Bonsante--Mondello--Schlenker \cite{BMS1}]{theorem}{landslide}
\label{LandslideTheorem}
\noindent Given $t\in]-\pi,\pi[$, and two marked, hyperbolic metrics, $g_1$ and $g_2$, there exists a unique pair, $h^+$ and $h^-$, of marked hyperbolic metrics such that
\begin{equationarray*}{rclclcc}
\ g_1 &=& \mathcal{L}(-t,h^+,h^-)& =& \mathcal{L}_{-t,h^-}(h^+)~,& \text{and}\\
\ g_2 &=& \mathcal{L}(t,h^+,h^-)& =& \mathcal{L}_{t,h^-}(h^+)~.
\end{equationarray*}
\end{restatable}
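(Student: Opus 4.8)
The plan is to deduce the Landslide theorem (Theorem~\ref{LandslideTheorem}) from the Labourie field theorem (Theorem~\ref{LabFieldthm}), exactly as the latter was deduced from the minimal Lagrangian theorem (Theorem~\ref{MinimalLagrangianDiffeomorphisms}), by setting up an explicit correspondence between the landslide data $(t,h^+,h^-)$ and the data $(g_1,A)$ furnished by Theorem~\ref{LabFieldthm}. First I would fix $t\in\,]-\pi,\pi[\,$ and observe that the whole question is a statement about a single hyperbolic metric together with a Codazzi field: if $A$ is the Labourie field of $h^-$ with respect to $h^+$, then by definition
\begin{equation*}
g_1 = (h^+)\bigl(e^{-(t/2)J_0A}\cdot,e^{-(t/2)J_0A}\cdot\bigr),\qquad
g_2 = (h^+)\bigl(e^{(t/2)J_0A}\cdot,e^{(t/2)J_0A}\cdot\bigr),
\end{equation*}
where $J_0$ is the complex structure of $h^+$, and (as noted in the excerpt) $J:=J_0A$ satisfies $J^2=-\opId$. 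The key algebraic observation is that from these two equations one can recover $A_t:=e^{(t/2)J_0A}$ intrinsically: since $g_2=h^+(A_t\cdot,A_t\cdot)$ and $g_1=h^+(A_t^{-1}\cdot,A_t^{-1}\cdot)$, the metric $h^+$ is the ``geometric mean'' of $g_1$ and $g_2$, and $A_t^2$ (equivalently $e^{tJ_0A}$) is the unique $g_1$-self-adjoint, $g_1$-positive endomorphism field with $g_2=g_1(A_t^2\cdot,A_t^2\cdot)$, up to the subtlety that we must extract the operator $A$ itself, not $e^{tJ_0A}$, from it.

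The heart of the argument is therefore the following linear-algebra dictionary, to be checked pointwise and then globally. Given $g_1,g_2$, let $B:TS\to TS$ be the $g_1$-self-adjoint positive field with $f^*g_2=g_1(B\cdot,B\cdot)$ for the (as yet unknown) marking-preserving diffeomorphism $f$; Theorem~\ref{LabFieldthm} provides a \emph{unique} such pair with, in addition, $B$ a Labourie field, i.e. $\opDet(B)=1$ and $B$ Codazzi. I claim $B$ plays the role of $e^{tJ_0A}$, and that from it one recovers $(h^+,h^-,A)$ as follows: one diagonalises $B$ with respect to $g_1$, writes its eigenvalues as $e^{t\mu}$ and $e^{-t\mu}$ (using $\opDet B=1$, so $t\mu$ is the half-log of the eigenvalue ratio, well defined once $t\neq0$), sets $A$ to be the operator with the same eigenvectors and eigenvalues $\mu,-\mu$ — wait, $A$ must satisfy $\opDet(A)=1$ and be positive definite, so in fact the right normalisation is to take $A$ positive with $J_0A$ having the matching rotation; concretely $A_t=e^{(t/2)J_0A}$ forces $A=\tfrac{2}{t}\,\bigl(\tfrac{1}{2i}\log(\text{eigenvalues of }J_0^{-1}\log B)\bigr)$, and one checks $\opDet(A)=1$ and $A>0$ reduce to $\opDet(B)=1$. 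Then $h^\pm$ are recovered by $h^+=g_1(A_t\cdot,A_t\cdot)$ with $A_t=e^{(t/2)J_0A}$ and $h^-$ the target of the Labourie field $A$ over $h^+$; that $A$ is a Labourie field \emph{with respect to $h^+$} (not just that $B$ is Codazzi with respect to $g_1$) is where one invokes the conformal-structure bookkeeping of Lemma~\ref{CharacterisationOfLabourieField} together with Corollary~\ref{SymmetryOfLabourieFields}, since $h^+$ is built from $g_1$ by the $A_t$-twist and the Codazzi/holomorphic-Hopf conditions transport along such twists (this is precisely the computation $\nabla^tX=A_t^{-1}\nabla(A_tX)$ used in the Lemma preceding the definition of the landslide). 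Uniqueness of $(h^+,h^-)$ then follows from uniqueness of $B$ in Theorem~\ref{LabFieldthm}, since the whole dictionary is invertible.

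The main obstacle, I expect, is the global regularity and bijectivity of the map $B\mapsto A$ (equivalently $e^{tJ_0A}\mapsto J_0A$), i.e. making rigorous the ``take the logarithm of the eigenvalues'' step as a diffeomorphism of endomorphism fields: for $t\in\,]-\pi,\pi[\,$ the exponential $X\mapsto e^{(t/2)J_0X}$ on the space of $g$-self-adjoint positive trace-one fields is a diffeomorphism onto its image precisely because $|t|/2<\pi/2$ keeps us below the first conjugate locus of the exponential of $\opsl(2)$-type elements, but the excerpt does not state this and one must either prove it by hand (it is an elementary but not trivial fibrewise computation, uniform in the point of $S$ by compactness) or cite \cite{BMS1}. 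A secondary, more bookkeeping-style difficulty is keeping straight which metric each operator is self-adjoint and Codazzi with respect to as one passes between $g_1$, $h^+$, and the pulled-back $g_2$; the symmetry asserted after the theorem statement (that the roles of $h^+$ and $h^-$, and of $g_1$ and $g_2$, can be swapped) is a useful consistency check and follows from Corollary~\ref{SymmetryOfLabourieFields} applied to $A$, but one should verify it explicitly to be sure the normalisations chosen above are the correct ones. Once these points are settled, existence and uniqueness are immediate translations of Theorem~\ref{LabFieldthm}.
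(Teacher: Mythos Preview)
Your proposal contains a genuine error at its core: the identification ``$B$ plays the role of $e^{tJ_0A}$'' cannot hold. Since $(J_0A)^2=-\opId$, the operator $J_0A$ is a complex structure and $e^{tJ_0A}=\cos(t)\,\opId+\sin(t)\,J_0A$ is a rotation with (complex) eigenvalues $e^{\pm it}$. A real endomorphism with non-real eigenvalues is never self-adjoint with respect to \emph{any} Riemannian metric, so for $t\in\,]0,\pi[\,$ the operator $e^{tJ_0A}$ cannot coincide with the Labourie field $B$ furnished by Theorem~\ref{LabFieldthm}, which is by construction $g_1$-self-adjoint and positive definite with real eigenvalues. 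Your own eigenvalue bookkeeping (``writes its eigenvalues as $e^{t\mu}$ and $e^{-t\mu}$'') already presupposes real eigenvalues, which is inconsistent with the claimed identity. What \emph{is} true is that $g_2=g_1(e^{tJ_0A}\cdot,e^{tJ_0A}\cdot)$, whence $B^2=(e^{tJ_0A})^{*_{g_1}}e^{tJ_0A}$; but unwinding $(h^+,A)$ from this relation --- with $A$ required to be Codazzi for $h^+$, not for $g_1$, and with $J_0$ the complex structure of $h^+$, which you do not yet know --- is not the routine logarithm you describe, and I do not see how to close the loop along these lines without substantial further work.

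The paper's argument is entirely different and goes through anti de Sitter geometry. Given $g_1,g_2$, set $\kappa:=\opTan(t/2)^2$, regard $(g_1,g_2)$ as a pair of Fuchsian representations $(\theta_l,\theta_r)$, and invoke Mess' theorem (Theorem~\ref{messAdS3}) to obtain the corresponding ghmc $\opAdS$ spacetime $\Omega_\theta$. By the Barbot--B\'eguin--Zeghib foliation (Theorem~\ref{bbz const ads}) there is a unique invariant spacelike surface $\Sigma_\kappa^+\subset\Omega_\theta^+$ of constant extrinsic curvature $\kappa$; take $h^+$ and $h^-$ to be the (rescaled) first and third fundamental forms of $\Sigma_\kappa^+$. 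The explicit computation of the left and right Gauss maps, formula~(\ref{InFactTheyAreLandslides}), shows that these Gauss maps realise precisely the landslides $\mathcal{L}_{\mp t,h^-}(h^+)$ onto $g_1$ and $g_2$, proving existence. For uniqueness, any candidate pair $(h^+,h^-)$ determines, via the fundamental theorem of surface theory (Theorem~\ref{FTS}), an equivariant immersion into $\opAdS^3$ with holonomy $(\theta_l,\theta_r)$ and constant extrinsic curvature $\kappa$; by the uniqueness clause of Theorem~\ref{bbz const ads} this surface must be $\Sigma_\kappa^+$, so $(h^+,h^-)$ is determined. Thus the proof rests on the $\opAdS$ machinery of Section~\ref{sec:ads} rather than on a direct reduction to Theorem~\ref{LabFieldthm}.
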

\begin{figure}[t]
\begin{center}
\includegraphics[scale=0.5]{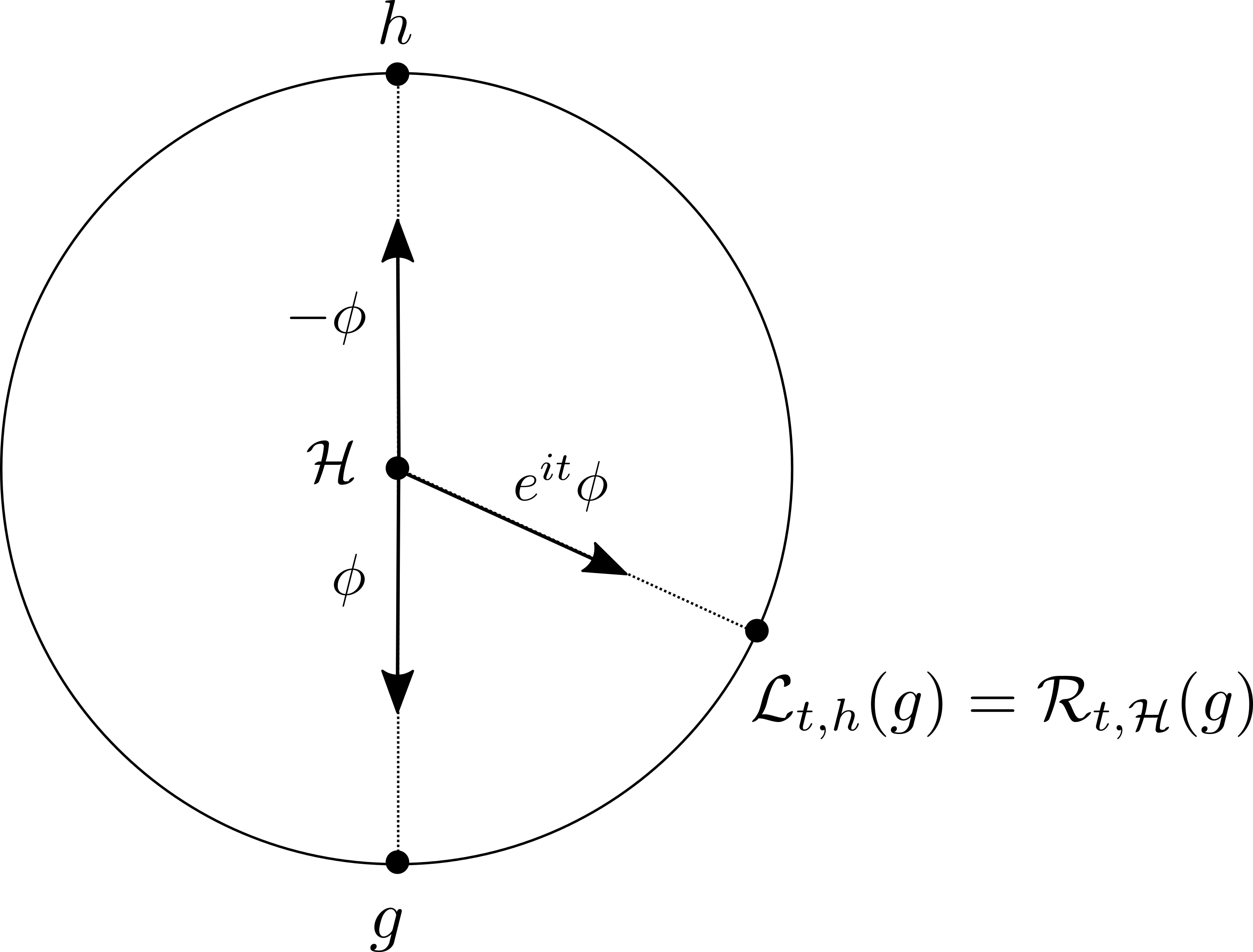}\caption{\label{fig:rotland} \textbf{Landslides and rotations.} Here the landslide of $g$ along $(t,h)$ is shown to coincide with the rotation of $g$ by an angle of $t$ about the marked holomorphic structure of the metric $g(A\cdot,\cdot)$, where $A$ is the unique Labourie field of $h$ with respect to $g$ furnished by Theorem~\ref{LabFieldthm}.}
\end{center}
\end{figure}
A more intuitively appealing approach to landslides is given by rotations, which are defined as follows. Given a marked hyperbolic metric, $g$, and a marked holomorphic structure, $\mathcal{H}$, let $\phi$ be the unique Hopf differential of $g$ with respect to $\mathcal{H}$ furnished by Theorem~\ref{HopfDifferentialMapIsRealAnalyticDiffeomorphism}. Now, given a real number, $t$, define $\mathcal{R}_{t,\mathcal{H}}(g):=\mathcal{R}(t,g,\mathcal{H})$ to be the unique marked, hyperbolic metric whose Hopf differential with respect to $\mathcal{H}$ is $e^{it}\phi$, see Figure~\ref{fig:rotland}. We call this the {\sl rotation} of $g$ by an angle $t$ about $\mathcal{H}$. Landslides and rotations are completely equivalent. Indeed,
\begin{theorem}
\noindent For all $(t,g,h)$,
\begin{equation*}
\mathcal{L}(t,g,h) = \mathcal{R}(t,g,\mathcal{H})~,
\end{equation*}
where $\mathcal{H}$ is the marked holomorphic structure of the metric $g(A\cdot,\cdot)$, and $A$ is the unique Labourie field of $h$ with respect to $g$ given by Theorem~\ref{LabFieldthm}.
\end{theorem}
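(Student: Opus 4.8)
The plan is to reduce both sides to the harmonic-map / Hopf-differential picture of Sections~\ref{HarmonicMapsAndMinimalLagrangianDiffeomorphisms}--\ref{LabourieFields} and to exploit the fact that, for the particular holomorphic structure $\mathcal{H}$ in the statement, the marking-preserving harmonic diffeomorphism onto $(S,g)$ --- and onto every landslide $\mathcal{L}(t,g,h)$ --- is the identity map. First I would set $J_0$ to be the complex structure of $g$ and $J:=J_0A$, so that $J^2=-\opId$ and $J$ is the complex structure of $\mathcal{H}$ (the conformal class of $g(A\cdot,\cdot)$). Since $A$ is a Labourie field of $g$ by Theorem~\ref{LabFieldthm}, Lemma~\ref{CharacterisationOfLabourieField} gives that $\phi:=\phi(g\,|\,\mathcal{H})$ is holomorphic for $\mathcal{H}$, so by Lemma~\ref{HopfDifferentialsAndHarmonicForms} the metric $g$ is a harmonic $2$-form for $\mathcal{H}$; thus $\opId\colon(S,\mathcal{H})\to(S,g)$ is harmonic and, by Theorem~\ref{HarmonicDiffeomorphisms}, is the marking-preserving harmonic diffeomorphism. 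Hence the Hopf differential of $g$ relative to $\mathcal{H}$ in the sense of Theorem~\ref{HopfDifferentialMapIsRealAnalyticDiffeomorphism} is exactly $\phi$, and $\mathcal{R}(t,g,\mathcal{H})$ is, by definition, the unique marked hyperbolic metric whose Hopf differential relative to $\mathcal{H}$ in that sense equals $e^{it}\phi$.

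Next I would establish the pointwise identity $\phi(\mathcal{L}(t,g,h)\,|\,\mathcal{H})=e^{it}\phi$. Here $\mathcal{L}(t,g,h)=g_{t,h}=g(A_t\cdot,A_t\cdot)$ with $A_t:=e^{(t/2)J}=\opCos(t/2)\opId+\opSin(t/2)J$. At a fixed point of $S$, take $w$ to be a $(1,0)$-vector for $\mathcal{H}$, i.e. an eigenvector of $J$ with eigenvalue $i$ (so $w=u_1-iu_2$ for a frame with $Ju_1=u_2$, exactly as in the Hopf-differential formula of Section~\ref{HopfDifferentials}); since $A_t$ commutes with $J$ it scales $w$ by $\opCos(t/2)+i\opSin(t/2)=e^{it/2}$, so, extending $g$ complex-bilinearly,
\begin{equation*}
g_{t,h}(w,w)=g(A_tw,A_tw)=e^{it}\,g(w,w)~.
\end{equation*}
Because $w$ (hence the accompanying $dz$) can be chosen the same on both sides, the formula $\phi(a\,|\,\mathcal{H})=\tfrac14\,a(w,w)\,dz\,dz$ of Section~\ref{HopfDifferentials} then yields $\phi(g_{t,h}\,|\,\mathcal{H})=e^{it}\,\phi(g\,|\,\mathcal{H})=e^{it}\phi$. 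This pointwise linear-algebra step is the only real computation, and it is short.

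To finish, I would note that $e^{it}\phi$ is again holomorphic, so Lemma~\ref{HopfDifferentialsAndHarmonicForms} shows $g_{t,h}$ is a harmonic $2$-form for $\mathcal{H}$; repeating the argument of the first paragraph, $\opId\colon(S,\mathcal{H})\to(S,g_{t,h})$ is the marking-preserving harmonic diffeomorphism, so the Hopf differential of $g_{t,h}$ relative to $\mathcal{H}$ in the sense of Theorem~\ref{HopfDifferentialMapIsRealAnalyticDiffeomorphism} equals $\phi(g_{t,h}\,|\,\mathcal{H})=e^{it}\phi$. By the uniqueness built into Theorem~\ref{HopfDifferentialMapIsRealAnalyticDiffeomorphism} (Wolf), $\mathcal{L}(t,g,h)=g_{t,h}=\mathcal{R}(t,g,\mathcal{H})$, which is the claim.

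I do not anticipate a serious obstacle; the two points needing care are the orientation/complex-structure bookkeeping in the pointwise step --- verifying that $J=J_0A$ genuinely is the complex structure of $\mathcal{H}$ and that the $(1,0)$-convention gives the phase $e^{+it}$ and not $e^{-it}$ --- and the observation that the phrase ``Hopf differential of a hyperbolic metric with respect to $\mathcal{H}$'' (which a priori could mean either $\phi(\cdot\,|\,\mathcal{H})$ directly or $\Phi(\mathcal{H},\cdot)$ via the harmonic map) is unambiguous here precisely because the harmonic map stays the identity all along the landslide flow. As a sanity check, at $t=\pi$ one has $\mathcal{L}(\pi,g,h)=h$, and the identity above predicts Hopf differential $-\phi$; this agrees with computing $\phi\bigl(g(A\cdot,A\cdot)\,\big|\,\mathcal{H}\bigr)=-\phi$ directly, in accordance with Corollary~\ref{SymmetryOfLabourieFields}.
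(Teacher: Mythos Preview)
Your argument is correct. The paper states this theorem without proof, so there is no approach to compare against; your route via Lemmas~\ref{HopfDifferentialsAndHarmonicForms} and~\ref{CharacterisationOfLabourieField}, the observation that $A_t=e^{(t/2)J}$ acts as multiplication by $e^{it/2}$ on $(1,0)$-vectors for $J=J_0A$, and Wolf's uniqueness (Theorem~\ref{HopfDifferentialMapIsRealAnalyticDiffeomorphism}) is exactly the kind of computation the paper's setup in Sections~\ref{HopfDifferentials}--\ref{LabourieFields} is designed to enable, and every step checks out. The only cosmetic remark is that your identification of $J=J_0A$ with the complex structure of $g(A\cdot,\cdot)$ rests on the identity $AJ_0A=J_0$, which holds precisely because $\opDet(A)=1$ and $A$ is $g$-symmetric; it may be worth saying this explicitly, since it is the one place where the Labourie-field hypothesis $\opDet(A)=1$ enters the orientation bookkeeping you flag at the end.
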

\noindent This correspondence is illustrated in Figure~\ref{fig:rotland}. In particular, whilst the corresponding result for landslides is less straightforward to state, it is easy to see that the composition of two rotations about a given marked holomorphic structure, $\mathcal{H}$, is another rotation about the same marked holomorphic structure. Theorem~\ref{LandslideTheorem}, is thus restated as follows, and is illustrated schematically in Figure~\ref{fig:rotation}.
\begin{figure}[t]
\begin{center}
\includegraphics[scale=0.5]{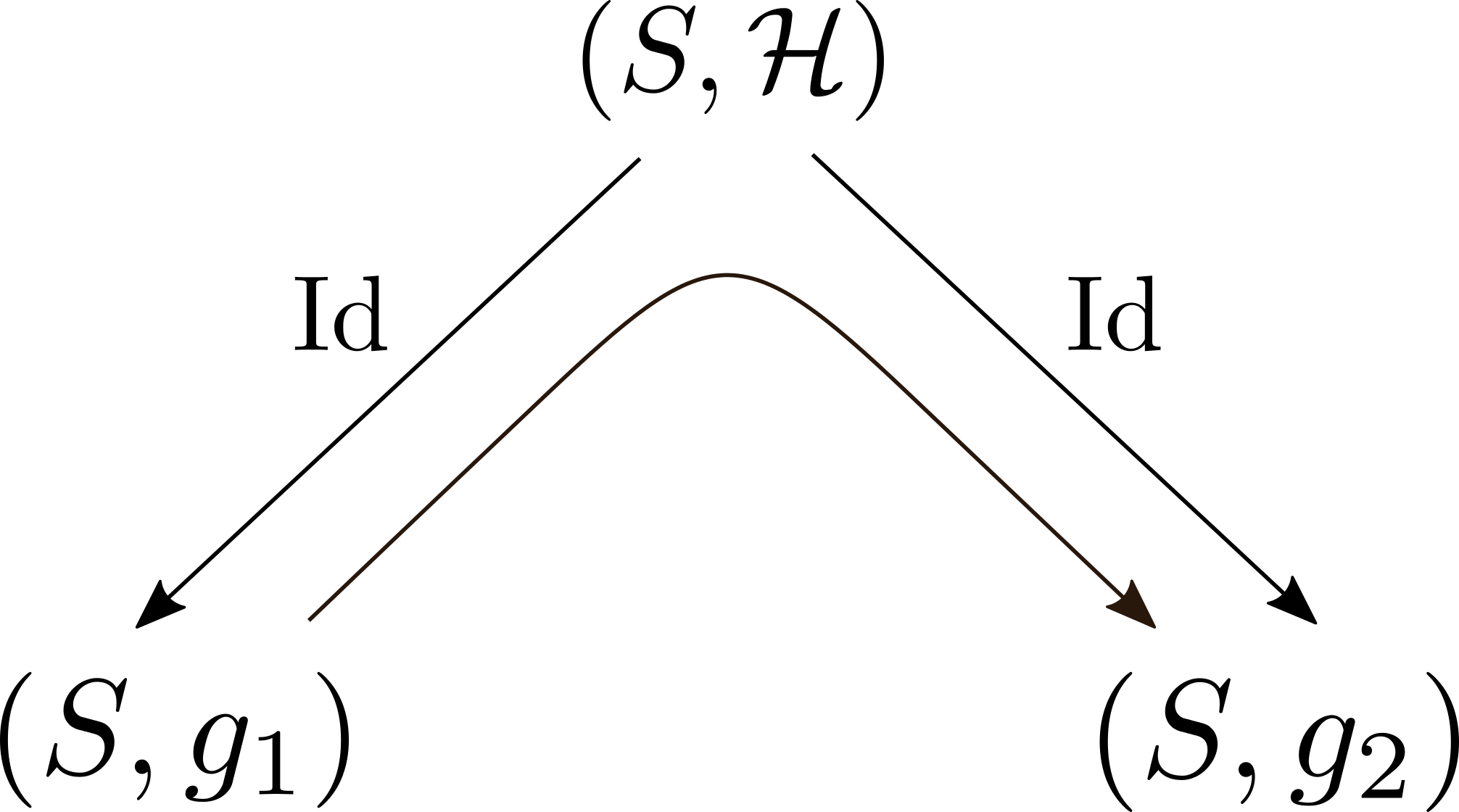}
\caption[caption]{ \textbf{The rotation theorem.} Here the upper vertex is given by a marked holomorphic structure over $S$, and the curved arrow joining the two lower vertices corresponds to a rotation of angle $2t$ about this point.}\label{fig:rotation}
\end{center}
\end{figure}
\begin{restatable}[Rotation theorem, Bonsante--Mondello--Schlenker \cite{BMS1}]{theorem}{rotation}
\label{rotations}
\noindent Given $t\in]-\pi,\pi[$, and two marked hyperbolic metrics, $g_1$ and $g_2$, there exists a unique marked hyperbolic metric, $h$, and a unique marked holomorphic structure, $\mathcal{H}$, such that
\begin{equationarray*}{rclclcc}
\ g_1 &=& \mathcal{R}_{-t,\mathcal{H}}(h) &=& \mathcal{R}(-t,h,\mathcal{H})~,& \text{and}\\
\ g_2 &=& \mathcal{R}_{t,\mathcal{H}}(h) &=& \mathcal{R}(t,h,\mathcal{H})~.
\end{equationarray*}
\end{restatable}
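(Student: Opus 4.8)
The plan is to deduce the rotation theorem directly from the landslide theorem (Theorem~\ref{LandslideTheorem}) together with the equivalence of landslides and rotations established just above. Recall that the preceding theorem asserts that, for any fixed $t\in]-\pi,\pi[$, any pair of marked hyperbolic metrics $g_1,g_2$ is obtained from a unique pair $(h^+,h^-)$ via $g_1=\mathcal{L}(-t,h^+,h^-)$ and $g_2=\mathcal{L}(t,h^+,h^-)$. First I would set $h:=h^+$, and let $A$ be the unique Labourie field of $h^-$ with respect to $h^+$ furnished by Theorem~\ref{LabFieldthm}; then let $\mathcal{H}$ be the marked holomorphic structure of the metric $h(A\cdot,\cdot)$. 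By the equivalence theorem $\mathcal{L}(s,h,h^-)=\mathcal{R}(s,h,\mathcal{H})$ for every real $s$, and applying this with $s=-t$ and $s=t$ immediately gives $g_1=\mathcal{R}(-t,h,\mathcal{H})$ and $g_2=\mathcal{R}(t,h,\mathcal{H})$, which is the existence assertion. Note that $\mathcal{R}(s,h,\mathcal{H})$ is defined as the metric whose Hopf differential with respect to $\mathcal{H}$ is $e^{is}\phi$, where $\phi$ is the Hopf differential of $h$ with respect to $\mathcal{H}$, so this is indeed the rotation flow through $h$ about $\mathcal{H}$.

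For uniqueness, suppose $(\tilde h,\tilde{\mathcal{H}})$ is another pair with $g_1=\mathcal{R}(-t,\tilde h,\tilde{\mathcal{H}})$ and $g_2=\mathcal{R}(t,\tilde h,\tilde{\mathcal{H}})$. Here I would run the equivalence in the other direction: given $(\tilde h,\tilde{\mathcal{H}})$, let $\tilde A$ be the endomorphism field with $\opDet(\tilde A)=1$ and $\tilde h(\tilde A\cdot,\cdot)$ conformal to $\tilde{\mathcal{H}}$ (this is a Labourie field by Lemma~\ref{CharacterisationOfLabourieField}), and let $\tilde h^-:=\tilde h(\tilde A\cdot,\cdot)$ — that is, recover the second hyperbolic metric of which $\tilde A$ is the Labourie field relative to $\tilde h$, using Theorem~\ref{AIsARealAnalDiff} (or Theorem~\ref{LabFieldthm}) to see that such a metric is uniquely determined. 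The equivalence theorem then gives $g_1=\mathcal{L}(-t,\tilde h,\tilde h^-)$ and $g_2=\mathcal{L}(t,\tilde h,\tilde h^-)$, so by the uniqueness clause of Theorem~\ref{LandslideTheorem} we get $\tilde h=h^+=h$ and $\tilde h^-=h^-$, whence $\tilde A=A$ (uniqueness in Theorem~\ref{LabFieldthm}) and therefore $\tilde{\mathcal{H}}=\mathcal{H}$.

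The one point that requires care — and which I expect to be the main obstacle — is the passage between the two descriptions of the conformal datum, i.e. checking that the assignment $(h,h^-)\mapsto(h,\mathcal{H})$, where $\mathcal{H}$ is the conformal class of $h(A\cdot,\cdot)$ with $A$ the Labourie field of $h^-$ relative to $h$, is genuinely a bijection onto the space of pairs $(h,\mathcal{H})$. Existence of the inverse uses Theorem~\ref{AIsARealAnalDiff}, which identifies $\opLab\Thyp$ with $\opT^*\Thol$ and hence the fibre over $h$ of Labourie fields of $h$ with $\Thyp$ via $A\mapsto$ (the hyperbolic metric whose Labourie field $A$ is); one must be slightly careful that this is exactly the metric appearing in the statement of Theorem~\ref{LabFieldthm}, i.e. that the roles of the two hyperbolic metrics are correctly matched, and in particular that $g(A\cdot,A\cdot)$ versus $g(A\cdot,\cdot)$ are used consistently (the former is the target metric, the latter induces $\mathcal{H}$). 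Once this dictionary is pinned down, the theorem follows formally, with no further analysis, from Theorems~\ref{LabFieldthm} and~\ref{LandslideTheorem} and the landslide–rotation equivalence.
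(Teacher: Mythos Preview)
Your proposal is correct and follows exactly the route the paper takes: the paper's entire proof is the one-line remark ``Since landslides and rotations are equivalent, Theorem~\ref{LandslideTheorem} can also be stated as follows,'' followed by the restatement of Theorem~\ref{rotations}. Your write-up simply unpacks that equivalence in more detail, including the bijection $(h,h^-)\leftrightarrow(h,\mathcal{H})$ needed for uniqueness; one cosmetic slip is that $\tilde h^-$ should be $\tilde h(\tilde A\cdot,\tilde A\cdot)$ rather than $\tilde h(\tilde A\cdot,\cdot)$, as you yourself note in the final paragraph.
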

%
%%%%%%%%%%%%%%%%%%%%%%%%%%%%%%%%%%%%%%%%%%%%%%%%%%%%%%%%%%%%%%%%%%%%%%%%%%%%%%%%
% Minkowski space.
%%%%%%%%%%%%%%%%%%%%%%%%%%%%%%%%%%%%%%%%%%%%%%%%%%%%%%%%%%%%%%%%%%%%%%%%%%%%%%%%
%
\section{Minkowski space}\label{MinkowskiSpace}
\subsection{Minkowski spacetimes}\label{MinkowskiSpacetimes}
{\sl Minkowski space}, which will be denoted by $\R^{2,1}$, is defined to be the space of all real triplets, $x:=(x_1,x_2,x_3)$, furnished with the metric\footnote{Throughout this text, in keeping with the notation of theoretical physics, spatial directions will have negative sign and temporal directions will have positive sign.}
\begin{equation*}
\langle x,y\rangle_{2,1} :=  - x_1y_1 - x_2y_2 + x_3y_3~.
\end{equation*}
An important object for the study of this space is the unit pseudosphere, defined to be the locus of all vectors of norm-squared equal to $1$. This hyperboloid consists of two isometric connected components, and the future oriented component is identified with $2$-dimensional hyperbolic space, that is
\begin{equation*}
\mathbb{H}^2 = \left\{x\ |\ \langle x,x\rangle_{2,1} = 1,\ x_3>0\right\}~.
\end{equation*}
The isometry group of Minkowski space is $\opO(2,1)\ltimes\R^{2,1}$, where $\opO(2,1)$ acts linearly, and $\R^{2,1}$ acts by translation. By considering the action of $\opO(2,1)$ on the unit pseudosphere, we see that this group consists of $4$ connected components which are determined by whether they preserve or reverse the orientation, and whether they preserve or exchange the two connected components of the unit pseudosphere. In particular, the identity component of $\opO(2,1)$ identifies with the group of orientation preserving isometries of $\mathbb{H}^2$, that is, $\opPSL(2,\R)$.
\par
Let $S$ be a closed surface of hyperbolic type. In all that follows, an {\sl equivariant immersion} of $S$ into $\R^{2,1}$ is a pair, $(e,\theta)$, where $e$ is an immersion of the universal cover, $\tilde{S}$, of $S$ into $\R^{2,1}$, which is {\sl locally strictly convex (LSC)} in the sense that its shape operator is everywhere positive definite, and $\theta:\Pi_1\rightarrow\opPSL(2,\R)\ltimes\R^{2,1}$ is a homomorphism such that, for all $\gamma\in\Pi_1$,
\begin{equation*}
e\circ\gamma = \theta(\gamma)\circ e~,
\end{equation*}
where $\Pi_1$ acts on $\tilde{S}$ by deck transformations. The homomorphism, $\theta$, will henceforth be referred as the {\sl holonomy} of the equivariant immersion. Two equivariant immersions, $(e,\theta)$ and $(e',\theta')$, will be considered equivalent whenever there exists a homeomorphism, $\phi$, of $S$ which is homotopic to the identity, and an element, $\alpha$, of $\opPSL(2,\R)\ltimes\R^{2,1}$ such that
\begin{eqnarray*}
e' &=& \alpha\circ e\circ\tilde{\phi}~,\ \text{and}\\
\theta' &=& \alpha\theta\alpha^{-1}~,\end{eqnarray*}
where $\tilde{\phi}:\tilde{S}\rightarrow\tilde{S}$ is a lifting of $\phi$. Throughout the sequel, by abuse of terminology, the equivalence class, $[e,\theta]$, of $(e,\theta)$ will also be referred to as an {\sl equivariant immersion}, and, furthermore, will be identified with the representative element, $(e,\theta)$, whenever convenient.
\par
Of repeated use throughout the sequel will be the following result (c.f. for example \cite{BGM})\footnote{Naturally, the fundamental theorem of surface theory is better known in the riemannian case (c.f. \cite{spi3}). The only difference between the two is the formula for $K$, which in the riemannian case is given by $K=c+\opDet(A)$.}.
\begin{theorem}[Fundamental theorem of surface theory --- lorentzian case]\label{FTS}
\noindent Let $S$ be a surface, let $g$ be a riemannian metric over $S$, and let $K$ be its sectional curvature. If $c\in\mathbb{R}$ is a real number, and if $A$ is a Codazzi field such that $K=c-\opDet(A)$, then there exists an isometric, spacelike immersion, $e:S\rightarrow M^c$, whose shape operator is given by $A$, where $M^c$ here denotes the model lorentzian spacetime of constant sectional curvature equal to $c$. Furthermore, $e$ is unique up to composition by isometries of $M^c$.
\end{theorem}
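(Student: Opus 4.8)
The plan is to reduce the statement to the integrability of a Pfaffian system on a simply connected base, exactly as in the classical Riemannian Bonnet theorem; the only genuinely new point is a sign in the Gauss equation, forced by the fact that the normal direction of a spacelike surface is timelike. Since the hypotheses and the conclusion lift to the universal cover, it suffices to treat the case in which $S$ is simply connected; for a general surface one runs the same argument on $\tilde S$ and reads off a holonomy homomorphism $\theta$, recovering the equivariant formulation used elsewhere in this paper. Realise the model spacetime $M^c$ as a quadric in a $4$-dimensional flat pseudo-Euclidean space $V$: for $c=0$ take $V=\R^{2,1}$ and $M^0=V$; for $c\neq 0$ take $M^c=\{x\in V\ :\ \langle x,x\rangle = 1/c\}$, with $V=\R^{3,1}$ when $c>0$ and $V=\R^{2,2}$ when $c<0$. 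In every case the identity component of $\opO(V)$ acts by isometries of $M^c$, transitively on adapted orthonormal frames.

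Next, fix locally a $g$-orthonormal coframe $(\sigma^1,\sigma^2)$ with Levi-Civita connection form $\omega^1_2$, so that $d\sigma^i=-\omega^i_j\wedge\sigma^j$ and $d\omega^1_2=-K\,\sigma^1\wedge\sigma^2$, and write the symmetric endomorphism $A$ in this frame. Any isometric spacelike immersion $e:S\to M^c$ with future unit normal $n$ and shape operator $A$ must satisfy the Gauss and Weingarten equations inside $M^c$, together with the radial equation $de=e_*$ when $c\neq 0$. Packaging $(e_*e_1,e_*e_2,n)$, and the position vector $e$ as well when $c\neq 0$, into a $V$-valued moving frame $F$, these equations take the form $dF=F\,\Omega$ for an explicit $\mathfrak{o}(V)$-valued $1$-form $\Omega=\Omega(g,A,c)$ built from the $\sigma^i$, from $\omega^1_2$, and from the components of $A$. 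By the standard theory of completely integrable Pfaffian systems, $dF=F\Omega$ is solvable on the simply connected $S$ precisely when the Maurer--Cartan equation $d\Omega+\Omega\wedge\Omega=0$ holds. The heart of the matter is the expansion of this equation: a direct computation shows it to be equivalent to the conjunction of (i) the identity read off from the $n$-component in bidegree $\sigma^1\wedge\sigma^2$, which upon substituting $d\omega^1_2=-K\,\sigma^1\wedge\sigma^2$ becomes the Gauss equation $K=c-\opDet(A)$ --- the minus sign being produced by the Lorentz norm $\langle n,n\rangle$ of the timelike normal --- and (ii) the tangential identities, which are exactly the Codazzi equations $(\nabla_X A)Y=(\nabla_Y A)X$. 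Under our hypotheses both hold, so the system is integrable.

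Finally, on the simply connected $S$ we integrate: there is a frame $F:S\to\opO(V)$, unique once its value at a basepoint is prescribed, with $dF=F\Omega$. When $c\neq 0$, differentiating shows that $\langle e,e\rangle$ is constant along $S$, so choosing the initial frame adapted forces $\langle e,e\rangle\equiv 1/c$, i.e. $e$ maps $S$ into $M^c$; when $c=0$ one simply integrates $de=\sigma^1 e_1+\sigma^2 e_2$ to obtain $e:S\to\R^{2,1}$ directly. Reading the construction backwards, $e$ is an immersion, its induced metric is $g$ (hence spacelike), $n$ is its future unit normal, and its shape operator is $A$. Uniqueness follows at once: two such immersions give rise to adapted parallel frames differing by a constant element of $\opO(V)$, that is, by an isometry of $M^c$. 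The only delicate step is the curvature computation of the second paragraph, together with the sign bookkeeping that distinguishes the Lorentzian Gauss equation from its Riemannian counterpart; the remainder is the routine machinery of integrating flat connections over a simply connected domain.
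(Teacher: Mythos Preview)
The paper does not actually prove this theorem: it merely states it with a reference to \cite{BGM}, and remarks in a footnote that the only difference from the Riemannian fundamental theorem of surface theory is the sign in the Gauss equation, which reads $K=c+\opDet(A)$ in the Riemannian case and $K=c-\opDet(A)$ here. Your moving-frame/Pfaffian-system sketch is correct and is precisely the standard approach; in particular you have correctly identified that the sign flip arises from the Lorentz norm of the timelike normal, which is exactly the point the paper's footnote is alluding to.
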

\par
The set of homomorphisms which arise as holonomies of equivariant immersions is characterised in a straightforward manner in terms of Teichm\"uller data. Indeed, given an equivariant immersion, $[e,\theta]$, its holonomy, $\theta$, defines a tangent vector to $\Trep$ as follows. Consider first the linear component, $\theta_0$, of $\theta$, which sends $\Pi_1$ into $\opPSL(2,\R)$. Since the equivariant immersion, $e$, is spacelike, it is actually an embedding, and is even a complete graph over a plane (c.f. \cite{mes+}). From this simple fact, we deduce
\begin{theorem}[Mess \cite{Mess}]\label{thm:mess mink}
\noindent The homomorphism, $\theta_0$, is injective, and its image, $\theta_0(\Pi_1)$, acts properly discontinuously on $\mathbb{H}^2$.
\end{theorem}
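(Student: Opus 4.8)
The plan is to exploit the fact, stated just before the theorem, that a spacelike equivariant immersion $e:\tilde S\to\R^{2,1}$ is an embedding onto a complete spacelike graph over a spacelike plane $P\cong\R^2$. Call this graph $\Sigma$. The first step is to observe that $\Sigma$ is $\theta(\Pi_1)$-invariant, and since $\Sigma$ is a graph over $P$, the affine action of $\theta(\Pi_1)$ on $\R^{2,1}$ descends to an action on $P\cong\R^2$: concretely, composing $\theta(\gamma)$ with the vertical projection $\R^{2,1}\to P$ along the timelike direction conjugate to $P$ gives a homeomorphism of $P$, and because $\theta(\gamma)$ preserves $\Sigma$ this assignment is a homomorphism $\bar\theta:\Pi_1\to\mathrm{Homeo}(\R^2)$. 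Since $e$ is an equivariant homeomorphism $\tilde S\to\Sigma$ and $\Sigma\to P$ is a homeomorphism, $\bar\theta$ is conjugate to the deck action of $\Pi_1$ on $\tilde S\cong\R^2$; in particular $\bar\theta$ is faithful and properly discontinuous. From this, $\theta$ itself is injective (it even acts freely and properly discontinuously on $\R^{2,1}$, since a fixed point or a non-properly-discontinuous behaviour would push down to one for $\bar\theta$).

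The second step is to promote injectivity of the affine holonomy $\theta$ to injectivity and discreteness of the \emph{linear} part $\theta_0$. Here I would argue by contradiction. Suppose $\theta_0(\gamma)=\Id$ for some $\gamma\ne 1$. Then $\theta(\gamma)$ is a pure translation by some vector $v\ne 0$ (it cannot be trivial since $\theta$ is injective). If $v$ is not timelike, the translation by $v$ preserves no complete spacelike graph over any spacelike plane — a complete spacelike hypersurface stays within a spacelike cone direction, and translating it by a spacelike or null vector moves it off itself, contradicting $\theta(\gamma)\Sigma=\Sigma$; if $v$ is timelike, then $\langle\gamma\rangle\cong\Z$ acts on $\Sigma$ by an infinite cyclic group all of whose elements project to the identity on $P$, contradicting proper discontinuity of $\bar\theta$ unless $\gamma=1$. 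Hence $\theta_0$ is injective. For discreteness, suppose $\theta_0(\gamma_n)\to\Id$ in $\opPSL(2,\R)$ for a sequence of distinct $\gamma_n\in\Pi_1$. One shows the corresponding affine maps $\theta(\gamma_n)$ cannot then act properly discontinuously on $\Sigma$: picking a compact fundamental domain $D\subset\Sigma$ for the cocompact action and a point $p\in D$, the images $\theta(\gamma_n)p$ would have linear parts converging to the identity, and a translational-part argument (the translation parts are constrained because $\theta(\gamma_n)$ must map $\Sigma$ into itself) forces infinitely many $\theta(\gamma_n)p$ into a fixed compact set, again contradicting proper discontinuity of the $\Pi_1$-action on $\Sigma$. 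Therefore $\theta_0(\Pi_1)$ is discrete, and being discrete and acting (via the above) properly discontinuously, it acts properly discontinuously on $\H^2$.

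Alternatively — and this is probably the cleaner route — one can invoke the fundamental theorem of surface theory in the Lorentzian case (Theorem~\ref{FTS}): the induced metric $g$ on $\Sigma\cong\tilde S$ together with its shape operator $A$ satisfies $K=-\opDet(A)<0$ since $A$ is positive definite (the immersion is LSC), so $g$ is a complete metric of strictly negative curvature on $\tilde S$, hence a Hadamard surface, and the developing map of its Gauss image lands in $\H^2$ equivariantly with respect to $\theta_0$. Completeness of $g$ — which follows from $\Sigma$ being a complete graph — makes the Gauss map a proper map onto $\H^2$, and equivariance under the cocompact deck action of $\Pi_1$ forces $\theta_0$ to be a discrete faithful representation. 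I would present the first argument as the main line and mention this Gauss-map viewpoint as a remark.

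The main obstacle I anticipate is the rigorous control of the \emph{translational parts} of $\theta(\gamma_n)$ in the discreteness argument: one needs that "$\theta(\gamma)$ preserves the complete spacelike graph $\Sigma$" genuinely pins down the translation part once the linear part is nearly trivial. This is exactly the geometric input that a spacelike graph cannot be translated in a spacelike or lightlike direction and still coincide with itself, together with the uniformity coming from cocompactness of the $\Pi_1$-action on $\Sigma$. Making that uniformity precise — e.g. via the fact that the "time function" height of $\Sigma$ over $P$ is proper and $\Pi_1$-invariant modulo the cocompact identification — is where the real work lies; everything else is formal once the picture "$\Sigma\to P$ is an equivariant homeomorphism intertwining $\theta$ and a conjugate of the deck action" is in place.
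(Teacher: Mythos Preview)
Your alternative route---the Gauss map argument---is exactly what the paper does, and it is the entire proof. The paper observes that the future unit normal $N_e:\tilde S\to\H^2$ is $\theta_0$-equivariant, that local strict convexity makes $N_e$ a local diffeomorphism, and that cocompactness of the $\Pi_1$-action on $\tilde S$ together with simple connectedness of $\tilde S$ forces $N_e$ to be a global diffeomorphism. Injectivity and proper discontinuity of $\theta_0$ then follow immediately, since $N_e$ conjugates the deck action on $\tilde S$ to the $\theta_0$-action on $\H^2$. Your version of this (``completeness makes the Gauss map proper'') is close but less direct: properness of $N_e$ does not follow from completeness of the induced metric alone without further argument, whereas the cocompact-plus-local-diffeomorphism route gives a covering map straight away.

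Your main line, by contrast, has a genuine gap in the case analysis for $\ker\theta_0$. You claim that if $\theta(\gamma)$ is a pure translation by a non-timelike vector $v$, then this translation ``preserves no complete spacelike graph over any spacelike plane''. This is false: the horizontal plane $\{x_3=0\}$ is a complete spacelike graph preserved by every horizontal (spacelike) translation. More generally, nothing prevents $\Sigma$ from being invariant under a spacelike translation, and in that case $\bar\theta(\gamma)$ is a nontrivial translation of $P$, which is perfectly compatible with proper discontinuity of $\bar\theta$. So neither branch of your dichotomy yields the contradiction you need. One can repair this by arguing that $\ker\theta_0$ is a normal abelian subgroup of the surface group $\Pi_1$ (its image under the injective $\theta$ lies in the translation subgroup), hence trivial; but that is a different argument from the one you wrote, and in any case it is strictly more work than the two-line Gauss map proof. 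Lead with the Gauss map.
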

\noindent Indeed, let $N_e:\tilde{S}\rightarrow\mathbb{H}^2$ be the future oriented, unit, normal vector field over $e$. This map, which will henceforth be referred to as the {\sl Gauss map} of $e$, is $\theta_0$-equivariant in the sense that for all $\gamma\in\Pi_1$,
\begin{equation*}
N_e\circ\gamma = \theta_0(\gamma)\circ N_e~.
\end{equation*}
It follows by local strict convexity that $N_e$ is at every point a local diffeomorphism, and since $\Pi_1$ acts cocompactly on $\tilde{S}$, and since the latter is simply connected, a straightforward argument shows that $N_e$ is actually a global diffeomorphism. In particular, $\theta_0$ is injective and $\theta_0(\Pi_1)$ acts properly discontinuously on $\mathbb{H}^2$, as desired.
\par
It follows that the quotient, $\mathbb{H}^2/\theta_0(\Pi_1)$, is a compact hyperbolic surface, so that $\theta_0$ indeed defines a point of $\Trep$. Furthermore, if $(e',\theta')$ is another equivariant immersion in the same equivalence class, then the linear component, $\theta'_0$, of its holonomy is conjugate to $\theta_0$. It therefore defines the same point of $\Trep$, so that the point of Teichm\"uller space determined by $\theta$ is indeed a function of the equivalence class, $[e,\theta]$, only.
\par
The tangent vector above this base point is defined via the translation component, $\tau$, of $\theta$. Indeed, $\tau$ satisfies the following {\sl cocycle condition},
\begin{equation*}
\tau(\gamma\gamma') = \tau(\gamma) + \theta_0(\gamma)\tau(\gamma')~,
\end{equation*}
where $\gamma$ and $\gamma'$ are arbitrary elements of $\Pi_1$. This means that $\tau$ defines a class in the cohomology group, $H^1_{\theta_0}(\Pi_1,\R^{2,1})$. However, since $\R^{2,1}$ itself identifies with the Lie algebra, $\opsl(2,\R)$, this is the same as the cohomology group, $H^1_{\opAd\circ \theta_0}(\Pi_1,\opsl(2,\R))$, which is known to identify with the tangent space to $\Trep$ at the point $[\theta_0]$ (c.f. \cite{gol84}). Furthermore, suppose that $(e',\theta')$ is another equivariant immersion in the same equivalence class whose holonomy, $\theta'$, has the same linear component as $\theta$. Then $\theta'$ only differs from $\theta$ by conjugation by a translation, and so $\tau'$ only differs from $\tau$ by addition of a coboundary. It therefore defines the same class in $H^1_{\opAd\circ\theta_0}(\Pi_1,\opsl(2,\R))$, so that the tangent vector to Teichm\"uller space determined by $\theta$ is also a function of the equivalence class, $[e,\theta]$, only.
\par
In summary, Theorem~\ref{thm:mess mink} refines to
\begin{theorem}[Mess \cite{Mess}]\label{mess mink 22}
\noindent The homomorphism, $\theta_0$, is injective, and its image, $\theta_0(\Pi_1)$, acts properly discontinuously on $\mathbb{H}^2$. Furthermore, the translation component, $\tau$, is a $\theta_0$-cocycle, so that the holonomy, $\theta$, defines a point of $\opT\Trep$. In particular, this point only depends on the equivalence class, $[e,\theta]$, of $(e,\theta)$.
\end{theorem}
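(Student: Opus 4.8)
The plan is to assemble the statement from the pieces already in place, treating its three assertions in turn. The injectivity of $\theta_0$ and the proper discontinuity of the action of $\theta_0(\Pi_1)$ on $\mathbb{H}^2$ are exactly the content of Theorem~\ref{thm:mess mink}, whose proof rests on the fact that the Gauss map $N_e$ is a global diffeomorphism onto $\mathbb{H}^2$; this is the geometric heart of the matter, and it has already been handled. In particular $\mathbb{H}^2/\theta_0(\Pi_1)$ is a closed hyperbolic surface and $[\theta_0]$ is a well-defined point of $\Trep$, so it remains only to produce the tangent vector above it and to check that the resulting point of $\opT\Trep$ is a function of $[e,\theta]$ alone.

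For the cocycle property, I would simply unwind the homomorphism identity $\theta(\gamma\gamma')=\theta(\gamma)\theta(\gamma')$ in the semidirect product $\opPSL(2,\R)\ltimes\R^{2,1}$. Writing $\theta(\gamma)=(\theta_0(\gamma),\tau(\gamma))$ and using that the group law sends $(\theta_0(\gamma),\tau(\gamma))\cdot(\theta_0(\gamma'),\tau(\gamma'))$ to $(\theta_0(\gamma)\theta_0(\gamma'),\,\tau(\gamma)+\theta_0(\gamma)\tau(\gamma'))$, comparison of second coordinates yields the cocycle relation $\tau(\gamma\gamma')=\tau(\gamma)+\theta_0(\gamma)\tau(\gamma')$ at once. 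Hence $\tau$ represents a class $[\tau]\in H^1_{\theta_0}(\Pi_1,\R^{2,1})$. Identifying $\R^{2,1}$ with $\opsl(2,\R)$ equipped with (a multiple of) its Killing form — an identification under which the linear $\opPSL(2,\R)$-action becomes the adjoint action — we may view $[\tau]$ as an element of $H^1_{\opAd\circ\theta_0}(\Pi_1,\opsl(2,\R))$, which is the tangent space $\opT_{[\theta_0]}\Trep$ by the standard identification (c.f.\ \cite{gol84}). Thus $\theta=(\theta_0,\tau)$ determines a point of $\opT\Trep$.

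Finally, to see that this point depends only on $[e,\theta]$, let $(e',\theta')$ be equivalent to $(e,\theta)$, so that $\theta'=\alpha\theta\alpha^{-1}$ for some $\alpha=(\alpha_0,v)\in\opPSL(2,\R)\ltimes\R^{2,1}$ (the homeomorphism homotopic to the identity affects the holonomy only through such a conjugation). A short computation gives $\theta_0'=\alpha_0\theta_0\alpha_0^{-1}$, which represents the same point of $\Trep$, and $\tau'(\gamma)=\alpha_0\tau(\gamma)+(v-\theta_0'(\gamma)v)$; the term $v-\theta_0'(\gamma)v$ is a coboundary for the twisted action $\theta_0'$, and $[\tau]\mapsto[\alpha_0\circ\tau]$ is precisely the isomorphism $\opT_{[\theta_0]}\Trep\to\opT_{[\theta_0']}\Trep$ induced by the conjugation, compatible with the identification $[\theta_0]=[\theta_0']$ in $\Trep$. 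Hence the point of $\opT\Trep$ is unchanged. The two points that deserve a moment of care are that the translation part of the conjugation contributes exactly a coboundary, and that the $\R^{2,1}\leftrightarrow\opsl(2,\R)$ identification is genuinely $\opPSL(2,\R)$-equivariant — the latter being where the match between the Lorentzian form and the Killing form is used — but neither is a real obstacle. Once Theorem~\ref{thm:mess mink} is granted, all that remains is this bookkeeping, and the only ``hard'' input is the global diffeomorphism property of the Gauss map already invoked there.
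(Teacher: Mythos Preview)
Your proposal is correct and follows essentially the same route as the paper: invoke Theorem~\ref{thm:mess mink} for injectivity and proper discontinuity, read off the cocycle relation from the semidirect product law, identify $\R^{2,1}\cong\opsl(2,\R)$ to land in $H^1_{\opAd\circ\theta_0}(\Pi_1,\opsl(2,\R))\cong\opT_{[\theta_0]}\Trep$, and check well-definedness under conjugation. The only minor difference is organisational: the paper normalises first so that $\theta_0'=\theta_0$ and then observes that the remaining conjugation is by a pure translation, hence contributes a coboundary, whereas you carry the general $\alpha=(\alpha_0,v)$ through in one step; your version is slightly more explicit but not a different argument.
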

The converse problem of recovering an equivariant immersion from its holonomy is more involved. Indeed, it is straightforward to show that perturbations of equivariant immersions yield entire continua of inequivalent equivariant immersions with the same holonomy, so that the problem is clearly highly degenerate. However, this degeneracy can be removed by studying, instead of the immersion itself, the ambient space in which it lies. In fact, every equivariant immersion is contained in a well defined ghmc Minkowski spacetime. More precisely,
\begin{theorem}[Mess \cite{Mess}]\label{mess mink 23}
\noindent Given a representation, $\theta:\Pi_1\rightarrow\opPSL(2,\R)\ltimes\R^{2,1}$, whose linear component, $\theta_0$, is injective and acts properly discontinuously over $\mathbb{H}^2$, there exists a unique future complete, convex subset, $\Omega_\theta^+$, of Minkowski space which is maximal with respect to inclusion, and over the interior of which $\theta$ acts freely and properly discontinuously.
\end{theorem}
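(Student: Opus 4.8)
Write $\Gamma_0:=\theta_0(\Pi_1)$, which is discrete and, since $\Pi_1$ is the fundamental group of a closed surface of hyperbolic type, cocompact in $\opPSL(2,\R)$, and let $\Gamma:=\theta(\Pi_1)$ act affinely on $\R^{2,1}$ by $\gamma\cdot x=\theta_0(\gamma)x+\tau(\gamma)$. The plan rests on the observation that the future timelike cone $I^+(0)$ is exactly the intersection of the open future half-spaces bounded by the null planes through the origin; I would construct $\Omega_\theta^+$ as the $\Gamma$--equivariant deformation of this description. Every null plane has the form $P_{n,s}=\{x\ |\ \langle x,n\rangle_{2,1}=s\}$ with $n$ future null and $s\in\R$, its open future side being $H_{n,s}=\{x\ |\ \langle x,n\rangle_{2,1}>s\}$; since $\langle v,n\rangle_{2,1}\geq 0$ whenever $v$ is future causal and $n$ future null, each $H_{n,s}$, and hence any intersection of such sets, is future complete. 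A direct computation, using $\theta_0(\gamma)\in\opO(2,1)$ together with the cocycle identity for $\tau$ recalled in Theorem~\ref{mess mink 22}, gives $\gamma\cdot H_{n,s}=H_{\theta_0(\gamma)n,\,s+\langle\tau(\gamma),\theta_0(\gamma)n\rangle_{2,1}}$. After normalising future null vectors by their last coordinate, so that their rays are identified with $\partial\mathbb{H}^2$, a $\Gamma$--invariant family $\{H_{n,f(n)}\}$ of null half-spaces is exactly the datum of a function $f:\partial\mathbb{H}^2\rightarrow\R$, extended to future null vectors by positive homogeneity of degree one, satisfying
\begin{equation*}
f(\theta_0(\gamma)n)=f(n)+\langle\tau(\gamma),\theta_0(\gamma)n\rangle_{2,1}\qquad(\gamma\in\Pi_1),
\end{equation*}
the cocycle condition on $\tau$ being precisely what renders this relation self-consistent.

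The heart of the argument is to produce such an $f$ and to establish its uniqueness. Fixing a base future null vector $n_0$, the displayed relation forces $f(\theta_0(\gamma)n_0)=f(n_0)+\langle\tau(\gamma),\theta_0(\gamma)n_0\rangle_{2,1}$ along the orbit $\Gamma_0\cdot n_0$, whose image in $\partial\mathbb{H}^2$ is dense; the task is to show that, for a suitable value of $f(n_0)$, this extends to a continuous --- indeed Lipschitz --- function on all of $\partial\mathbb{H}^2$. Cocompactness of $\Gamma_0$ is exactly what supplies the uniform estimate needed to carry out this extension, and this --- the existence of $f$ --- is the one genuinely analytic point of the proof and its main obstacle. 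Uniqueness is then easier: two continuous solutions differ by a function $g$ on $\partial\mathbb{H}^2$ with $g(\theta_0(\gamma)n)=g(n)$ for all $\gamma$; evaluating at the attracting fixed point $\xi$ of a hyperbolic element $\gamma\in\Gamma_0$, where $\theta_0(\gamma)n(\xi)=\lambda n(\xi)$ with $\lambda>1$, forces $g(\xi)=0$, hence $g$ vanishes at the dense set of such fixed points, and hence identically.

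Given $f$, set $\Omega_\theta^+:=\bigcap_{n}\overline{H_{n,f(n)}}$, the intersection of the closed future sides of the planes $P_{n,f(n)}$. Continuity of $f$ on the compact circle $\partial\mathbb{H}^2$ ensures that $\Omega_\theta^+$ is convex, future complete, has nonempty interior $\bigcap_n H_{n,f(n)}$ (it contains $(0,0,T)$ for $T$ large), and is cut out by pairwise non-parallel planes; equivariance of $f$ makes $\Omega_\theta^+$ invariant under $\Gamma$. For maximality, I would argue that any competing future complete convex $\Gamma$--invariant set $\Omega'$ on whose interior $\Gamma$ acts properly discontinuously has a support function that is $\Gamma$--equivariant and --- once the degenerate directions are excluded using proper discontinuity --- continuous on all of $\partial\mathbb{H}^2$; by the uniqueness just established this support function must coincide with $f$, so that $\Omega'\subseteq\Omega_\theta^+$. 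Hence $\Omega_\theta^+$ is the unique maximal such domain.

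It remains to check that $\Gamma$ acts freely and properly discontinuously on the interior of $\Omega_\theta^+$. Since $\theta_0$ is injective the homomorphism $\theta$ is injective, and since $\Pi_1$ is torsion free so is $\Gamma$, so freeness will follow automatically from proper discontinuity; and for proper discontinuity it suffices to exhibit a continuous map $\pi$ from the interior of $\Omega_\theta^+$ to $\mathbb{H}^2$ intertwining the $\Gamma$--action with the $\theta_0$--action, for then $\theta(\gamma)K\cap K\neq\emptyset$ for compact $K$ would force $\theta_0(\gamma)\pi(K)\cap\pi(K)\neq\emptyset$, which by the proper discontinuity of $\theta_0$ on $\mathbb{H}^2$ (a hypothesis) holds for only finitely many $\gamma$, and injectivity of $\theta$ then concludes. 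For $\pi$ I would take the normal Gauss map of a level set of the cosmological time function of $\Omega_\theta^+$: the cosmological time of a domain cut out by null planes is regular, its level sets are convex spacelike Cauchy surfaces, and $\Gamma$--invariance of $\Omega_\theta^+$ makes the construction equivariant. This input --- together with the finiteness estimate behind the existence of $f$ --- is the place where I would lean on the standard theory of these domains rather than reprove everything; for a self-contained treatment one would instead build a $\Gamma$--invariant complete convex spacelike Cauchy surface in $\Omega_\theta^+$ directly by a barrier argument and use its Gauss map.
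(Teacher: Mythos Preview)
The paper does not actually prove this theorem. It is stated as a result of Mess and followed only by two remarks---one noting that Mess in fact works projectively, constructing a single maximal invariant convex set $\Omega_\theta\subseteq\opRP^3$ whose intersection with $\R^{2,1}$ has a future-complete and a past-complete component, and one observing that reflection through the origin exchanges $\Omega_\theta^+$ with $\Omega_{\theta'}^+$ for the representation $\theta'$ with negated translation part. No argument for existence, uniqueness, or proper discontinuity is given here; the paper is a survey and simply cites \cite{Mess}. A bit later (the paragraph after Theorem~\ref{mess mink 5}) the paper does describe how to rebuild $\Omega_\theta^+$ from the pair $(\theta_0,\lambda)$ via a grafting-type construction along a measured geodesic lamination, but that is presented as the inverse of a different parametrisation, not as a proof of the present statement.

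Your outline is the standard modern route to this result (closer to Bonsante \cite{Bar05} and Barbot than to Mess' original projective argument): encode the domain by an equivariant $1$-homogeneous function on the null cone and reduce everything to existence and uniqueness of that function. Two places deserve tightening. First, your maximality step asserts that any competing $\Gamma$-invariant domain has a \emph{continuous} null support function ``once the degenerate directions are excluded using proper discontinuity''; this is exactly the delicate point, and it is where proper discontinuity genuinely enters---a domain whose null support function fails to be finite in some direction contains a full null half-space, and one must argue that $\Gamma$ cannot act properly discontinuously there. Second, the phrase ``for a suitable value of $f(n_0)$'' sits uneasily with your own uniqueness argument, which shows that at most one value of $f(n_0)$ can yield a continuous extension; existence is really about showing that the orbit-defined values admit a continuous (indeed Lipschitz) extension at all, with $f(n_0)$ then forced rather than chosen.
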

\begin{remark} Here a spacetime is said to be future (resp. past) complete whenever every future oriented (resp. past oriented) causal geodesic can be extended indefinitely.\end{remark}
\begin{remark} In fact, Mess constructs a unique, maximal, invariant convex subset, $\Omega_\theta$, in the projective space, $\opRP^3$. Its intersection with $\Bbb{R}^{2,1}$ then consists of two connected components, one of which, denoted by $\Omega_\theta^+$, is future complete, and the other of which, denoted by $\Omega_\theta^-$,
 is past complete. However, reflection through the origin maps $\Omega^-_\theta$ into
$\Omega^{+}_{\theta'}$,
where $\theta':
\Pi_1\rightarrow\opPSL(2,\Bbb{R})\ltimes\Bbb{R}^{2,1}$
is the homomorphism whose linear part, $\theta'_0$, is equal to $\theta_0$, and whose translation component, $\tau'$, is equal to $-\tau$. For this reason, it will be sufficient in all that follows to consider only the future complete component. \end{remark}
\par
The quotient, $\Omega^+_\theta/\theta(\Pi_1)$, is a future complete, ghmc Minkowski spacetime. Furthermore, since any compact, LSC Cauchy surface in a given ghmc Minkowski spacetime lifts to an equivariant immersion in $\R^{2,1}$, it turns out that every future complete, ghmc Minkowski spacetime actually arises in this manner. In other words, we have constructed two maps which send the space, $\opGHMC_0$, of future-complete, ghmc Minkowski spacetimes into spaces of Teichm\"uller data (c.f. Tables~\ref{table mink 1} and \ref{table mink 2}), and Theorems \ref{mess mink 22} and \ref{mess mink 23} now yield
\begin{table}[h!]
\begin{center}
\begin{tabular}{|c|c|c|}
\hline
\bf Map & \bf Description  &  \bf Codomain \\
\hline
$\Theta_0$ & \multicolumn{1}{|l|}{The linear component of the holonomy} & $\Trep$ \\
\hline
\end{tabular}
\end{center}
\caption{Maps taking values in spaces of real dimension $(6\mathfrak{g}-6)$.}\label{table mink 1}
\end{table}
\begin{table}[h!]
\begin{center}
\begin{tabular}{|c|c|c|}
\hline
\bf Map & \bf Description  &  \bf Codomain \\
\hline $\Theta$ & \multicolumn{1}{|l|}{The entire holonomy} & $\opT\Trep$ \\
\hline
\end{tabular}
\end{center}
\caption{Maps taking values in spaces of real dimension $(12\mathfrak{g}-12)$.}\label{table mink 2}
\end{table}
\begin{theorem}\label{thm:mess bij mink}
\noindent The map, $\Theta$, defines a bijection from $\opGHMC_0$ into $\opT\Trep$.
\end{theorem}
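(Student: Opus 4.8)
The plan is to derive the statement from the two complementary halves of Mess's correspondence recorded above: Theorem~\ref{mess mink 22}, which attaches a well-defined point of $\opT\Trep$ to the holonomy of an equivariant immersion, and Theorem~\ref{mess mink 23}, which reconstructs from a representation with discrete injective linear part a canonical maximal future-complete invariant convex domain in $\R^{2,1}$. The first point to settle is that $\Theta$ really does take values in $\opT\Trep$. For $M\in\opGHMC_0$, the holonomy $\theta:\Pi_1\to\opPSL(2,\R)\ltimes\R^{2,1}$ is canonically defined up to conjugation, being the monodromy of the underlying flat Lorentzian structure, equivalently the deck group of the developing map $\widetilde M\to\R^{2,1}$. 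To locate its class inside $\opT\Trep$ I would pick a smooth, compact, locally strictly convex Cauchy surface in $M$, such as a constant extrinsic curvature Cauchy surface, and lift it to an equivariant immersion $(e,\theta)$ of $S$; Theorem~\ref{mess mink 22} then guarantees that $\theta_0$ is injective and acts properly discontinuously on $\mathbb{H}^2$ and that $\tau$ is a $\theta_0$-cocycle, so that $[\theta]\in\opT\Trep$. Since such a surface may be deformed within its isotopy class without changing $[\theta]$, the map $\Theta$ is well-defined.

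For surjectivity, fix a class in $\opT\Trep$ represented by $\theta=(\theta_0,\tau)$ with $\theta_0$ injective and properly discontinuous on $\mathbb{H}^2$. Theorem~\ref{mess mink 23} produces the maximal future-complete invariant convex domain $\Omega^+_\theta$, over whose interior $\theta(\Pi_1)$ acts freely and properly discontinuously, and I would set $M_\theta:=\operatorname{Int}(\Omega^+_\theta)/\theta(\Pi_1)$. It then remains to check that $M_\theta\in\opGHMC_0$: it is a flat Lorentzian spacetime, globally hyperbolic and future complete by the corresponding properties of $\Omega^+_\theta$; it is maximal, since any proper globally hyperbolic enlargement of $M_\theta$ preserving a Cauchy surface would develop to a strictly larger future-complete invariant convex domain, contradicting the maximality clause of Theorem~\ref{mess mink 23}; and it is Cauchy compact, with Cauchy surface a closed surface whose Euler characteristic equals that of $\mathbb{H}^2/\theta_0(\Pi_1)$ and which is therefore diffeomorphic to $S$. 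Finally, $\Omega^+_\theta$ is convex, hence simply connected, so it is the universal cover of $M_\theta$ with deck group $\theta(\Pi_1)$; consequently $\Theta(M_\theta)=[\theta]$, and $\Theta$ is onto.

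For injectivity, suppose $M_1,M_2\in\opGHMC_0$ with $\Theta(M_1)=\Theta(M_2)$; after conjugation we may assume both have holonomy $\theta$. Each universal cover $\widetilde{M_i}$ is a simply connected, future-complete, globally hyperbolic flat spacetime with complete Cauchy surface, and therefore develops, as in \cite{Mess,Bar05}, as an embedding onto a convex domain $\Omega_i\subset\R^{2,1}$ on which $\theta(\Pi_1)$ acts with quotient $M_i$. The crucial step is to see that GHM-maximality of $M_i$ forces $\Omega_i$ to be inclusion-maximal among future-complete invariant convex domains: if $\Omega_i$ were properly contained in such a domain $\Omega'$, then $\operatorname{Int}(\Omega')/\theta(\Pi_1)$ would be a strictly larger globally hyperbolic spacetime into which $M_i$ embeds with a common Cauchy surface, contradicting the maximality of $M_i$. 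The uniqueness clause of Theorem~\ref{mess mink 23} then gives $\Omega_1=\Omega_2=\Omega^+_\theta$, whence $M_1\cong M_2$. I expect the real obstacle to lie in this last circle of ideas: establishing that the developing map of a ghmc Minkowski spacetime is a global embedding with convex image --- the one place where global hyperbolicity, Cauchy compactness and completeness of the Cauchy surface enter in an essential, Mess--Barbot style way --- and then matching GHM-maximality precisely with inclusion-maximality of the resulting convex domain.
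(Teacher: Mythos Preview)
Your proposal is correct and follows exactly the route the paper indicates: the paper does not give a detailed proof of Theorem~\ref{thm:mess bij mink} but simply states that it is yielded by Theorems~\ref{mess mink 22} and~\ref{mess mink 23}, together with the observation (in the paragraph preceding the theorem) that every future-complete ghmc Minkowski spacetime arises as a quotient $\Omega_\theta^+/\theta(\Pi_1)$. Your argument spells out precisely this, supplying the well-definedness, surjectivity, and injectivity in turn, and you have correctly located the substantive content (the embedding of the developing map onto a convex domain, and the matching of GHM-maximality with inclusion-maximality) in Mess' and Barbot's work.

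One small caution: when establishing that $\Theta$ lands in $\opT\Trep$, you invoke a constant extrinsic curvature Cauchy surface as an example of an LSC Cauchy surface. The existence result you are implicitly using (Theorem~\ref{BBZmink}) is stated for $\Omega_\theta^+$ with $\theta_0$ already known to be injective and discrete, so appealing to it here is mildly circular. This is harmless in practice---the existence of \emph{some} LSC Cauchy surface in a future-complete ghmc flat spacetime follows, for instance, from the regularity of level sets of cosmological time, or directly from Mess' original argument---but you should cite one of those instead.
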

\noindent In particular, this yields a parametrisation of $\opGHMC_0$ by $\opT\Trep$, and it is this parametrisation that is used to furnish $\opGHMC_0$ with the structure of a real algebraic variety.
\subsection{Laminations and trees}\label{LaminationsAndTrees}
The inverse problem of reconstructing the equivariant immersion from its holonomy becomes clearer when our attention shifts from the homomorphism, $\theta$, to its invariant set, $\Omega_\theta^+$. Furthermore, by showing that certain geometric objects associated to $\Omega_\theta^+$ are unique, we obtain new functions from $\opGHMC_0$ into spaces of Teichm\"uller data, which suggest other potential parametrisations for this space. In this section, we consider the non-smooth geometric objects that are associated to $\Omega_\theta^+$. The first is a measured geodesic lamination over the hyperbolic surface, $\mathbb{H}^2/\theta_0(\Pi_1)$. It is constructed using the {\sl generalised Gauss map} of $\Omega_\theta^+$. This is a set-valued function defined as follows. Consider first an arbitrary closed, future complete convex subset, $X$, of $\R^{2,1}$. Given a boundary point, $p$, of $X$, any plane passing through $p$ is said to be a {\sl supporting plane} to $X$ at that point whenever $X$ lies entirely to one side of it. The set, $G(p)$, is then defined to be the set of all future oriented, timelike, unit vectors which are normal to some spacelike supporting plane to $X$ at $p$. In particular, $G(p)$ is a (possibly empty) subset of the future component of the unit pseudosphere, which, we recall, identifies with $\mathbb{H}^2$.
\par
In order to understand the geometry of the generalised Gauss map, it is useful to first consider configurations of null planes. To this end, let $\opC^+$ be the positive light cone,
\begin{equation*}
\opC^+ := \left\{x\ |\ \langle x,x\rangle_{2,1}=0,\ x_3>0\right\}~,
\end{equation*}
and let $\opPC^+$ be its projective quotient. Observe that any geodesic, $c$, in $\mathbb{H}^2$ is the intersection of $\mathbb{H}^2$ with a unique timelike plane, $P$. The intersection of $P$ with $\opC^+$ then defines two distinct rays which project down to two distinct points in $\opPC^+$. By identifying these points with the end points of the geodesic, $c$, we see how $\opPC^+$ naturally identifies with the ideal boundary of $\mathbb{H}^2$.
\par
Now consider a null plane, $P_1$, in $\R^{2,1}$. Its null direction, $N_1$, is the ray in $\opC^+$ given by the intersection of a suitable translate of itself with $C^+$. Furthermore, if $X$ is the half-space corresponding to the future of $P_1$, then for any $p\in P_1$, the image, $G(p)$, of the generalised Gauss map of $X$ at the point, $p$, is empty.
\par
Now let $P_2$ be another null plane which is not parallel to $P_1$. The intersection of these two planes is a complete, spacelike geodesic, $\Gamma$, normal to both $N_1$ and $N_2$. Furthermore, if $X$ now denotes the future complete, convex set determined by the intersections of the respective futures of these two planes, then the image, $G(p)$, of its generalised Gauss map at any point of $\Gamma$ is the complete geodesic in $\mathbb{H}^2$ whose end points at infinity are $N_1$ and $N_2$.
\par
Finally, let $P_3$ be a third null plane which is not parallel to $\Gamma$. The three planes, $P_1$, $P_2$ and $P_3$ then intersect in a single point, $p$, say, and if $X$ now denotes the future complete convex set determined by the intersection of their three respective futures, then the image, $G(p)$, of its generalised Gauss map at the point, $p$, is the ideal triangle in $\mathbb{H}^2$ with end points $N_1$, $N_2$ and $N_3$.
\par
Now, since the invariant set, $\Omega^+_\theta$, is maximal, its generalised Gauss map behaves much like that of a finite configuration of null planes. In particular, its boundary, $\partial\Omega_\theta^+$, is made up of three types of points. When $p$ is a face point, $G(p)$ is empty, when $p$ is an edge point, $G(p)$ is a complete geodesic in $\Bbb{H}^2$, and when $p$ is a vertex point, $G(p)$ is an ideal polygon in $\Bbb{H}^2$, possibly with infinitely many sides. The union of all complete geodesics which are images of edge points now defines a geodesic lamination, $L$, over $\mathbb{H}^2$. Furthermore, since it is invariant under the action of $\theta_0(\Pi_1)$, it projects to a lamination over the surface, $\mathbb{H}^2/\theta_0(\Pi_1)$.
\par
The construction of the transverse measure over this lamination is a bit more subtle, and relies on the observation that any point, $p$, of $\mathbb{H}^2$ not lying in $L$ has a unique preimage in $\partial\Omega^+_\theta$. With this in mind, the mass of any short transverse curve, $c$, with end points not in $L$ is first approximated by the Minkowski distance in $\R^{2,1}$ between the preimages of these two end points, and the mass of an arbitrary curve, $c$, compatible with $L$, is now determined in the usual manner by summing over short segments and taking a limit. Since this transverse measure is also invariant under the action of $\theta_0(\Pi_1)$, we thereby obtain a measured geodesic lamination over the surface, $\mathbb{H}^2/\theta_0(\Pi_1)$. An analogous construction also associates a measured geodesic lamination to the past-complete invariant set, $\Omega_\theta^-$. In this manner, we obtain two maps taking values in a space of Teichm\"uller data of real dimension $(6\mathfrak{g}-6)$ (c.f. Table~\ref{table 3 mink}).
\begin{table}[h!]
\begin{center}
\begin{tabular}{|c|c|c|}
\hline
\bf Map & \bf Description  &  \bf Codomain \\
\hline
$\operatorname{L}^\pm$ & \multicolumn{1}{|l|}{The measured geodesic lamination of $\partial\Omega^\pm_\theta$} & $\opML$\\
\hline
\end{tabular}
\end{center}
\caption{Maps taking values in spaces of real dimension $(6\mathfrak{g}-6)$.}\label{table 3 mink}
\end{table}
\par
In particular, the pair $(\Theta_0,\operatorname{L}^\pm)$ also parametrises the space of future-complete, ghmc Minkowski spacetimes. Indeed,
\begin{theorem}[Mess \cite{Mess}]\label{mess mink 5}
\noindent The map $(\Theta_0,\operatorname{L}^+)$ defines a bijection from $\opGHMC_0$ into $\Trep\times\opML$.
\end{theorem}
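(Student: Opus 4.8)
The plan is to use the bijection $\Theta : \opGHMC_0 \to \opT\Trep$ of Theorem~\ref{thm:mess bij mink} to reduce the statement to one purely about cocycles and laminations. That theorem identifies $\opGHMC_0$ with the set of pairs $(\theta_0,[\tau])$, where $\theta_0$ is a discrete faithful representation of $\Pi_1$ into $\opPSL(2,\R)$ and $[\tau]\in H^1_{\opAd\circ\theta_0}(\Pi_1,\opsl(2,\R))\cong\opT_{[\theta_0]}\Trep$; under this identification $\Theta_0$ becomes simply the bundle projection $\opT\Trep\to\Trep$. Hence $(\Theta_0,\operatorname{L}^+)$ is a bijection onto $\Trep\times\opML$ if and only if, for each fixed Fuchsian $\theta_0$ with quotient surface $\Sigma:=\mathbb{H}^2/\theta_0(\Pi_1)$ and hyperbolic metric $g_{\theta_0}$, the assignment $[\tau]\mapsto\operatorname{L}^+$ is a bijection from $H^1_{\opAd\circ\theta_0}(\Pi_1,\opsl(2,\R))$ onto $\opML_{g_{\theta_0}}\cong\opML$. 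Everything therefore comes down to the dictionary, for fixed $\theta_0$, between translation cocycles and $\theta_0$-invariant measured geodesic laminations on $\mathbb{H}^2$.

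To invert $[\tau]\mapsto\operatorname{L}^+$ I would write down directly the corresponding \emph{lamination cocycle}. Given a $\theta_0$-invariant measured geodesic lamination $\tilde\lambda$ on $\mathbb{H}^2$, with support $\tilde L$, fix a basepoint $p_0\in\mathbb{H}^2\setminus\tilde L$ and attach to each leaf $\ell$ of $\tilde L$ the unit spacelike vector $\nu_\ell\in\R^{2,1}$ spanning the orthogonal complement of the timelike plane through the origin that contains $\ell$, oriented by the transverse orientation induced along $\ell$ by the geodesic segment issuing from $p_0$. Put
\begin{equation*}
\tau_{\tilde\lambda}(\gamma) := \int_{[p_0,\gamma p_0]}\nu\,d\tilde\lambda~,
\end{equation*}
the integral being taken along the geodesic segment from $p_0$ to $\gamma p_0$ against the transverse measure of $\tilde\lambda$, and interpreted for non-rational laminations by means of Theorem~\ref{ExistenceOfTransverseMeasure}. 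Using the $\theta_0$-equivariance of $\tilde\lambda$ and of $\ell\mapsto\nu_\ell$, one checks both the cocycle relation $\tau_{\tilde\lambda}(\gamma\gamma')=\tau_{\tilde\lambda}(\gamma)+\theta_0(\gamma)\tau_{\tilde\lambda}(\gamma')$ and the invariance of the cohomology class under change of $p_0$; identifying $\R^{2,1}$ with $\opsl(2,\R)$ as in the introduction, this produces a well-defined map $\lambda\mapsto[\tau_{\tilde\lambda}]$ from $\opML_{g_{\theta_0}}$ to $H^1_{\opAd\circ\theta_0}(\Pi_1,\opsl(2,\R))$.

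It remains to prove that $\operatorname{L}^+$ and $\lambda\mapsto[\tau_{\tilde\lambda}]$ are mutually inverse. I would settle this first for rational laminations, carried by finitely many weighted simple closed geodesics. In that case the domain $\Omega^+_{(\theta_0,\tau_{\tilde\lambda})}$ can be assembled by hand from the configurations of null planes of Section~\ref{LaminationsAndTrees}: across each leaf one inserts the wedge cut out by the two null half-spaces whose boundary planes have null directions the ideal endpoints of that leaf, with the weight prescribing the Minkowski width of the wedge, and one then intersects the futures of the resulting $\theta_0$-equivariant family of half-spaces. One verifies that this set is future complete, convex, invariant and maximal, so that by the uniqueness clause of Theorem~\ref{mess mink 23} it coincides with Mess's $\Omega^+_\theta$ for $\theta=(\theta_0,\tau_{\tilde\lambda})$, and that, by inspection of its generalised Gauss map, the edge set of its past boundary maps exactly onto $\tilde L$ with transverse Minkowski widths equal to the weights --- i.e. $\operatorname{L}^+=\lambda$; running the same picture backwards shows conversely that the translation cocycle of an arbitrary rational $\Omega^+_\theta$ agrees with $\tau_{\operatorname{L}^+}$ modulo coboundary. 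For a general lamination I would pass to the limit through rational ones, using their density in $\opML$, the continuity of transverse measures (Theorem~\ref{ExistenceOfTransverseMeasure}), the continuity of $\theta\mapsto\Omega^+_\theta$ and of $\operatorname{L}^+$, and the piecewise-linear structure of $\opML$ to organise the limits cell by cell. Granting this identity of maps, the bijectivity of $(\Theta_0,\operatorname{L}^+)$ onto $\Trep\times\opML$ follows at once from the identification $\opGHMC_0\cong\opT\Trep$.

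The principal obstacle is exactly the rational case together with its stability under these limits: one must make sure that the null-plane configuration attached to $\tau_{\tilde\lambda}$ genuinely produces the \emph{maximal} invariant future-complete domain, and that its past-boundary lamination is $\lambda$ on the nose --- not a proper sub- or super-lamination, and not one with rescaled weights --- and then that this exact equality survives the passage from rational to arbitrary laminations. These are the same delicate points that pervade the non-rational theory of measured laminations; once the dictionary $\tau\leftrightarrow\operatorname{L}^+$ has been set up in this way, the theorem itself is immediate.
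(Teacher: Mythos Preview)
Your proposal is correct and follows the same overall strategy as the paper --- handle the rational case explicitly, then pass to general laminations by density and continuity --- but the two presentations differ in emphasis. The paper builds the inverse map geometrically: given a weighted geodesic $(c,a)$, it slices the interior of the future cone $\opC^+$ along the timelike plane $P$ through $c$, translates one half by $aN$ (with $N$ the spacelike unit normal to $P$), and fills in the resulting ``V''-shaped wedge; the locus at unit distance from the segment $[0,aN]$ is then a grafting of $\Bbb{H}^2$ along $(c,a)$, and the construction extends continuously as graftings do. You instead write down the \emph{cocycle} $\tau_{\tilde\lambda}(\gamma)=\int_{[p_0,\gamma p_0]}\nu\,d\tilde\lambda$ and feed it into Theorem~\ref{mess mink 23} to recover $\Omega^+_\theta$. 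These are two faces of the same construction: your integral across a single leaf with weight $a$ is exactly the translation by $aN$ that the paper inserts, and your null-plane configuration is the paper's ``intersection of the future sides of all lightlike planes lying in the past of $[0,aN]$''. Your formulation is closer to Mess's original argument and makes the algebraic structure (cocycle relation, coboundary-independence of basepoint) explicit; the paper's formulation is more visual and exploits the analogy with grafting to justify the continuous extension without further comment. Either buys the same conclusion, and the delicate points you flag --- maximality of the domain and exact recovery of the lamination, together with stability under limits --- are precisely the content hidden in the paper's appeal to the grafting analogy.
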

In order to visualise how the set $\Omega^+_\theta$ is recovered from the lamination, consider first the case of a single, complete geodesic, $c$, in $\mathbb{H}^2$, weighted by a positive real number, $a$. When $\mathbb{H}^2$ is identified with the unit pseudosphere in $\Bbb{R}^{2,1}$, the geodesic, $c$, is given by its intersection with a timelike plane, $P$, whose normal we denote by $N$. The future convex set, $\Omega^+_{c,a}$, is then constructed by slicing the interior of the future cone, $\opC^+$, along $P$, translating one of the components by a distance, $a$, in the direction of $N$, and filling in the ``V''-shaped region between the two. In fact, this set coincides, up to translation, with the intersection of all the future sides of all lightlike planes which lie in the past of the segment $[0,aN]$. In addition, it is worth observing that, in the same manner that hyperbolic space is the locus of all points in the interior of the future cone, $\opC^+$, which lie at unit distance from the origin, the locus of all points in $\Omega_{c,a}^+$ lying at unit distance from the segment, $[0,aN]$, coincides with a grafting of a cylinder of length, $a$, along the geodesic, $c$, as outlined in Section~\ref{EarthquakesAndGraftings}. In particular, as with graftings, this construction extends continuously to the space of all measured geodesic laminations, thereby yielding Theorem~\ref{mess mink 5}.
\par
Consider now the pair $(L^-,L^+)$. We show that this map defines a partial parametrisation of $\opGHMC_0$. First, let $\opFuc_0$ denote the space of {\sl Fuchsian} ghmc Minkowski spacetimes, that is, those spacetimes in $\opGHMC_0$ whose holonomy has vanishing translation component. Observe that, for all such spacetimes, the invariant set, $\Omega_\theta$, is simply the interior of the future cone, $C^+$, and so the corresponding measured geodesic lamination is trivial. In particular, the whole of $\opFuc_0$ is mapped by $(L^-,L^+)$ to the same point, $(0,0)$, of $\opML\times\opML$. We now describe the image of this map. We say that a pair $(\lambda^-,\lambda^+)$, of measured geodesic laminations {\sl fills} $S$ whenever every connected component of the complement of the union of their respective supports lifts to a bounded polygon in $\Bbb{H}^2$. Equivalently (c.f. \cite{ser12}), $(\lambda^-,\lambda^+)$ fills $S$ whenever there exists $\epsilon>0$ such that
\begin{equation*}
M^-(\langle\gamma\rangle) + M^+(\langle\gamma\rangle) > \epsilon~,
\end{equation*}
for every non-trivial free homotopy class, $\langle\gamma\rangle$, in $\langle\Pi_1\rangle$, where $M^-$ and $M^+$ denote respectively the mass functions of $\lambda^-$ and $\lambda^+$. We denote by
$$
\opML\times_{\operatorname{fill}}\opML
$$
the subset of $\opML\times\opML$ consisting of those pairs of laminations which fill $S$.
\begin{theorem}[Bonsante--Schlenker \cite{BS12}]
\noindent The map, $(\opL^+,\opL^-)$ defines a bijection from $\opGHMC_0\setminus\opFuc_0$ into $\opML\times_{\operatorname{fill}}\opML$.
%the subset of $\opML\times\opML$ consisting of those pairs that fill $S$.
%
\end{theorem}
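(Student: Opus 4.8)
The plan is to combine Mess's parametrisation of $\opGHMC_0$ with a fixed-point theorem for compositions of earthquakes, the filling condition entering exactly as the condition guaranteeing uniqueness of the fixed point. By Theorem~\ref{mess mink 5} the map $(\Theta_0,\operatorname{L}^+)$ identifies $\opGHMC_0$ with $\Trep\times\opML$, under which $\opFuc_0$ corresponds to $\Trep\times\{0\}$. Hence it suffices to analyse the map $\Trep\times(\opML\setminus\{0\})\to\opML\times\opML$ sending $(g_0,\lambda^+)$ to $(\lambda^+,\Xi(g_0,\lambda^+))$, where $\Xi(g_0,\lambda^+)$ denotes the past lamination $\operatorname{L}^-$ of the spacetime with linear holonomy $g_0$ and future lamination $\lambda^+$. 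By the reflection-through-the-origin symmetry recorded in the second Remark following Theorem~\ref{mess mink 23}, $\Xi(g_0,\lambda^+)$ is the future lamination $\operatorname{L}^+$ of the spacetime whose linear holonomy is again $g_0$ but whose translation cocycle has been negated.

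The geometric core of the reduction is the identity
\begin{equation*}
\mathcal{E}^l_{\lambda^+}(g_0) = \mathcal{E}^r_{\lambda^-}(g_0)\qquad\bigl(\text{equivalently, } g_0 \text{ is fixed by } \mathcal{E}^l_{\lambda^-}\circ\mathcal{E}^l_{\lambda^+}\bigr),
\end{equation*}
relating the Gauss-image metric $g_0$ and the two boundary laminations $\lambda^\pm$ of a flat GHMC spacetime, with the appropriate normalisation of the transverse-measure weights. I would prove this first for weighted multicurves, directly from the description of $\partial\Omega^+_\theta$ and $\partial\Omega^-_\theta$ via configurations of null planes: a left (resp.\ right) earthquake of $g_0$ along a leaf corresponds, under the generalised Gauss map, to translating one of the two half-spaces bounding such a configuration, and the two boundary pieces $\partial\Omega^\pm_\theta$ of the maximal invariant domain in $\opRP^3$ are produced from the \emph{same} metric $g_0$ by these two opposite translations; the ``unit-distance locus is a grafting'' remark of Section~\ref{LaminationsAndTrees} is what pins down the weights. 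One then extends to arbitrary $\lambda^\pm$ by continuity, using Theorem~\ref{ExistenceOfTransverseMeasure} and the density of rational laminations. Granting the identity, the theorem becomes equivalent to the statement that for every $(\lambda^+,\lambda^-)$ the composition $\Phi:=\mathcal{E}^l_{\lambda^-}\circ\mathcal{E}^l_{\lambda^+}$ has a fixed point in $\Thyp$ which is unique precisely when $(\lambda^+,\lambda^-)$ fills $S$, and that in the non-filling case the only solutions are degenerate, i.e.\ force the spacetime into (a limit of) $\opFuc_0$.

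For existence when $(\lambda^+,\lambda^-)$ fills, consider the non-negative functional $D(g):=\sum_\gamma\bigl|\ell_g(\gamma)-\ell_{\Phi(g)}(\gamma)\bigr|$, where $\gamma$ runs over a finite filling family of simple closed curves (or, alternatively, the Thurston-metric displacement of $g$ under $\Phi$). One shows $D$ is proper on $\Thyp$ when $(\lambda^+,\lambda^-)$ fills: a degenerating sequence pinches some curve, and the $\epsilon$-lower-bound formulation of filling recalled just before the theorem (c.f.\ \cite{ser12}) forces the length of that curve, or of a transverse curve, to behave incompatibly under $\mathcal{E}^l_{\lambda^+}$ and $\mathcal{E}^l_{\lambda^-}$, so $D$ blows up. A continuous, proper, non-negative functional attains its minimum, and that the minimum is a zero of $D$, hence a fixed point of $\Phi$, follows from the first-variation formula for geodesic lengths along earthquake paths (Kerckhoff). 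For uniqueness when $(\lambda^+,\lambda^-)$ fills, I would invoke Kerckhoff's and Wolpert's strict convexity of length functions along earthquake paths: given two fixed points, interpolate along a suitable earthquake path and build from the length functions of a filling family a strictly convex function that must nonetheless be constant, a contradiction; this also yields injectivity of $(\operatorname{L}^+,\operatorname{L}^-)$ on $\opGHMC_0\setminus\opFuc_0$ via Theorem~\ref{mess mink 5}. Finally, if $(\lambda^+,\lambda^-)$ does not fill there is a simple closed curve $\gamma$ disjoint from both supports; cutting along $\gamma$ shows any solution makes the spacetime split and pushes it to the boundary of $\opFuc_0$, so it contributes nothing to $\opGHMC_0\setminus\opFuc_0$; conversely this same splitting argument shows that the pair $(\operatorname{L}^+,\operatorname{L}^-)$ of any \emph{non}-Fuchsian spacetime must fill, which identifies the image as exactly $\opML\times_{\operatorname{fill}}\opML$.

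The analytic crux is the fixed-point step, and within it the uniqueness statement: converting the filling hypothesis into genuine \emph{strict} convexity of an auxiliary length functional along earthquake paths, and controlling non-rational laminations by approximation without losing strictness. The second delicate point is getting the handedness and the weight normalisation in the earthquake identity exactly right, so that ``$(\lambda^+,\lambda^-)$ fills'' matches ``unique fixed point'' rather than merely ``some fixed point''; this is bookkeeping with the orientation of the null-plane configurations, but the description of the image depends on it entirely.
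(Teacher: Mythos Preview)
The paper does not prove this theorem; it merely records the result and cites \cite{BS12}. So there is no paper proof to compare against, and your proposal must be assessed on its own terms.

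Your overall architecture --- reduce via Mess' parametrisation to a statement about a fixed hyperbolic structure $g_0$ and two laminations, then run a fixed-point/uniqueness argument driven by the filling hypothesis --- is indeed the shape of the argument in \cite{BS12}. But the central identity you write down is the \emph{anti de Sitter} relation, not the Minkowski one. In a ghmc AdS spacetime one has $\mathcal{E}^l_{\lambda^+}(g_0)=\mathcal{E}^r_{\lambda^-}(g_0)$, where $g_0$ is the intrinsic metric of $\partial\opK_\theta^+$ (this is exactly the content of Section~\ref{Earthquakes}). In the Minkowski case the holonomy is a point of $\opT\Trep$, not of $\Trep\times\Trep$: the linear part $g_0=\Theta_0$ is the Gauss-image metric and the translation part $\tau$ is a \emph{tangent vector} at $g_0$. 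The boundary $\partial\Omega_\theta^+$ encodes $\tau$ as the \emph{infinitesimal} left earthquake along $\lambda^+$ at $g_0$, and reflection through the origin negates $\tau$, so the correct relation is
\[
\left.\frac{d}{dt}\right|_{t=0}\mathcal{E}^l_{t\lambda^+}(g_0)\;=\;-\left.\frac{d}{dt}\right|_{t=0}\mathcal{E}^l_{t\lambda^-}(g_0)
\]
as elements of $T_{g_0}\Thyp$. Your finite identity $\mathcal{E}^l_{\lambda^+}(g_0)=\mathcal{E}^r_{\lambda^-}(g_0)$ is simply false for flat spacetimes, and the ``unit-distance locus is a grafting'' remark you invoke does not produce it: grafting relates to the conformal structure at cosmological time $1$, not to a finite earthquake of the Gauss metric.

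Consequently the problem is not ``find a fixed point of $\mathcal{E}^l_{\lambda^-}\circ\mathcal{E}^l_{\lambda^+}$'' but rather ``find a zero of the vector field $g\mapsto \dot{\mathcal{E}}^l_{\lambda^+}(g)+\dot{\mathcal{E}}^l_{\lambda^-}(g)$ on $\Thyp$''. Your properness and convexity heuristics can be adapted to this setting --- indeed this is what \cite{BS12} does, using the Hessian of length functions along earthquake paths rather than their first variation --- but the functional $D$ you propose and the fixed-point reformulation both need to be rewritten at the tangent level. As written, the proof addresses the wrong theorem (Theorem~\ref{BSads} rather than the present one).
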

Finally, consider the mass function, $M_\lambda$, of some measured geodesic lamination, $\lambda$. Recall that $M_\lambda$ maps free homotopy classes in $\langle\Pi_1\rangle$ into $[0,\infty[$. Now, consider a minimal, short action\footnote{Recall that an action of $\Pi_1$ on a real tree, $T$, is said to be minimal whenever it contains no invariant proper subtree, and is said to be short whenever the stabilizer of any isometric copy of $\Bbb{R}$ in $T$ is abelian.}, $\alpha$, of $\Pi_1$ on a real tree, $T$ (c.f. \cite{CM}), and let $D_\alpha:\langle\Pi_1\rangle\rightarrow[0,\infty[$ be its displacement function, that is, for every conjugacy class, $\langle\gamma\rangle$, in $\langle\Pi_1\rangle$, $D_\alpha(\langle\gamma\rangle)$ is the infimal displacement, $d(x,\alpha(\gamma)\cdot x)$, as $x$ ranges over all points, $x$, of $T$. Recall (c.f. \cite{hat88}), that, as with measured geodesic laminations and their mass functions, minimal, short actions of $\Pi_1$ on real trees are also uniquely defined up to isometry by their displacement functions. Now, although it is not known which functions arise as mass functions of measured geodesic laminations or as displacement functions of minimal, short actions on real trees, it is a remarkable fact (c.f. \cite{skora}), that the two coincide, that is, a function, $f:\langle\Pi_1\rangle\rightarrow[0,\infty[$, is the mass function of some measured geodesic lamination if and only if it is the displacement function of some minimal, short action on some real tree. In this manner, we obtain a natural duality between the space of measured geodesic laminations and the space of minimal, short actions on real trees which associates to a given measured geodesic lamination, $\lambda$, the unique minimal, short action, $\alpha$, whose displacement function is equal to the mass function of $\lambda$.
\par
It turns out that the minimal, short action on a real tree is, in fact, the easiest object to visualise in the above construction. Indeed, consider again the three planes, $P_1$, $P_2$ and $P_3$, introduced above. Let $X_{12}$ denote the convex set defined by the intersections of the respective futures of $P_1$ and $P_2$, and define the pseudometric, $d$, over its boundary, $\partial X_{12}$, by
\begin{equation*}
d(x,y) = \inf_{\gamma}l(\gamma)~,
\end{equation*}
where $\gamma$ ranges over all continuous curves in $\partial X_{12}$ starting at $x$ and ending at $y$, and $l(\gamma)$ denotes its length with respect to the Minkowski metric. It is a straightforward matter to show that the metric space, $\partial X_{12}/\sim$, obtained by identifying points separated by zero distance is naturally isometric to the geodesic, $\Gamma$, defined by the intersection of $P_1$ with $P_2$. Furthermore, performing the same construction on the boundary of the set, $X_{123}$, defined by the intersection of the respective futures of $P_1$, $P_2$ and $P_3$, yields a metric space, $\partial X_{123}/\sim$, which is isometric to the union of the edges of the polyhedron, $\partial X_{123}$. More generally, applying this construction to the boundary of $\Omega_\theta^+$ yields a real tree, over which $\theta(\Pi_1)$ acts in a minimal, short manner.
\par
We therefore obtain another map taking values in a space of Teichm\"uller data of real dimension $(6\mathfrak{g}-6)$, even though it is, in fact, completely equivalent to the function, $\operatorname{L}$, defined above (c.f. Table~\ref{table 4}).
\begin{table}[h!]
\begin{center}
\begin{tabular}{|c|c|c|}
\hline
\bf Map & \bf Description  &  \bf Codomain \\
\hline
$\opT$ & \multicolumn{1}{|l|}{The minimal, short action of $\theta$ on the real tree $\partial\Omega^+_\theta/\sim$} & $\opRT$\\
\hline
\end{tabular}
\end{center}
\caption{Maps taking values in spaces of real dimension $(6\mathfrak{g}-6)$. Here $\opRT$ denotes the space of minimal, short actions of $\Pi_1$ on real trees.}\label{table 4}
\end{table}
\subsection{Smooth parametrisations}\label{SmoothParametrisations}
Our starting point for constructing smooth parametrisations of $\opGHMC_0$ is the following result.
\begin{theorem}[Barbot--B\'eguin--Zeghib \cite{BBZ}]
\label{BBZmink}
\noindent Let $\theta:\pi_1(S)\rightarrow\opPSL(2,\R)\ltimes\R^{2,1}$ be a homomorphism whose linear component, $\theta_0$, is injective and acts properly discontinuously over $\mathbb{H}^2$. For all $\kappa>0$, there exists a unique smooth, spacelike, LSC surface, $\Sigma_\kappa$, which is embedded in $\Omega_\theta^+$, is invariant under the action of $\theta$, and has constant extrinsic curvature equal to $\kappa$. Furthermore, the family of all such surfaces foliates $\Omega_\theta^+$ as $\kappa$ varies over the interval $]0,\infty[$.
\end{theorem}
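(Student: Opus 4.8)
The plan is to translate the prescribed-extrinsic-curvature condition into a scalar Monge--Amp\`ere equation on the closed hyperbolic surface $\Sigma := \mathbb{H}^2/\theta_0(\Pi_1)$ by means of the Gauss map, to solve it by the continuity method, and then to read off uniqueness and the foliation property from the geometric maximum principle.

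First I would set up the Monge--Amp\`ere reduction. Recall from Section~\ref{MinkowskiSpacetimes} that for a $\theta$-equivariant LSC spacelike immersion $e:\tilde S\to\R^{2,1}$ the Gauss map $N_e:\tilde S\to\mathbb{H}^2$ is a $\theta_0$-equivariant diffeomorphism, so that $e$ is completely encoded by its support function $u$ over $\mathbb{H}^2$ (the function recording, at each conormal $\nu$, the position of the spacelike supporting plane to $e$). The translation cocycle $\tau$ enters $u$ only through an additive term $\langle\tau(\gamma),\theta_0(\gamma)\nu\rangle$, so that, modulo a fixed reference function, $u$ descends to a genuine function on $\Sigma$. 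A direct computation gives that the third fundamental form of $e$ transported through $N_e^{-1}$ is exactly $g_{\mathbb{H}^2}$, that the shape operator $A$ is, in these coordinates, (a sign normalisation of) the inverse of the tensor $M := \operatorname{Hess}_{g_{\mathbb{H}^2}}u - u\,g_{\mathbb{H}^2}$, with $M$ positive definite precisely when $e$ is LSC, and that $\opDet A = (\opDet M)^{-1}$. Since the lorentzian Gauss equation (Theorem~\ref{FTS} with $c=0$) identifies the extrinsic curvature with minus the intrinsic curvature --- so that the intrinsic metric of a constant-$\kappa$ surface is $\kappa^{-1}$ times a hyperbolic metric, tracing a curve in Teichm\"uller space as in the scattering picture --- the condition of constant extrinsic curvature $\kappa$ becomes the equation
\[
\opDet\bigl(\operatorname{Hess}_{g_{\mathbb{H}^2}}u - u\,g_{\mathbb{H}^2}\bigr) = \kappa^{-1}
\]
for a function $u$ on $\Sigma$ with $M>0$, the affine equivariance being dictated by $\tau$; conversely, by Theorem~\ref{FTS}, any such solution reconstructs an equivariant LSC surface, which lies in $\Omega_\theta^+$ by maximality of the latter once one observes that it is a Cauchy surface.

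I would then prove existence by the continuity method along the path $\theta_s$ with linear part $\theta_0$ and translation cocycle $s\tau$, $s\in[0,1]$. At $s=0$ the equation has the explicit umbilic solution --- the rescaled hyperboloid $\kappa^{-1/2}\mathbb{H}^2$, i.e. $u$ constant --- so the set of parameters carrying a solution is nonempty. Openness follows from the implicit function theorem in H\"older spaces: the linearisation of the operator is uniformly elliptic and its zeroth-order term has the sign favourable to the maximum principle, so on the \emph{closed} surface $\Sigma$ it has trivial kernel --- here one uses crucially that, $\theta_0$ being cocompact, $\R^{2,1}$ has no nonzero $\theta_0$-invariant vector, hence no ambient linear function descends to $\Sigma$ (such functions being exactly those killed by the linearised operator over $\mathbb{H}^2$, and corresponding geometrically to translations). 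Closedness requires a priori $C^{2,\alpha}$ estimates, uniform in $s$ and locally uniform in $\kappa$: a $C^0$ bound, preventing the surface from escaping to timelike infinity or collapsing onto $\partial\Omega_\theta^+$, obtained by trapping it between explicit barriers (translated hyperboloids, and convex surfaces adapted to the geometry of $\partial\Omega_\theta^+$); a $C^1$ (gradient) bound, equivalent to a uniform bound on the hyperbolic angle between the surface and a fixed Cauchy foliation, i.e. to uniform spacelikeness; and a $C^2$ (Pogorelov-type) estimate for the Monge--Amp\`ere operator, which on the closed manifold $\Sigma$ needs no boundary term, and which then bootstraps to $C^{2,\alpha}$ by Evans--Krylov and to $C^\infty$ by Schauder. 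I expect the $C^1$ estimate --- the a priori control of the spacelike character of the leaves --- to be the principal obstacle, since its failure would allow a leaf to degenerate to a null surface; it is here that the specific Lorentzian geometry of $\Omega_\theta^+$, and not soft arguments, must be brought to bear.

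Finally, uniqueness, disjointness and the foliation all come from the geometric maximum principle. Two $\theta$-invariant LSC spacelike surfaces in $\Omega_\theta^+$ of the same constant extrinsic curvature $\kappa$ are both Cauchy surfaces, so one may slide one of them in a timelike direction until it last meets the other; at a point of contact they are tangent with one to the future of the other, their shape operators are then ordered as quadratic forms, and equality of the constant $\opDet A$ forces, by the strong maximum principle for the quasilinear equation, agreement near the contact point and hence everywhere. The same comparison applied to $\Sigma_\kappa$ and $\Sigma_{\kappa'}$ with $\kappa<\kappa'$ shows that $\Sigma_{\kappa'}$ lies strictly to the past of $\Sigma_\kappa$, so $\kappa\mapsto\Sigma_\kappa$ is a monotone family, continuous in $\kappa$ by uniqueness and the a priori estimates. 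Analysing the limits --- the leaves accumulating on the past boundary $\partial\Omega_\theta^+$ (where the cosmological time tends to $0$) as $\kappa\to\infty$, and receding to future infinity (cosmological time to $\infty$) as $\kappa\to 0$, both controlled by the same barriers --- together with global hyperbolicity of $\Omega_\theta^+$ and the fact that each $\Sigma_\kappa$ is a Cauchy surface, one concludes: the union of the leaves is both open (by continuity of $\kappa\mapsto\Sigma_\kappa$) and closed (by the limit analysis) in the connected set $\Omega_\theta^+$, hence equals $\Omega_\theta^+$, so that the disjoint leaves $(\Sigma_\kappa)_{\kappa\in]0,\infty[}$ foliate $\Omega_\theta^+$ as asserted.
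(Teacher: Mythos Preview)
The paper does not supply its own proof of this theorem: it is quoted from Barbot--B\'eguin--Zeghib \cite{BBZ} and used as a black box to generate the maps $\opI_\kappa^\pm$, $\opII_\kappa^\pm$, $\opA_\kappa$, $\Phi_\kappa$ of Tables~\ref{table 5} and~\ref{table 6}. So there is nothing in the text to compare your argument against.

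That said, your outline is a coherent and essentially viable route to the result, and it is in the spirit of several treatments in the literature (support functions on $\mathbb{H}^2$, reduction to a Monge--Amp\`ere equation on the closed quotient, continuity method from the Fuchsian case). A few comments. First, your handling of the equivariance is correct: the support function is $\theta_0$-quasi-periodic with defect $\langle\tau(\gamma),\theta_0(\gamma)\nu\rangle$, so subtracting any fixed reference (say the support function of a chosen $\theta$-invariant Cauchy surface) produces an honest function on $\Sigma$, and the nonlinear equation becomes an equation for that difference. Second, your identification of the main analytic difficulty is right: the $C^0$ barriers and the $C^2$ Pogorelov estimate are relatively soft on a closed surface, whereas the $C^1$ bound (uniform spacelikeness, equivalently a bound on the tilt of $\Sigma_\kappa$ against a background Cauchy foliation) genuinely requires the structure of $\Omega_\theta^+$; in the original \cite{BBZ} this is where most of the work lies. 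Third, your foliation argument at the end is morally correct but phrased loosely: ``openness by continuity of $\kappa\mapsto\Sigma_\kappa$'' is not quite the mechanism. What one actually uses is that each $\Sigma_\kappa$ is a Cauchy surface, so every inextendible timelike curve meets it exactly once; monotonicity in $\kappa$ then makes the cosmological time of the intersection point a continuous, strictly monotone function of $\kappa$, and the barrier-controlled limits $\kappa\to 0^+$ and $\kappa\to\infty$ force this function to sweep out the whole timelike line, hence every point of $\Omega_\theta^+$ lies on exactly one leaf.
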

The foliations constructed here yield various families of maps taking values in spaces of Teichm\"uller data. Indeed, for $\kappa>0$, consider the space-like, LSC, embedded surface, $\Sigma_\kappa$, in $\Omega_\theta^+$ furnished by Theorem~\ref{BBZmink}, and let $I_\kappa$, $\II_\kappa$ and $\III_\kappa$ be its first, second and third fundamental forms respectively. The form, $\kappa I_\kappa$, defines a marked hyperbolic metric over $\Sigma_\kappa$, thereby yielding a point in $\Thyp$. The form, $\III_\kappa$, defines another marked hyperbolic metric over $\Sigma_\kappa$. However, this metric actually concides with the pull back through the Gauss map of the metric over $\mathbb{H}^2$, and since the Gauss map is equivariant with respect to the linear component, $\theta_0$, of the holonomy, the point that it defines in Teichm\"uller space actually coincides with the point already defined by the map, $\Theta_0$, given in Section~\ref{ads 1}, above. Next, by local strict convexity, the form, $\II_\kappa$, also defines a marked metric over $\Sigma_\kappa$, but since this metric has no clear curvature properties, we consider it rather as defining a marked holomorphic structure in $\Thol$. The same constructions also apply to the past complete invariant set, $\Omega_\theta^-$. We therefore have two pairs of maps, each taking values in spaces of Teichm\"uller data of real dimension $(6\mathfrak{g}-6)$ (c.f. Table~\ref{table 5}).
\begin{table}[h!]
\begin{center}
\begin{tabular}{|c|c|c|}
\hline
\bf Map & \bf Description  &  \bf Codomain \\
\hline
$\I_\kappa^\pm$ & \multicolumn{1}{|l|}{The constant curvature metric of $I_\kappa$ in $\Omega^{\pm}_\theta$} & $\Thyp$\\
$\operatorname{II}_\kappa^\pm$& \multicolumn{1}{|l|}{The holomorphic structure of $\II_\kappa$ in $\Omega^{\pm}_\theta$}& $\Thol$ \\
\hline
\end{tabular}
\end{center}
\caption{Maps taking values in spaces of real dimension $(6\mathfrak{g}-6)$.}\label{table 5}
\end{table}
\par
These maps are complemented to maps taking values in spaces of Teichm\"uller data of real dimension $(12\mathfrak{g}-12)$ as follows. First, the shape operator, $A_\kappa$, of $\Sigma_\kappa$ defines a Labourie field of $I_\kappa$, so that the pair $(I_\kappa,A_\kappa)$ yields a point of $\opLab\Thyp$. Likewise, the Hopf differential, $\phi_\kappa$, of $I_\kappa$ with respect to the conformal structure of $\II_\kappa$ is a quadratic holomorphic differential, so that the pair $(\II_\kappa,\phi_\kappa)$ yields a point of $\opT^*\Thol$. In summary, we have two maps taking values in spaces of Teichm\"uller data of real dimension $(12\mathfrak{g}-12)$ (c.f. Table~\ref{ads 1}).
\begin{table}[h!]
\begin{center}
\begin{tabular}{|c|c|c|}
\hline
\bf Map & \bf Description  &  \bf Codomain \\
\hline
$\opA_\kappa$ &  \multicolumn{1}{|l|}{The metric $I_\kappa$  together with the Labourie field $A_\kappa$} & $\opLab\Thyp$ \\
$\Phi_\kappa$&
\multicolumn{1}{|l|}{\pbox{20cm}{The holomorphic structure of $\II_\kappa$ in $\Omega^+_\theta$ \\  together with the Hopf differential of $I_\kappa$}} & $\opT^*\Thol$ \\
\hline
\end{tabular}
\end{center}
\caption{Maps taking values in spaces of real dimension $(12\mathfrak{g}-12)$.}\label{table 6}
\end{table}
\begin{theorem}\label{Lab mink}
\noindent For all $\kappa$, the map, $\opA_\kappa$, defines a real analytic diffeomorphism from $\opGHMC_0$ into $\opLab\Thyp$.
\end{theorem}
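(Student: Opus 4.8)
The plan is to check that $\opA_\kappa$ is a bijection and that $\opA_\kappa$ and its inverse are real analytic. Bijectivity rests on surface theory together with Mess' results. For surjectivity, given $(\hat g,\hat A)\in\opLab\Thyp$, set $g:=\kappa^{-1}\hat g$, a metric of constant curvature $-\kappa$, and $A:=\sqrt\kappa\,\hat A$, a positive definite Codazzi field of $g$ with $\opDet(A)=\kappa$, so that, with $c=0$, $K_g=c-\opDet(A)$. Theorem~\ref{FTS} (with $c=0$) then produces an isometric, spacelike immersion $e:(\tilde S,g)\to\R^{2,1}$, which is LSC since $A$ is positive definite, with shape operator $A$, unique up to isometries of $\R^{2,1}$; it is therefore equivariant for a holonomy $\theta:\Pi_1\to\opPSL(2,\R)\ltimes\R^{2,1}$, whose linear component is injective with properly discontinuous image, by the Gauss-map argument underlying Theorem~\ref{thm:mess mink}. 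Thus $e$ lifts a compact LSC Cauchy surface of the unique $M\in\opGHMC_0$ whose holonomy is $\theta$ (Theorems~\ref{mess mink 23} and~\ref{thm:mess bij mink}, together with the converse direction of Mess' construction); since that surface is $\theta$-invariant, LSC and of constant extrinsic curvature $\opDet(A)=\kappa$, the uniqueness clause of Theorem~\ref{BBZmink} identifies it with $\Sigma_\kappa$, and hence $\opA_\kappa(M)=(\hat g,\hat A)$. For injectivity, if $\opA_\kappa(M_1)=\opA_\kappa(M_2)$ then $\Sigma_\kappa$ inside $M_1$ and inside $M_2$ carry the same first fundamental form and shape operator up to a marking-preserving diffeomorphism, so the uniqueness clause of Theorem~\ref{FTS} equates the two equivariant immersions up to isometry of $\R^{2,1}$, hence their holonomies up to conjugation, and Theorem~\ref{thm:mess bij mink} gives $M_1=M_2$.

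The key to real analyticity is the identity
\begin{equation*}
\opA_\kappa = \mathcal{A}\circ(\operatorname{II}_\kappa^+,\I_\kappa^+)~.
\end{equation*}
Fix $M$, and let $\mathcal{H}_\kappa=\operatorname{II}_\kappa^+(M)$ be the holomorphic structure of $\II_\kappa$. The shape operator $A_\kappa$ is a positive definite Codazzi field of $I_\kappa$ with $\opDet(A_\kappa)=\kappa$ constant, and $\kappa I_\kappa(A_\kappa\cdot,\cdot)$ is conformal to $\mathcal{H}_\kappa$, so conditions (1) and (2) of Lemma~\ref{CharacterisationOfLabourieField} yield condition (3), that $\phi(\kappa I_\kappa\mid\mathcal{H}_\kappa)$ is holomorphic; by Lemma~\ref{HopfDifferentialsAndHarmonicForms} this means $\kappa I_\kappa$ is harmonic with respect to $\mathcal{H}_\kappa$, so by the uniqueness in Theorem~\ref{HarmonicDiffeomorphisms} the harmonic diffeomorphism $f:(S,\mathcal{H}_\kappa)\to(S,\kappa I_\kappa)$ is the identity. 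Consequently the endomorphism component of $\mathcal{A}(\mathcal{H}_\kappa,\kappa I_\kappa)$ equals $-J_0J_\kappa$, where $J_0$ and $J_\kappa$ are the complex structures of $I_\kappa$ and of $\II_\kappa$; a short computation from $\II_\kappa=I_\kappa(A_\kappa\cdot,\cdot)$ gives $J_\kappa=\kappa^{-1/2}J_0A_\kappa$, whence $-J_0J_\kappa=\kappa^{-1/2}A_\kappa$, which is precisely $A_\kappa$ renormalised to a Labourie field of $\kappa I_\kappa=\I_\kappa^+(M)$. This proves the identity. Since $\mathcal{A}$ is a real analytic diffeomorphism by Theorem~\ref{AIsARealAnalDiff}, and since $(\operatorname{II}_\kappa^+,\I_\kappa^+)$ is a bijection by the previous paragraph, it remains to prove that $(\operatorname{II}_\kappa^+,\I_\kappa^+):\opGHMC_0\to\Thol\times\Thyp$ and its inverse are real analytic.

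In the forward direction, writing $\Sigma_\kappa$ as a graph over $\H^2$ through its Gauss map, $\Sigma_\kappa$ solves the constant Gauss curvature equation, whose data depend real analytically on $\theta=\Theta(M)$ and whose linearisation has trivial kernel by the uniqueness part of Theorem~\ref{BBZmink}; the implicit function theorem in the analytic category then makes $\Sigma_\kappa$, and with it $I_\kappa$ and $\II_\kappa$, depend real analytically on $\theta$, which is real analyticity of $(\operatorname{II}_\kappa^+,\I_\kappa^+)$ for the algebraic structure that $\opGHMC_0$ inherits from $\Theta$. In the reverse direction, $(\operatorname{II}_\kappa^+,\I_\kappa^+)^{-1}=\opA_\kappa^{-1}\circ\mathcal{A}$, and $\opA_\kappa^{-1}$ is the composite of the affine rescaling $(\hat g,\hat A)\mapsto(\kappa^{-1}\hat g,\sqrt\kappa\,\hat A)$, the passage from these Gauss--Codazzi data to the holonomy $\theta$ --- the monodromy of the flat connection in Theorem~\ref{FTS}, whose coefficients depend real analytically on the metric, the shape operator and their first derivatives --- and $\Theta^{-1}$; monodromy depends real analytically on the connection, and $\Theta^{-1}$ is real analytic, so $\opA_\kappa^{-1}$, and with it $(\operatorname{II}_\kappa^+,\I_\kappa^+)^{-1}$, is real analytic. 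Combining the three steps, $\opA_\kappa$ is a real analytic bijection with real analytic inverse.

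I expect the main obstacle to be the forward half of the last step: the real analytic dependence of the constant curvature surfaces $\Sigma_\kappa$ on the ambient spacetime genuinely requires the non-degeneracy of the linearised constant Gauss curvature operator together with the implicit function theorem in the analytic category, neither of which is formal, whereas the surface-theoretic core of the argument --- bijectivity via Theorems~\ref{FTS} and~\ref{BBZmink} and Mess' results --- is essentially bookkeeping.
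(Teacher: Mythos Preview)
Your bijectivity argument is essentially the paper's: build the inverse of $\opA_\kappa$ from the fundamental theorem of surface theory (Theorem~\ref{FTS}) by rescaling $(g,A)\mapsto(\kappa^{-1}g,\sqrt{\kappa}A)$ and reading off the holonomy, with the uniqueness in Theorem~\ref{BBZmink} closing the loop. The paper's sketch stops there, simply declaring this inverse real analytic; you supply the missing justification (monodromy of the flat connection depends real analytically on the Gauss--Codazzi data), which is correct and is exactly what the paper is tacitly invoking.

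Where you diverge is in the treatment of forward real analyticity. You factor $\opA_\kappa=\mathcal{A}\circ(\operatorname{II}_\kappa^+,\I_\kappa^+)$ and then argue analytic dependence of $\Sigma_\kappa$ on $\theta$ via the implicit function theorem applied to the constant Gauss curvature equation. The factorisation is correct (and your computation $-J_0J_\kappa=\kappa^{-1/2}A_\kappa$ is right; it is in fact the identity the paper uses one theorem later, in the proof of Theorem~\ref{mink II}), but the detour through $\mathcal{A}$ is not needed for the present result: once $\opA_\kappa^{-1}$ is known to be real analytic and bijective, real analyticity of $\opA_\kappa$ follows from the inverse function theorem in the analytic category, provided the derivative of $\opA_\kappa^{-1}$ is everywhere invertible --- which is equivalent to your non-degeneracy of the linearised Gauss curvature operator. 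So the genuine analytic input you flag at the end is indeed the crux, but it can be invoked directly on $\opA_\kappa^{-1}$ without passing through $(\operatorname{II}_\kappa^+,\I_\kappa^+)$. Your route is longer but has the merit of making explicit a step the paper's sketch elides.
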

\begin{proof}[Sketch of proof] Consider a hyperbolic metric, $g$, and a Labourie field, $A$. By the fundamental theorem of surface theory (Theorem~\ref{FTS}), there exists a locally strictly convex equivariant immersion $e:(\tilde{S},\kappa^{-1}g)\rightarrow\R^{2,1}$ ,with shape operator equal to $\sqrt{\kappa}A$, which is unique up to isometries of $\R^{2,1}$. This yields a real analytic inverse of $(\opI_\kappa,\opA_\kappa)$, and the result follows.\end{proof}
\begin{theorem}\label{mink II}% \nextprocno, {\bf [???]}}
\noindent For all $\kappa$, $\Phi_{\kappa}$ defines a real analytic diffeomorphism from $\opGHMC_0$ into $\opT^*\Thol$.
\end{theorem}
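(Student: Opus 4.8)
\noindent\emph{Proof proposal.} The plan is to deduce this from Theorem~\ref{Lab mink}, using the fact, contained in the proof of Theorem~\ref{AIsARealAnalDiff}, that the Labourie--field bundle $\opLab\Thyp$ and the cotangent bundle $\opT^*\Thol$ are canonically real analytically diffeomorphic. Concretely, let $\Xi:\opLab\Thyp\rightarrow\opT^*\Thol$ denote the real analytic diffeomorphism which sends a pair $(g,A)$ to $\big([g(A\cdot,\cdot)],\phi(g\,|\,[g(A\cdot,\cdot)])\big)$, where $[g(A\cdot,\cdot)]$ is the marked holomorphic structure conformal to $g(A\cdot,\cdot)$; by Lemma~\ref{CharacterisationOfLabourieField} the $(2,0)$-form appearing here is a holomorphic quadratic differential, so this formula makes sense. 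The claim will follow once we show that $\Phi_\kappa$ equals $\Xi\circ\opA_\kappa$ up to an inessential rescaling.

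To this end I would first recall that, for $M\in\opGHMC_0$, the point $\opA_\kappa(M)\in\opLab\Thyp$ is the pair $(\kappa I_\kappa,\kappa^{-1/2}A_\kappa)$, where $I_\kappa$ and $A_\kappa$ are respectively the first fundamental form and the shape operator of the surface $\Sigma_\kappa\subset\Omega_\theta^+$ of Theorem~\ref{BBZmink}: indeed $\kappa I_\kappa$ is hyperbolic by the lorentzian Gauss equation $K_{I_\kappa}=-\opDet(A_\kappa)=-\kappa$, the field $\kappa^{-1/2}A_\kappa$ has unit determinant, is positive definite by local strict convexity, and is a Codazzi field for $\kappa I_\kappa$ by the Codazzi equation in $\R^{2,1}$ (a constant conformal rescaling of a metric changes neither its Levi-Civita connection nor the Codazzi property of an endomorphism field). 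Applying $\Xi$, using that $\kappa I_\kappa(\kappa^{-1/2}A_\kappa\cdot,\cdot)=\kappa^{1/2}\II_\kappa$ is conformal to $\II_\kappa$ and that the Hopf form is linear in its metric argument, we obtain
\[ \Xi(\opA_\kappa(M))=\big([\II_\kappa],\,\kappa\,\phi(I_\kappa\,|\,[\II_\kappa])\big)=\big([\II_\kappa],\,\kappa\,\phi_\kappa\big), \]
whereas $\Phi_\kappa(M)=([\II_\kappa],\phi_\kappa)$ by definition. Hence $\Phi_\kappa=m_{1/\kappa}\circ\Xi\circ\opA_\kappa$, where $m_{1/\kappa}$ denotes fibrewise multiplication by $\kappa^{-1}$ on $\opT^*\Thol$. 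All three maps are real analytic diffeomorphisms, the middle one by Theorem~\ref{AIsARealAnalDiff} and the last by Theorem~\ref{Lab mink}, so $\Phi_\kappa$ is too.

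Since all the hard analytic content has already been invested in Theorems~\ref{AIsARealAnalDiff}, \ref{Lab mink} and~\ref{BBZmink}, the only point requiring care is the verification that the holomorphic structure $[g(A\cdot,\cdot)]$ arising in the Labourie-field picture coincides with the holomorphic structure of the second fundamental form $\II_\kappa$, together with the tracking of the powers of $\kappa$; I expect this to be slightly fiddly but not deep. For a more self-contained route, one can instead mirror the sketch of Theorem~\ref{Lab mink} directly: given $(\mathcal{H},\phi)\in\opT^*\Thol$, apply $\Xi^{-1}$ to $(\mathcal{H},\kappa\phi)$ to obtain a hyperbolic metric $g$ and a unit-determinant positive definite Codazzi field $A$ with $[g(A\cdot,\cdot)]=\mathcal{H}$; then $(\kappa^{-1}g,\sqrt{\kappa}A)$ satisfies $K_{\kappa^{-1}g}=-\opDet(\sqrt{\kappa}A)$, so the fundamental theorem of surface theory (Theorem~\ref{FTS} with $c=0$) yields a locally strictly convex immersion of $\tilde S$ into $\R^{2,1}$ with these fundamental data, unique up to isometry and hence equivariant, whose image projects to $\Sigma_\kappa$ by the uniqueness clause of Theorem~\ref{BBZmink}; a direct computation then identifies $\Phi_\kappa$ of the resulting spacetime with $(\mathcal{H},\phi)$, and injectivity together with real analyticity of the inverse follow from those of $\Xi^{-1}$ and of the dependence of the immersion on its fundamental data.
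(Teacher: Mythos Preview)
Your proposal is correct and follows essentially the same route as the paper's own sketch, which simply observes that $\Phi_\kappa\circ\opA_\kappa^{-1}$ coincides with $\Phi\circ\mathcal{A}^{-1}$ and then invokes Theorems~\ref{wolf}, \ref{AIsARealAnalDiff} and~\ref{Lab mink}. Your map $\Xi$ is exactly $\Phi\circ\mathcal{A}^{-1}$, and you are in fact more careful than the paper in tracking the factor of $\kappa$ arising from the normalisation $\opA_\kappa(M)=(\kappa I_\kappa,\kappa^{-1/2}A_\kappa)$; the paper's sketch suppresses this harmless rescaling.
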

\begin{proof}[Sketch of proof] Indeed, observe that $\Phi_\kappa\circ \opA_\kappa^{-1}$ coincides with $\Phi\circ\mathcal{A}^{-1}$, where $\Phi$ and $\mathcal{A}$ are defined as in Sections \ref{HopfDifferentials} and \ref{LabourieFields} respectively. The result now follows by Theorems \ref{wolf}, \ref{AIsARealAnalDiff} and \ref{Lab mink}.\end{proof}
Certain pairs of data taking values inside spaces of Teichm\"uller data of real dimension $(6\mathfrak{g}-6)$ also yield parametrisations of $\opGHMC_0$. Indeed,
\begin{theorem}[Labourie]
\noindent For all $\kappa$, and for all $\epsilon\in\left\{-,+\right\}$, $(\I_\kappa^\epsilon,\Theta_0)$ defines a bijection from $\opGHMC_0$ into $\Thyp\times\Trep$.
\end{theorem}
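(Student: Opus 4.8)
The plan is to deduce this from the real analytic diffeomorphism $\opA_\kappa\colon\opGHMC_0\to\opLab\Thyp$ of Theorem~\ref{Lab mink}, by showing that $(\I_\kappa^\epsilon,\Theta_0)$ is obtained from $\opA_\kappa$ by post-composition with a fixed bijection of the target. Take $\epsilon=+$ first. Recall from the sketch of Theorem~\ref{Lab mink} that $\opA_\kappa$ sends a spacetime $M=\Omega_\theta^+/\theta(\Pi_1)$ to the pair $(\kappa I_\kappa,\kappa^{-1/2}A_\kappa)$, where $I_\kappa$ and $A_\kappa$ are the first fundamental form and shape operator of the leaf $\Sigma_\kappa$; in particular the first coordinate of $\opA_\kappa(M)$ is exactly $\I_\kappa^+(M)$. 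Writing $(g,\mathcal{B}):=\opA_\kappa(M)$, one checks directly that $g(\mathcal{B}\cdot,\mathcal{B}\cdot)=I_\kappa(A_\kappa\cdot,A_\kappa\cdot)=\III_\kappa$, the third fundamental form of $\Sigma_\kappa$. Since $\III_\kappa$ is the pull-back through the (equivariant, diffeomorphic) Gauss map of the hyperbolic metric on $\mathbb{H}^2$, it is a marked hyperbolic metric with holonomy $\theta_0$, hence represents $\Theta_0(M)$ under the canonical identification $\Thyp\cong\Trep$. Thus $(\I_\kappa^+,\Theta_0)=\Xi\circ\opA_\kappa$, where $\Xi\colon\opLab\Thyp\to\Thyp\times\Trep$ is the map $\Xi(g,A):=\bigl(g,[g(A\cdot,A\cdot)]\bigr)$, with $[\cdot]$ denoting the point of $\Trep$ represented by a marked hyperbolic metric (here $g(A\cdot,A\cdot)$ is again hyperbolic by Corollary~\ref{SymmetryOfLabourieFields}, so $\Xi$ is well defined). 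As $\opA_\kappa$ is a bijection, it suffices to prove that $\Xi$ is.

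This last step is exactly the Labourie field theorem. Fix a marked hyperbolic metric $g$; I claim that $A\mapsto[g(A\cdot,A\cdot)]$ is a bijection from the fibre of $\opLab\Thyp$ over $g$ onto $\Trep\cong\Thyp$. For surjectivity, given any marked hyperbolic metric $g'$, Theorem~\ref{LabFieldthm} supplies a Labourie field $A$ of $g$ and a marking-preserving diffeomorphism $f$ with $f^{*}g'=g(A\cdot,A\cdot)$, so that $g(A\cdot,A\cdot)$ and $g'$ define the same point of $\Trep$. For injectivity, suppose $A_1$ and $A_2$ are Labourie fields of $g$ with $[g(A_1\cdot,A_1\cdot)]=[g(A_2\cdot,A_2\cdot)]=:[g']$; then both $(A_1,\opId)$ and $(A_2,\psi)$, where $\psi$ is a marking-preserving diffeomorphism realising the isometry class $[g']$, are pairs of the kind whose uniqueness is asserted by Theorem~\ref{LabFieldthm} for the data $(g,g')$, whence $A_1=A_2$. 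Therefore $\Xi$ preserves the first coordinate and restricts to a bijection over each point of $\Thyp$, so it is a bijection, and hence so is $(\I_\kappa^+,\Theta_0)=\Xi\circ\opA_\kappa$. The case $\epsilon=-$ is handled identically, working with the past-complete invariant set $\Omega_\theta^-$ and its constant-curvature foliation: the Gauss map of the latter still has linear holonomy $\theta_0$, so $\Theta_0$ is unaffected, and the analogue of Theorem~\ref{Lab mink} for $\Omega_\theta^-$ follows from the reflection relating $\Omega_\theta^-$ to $\Omega_{\theta'}^+$.

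I do not expect a serious obstacle, since the one substantial ingredient --- the Labourie field theorem (Theorem~\ref{LabFieldthm}), equivalently the Schoen--Labourie theorem on minimal lagrangian diffeomorphisms --- is available, and it is precisely what forces the fibrewise map $A\mapsto[g(A\cdot,A\cdot)]$ to be bijective; everything else is formal. The points that nonetheless require care are the normalisation bookkeeping (tracking the $\kappa$-rescalings relating $I_\kappa$, $\I_\kappa^+=\kappa I_\kappa$, $A_\kappa$ and $\kappa^{-1/2}A_\kappa$) and, above all, confirming that the second coordinate of $\Xi\circ\opA_\kappa$ really is $\Theta_0$ --- which rests on the identification of $\III_\kappa$ with the Gauss-map pull-back of the hyperbolic metric on $\mathbb{H}^2$. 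One could instead argue without Theorem~\ref{Lab mink}: given $g\in\Thyp$ and $\rho\in\Trep$, produce the unique Codazzi field via Theorem~\ref{LabFieldthm}, build the corresponding equivariant immersion with $c=0$ via the fundamental theorem of surface theory (Theorem~\ref{FTS}), and realise it as the $\kappa$-th leaf of a unique element of $\opGHMC_0$ via Theorems~\ref{mess mink 23} and~\ref{BBZmink}; the uniqueness clauses of these results then yield injectivity. This route has the same moving parts, merely unpacked.
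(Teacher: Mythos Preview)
Your proof is correct and follows essentially the same approach as the paper: both identify $\Theta_0$ with $\opIII_\kappa^\epsilon$ via the Gauss map, factor $(\I_\kappa^\epsilon,\Theta_0)$ through the diffeomorphism $\opA_\kappa$, and then invoke the Labourie field theorem (Theorem~\ref{LabFieldthm}) to see that the resulting map $(g,A)\mapsto(g,[g(A\cdot,A\cdot)])$ on $\opLab\Thyp$ is a bijection. The paper phrases this last step as ``$(\opI_\kappa^\epsilon,\opIII_\kappa^\epsilon)\circ\opA_\kappa^{-1}$ is precisely the inverse of the map given by the Labourie field theorem,'' which is exactly your $\Xi$.
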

\begin{proof} First observe that, by the discussion following Theorem \ref{BBZmink}, $(\opI_\kappa^\epsilon,\Theta_0)$ identifies with $(\opI_\kappa^\epsilon,\opIII_\kappa^\epsilon)$. However, the maps $(\opI_\kappa^\epsilon,\opIII_\kappa^\epsilon)\circ \opA_\kappa^{-1}$ is precisely the inverse of the map given by the Labourie field theorem (Theorem \ref{LabFieldthm}), and the result follows.\end{proof}
\begin{theorem}[Labourie]
\noindent For all $\kappa$, and for all $\epsilon\in\left\{-,+\right\}$, $(\I_\kappa^\epsilon,\operatorname{II}_\kappa^\epsilon)$ defines a real analytic diffeomorphism from $\opGHMC_0$ into $\Thyp\times\Thol$.
\end{theorem}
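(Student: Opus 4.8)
The plan is to reduce the claim to results already established, by factoring $(\I_\kappa^\epsilon,\operatorname{II}_\kappa^\epsilon)$ through the map $\opA_\kappa^\epsilon$, in exactly the spirit of the proof of Theorem~\ref{mink II}. Since $\opA_\kappa^\epsilon$ is a real analytic diffeomorphism from $\opGHMC_0$ onto $\opLab\Thyp$ --- this being Theorem~\ref{Lab mink}, together with the identical statement for the past-complete side --- it suffices to show that the composition $(\I_\kappa^\epsilon,\operatorname{II}_\kappa^\epsilon)\circ(\opA_\kappa^\epsilon)^{-1}\colon\opLab\Thyp\rightarrow\Thyp\times\Thol$ is a real analytic diffeomorphism.

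The first step is to identify this composition explicitly. Unwinding the sketch of proof of Theorem~\ref{Lab mink}, for a pair $(g,A)\in\opLab\Thyp$ the spacetime $(\opA_\kappa^\epsilon)^{-1}(g,A)$ is the one whose $\kappa$-th leaf inside $\Omega_\theta^\epsilon$ has first fundamental form $I_\kappa=\kappa^{-1}g$ and shape operator $A_\kappa=\sqrt{\kappa}A$. Consequently $\I_\kappa^\epsilon$ of this spacetime, namely the associated hyperbolic metric $\kappa I_\kappa=g$, is just $g$; and $\operatorname{II}_\kappa^\epsilon$, being the holomorphic structure of $\II_\kappa=(\kappa^{-1}g)(\sqrt{\kappa}A\cdot,\cdot)=\kappa^{-1/2}g(A\cdot,\cdot)$, is the holomorphic structure of $g(A\cdot,\cdot)$, the factor $\kappa^{-1/2}$ being an irrelevant conformal rescaling. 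Thus $(\I_\kappa^\epsilon,\operatorname{II}_\kappa^\epsilon)\circ(\opA_\kappa^\epsilon)^{-1}$ is the ``forgetful'' map sending $(g,A)$ to $(g,\mathcal{H}_{g,A})$, where $\mathcal{H}_{g,A}$ denotes the marked holomorphic structure conformal to $g(A\cdot,\cdot)$; in particular this map depends on neither $\kappa$ nor $\epsilon$.

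The second step is to recognise this forgetful map as $\mathcal{A}^{-1}$ followed by the transposition of factors. Indeed, by the construction of $\mathcal{A}$ recalled in Section~\ref{LabourieFields} --- see in particular the sketch of proof of Theorem~\ref{AIsARealAnalDiff} and the identification of $\opLab\Thyp$ with $\opT^*\Thol$ used there --- for every $(\mathcal{H},g)\in\Thol\times\Thyp$ the endomorphism field $\mathcal{A}(\mathcal{H},g)$ is precisely the unique Labourie field $A$ of $g$ for which $g(A\cdot,\cdot)$ is conformal to $\mathcal{H}$; equivalently, $\mathcal{A}^{-1}(g,A)=(\mathcal{H}_{g,A},g)$. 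Post-composing $\mathcal{A}^{-1}$ with the transposition $\Thol\times\Thyp\rightarrow\Thyp\times\Thol$, which is trivially a real analytic diffeomorphism, therefore yields exactly the map $(g,A)\mapsto(g,\mathcal{H}_{g,A})$. Since $\mathcal{A}$ is a real analytic diffeomorphism by Theorem~\ref{AIsARealAnalDiff}, the forgetful map is one too, and combining this with Theorem~\ref{Lab mink} (and its past-complete counterpart) completes the argument.

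The only place where care is genuinely needed --- and hence the main obstacle, such as it is --- is the bookkeeping in the first step: one must correctly track the powers of $\kappa$ relating the first and second fundamental forms of $\Sigma_\kappa$ to the hyperbolic metric and Labourie field parametrising $\opLab\Thyp$, and verify that the discrepancy between $\II_\kappa$ and $g(A\cdot,\cdot)$ is purely conformal, so that it disappears on passing to holomorphic structures. Once that identification is pinned down, everything reduces to composing diffeomorphisms already supplied by Theorems~\ref{Lab mink} and~\ref{AIsARealAnalDiff}; no new existence or regularity statement is required, all of that substance having been absorbed into those two theorems.
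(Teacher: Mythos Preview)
Your proposal is correct and follows essentially the same route as the paper's own proof: factor through $\opA_\kappa$, identify the resulting map $\opLab\Thyp\to\Thyp\times\Thol$ with $\mathcal{A}^{-1}$ (up to the harmless transposition of factors you note), and conclude via Theorems~\ref{Lab mink} and~\ref{AIsARealAnalDiff}. Your version is simply more explicit about the bookkeeping with the powers of $\kappa$ and the transposition $\Thol\times\Thyp\to\Thyp\times\Thol$, points the paper leaves implicit.
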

\begin{proof} The map $(\opI_\kappa^\epsilon,\opII_\kappa^\epsilon)\circ \opA_\kappa^{-1}$ coincides with $\mathcal{A}^{-1}$, where $\mathcal{A}$ is the map given in Section \ref{LabourieFields}. The result now follows by Theorems \ref{AIsARealAnalDiff} and \ref{Lab mink}.\end{proof}
Finally, we remark that the classical Weyl problem in Minkowski space, as yet unresolved at the time of writing, also admits a straightforward expression in the current framework. Indeed,
\begin{question}[Weyl problem]
\noindent For $\kappa,\kappa'>0$, does $(\I_\kappa^+,\I_{\kappa'}^{-})$ define a bijection from $\opGHMC_0$ into $\Thyp\times\Thyp$?
\end{question}
Other data can also be extracted using different existence results. However, these are not necessarily so regular, or do not necessarily interact so well with the invariant set, $\Omega_\theta^+$. For example,  foliations can be constructed using surfaces of constant mean curvature, although the foliation thereby obtained is not necessary contained in $\Omega_\theta^+$ (c.f., for example, \cite{ABBZ} and the references therein). Likewise, in \cite{fv}, foliations are constructed using surfaces of constant mean radius of curvature although, in this case, the surfaces constructed are not necessarily even smooth. Nonetheless, the unique surface of zero mean radius of curvature can be used to construct a bijection between $\opGHMC_0$ and the space of traceless Codazzi tensors on $S$, which in turn canonically identifies with $\operatorname{TT}_{\operatorname{rep}}[S]$ (c.f. \cite{lafontaine,BS}).
%
%%%%%%%%%%%%%%%%%%%%%%%%%%%%%%%%%%%%%%%%%%%%%%%%%%%%%%%%%%%%%%%%%%%%%%%%%%%%%%%%
% Anti de Sitter space.
%%%%%%%%%%%%%%%%%%%%%%%%%%%%%%%%%%%%%%%%%%%%%%%%%%%%%%%%%%%%%%%%%%%%%%%%%%%%%%%%
%
\section{Anti de Sitter space}\label{sec:ads}
\subsection{Definition of anti de Sitter space}\label{ads 1}
{\sl Anti de Sitter space}, which we denote by $\opAdS^3$, is defined to be the projective quotient of the unit pseudosphere in $\R^{2,2}$, that is
\begin{equation*}
\opAdS^3 := \left\{ x\ |\ x_1^2 + x_2^2 - x_3^2 - x_4^2 = 1\right\}/\left\{\pm\operatorname{Id}\right\}~.
\end{equation*}
This space carries a natural group structure which plays an important role in its study. This structure can be visualised by introducing the matrix
\begin{equation*}
J=
\begin{pmatrix} & -1 \\ 1 & \end{pmatrix},
\end{equation*}
and defining the bilinear form over $\opEnd(\R^2)$ by
\begin{equation*}
\langle A,B\rangle_{2,2} = -\frac{1}{2}\opTr(A^t J B J)~.
\end{equation*}
Indeed, since this form is non-degenerate with signature $(2,2)$, and since, for every matrix, $A$,
\begin{equation*}
\langle A,A\rangle_{2,2} = \opDet(A)~,
\end{equation*}
it follows that $\opAdS^3$ naturally identifies with the group $\opPSL(2,\R)$.
\par
The form $\langle\cdot,\cdot\rangle_{2,2}$ restricts to a $(2,1)$-form over the Lie algebra, $\opsl(2,\R)$, so that the latter identifies with Minkowski space, which was studied in detail in the preceding chapter. Furthermore, a straightforward calculation reveals that the unit pseudosphere in $\opsl(2,\R)$ coincides with the locus of all matrices, $M$, whose square is equal to $(-\operatorname{Id})$. In particular, the matrix $J$ itself is an element of this unit pseudosphere, and therefore distinguishes a preferred component which we identify with $\mathbb{H}^2$. By considering this as the future component, we see that $J$ defines a time orientation over $\opAdS^3$. We also use this matrix to define a spatial orientation. Indeed, the multiplications by $J$ on the left and on the right define two right angled rotations of the tangent space of $\mathbb{H}^2$ at the point $J$. Choosing multiplication on the left, we thus obtain a spatial orientation of $\mathbb{H}^2$ at this point, which extends uniquely to the whole of $\opAdS^3$.
\par
The isometry group of $\opAdS^3$ coincides with $\opPO(2,2)$, that is, the projective quotient of $\opO(2,2)$. This group has two connected components, determined by whether they preserve or reverse the orientation, and its identity component identifies with the Cartesian product, $\opPSL(2,\R)\times\opPSL(2,\R)$. Indeed, observe first that $\opPSL(2,\R)$ acts on itself transitively and isometrically by multiplication on the left and on the right. This yields a natural embedding of $\opPSL(2,\R)\times\opPSL(2,\R)$ into $\opPO(2,2)$ which sends the pair $(M,N)$ to the map $A\mapsto MAN^{-1}$. Consider now the stabiliser subgroup of $\Id$ in the identity component of $\opPO(2,2)$. Since this subgroup acts by orientation preserving isometries which send the $J$-component of the unit pseudosphere to itself, it actually coincides with the adjoint action of $\opPSL(2,\R)$ on $\opsl(2,\mathbb{R})$. The assertion now follows, since this adjoint action is none other than the action of the diagonal subgroup of $\opPSL(2,\R)\times\opPSL(2,\R)$ on this Lie algebra.
\par
Consider now a spacelike, LSC, equivariant immersion, $[e,\theta]$, in $\opAdS^3$. By the preceeding discussion, the holonomy, $\theta$, decomposes as $\theta=:(\theta_l,\theta_r)$, where each of $\theta_l$ and $\theta_r$ are homomorphisms into $\opPSL(2,\R)$. In order to understand the properties of these two homomorphisms, we introduce the future oriented, unit, normal vector field, $N_e$, over $e$. By composing this vector field with $e^{-1}$, we obtain the {\sl left} and {\sl right Gauss maps},
\begin{eqnarray*}
\ &&N_{l,e}(p) := e(p)^{-1}N_e(p)~,\ \text{and}\\
\ &&N_{r,e}(p) := N_e(p)e(p)^{-1}~.\end{eqnarray*}
These maps both take values in the subspace, $\mathbb{H}^2$, of $\opsl(2,\R)$. Furthermore, since the immersion, $e$, is spacelike and LSC, they are both local diffeomorphisms, and a straightforward topological argument shows that they define global diffeomorphisms from the universal cover, $\tilde{S}$, of $S$ into $\mathbb{H}^2$ which are each equivariant with respect to the respective actions by conjugation of $\theta_r$ and $\theta_l$ on $\opsl(2,\R)$. That is, for all $\gamma\in\Pi_1$, and for all $p\in\tilde{S}$,
\begin{eqnarray*}
\ &&N_{l,e}(\gamma\cdot p) = \theta_r(\gamma)N_{l,e}(p)\theta_r(\gamma)^{-1},\ \text{and}\\
\ &&N_{r,e}(\gamma\cdot p) = \theta_l(\gamma)N_{r,e}(p)\theta_l(\gamma)^{-1}~.\end{eqnarray*}
We thus obtain,
\begin{theorem}[{Mess \cite{Mess}}]\label{messAdS1}
\noindent The homomorphisms, $\theta_l$ and $\theta_r$, are injective and their images act properly discontinuously on $\mathbb{H}^2$, so that the holonomy, $\theta$, defines a point of $\Trep\times\Trep$.
\end{theorem}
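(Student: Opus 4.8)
The plan is to exploit the left and right Gauss maps $N_{l,e}$ and $N_{r,e}$ introduced just above, following essentially the argument used for the Minkowski case in the discussion after Theorem~\ref{thm:mess mink}. First I would check that $N_{l,e}$ and $N_{r,e}$ are local diffeomorphisms: since $e$ is spacelike, its future oriented unit normal field $N_e$ is smooth and everywhere defined, and, using the group structure of $\opAdS^3$, one expresses the differentials of the two Gauss maps in terms of the shape operator of $e$, which is positive definite by local strict convexity. Hence each Gauss map is an immersion between manifolds of equal dimension, i.e. a local diffeomorphism; moreover they are equivariant, $N_{l,e}$ with respect to the conjugation action of $\theta_r$ and $N_{r,e}$ with respect to that of $\theta_l$, as recorded in the displayed equivariance relations above.

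The key point is to promote these equivariant local diffeomorphisms to global ones. I would pull back the hyperbolic metric of $\mathbb{H}^2\subset\opsl(2,\R)$ through $N_{l,e}$ to obtain a Riemannian metric $h$ on $\tilde S$. Because $N_{l,e}$ intertwines the deck action of $\Pi_1$ on $\tilde S$ with the isometric conjugation action of $\theta_r(\Pi_1)$ on $\mathbb{H}^2$, the metric $h$ is $\Pi_1$-invariant; since $\Pi_1$ acts cocompactly on $\tilde S$ (because $S$ is compact), $h$ is complete. A local isometry out of a complete Riemannian manifold is a covering map, so $N_{l,e}\colon(\tilde S,h)\to\mathbb{H}^2$ is a covering map, and since $\mathbb{H}^2$ is simply connected it is in fact a diffeomorphism. (Equivalently, cocompactness makes $N_{l,e}$ proper, and a proper local diffeomorphism onto a connected manifold is a covering map.) The same reasoning applies verbatim to $N_{r,e}$. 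This completeness/covering-map step is the only part with real content, and it is the main obstacle --- it is precisely the ``straightforward topological argument'' alluded to in the statement.

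Finally I would read off the conclusion. Transporting the free, properly discontinuous deck action of $\Pi_1$ on $\tilde S$ through the equivariant diffeomorphism $N_{l,e}$ shows that $\theta_r(\Pi_1)$ acts freely and properly discontinuously on $\mathbb{H}^2$; in particular, if $\theta_r(\gamma)=\opId$ for some $\gamma$, then $\gamma$ would fix every point of $\tilde S$, forcing $\gamma=1$, so $\theta_r$ is injective. By the symmetric argument with $N_{r,e}$, the homomorphism $\theta_l$ is injective and $\theta_l(\Pi_1)$ acts freely and properly discontinuously on $\mathbb{H}^2$. Since $\Pi_1$ is the fundamental group of a closed surface of hyperbolic type, freeness rules out torsion, so each of $\mathbb{H}^2/\theta_l(\Pi_1)$ and $\mathbb{H}^2/\theta_r(\Pi_1)$ is a closed hyperbolic surface; hence each of $\theta_l$ and $\theta_r$ determines a point of $\Trep$, and $\theta=(\theta_l,\theta_r)$ determines a point of $\Trep\times\Trep$, as claimed.
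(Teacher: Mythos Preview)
Your proposal is correct and follows essentially the same approach as the paper: use the equivariant left and right Gauss maps, observe that local strict convexity makes them local diffeomorphisms, and then invoke cocompactness of the $\Pi_1$-action on $\tilde S$ together with simple connectedness of $\mathbb{H}^2$ to upgrade them to global diffeomorphisms, from which injectivity and proper discontinuity of $\theta_l$ and $\theta_r$ follow. You have in fact spelled out the ``straightforward topological argument'' (completeness of the pulled-back metric, hence covering map, hence diffeomorphism) that the paper only alludes to.
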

\noindent In particular, since two homomorphisms, $\theta$ and $\theta'$, lie in the same conjugacy class if and only if the same holds for their respective left and right components, the point $([\theta_l],[\theta_r])$ of $\Trep\times\Trep$ determined by $\theta$ actually depends on the equivalence class, $[e,\theta]$, only.
\par
As in the Minkowski case, the converse problem of recovering the equivariant immersion from the holonomy is easiest understood once we see that every equivariant immersion is actually contained inside a well defined ghmc AdS spacetime. However, to properly state the result, we first require a good notion of convexity for subsets of $\opAdS^3$. To this end, consider $3$-dimensional, real projective space, $\opRP^3$. It follows from the definition that $\opAdS^3$ projects diffeomorphically onto an open subset of $\opRP^3$. Recall now that a {\sl homogeneous, convex cone}, $\Lambda$, in $\R^{2,2}$ is a (closed) convex subset which is invariant under the action of the dilatation group. A closed subset of $\opRP^3$ is then said to be {\sl convex} whenever it is the projective quotient of some homogeneous, convex cone, and a closed subset of $\opAdS^3$ is said to be {\sl convex} whenever it is convex in $\opRP^3$.
\par
There is also a notion of duality for convex subsets of $\opRP^3$ which is constructed in terms of the metric, $\langle\cdot,\cdot\rangle_{2,2}$, and which plays an important role in what follows. Indeed, given a homogeneous, convex cone, $\Lambda$, in $\R^{2,2}$, its {\sl dual cone} is defined by
\begin{equation*}
\Lambda^* := \left\{ x\in\R^{2,2}\ |\ \langle x,y\rangle_{2,2}\leq 0\ \forall y\in\Lambda\right\}~.
\end{equation*}
$\Lambda^*$ is another homogeneous, convex cone, whose own dual yields $\Lambda$ again. This concept of duality also applies to convex subsets of projective space, since these are equivalent to homogeneous convex cones,
\par
We now have,
\begin{theorem}[Mess \cite{Mess}]\label{messAdS2}
\noindent Given two homomorphisms, $\theta_l,\theta_r:\pi_1(S)\rightarrow\opPSL(2,\R)$, which are injective and which act properly discontinuously on $\mathbb{H}^2$, there exists a unique convex subset, $\Omega_\theta$, of $\opAdS^3$ which is maximal with respect to inclusion, and over the interior of which $\theta:=(\theta_l,\theta_r)$ acts freely and properly discontinuously.
\end{theorem}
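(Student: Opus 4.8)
The plan is to recover $\Omega_\theta$ as an explicit region attached to a canonical limit curve on the boundary at infinity of $\opAdS^3$, essentially following Mess.

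\textbf{Setup and the limit curve.} Adjoining to $\opAdS^3$ the ruled quadric $\{x_1^2+x_2^2-x_3^2-x_4^2=0\}$ inside $\opRP^3$ realises the boundary at infinity $\partial_\infty\opAdS^3$ as a torus whose two rulings furnish a diffeomorphism $\partial_\infty\opAdS^3\cong\opRP^1\times\opRP^1$ intertwining the action of $\opPSL(2,\R)\times\opPSL(2,\R)$ with the product of the two standard actions on $\opRP^1\cong\partial\mathbb{H}^2$, and the causal structure of $\opAdS^3$ extends so that an embedded circle in $\partial_\infty\opAdS^3$ is acausal exactly when it is the graph of a weakly monotone self-homeomorphism of $\opRP^1$. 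Since $\Pi_1$ is the fundamental group of the closed surface $S$, an injective, properly discontinuous action on $\mathbb{H}^2$ is automatically cocompact (the quotient is an aspherical surface homotopy equivalent to $S$, hence homeomorphic to it), so $\theta_l$ and $\theta_r$ are holonomies of hyperbolic structures on $S$. A marking-preserving homeomorphism between the two quotient surfaces lifts to a $\theta$-equivariant quasi-isometry of $\mathbb{H}^2$ whose boundary extension is an orientation-preserving homeomorphism $\phi:\partial\mathbb{H}^2\to\partial\mathbb{H}^2$ with $\phi\circ\theta_l(\gamma)=\theta_r(\gamma)\circ\phi$ for all $\gamma\in\Pi_1$; this $\phi$ is unique, since any orientation-preserving circle homeomorphism commuting with a cocompact Fuchsian group fixes the dense set of fixed points of its hyperbolic elements and hence is the identity. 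The graph $\Lambda_\theta:=\{(\xi,\phi(\xi))\ |\ \xi\in\partial\mathbb{H}^2\}$ is then a $\theta$-invariant acausal topological circle in $\partial_\infty\opAdS^3$, and a short argument identifies it with the limit set of $\theta(\Pi_1)$ there.

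\textbf{Construction of $\Omega_\theta$.} I would take $\Omega_\theta$ to be the closure in $\opAdS^3$ of the invisible domain of $\Lambda_\theta$, i.e.\ of the set of points of $\opAdS^3$ that can be joined to $\Lambda_\theta$ by no causal curve. Working with the causal structure on $\partial_\infty\opAdS^3$ described above, one checks that $\Omega_\theta$ is nonempty, $\theta$-invariant, and convex --- in direct analogy with the Minkowski construction recalled above, it can be written as an intersection of half-spaces bounded by lightlike totally geodesic planes --- and that its boundary at infinity is precisely $\Lambda_\theta$.

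\textbf{Free, properly discontinuous action.} The core of the proof is to show that $\theta$ acts freely and properly discontinuously on $\mathrm{int}(\Omega_\theta)$, equivalently that $\mathrm{int}(\Omega_\theta)/\theta(\Pi_1)$ is a ghmc AdS spacetime. For this I would exhibit an explicit $\theta$-invariant, spacelike, acausal topological surface $\Sigma\subset\mathrm{int}(\Omega_\theta)$ which is a Cauchy surface for the invisible domain --- for instance a boundary component of the convex hull of $\Lambda_\theta$, or the equidistant locus between the two boundary components of the convex core --- and verify that $\Sigma$ is complete for its induced path metric with $\Sigma/\theta(\Pi_1)$ compact; proper discontinuity of the action on the Cauchy development of such a surface is then standard, and freeness follows because a fixed point of $\theta(\gamma)$ in $\mathrm{int}(\Omega_\theta)$ would preserve, and fix a point of, the leaf of the Cauchy foliation through it, contradicting the free action of $\gamma\neq 1$ on $\tilde{S}$. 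The hard part will be precisely this step --- pinning down a Cauchy surface, controlling its completeness, and deducing properness; everything before it is projective- and causal-geometric bookkeeping.

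\textbf{Maximality and uniqueness.} Finally, suppose $C$ is any convex subset of $\opAdS^3$ on whose interior $\theta$ acts freely and properly discontinuously. A causality argument --- the same one underlying Mess's classification --- shows that the boundary at infinity of $\overline{C}$ is an acausal subset of $\partial_\infty\opAdS^3$; since it is $\theta$-invariant and contains the limit set $\Lambda_\theta$, and $\Lambda_\theta$ is a maximal acausal $\theta$-invariant set, it must equal $\Lambda_\theta$. Convexity then forces $\overline{C}\subseteq\overline{\Omega_\theta}$, hence $C\subseteq\Omega_\theta$. As $\Omega_\theta$ itself belongs to this family, it is the unique maximal member, as asserted.
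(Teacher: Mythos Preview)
Your proposal is correct and follows essentially the same route as the paper's Section~\ref{MessConstruction}, which sketches Mess' construction rather than giving a formal proof. Both arguments build the invariant Jordan curve on the boundary torus $\partial_\infty\opAdS^3\cong\opRP^1\times\opRP^1$ and then recover $\Omega_\theta$ from it; the only notable difference is descriptive. The paper obtains the curve as the image of the map $(\hat\theta_l,\hat\theta_r):\partial_\infty\Pi_1\to S_l\times S_r$ and then defines $\Omega_\theta$ as the \emph{dual}, with respect to $\langle\cdot,\cdot\rangle_{2,2}$, of the convex hull $\opK_\theta$ of this curve in $\opRP^3$; you instead produce the curve as the graph of the conjugating boundary homeomorphism $\phi$ and take $\Omega_\theta$ to be (the closure of) its invisible domain. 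These are the same curve and the same convex set, so the two presentations are interchangeable. The paper, being a survey, does not address the proof of proper discontinuity or of maximality and simply refers to \cite{Mess}; your Cauchy-surface outline for properness is the standard one. Your maximality paragraph is slightly loose --- a small invariant convex set need not meet $\partial_\infty\opAdS^3$ at all, so one cannot assert that its ideal boundary \emph{contains} $\Lambda_\theta$; the cleaner argument is that any point causally related to $\Lambda_\theta$ is accumulated by $\theta(\Pi_1)$-orbits, so a convex set on which the action is properly discontinuous must lie in the invisible domain.
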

In contrast to the Minkowski case, the set $\Omega_\theta$ is neither future nor past complete. However, by maximality, it contains its dual, which we refer to as its {\sl Nielsen kernel}, and which we denote by $\opK_\theta$. The complement of $\opK_\theta$ in $\Omega_\theta$ consists of $2$ connected components, one, denoted by $\Omega_\theta^+$, which lies in the future, and another, denoted by $\Omega_\theta^-$, which lies in the past (c.f. Figure~\ref{fig:ads}).
\begin{figure}
\begin{center}
\includegraphics[scale=0.9]{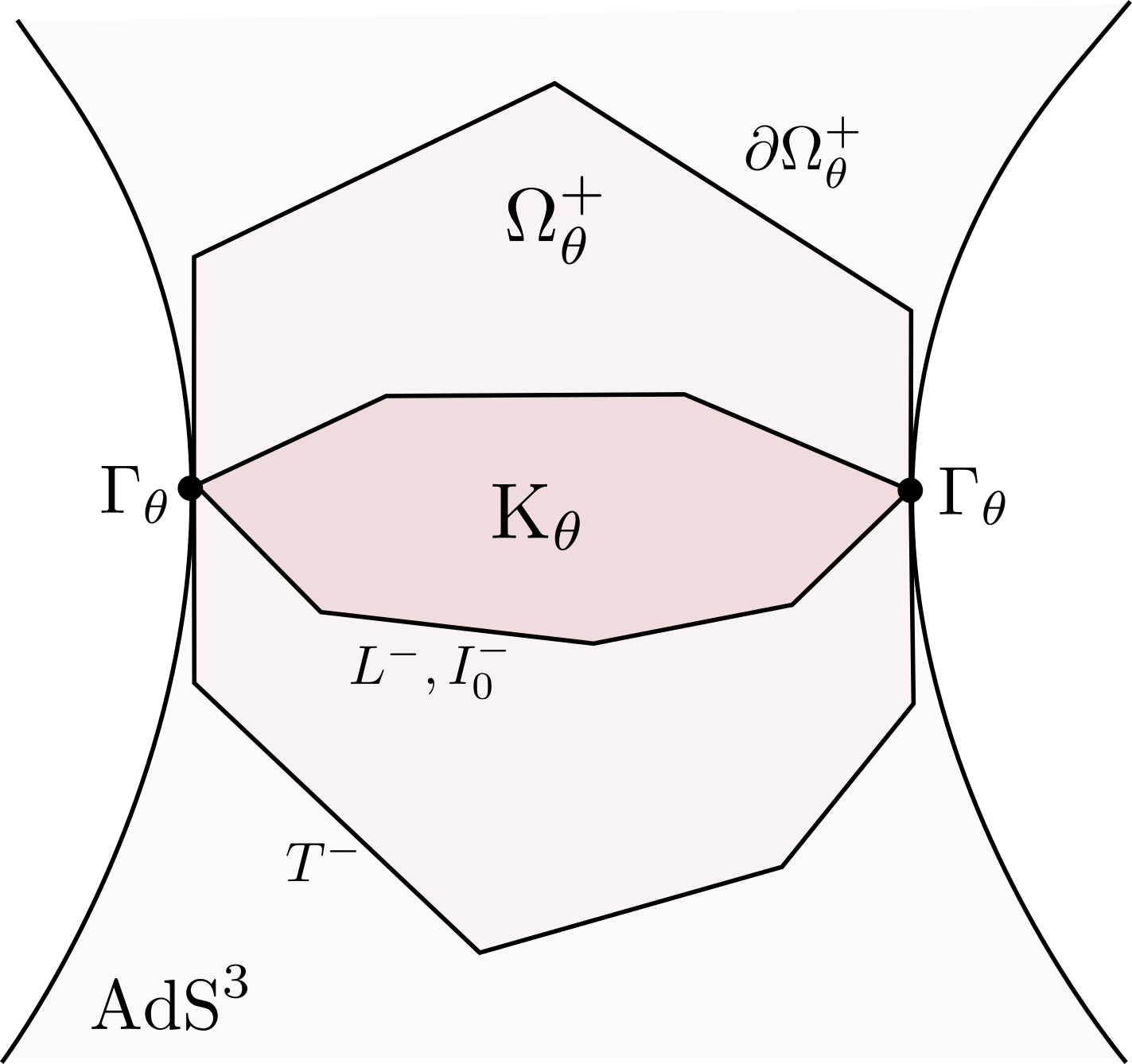}\caption{\label{fig:ads} \textbf{The structure of a ghmc AdS spacetime.}\ Anti de Sitter space can be realised in a projective chart as the interior of a one-sheeted hyperboloid in $\Bbb{R}^3$ plus a disk at infinity. $\Omega_\theta$ and $\opK_\theta$ both meet the boundary of anti de Sitter space along a Jordan curve, $\Gamma_\theta$, and $\opK_\theta$ is, in fact, the convex hull of this curve. The intrinsic metric of $\partial\opK_\theta^\pm$ is hyperbolic, and its singular set defines a measured geodesic lamination. The intrinsic metric of $\partial\Omega_\theta^\pm$ is a real tree, over which $\Pi_1$ acts in a minimal, short manner.}
\end{center}
\end{figure}
\par
The quotient, $\Omega_\theta/\theta(\Pi_1)$, is a ghmc AdS spacetime. Furthermore, since any compact, LSC Cauchy surface in a given ghmc AdS spacetime lifts to an equivariant immersion in $\opAdS^3$, we see that every ghmc AdS spacetime arises in this manner. In other words, we have two maps which send the moduli space, $\opGHMC_{-1}$, of ghmc AdS spacetimes into spaces of Teicm\"uller data of real dimension $(6\mathfrak{g}-6)$ (c.f. Table~\ref{ads table 1}). Theorems \ref{messAdS1} and \ref{messAdS2} therefore yield
\begin{table}[h!]
\begin{center}
\begin{tabular}{|c|c|c|}
\hline
\bf Map & \bf Description  &  \bf Codomain \\
\hline
$\Theta_l$& \multicolumn{1}{|l|}{The left component of the holonomy} & $\Trep$ \\
$\Theta_r$& \multicolumn{1}{|l|}{The right component of the holonomy} & $\Trep$ \\
\hline
\end{tabular}
\end{center}
\caption{Maps taking values in spaces of real dimension $(6\mathfrak{g}-6)$.}\label{ads table 1}
\end{table}
\begin{theorem}[Mess, \cite{Mess}]\label{messAdS3}
\noindent The map $(\Theta_l,\Theta_r)$ defines a bijection from $\opGHMC_{-1}$ into $\Trep\times\Trep$.
\end{theorem}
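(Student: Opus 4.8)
The plan is to deduce the statement almost entirely from the two preceding results of Mess, Theorems~\ref{messAdS1} and~\ref{messAdS2}, after pinning down the identification of $\opGHMC_{-1}$ with conjugacy classes of suitable holonomies.

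First I would check that $(\Theta_l,\Theta_r)$ is well defined on $\opGHMC_{-1}$ and takes values in $\Trep\times\Trep$. Given a marked ghmc AdS spacetime $M$, its developing map realises the universal cover of $M$ as an open subset of $\opAdS^3$, uniquely up to post-composition by an element of $\opIsom(\opAdS^3)$; hence the associated holonomy $\theta\colon\Pi_1\to\opIsom_0(\opAdS^3)=\opPSL(2,\R)\times\opPSL(2,\R)$ is well defined up to conjugation, and in particular is independent of any auxiliary choice of Cauchy surface. Writing $\theta=:(\theta_l,\theta_r)$, one picks a spacelike, locally strictly convex Cauchy surface of $M$ (such surfaces exist in any ghmc AdS spacetime), observes that it lifts to an equivariant immersion with holonomy $\theta$, and invokes Theorem~\ref{messAdS1} to conclude that $\theta_l$ and $\theta_r$ are each injective with image acting properly discontinuously on $\mathbb{H}^2$. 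Thus $([\theta_l],[\theta_r])\in\Trep\times\Trep$; and since a marking-preserving isometry of spacetimes induces a conjugation of holonomies, this pair depends only on the class of $M$ in $\opGHMC_{-1}$.

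Next I would establish injectivity and surjectivity directly from Theorem~\ref{messAdS2}. For injectivity, suppose $M=\operatorname{int}(\Omega_\theta)/\theta(\Pi_1)$ and $M'=\operatorname{int}(\Omega_{\theta'})/\theta'(\Pi_1)$ have the same image under $(\Theta_l,\Theta_r)$. Then $\theta'_l=g_l\theta_l g_l^{-1}$ and $\theta'_r=g_r\theta_r g_r^{-1}$ for some $g_l,g_r\in\opPSL(2,\R)$, so the single isometry $g:=(g_l,g_r)$ conjugates $\theta$ into $\theta'$; by the uniqueness clause of Theorem~\ref{messAdS2}, $g$ must carry $\Omega_\theta$ onto $\Omega_{\theta'}$, hence descends to a marking-preserving isometry $M\to M'$, and so $M=M'$ in $\opGHMC_{-1}$. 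For surjectivity, given $([\rho_l],[\rho_r])\in\Trep\times\Trep$, I would choose representatives $\rho_l,\rho_r$, which are injective and act properly discontinuously on $\mathbb{H}^2$, apply Theorem~\ref{messAdS2} to obtain the maximal invariant convex set $\Omega_\theta$ for $\theta:=(\rho_l,\rho_r)$, and note --- as recorded in the discussion following that theorem --- that $M:=\operatorname{int}(\Omega_\theta)/\theta(\Pi_1)$ is a ghmc AdS spacetime with Cauchy surface diffeomorphic to $S$. Its developing map is a lift of the inclusion $\operatorname{int}(\Omega_\theta)\hookrightarrow\opAdS^3$, so its holonomy is $\theta$, and therefore $(\Theta_l,\Theta_r)(M)=([\rho_l],[\rho_r])$.

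The genuine content lies not in this bookkeeping but in the geometric input already supplied by Theorems~\ref{messAdS1} and~\ref{messAdS2}: the fact that every marked ghmc AdS spacetime is, up to marking-preserving isometry, of the form $\operatorname{int}(\Omega_\theta)/\theta(\Pi_1)$, with the maximality clause in the definition of ghmc matching the maximality of $\Omega_\theta$ with respect to inclusion. This --- establishing that a Cauchy surface lifts to an equivariant immersion and that the spacetime is then reconstructed as the quotient of the maximal invariant convex domain --- is what I expect to be the main obstacle were one to argue from scratch. Granting it, the theorem is purely formal: existence in Theorem~\ref{messAdS2} gives surjectivity, uniqueness gives injectivity, and Theorem~\ref{messAdS1} ensures the image fills all of $\Trep\times\Trep$ rather than a larger representation variety.
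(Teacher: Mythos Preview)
Your proposal is correct and matches the paper's approach exactly: the paper gives no separate proof, but simply says that Theorems~\ref{messAdS1} and~\ref{messAdS2} ``therefore yield'' the result, after noting that the quotient $\Omega_\theta/\theta(\Pi_1)$ is a ghmc AdS spacetime and that every such spacetime arises this way since any compact LSC Cauchy surface lifts to an equivariant immersion. You have made explicit precisely the bookkeeping the paper leaves implicit, and your closing paragraph correctly identifies where the genuine geometric content lies.
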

\noindent In particular, it is this parametrisation of $\opGHMC_{-1}$ which we use to furnish this space with the structure of a real algebraic variety.
\subsection{Mess' construction}\label{MessConstruction}
It is worth reviewing Mess' construction in some detail for the insight it provides into the structure of anti de Sitter space. First, let $\hat{\opC}$ be the light cone in $\R^{2,2}$, that is
\begin{equation*}
\hat{\opC} := \left\{ x\ |\ x_1^2 + x_2^2 = x_3^2 + x_4^2\right\}~,
\end{equation*}
and let $\opC$ be its projective quotient in $\opRP^3$. This subset is a torus whose complement consists of two connected components, one of which we have already identified with $\opAdS^3$, and the other of which identifies with a copy of $\opAdS^3$ whose semi-riemannian metric has reversed sign. In particular, the subset, $\opC$, can be viewed as the boundary at infinity of $\opAdS^3$.
\par
The lorentzian metric over $\R^{2,2}$ restricts to a degenerate metric over $\hat{\opC}$, having one null direction, namely, the radial direction, one positive direction, and one negative direction. However, since the radial direction is collapsed by projection onto $\opRP^3$, this degenerate metric projects down to a conformal class of non-degenerate metrics of signature $(1,1)$ over the torus, $\opC$. That is, $\opC$, carries a natural conformal Minkowski structure.
\par
The conformal Minkowski structure of $\opC$ yields two smooth, transverse distributions over this torus, given at each point by its two lightlike directions. Furthermore, these distributions integrate into two transverse foliations of $\opC$ by lightlike circles. Indeed, recalling first that $\R^{2,2}$ identifies with $\opEnd(\R^2)$, consider a non-zero, lightlike vector, $A$, in $\hat{\opC}$, together with the two planes, $\langle A,JA\rangle$, with generators $A$ and $JA$, and $\langle A, AJ\rangle$, with generators $A$ and $AJ$. These two planes consist of lightlike vectors only, and thus project onto circles in $\opC$, which meet transversally at a unique point, namely the projective image of $A$. The two foliations are now obtained by taking the families of all such circles. We call the foliation defined by planes of the form, $\langle A,JA\rangle$, the {\sl left foliation} of $\opC$, and we call the other foliation its {\sl right foliation}.\footnote{We observe in passing that the operation of matrix transposition exchanges these two foliations.} In particular, if $S_l$ and $S_r$ are fixed leaves of the left and right foliations respectively, then there is a canonical, smooth parametrisation of $C$ by the cartesian product, $S_l\times S_r$.
\par
An alternative and pleasing presentation of these foliations also arises upon consideration of the affine chart of $\opRP^3$ obtained by projecting radially onto the subspace, $E:=\left\{x_1=1\right\}$. Indeed, in this chart, $\opAdS^3$ projects onto the region, $\left\{x_2^2 - x_3^2 - x_4^2<1\right\}$, and $\hat{\opC}$ projects onto the one sheeted hyperboloid,
\begin{equation*}
\left\{x_2^2 - x_3^2 - x_4^2 =-1\right\}~.
\end{equation*}
Since lightlike planes in $\mathbb{R}^{2,2}$ project onto lightlike lines in $E$, it now follows that the double foliation of $\opC$ projects onto the classical double ruling of the one sheeted hyperboloid in $\mathbb{R}^{2,1}$ by lightlike lines.
\par
Consider again the parametrisation of $\opC$ by $S_l\times S_r$. The circles, $S_l$ and $S_r$ are the respective projective spaces of the planes $\langle A,AJ\rangle$ and $\langle B,JB\rangle$, for some non-trivial degenerate matrices, $A$ and $B$. Consider now the action of $\opPSL(2,\R)\times\opPSL(2,\R)$ on $\opC$, and, in particular, its effect on the left and right foliations. A straightforward calculation shows that multiplication on the left by an element of $\opPSL(2,\R)$ preserves each leaf of the left foliation, and thus leaves the right coordinate in $S_l\times S_r$ invariant. Likewise, multiplication on the right preserves each leaf of the right foliation, thereby leaving the left coordinate in $S_l\times S_r$ invariant. In this manner, we see how the parametrisation of $\opC$ by $S_l\times S_r$ separates the two components of $\opPSL(2,\R)\times\opPSL(2,\R)$.
\par
Furthermore, let $(e_1,e_2)$ be the canonical basis of $\R^2$, and let $\Phi:\R^2\rightarrow\langle A,JA\rangle$ be the linear map given by
\begin{eqnarray*}
\ &&\Phi(e_1) := A~,\ \text{and}\\
\ &&\Phi(e_2) := JA~.
\end{eqnarray*}
For a given element, $M$, of $\opPSL(2,\R)$, denoting by $M_L$ its action by multiplication on the left on the invariant plane, $\langle A,JA\rangle$, we readily obtain
\begin{equation*}
\Phi^{-1} M_L\Phi = M~.
\end{equation*}
In other words, $\Phi$ defines a projective linear diffeomorphism from $\opRP^1$ into $S_l$, which conjugates the action of $\opPSL(2,\R)$. The same observation also holds for $S_r$, so that the product, $S_l\times S_r$, actually identifies with $\opRP^1\times\opRP^1$ over each component of which $\opPSL(2,\R)$ acts in the usual manner.
\par
The completion of Mess' construction is now straightforward. Indeed, suppose that $\theta_l$ and $\theta_r$ satisfy the hypotheses of Theorem~\ref{messAdS2}. In particular, they both extend to homeomorphisms, $\hat{\theta}_l:\partial_\infty\Pi_1\rightarrow S_l$ and $\hat{\theta}_r:\partial_\infty\Pi_1\rightarrow S_r$. The image of $(\hat{\theta}_l,\hat{\theta}_r)$ is a Jordan curve, $\Gamma_\theta$, in $\opC$, which actually coincides with the closure of the set of all attractive and repulsive fixed points of $\theta(\gamma)$, as $\gamma$ ranges over all non-trivial elements of $\Pi_1$. Furthermore, since it is a graph over each of $S_l$ and $S_r$, it is spacelike. The curve, $\Gamma_\theta$, has a well defined convex hull, $\opK_\theta$, in $\opRP^3$, which is, in fact, contained in $\opAdS^3$, and its dual, $\Omega_\theta$, is the desired maximal, invariant convex set.
\subsection{Laminations and trees}\label{LaminationsAndEarthquakes}
We now turn our attention to the non-smooth geometric objects associated to $\Omega_\theta$. These will be contained in the union of $\Omega_\theta^+$ and $\Omega_\theta^-$. Furthermore, since the time orientation is readily reversed --- say, by replacing the matrix, $J$, with $(-J)$ --- it suffices for the moment to consider only the future component, $\Omega_\theta^+$. The boundary of this set itself consists of two components. The first, which we denote by $\partial\Omega_\theta^+$, is the intersection of $\Omega_\theta^+$ with $\partial\Omega_\theta$, and the second, which we denote by $\partial \opK_\theta^+$, is the intersection of $\Omega_\theta^+$ with $\partial \opK_\theta$.
\par
The measured geodesic lamination of $\Omega_\theta^+$ is determined by the boundary component, $\partial\opK_\theta^+$. In order to visualise the construction, consider first a timelike, unit vector, $B_1$, in $\R^{2,2}$. Observe that $B_1$ projects to an element of $\opAdS^3$, which we also denote by $B_1$. Furthermore, its orthogonal complement defines a spacelike, totally geodesic, embedded submanifold of $\opAdS^3$ isometric to $\mathbb{H}^2$. In particular, at every point, $A$, of this subset, the vector $B_1$ also defines a future oriented, unit, timelike tangent vector to $\opAdS^3$ which is orthogonal to this submanifold at this point. By abuse of terminology, we denote this subspace by $B_1^\perp$ and we refer to it is the {\sl spacelike plane} orthogonal to $B_1$.
\par
Consider now another element, $B_2$, of $\opAdS^3$, chosen such that $B_1^\perp$ and $B_2^\perp$ intersect along a shared, spacelike geodesic, $\Gamma$, which, in particular, divides each of $B_1^\perp$ and $B_2^\perp$ into two half-spaces with geodesic boundaries. Let $X_{12}$ be the intersection of the respective pasts of $B_1^\perp$ and $B_2^\perp$.\footnote{Technically, since $\opAdS^3$ is not causal, the past of a given spacelike plane, $P$, is not globally defined. However, since we are only concerned with what happens in a neighbourhood of $P$, this is resolved in one of two different ways: either by working in the universal cover of $\opAdS^3$, which is causal, or by working in a small causal neighbourhood of $P$.} The boundary, $\partial X_{12}$, of $X_{12}$ consists of two spacelike, totally geodesic components which meet along $\Gamma$, one of which is a half-space in $B_1^\perp$, and the other of which is a half-space in $B_2^\perp$. Recall that the intrinsic metric of $\partial X_{12}$ is defined by
$$
d(x,y) := \inf_{\gamma}l(\gamma)~,
$$
where $\gamma$ ranges over all continuous curves in $\partial X_{12}$ starting at $x$ and ending at $y$, and $l(\gamma)$ denotes its length with respect to the Minkowski metric, $\langle\cdot,\cdot\rangle_{2,2}$. It is now a straightforward matter to show that $\partial X_{12}$, furnished with this metric, is isometric to $\mathbb{H}^2$, and that the bending locus, $\Gamma$, is a complete geodesic in this space.
\par
Consider now a third element, $B_3$, of $\opAdS^3$, chosen in such a manner that the spacelike planes, $B_1^\perp$, $B_2^\perp$ and $B_3^\perp$ have trivial intersection. Let $X_{123}$ be the intersection of the respective pasts of these three planes. As before, $\partial X_{123}$, furnished with its intrinsic metric, is isometric to $\mathbb{H}^2$, and there will be two of the three curves, $\Gamma_{12}$, $\Gamma_{13}$ and $\Gamma_{23}$, which define complete, non-intersecting geodesics in this space.
\par
Now, since $\opK_\theta$ is a convex hull, it behaves much like the intersection of the respective pasts of a finite configuration of spacelike planes, no three of which share a common point in $\opAdS^3$. In particular, $\partial \opK_\theta^+$, furnished with its intrinsic metric, is isometric to $\mathbb{H}^2$, and since it is invariant under the action of $\theta(\Pi_1)$, it defines a compact, hyperbolic surface, $\partial \opK_\theta^+/\theta(\Pi_1)$.
\par
The measured geodesic lamination is now constructed using supporting planes. Indeed, given a convex set, $X$, and a boundary point, $A$, a spacelike plane, $B^\perp$, passing through $A$ is said to be a {\sl supporting plane} to $X$ at that point whenever $X$ lies entirely to one side of it. Once again, since $\opK_\theta$ is a convex hull, any supporting plane to its boundary, $\partial\opK_\theta^+$, meets this set, either along a complete geodesic, or along an ideal polygon. The lamination, $L$, of $\partial\opK_\theta^+$ is then defined to be the union of all complete geodesics determined by intersections of $\partial\opK_\theta^+$ with supporting hyperplanes. As in the case of a finite configuration of spacelike planes, no two of these geodesics intersect in $\partial\opK_\theta^+$, so that this set is indeed a lamination.
\par
Given a convex subset, $X$, of $\opAdS^3$, and a boundary point, $A$, an element, $B$, of $\opAdS^3$ is said to be a {\sl supporting normal} to $X$ at $A$ whenever the spacelike plane, $B^\perp$, is a supporting plane to $X$ at that point. It is a straightforward matter to show that if $A$ is a point of $\partial\opK_\theta^+$ not lying on the lamination, $L$, then $\partial\opK_\theta^+$ has a unique supporting normal at that point. Now, given a short curve, $c$, in $\partial\opK_\theta^+$, with end points not in $L$, its mass is approximated by the length of the shortest spacelike curve in $\opAdS^3$ joining the respective supporting normals of these two end points, and the mass of an arbitrary curve, $c$, compatible with $L$, is now determined in the usual manner by summing over short segments and taking a limit. In this way, we define a transverse measure over $L$ which yields a measured geodesic lamination over $\partial\opK_\theta^+$. In particular, since it is trivially invariant under the action of $\theta(\Pi_1)$, it defines a measured geodesic lamination over the hyperbolic surface, $\partial\opK_\theta^+/\theta(\Pi_1)$. We thus obtain two pairs of maps, each taking values in spaces of Teichm\"uller data of real dimension $(6\mathfrak{g}-6)$ (c.f. Table~\ref{table ads 2}).
\begin{table}[h!]
\begin{center}
\begin{tabular}{|c|c|c|}
\hline
\bf Map & \bf Description  &  \bf Codomain \\
\hline
$\opI_0^\pm$& \multicolumn{1}{|l|}{The intrinsic metric of $\partial\opK_\theta^\pm$} & $\Thyp$\\
$\opL^\pm$ & \multicolumn{1}{|l|}{The measured geodesic lamination of $\partial\opK_\theta^\pm$} & $\opML$ \\
\hline
\end{tabular}
\end{center}
\caption{Maps taking values in spaces of real dimension $(6\mathfrak{g}-6)$.}\label{table ads 2}
\end{table}\par
We now have the following scattering type result.
\begin{theorem}[Mess \cite{Mess}]\label{messAdS4}
\noindent For each $\epsilon\in\left\{+,-\right\}$, the map $(\opI_0^\epsilon,\opL^\epsilon)$ defines a bijection from $\opGHMC_{-1}$ into $\Thyp\times\opML$.
\end{theorem}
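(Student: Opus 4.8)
The plan is to deduce the bijectivity of $(\opI_0^\epsilon,\opL^\epsilon)$ from Mess' parametrisation of $\opGHMC_{-1}$ by pairs of Fuchsian representations (Theorem~\ref{messAdS3}) together with the earthquake theorem in its ``mid point'' form (Theorem~\ref{EarthquakeTheorem2}). It suffices to treat the case $\epsilon=+$, since reversing the time orientation --- for instance, by replacing the matrix $J$ with $(-J)$ --- interchanges $\Omega_\theta^+$ and $\Omega_\theta^-$, and hence carries the $+$ statement onto the $-$ statement. Recall also that the discussion preceding the theorem already ensures that $\opI_0^+$ and $\opL^+$ are well defined: $\partial\opK_\theta^+/\theta(\Pi_1)$ is a compact hyperbolic surface, its bending locus is a geodesic lamination, and the construction via supporting normals endows that lamination with a transverse measure, so that $\opL^+$ indeed takes values in $\opML$.

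The crux is a geometric identity, due to Mess, expressing the two components of the holonomy in terms of the boundary data. Writing $m^+:=\opI_0^+([e,\theta])$ and $\lambda^+:=\opL^+([e,\theta])$, and using the standard identification $\Thyp\cong\Trep$, one shows
\begin{equation*}
\theta_l = \ophol\bigl(\mathcal{E}^l_{\lambda^+}(m^+)\bigr)\quad\text{and}\quad\theta_r = \ophol\bigl(\mathcal{E}^r_{\lambda^+}(m^+)\bigr)~.
\end{equation*}
The mechanism behind this is that $\partial\opK_\theta^+$ is a pleated surface in $\opAdS^3$ whose pleating lamination is $\lambda^+$, and that reading this pleated surface through the two projections of the boundary torus $\opC$ onto its left and right leaf spaces $S_l\cong S_r\cong\opRP^1$ --- the foliations described in Section~\ref{MessConstruction} --- turns the bending along $\lambda^+$ into a shear, the two projections yielding, respectively, the left and the right earthquake of $m^+$ along $\lambda^+$. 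I expect this identity to be the main obstacle. One proves it first for the case of a single weighted geodesic, where the bending of $\partial\opK_\theta^+$ along a closed curve $c$ by a weight $a$ is computed explicitly and found to become the Fenchel--Nielsen twist of length $a$ along $c$ in each of the two factors, and then extends it to arbitrary $\lambda^+$ by density of rational laminations in $\opML$ together with the continuity of the earthquake flow (Theorem~\ref{EarthquakeTheorem}) and of the transverse measure (Theorem~\ref{ExistenceOfTransverseMeasure}). Any universal normalising constant appearing in the identity can be absorbed into the map $G$ below without affecting the argument.

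Granting the identity, the conclusion is formal. Define $G:\Thyp\times\opML\rightarrow\Trep\times\Trep$ by $G(h,\lambda):=\bigl(\ophol(\mathcal{E}^l_\lambda(h)),\ophol(\mathcal{E}^r_\lambda(h))\bigr)$, again via $\Thyp\cong\Trep$. The identity above says precisely that $(\Theta_l,\Theta_r)=G\circ(\opI_0^+,\opL^+)$ on $\opGHMC_{-1}$, so that $(\opI_0^+,\opL^+)=G^{-1}\circ(\Theta_l,\Theta_r)$ once $G$ is known to be invertible. But $G$ is a bijection by Theorem~\ref{EarthquakeTheorem2}: given $(g_1,g_2)\in\Trep\times\Trep$, that theorem furnishes a unique pair $(h,\lambda)$ with $g_2=\mathcal{E}^l_\lambda(h)$ and $g_1=\mathcal{E}^r_\lambda(h)$, which is exactly $G^{-1}(g_2,g_1)$, and the uniqueness clause shows at once that this is a two-sided inverse of $G$. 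Since $(\Theta_l,\Theta_r)$ is a bijection from $\opGHMC_{-1}$ onto $\Trep\times\Trep$ by Theorem~\ref{messAdS3}, the composition $(\opI_0^+,\opL^+)=G^{-1}\circ(\Theta_l,\Theta_r)$ is a bijection from $\opGHMC_{-1}$ onto $\Thyp\times\opML$, which is the assertion for $\epsilon=+$; the case $\epsilon=-$ follows as noted above.
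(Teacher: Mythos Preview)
Your argument is correct and is precisely the implication the paper itself flags in the remark following Theorem~\ref{EarthquakeTheorem2}: ``any two of Theorems~\ref{messAdS3}, \ref{messAdS4} and \ref{EarthquakeTheorem2} imply the third.'' The paper, however, runs the logic in the opposite direction. It treats Theorem~\ref{messAdS4} as input (attributed to Mess), sketching its inverse directly via the bending construction: from data $(h,\lambda)$ one builds a $\theta$-invariant pleated convex surface in $\opAdS^3$, and the uniqueness theorem for such surfaces forces it to coincide with $\partial\opK_\theta^+$. Section~\ref{Earthquakes} then derives the earthquake theorem as a consequence, via exactly the Gauss-map identity you isolate. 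Your route has the merit of reducing everything to a classical statement in Teichm\"uller theory, at the price of importing that statement as a black box; the paper's route is self-contained on the Lorentzian side and obtains the earthquake theorem as a dividend rather than a hypothesis.

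One small correction of conventions: in the paper (see Section~\ref{Earthquakes}), the left Gauss map $N_l$ is $\theta_r$-equivariant and the right Gauss map $N_r$ is $\theta_l$-equivariant, so the identity actually reads $\theta_l=\ophol\bigl(\mathcal{E}^r_{\lambda^+}(m^+)\bigr)$ and $\theta_r=\ophol\bigl(\mathcal{E}^l_{\lambda^+}(m^+)\bigr)$, the reverse of what you wrote. This is immaterial for bijectivity --- it amounts to post-composing your $G$ with the swap of the two $\Trep$ factors --- but the labels should match.
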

\noindent In order to visualise how the set, $\Omega_\theta$, is recovered from the metric and the lamination, we first recall a general result of convex sets. A subset, $\Sigma$, of $\opAdS^3$ is said to be a {\sl convex surface} whenever it is a relatively open subset of the boundary of some convex subset, $X$, which has non-trivial interior. We then say that $\Sigma$ is {\sl pleated} whenever it has the property that for every $p\in\Sigma$, there exists a {\sl relatively open} geodesic segment in $\opAdS^3$ which is contained in $\Sigma$ and which passes through $p$. Finally, we say that $\Sigma$ is {\sl future} or {\sl past oriented} depending on whether its outward pointing supporting normals are future or past oriented. In particular, $\partial\opK_\theta^+$ and $\partial \opK_\theta^-$ are both pleated convex surfaces, $\partial\opK_\theta^+$ is future oriented, and $\partial\opK_\theta^-$ is past oriented.
\begin{theorem}
\noindent If $\Sigma$ is a complete, future (resp. past) oriented, pleated convex surface in $\Omega_\theta$ which is invariant with respect to the action of $\theta(\Pi_1)$, then $\Sigma$ coincides with $\partial\opK_\theta^+$ (resp. $\partial\opK_\theta^-$).
\end{theorem}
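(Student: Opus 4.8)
We treat the future oriented case; the past oriented case is identical after reversing the time orientation of $\opAdS^3$ (i.e. replacing $J$ by $-J$). The plan is to show that $\Sigma$ is forced onto the future boundary of the convex hull of the asymptotic curve $\Gamma_\theta$. First some preliminaries on $\Sigma$. Since $\Sigma$ is future oriented, all the supporting planes of the convex body that it locally bounds are spacelike, so $\Sigma$ is an achronal spacelike surface; moreover, being complete and convex, it is properly embedded, so that $\overline{\Sigma}\setminus\Sigma\subseteq\opC$, where $\opC=\partial_\infty\opAdS^3$. Put $\hat X:=\operatorname{Conv}(\Sigma)$, the convex hull in $\opRP^3$. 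Since $\Omega_\theta$ is convex and contains $\Sigma$, we have $\hat X\subseteq\Omega_\theta$; since $\hat X$ is contained in every convex body whose boundary contains $\Sigma$, we still have $\Sigma\subseteq\partial\hat X$; and $\hat X$ is $\theta(\Pi_1)$-invariant.

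Step 1: the asymptotic boundary of $\Sigma$ is $\Gamma_\theta$. The set $\partial_\infty\Sigma:=\overline{\Sigma}\cap\opC$ is non-empty (as $\Sigma$ is complete and non-compact while $\overline{\opAdS^3}$ is compact), closed, $\theta(\Pi_1)$-invariant, and contained in $\overline{\Omega_\theta}\cap\opC=\Gamma_\theta$. By Mess' construction (Section~\ref{MessConstruction}), $\Gamma_\theta$ is the closure of the set of fixed points of the elements of $\theta(\Pi_1)$, so this group acts minimally on it; hence $\partial_\infty\Sigma=\Gamma_\theta$. In particular $\Gamma_\theta\subseteq\overline{\hat X}$, and since $\overline{\hat X}$ is convex, $\overline{\opK_\theta}=\operatorname{Conv}(\Gamma_\theta)\subseteq\overline{\hat X}$.

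Step 2: $\Sigma\subseteq\partial\opK_\theta$. Here we invoke the structure of a complete convex pleated surface in $\opAdS^3$ (the Lorentzian counterpart of Thurston's description of pleated surfaces): $\Sigma$ is the union of a family of complete spacelike geodesics of $\opAdS^3$, its bending geodesics, together with totally geodesic flat pieces, each flat piece being a convex subset of a spacelike totally geodesic plane (isometric to $\mathbb{H}^2$) bounded by bending geodesics and possibly by arcs at infinity. The two endpoints of each bending geodesic lie in $\partial_\infty\Sigma=\Gamma_\theta$, so each bending geodesic is contained in $\operatorname{Conv}(\Gamma_\theta)=\overline{\opK_\theta}$. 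Now let $p\in\Sigma$. If $p$ lies on a bending geodesic, then $p\in\overline{\opK_\theta}$. Otherwise $p$ lies in a flat piece, and the maximal geodesic arc of $\opAdS^3$ through $p$ that is contained in $\Sigma$ is a segment each of whose two ends either escapes to $\opC$ — and hence lands on $\partial_\infty\Sigma=\Gamma_\theta$ — or terminates on a bending geodesic, and hence at a point of $\overline{\opK_\theta}$. In all cases the closure of this arc in $\overline{\opAdS^3}$ is a geodesic segment with both endpoints in $\overline{\opK_\theta}$, so by convexity it lies in $\overline{\opK_\theta}$, whence $p\in\overline{\opK_\theta}$. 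Thus $\Sigma\subseteq\overline{\opK_\theta}$, so $\hat X=\operatorname{Conv}(\Sigma)\subseteq\overline{\opK_\theta}$; combined with Step 1 this gives $\overline{\hat X}=\overline{\opK_\theta}$, hence $\partial\hat X=\partial\overline{\opK_\theta}$. Intersecting with $\opAdS^3$ and recalling that $\partial\overline{\opK_\theta}\cap\opAdS^3=\partial\opK_\theta^+\cup\partial\opK_\theta^-$, we get $\Sigma\subseteq\partial\opK_\theta^+\cup\partial\opK_\theta^-$.

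Step 3 and the main difficulty. Each connected component of $\Sigma$ lies in $\partial\opK_\theta^+$ or in $\partial\opK_\theta^-$, since these are disjoint in $\opAdS^3$; as $\Sigma$ is future oriented while $\partial\opK_\theta^-$ is past oriented, $\Sigma\subseteq\partial\opK_\theta^+$. Now $\Sigma$ is open in $\partial\opK_\theta^+$ by invariance of domain (both are $2$-manifolds without boundary), and it is closed in $\partial\opK_\theta^+$ since $\Sigma$ is closed in $\opAdS^3$ (being properly embedded) and $\partial\opK_\theta^+$ is closed in $\opAdS^3$. As $\partial\opK_\theta^+$ is connected and $\Sigma\neq\emptyset$, we conclude $\Sigma=\partial\opK_\theta^+$. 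The crux is Step 2, namely the extraction of the bending geodesics and flat pieces of a complete convex pleated surface and, above all, the fact — on which the geodesic-ruling argument rests — that the bending geodesics are complete and have their endpoints on $\Gamma_\theta$. Once this structure is in hand, the remainder is soft point-set topology together with convexity in $\opRP^3$.
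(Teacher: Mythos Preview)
Your approach is essentially the same as the paper's: first identify the asymptotic boundary of $\Sigma$ with $\Gamma_\theta$, then use the pleated structure to force $\Sigma$ onto a boundary component of the convex hull $\opK_\theta=\operatorname{Conv}(\Gamma_\theta)$, and finally use the orientation to pick out $\partial\opK_\theta^+$. The difference is one of packaging. The paper's proof is two lines: it asserts that $\partial_\infty\Sigma=\Gamma_\theta$ and then invokes a black-box result (Theorem~4.18 of \cite{smith-livre}) stating that a pleated convex surface is a boundary component of the convex hull of its ideal boundary. Your Steps~2 and~3 are an attempt to prove that black-box result by hand.

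Your Step~1 is fine and matches the paper. Your Step~3 is routine. The substantive content is Step~2, and you are right to flag it as the crux. The argument you give there --- extending a maximal geodesic arc through $p$ until it either reaches $\Gamma_\theta$ or ``terminates on a bending geodesic'' --- is the right intuition but is not quite a proof as written. The dichotomy ``bending geodesic versus flat piece'' and the completeness of bending geodesics require justification: one must show that on a convex pleated surface the set of points admitting more than one direction of geodesic segment decomposes into flat pieces, and that the frontier of a flat piece consists of complete geodesics (this is where convexity and completeness are really used). Your closing paragraph correctly isolates exactly this. So: correct strategy, honestly identified gap, and the gap is precisely what the paper outsources to the cited theorem. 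If you want a self-contained argument, that theorem is what you need to supply.
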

\begin{proof} Indeed, $\Sigma$ is a convex surface in $\mathbb{R}\mathbb{P}^3$ whose boundary is $\Gamma_\theta$. Since $\Sigma$ is pleated, it is a boundary component of the convex hull of $\Gamma_\theta$ (c.f. Theorem~$4.18$ of \cite{smith-livre}), and the result follows.\end{proof}
The set, $\Omega_\theta$, is now recovered from the metric and the lamination by constructing a pleated convex surface. In order to see this, consider first the case of an isometric embedding, $e:\mathbb{H}^2\rightarrow\opAdS^3$, and a complete, weighted geodesic, $(\Gamma,a)$, in $\mathbb{H}^2$. The image of $e$ is a spacelike plane, $B_1^\perp$, say. We bend $B_1^\perp$ along $\Gamma$ as follows. Consider the locus of all points, $B$, in $\opAdS^3$ such that $B^\perp$ contains $\Gamma$, and observe that this is also a complete, spacelike geodesic, and it contains $B_1$. Now let $B_2$ be a point lying at a distance, $a$, along this geodesic from $B_1$, and, as before, let $X_{12}$ be the intersection of the respective pasts of $B_1^\perp$ and $B_2^\perp$. We now call $\partial X_{12}$ the {\sl bending} of $e$ along $\Gamma$ by the {\sl hyperbolic angle}, $a$. Observe, in particular, that in contrast to circular angles, which are bounded in absolute value by $\pi$, hyperbolic angles can be arbitrarily large. As with graftings, this construction extends continuously to all measured geodesic laminations, and, by identifying the resulting pleated convex surface with $\partial\opK_\theta^+$, we thereby obtain the desired inverse of $(\opI_0^+,\opL^+)$.
\par
Finally, the minimal, short action of $\theta(\Pi_1)$ on a real tree is determined by the boundary component, $\partial\Omega_\theta^+$, of $\Omega_\theta^+$. Indeed, let $d$ be the pseudo-metric over $\partial\Omega_\theta^+$ given by the infimal lengths of continuous curves in $\partial\Omega_\theta^+$ with respect to the Minkowski metric, $\langle\cdot,\cdot\rangle_{2,2}$. Identifying points separated by zero distance yields a quotient space, $\partial\Omega_\theta^+/\sim$. As in the Minkowski case, this quotient is a real tree, and the induced action of $\theta(\Pi_1)$ is minimal and short. In this manner, we obtain another pair of maps taking values in a space of Teichm\"uller data of real dimension $(6\mathfrak{g}-6)$ (c.f. Table~\ref{table ads 9}),
\begin{table}[h!]
\begin{center}
\begin{tabular}{|c|c|c|}
\hline
\bf Map & \bf Description  &  \bf Codomain \\
\hline
$\opT^\pm$ & \multicolumn{1}{|l|}{The minimal, short action of $\theta$ on the real tree $\partial\Omega^+_\theta/\sim$} & $\opRT$\\
\hline
\end{tabular}
\end{center}
\caption{Maps taking values in spaces of real dimension $(6\mathfrak{g}-6)$.}\label{table ads 9}
\end{table}
\noindent although, since the tree, $\partial\Omega_\theta^\pm/\sim$, is dual to the measured geodesic lamination, $L^\pm$, these maps do not actually yield any new information.
\subsection{Earthquakes}\label{Earthquakes}
Theorems \ref{messAdS3} and \ref{messAdS4} have a nice interpretation in terms of Teichm\"uller theory. To see this, we first introduce the left and right {\sl generalised Gauss maps}, $N_l$ and $N_r$, as follows. Given a convex set, $X$, and a boundary point, $A$, of this set, define
\begin{eqnarray*}
\ N_l(A) &:=& \left\{ A^{-1}B\ |\ B\ \text{a supporting normal to}\ \partial\opK_\theta^+\ \text{at}\ A\right\},\ \text{and}\\
\ N_r(A) &:=& \left\{ BA^{-1}\ |\ B\ \text{a supporting normal to}\ \partial\opK_\theta^+\ \text{at}\ A\right\}.
\end{eqnarray*}
In order to understand the geometry of these maps, consider first two spacelike planes, $B_1^\perp$ and $B_2^\perp$, in $\opAdS^3$ which intersect along a shared spacelike geodesic, $\Gamma$. As before, let $X_{12}$ be the intersection of their respective pasts, and consider the actions of $N_l$ and $N_r$ over the boundary, $\partial X_{12}$, of this set. Since $\partial X_{12}$ has a unique supporting normal at every point, $A$, not lying on $\Gamma$, both $N_l$ and $N_r$ define local isometries of the complement of $\Gamma$ in $\partial X_{12}$ into the future component, $\mathbb{H}^2$, of the unit pseudosphere in $\opsl(2,\R)$.
However, at every point, $A$, of $\Gamma$, $\partial X_{12}$ has an entire continuum of supporting normals which are all contained in the intersection of $\Bbb{H}^2$ with the plane in $\Bbb{R}^{2,2}$ spanned by $B_1$ and $B_2$. Furthermore, since $A\in\opAdS^3$ acts isometrically on $\R^{2,2}$,
\begin{equation*}
d_{\mathbb{H}^2}(A^{-1}B_1,A^{-1}B_2) = d_{\mathbb{H}^2}(B_1A^{-1},B_2A^{-1}) = d_{\opAdS^3}(B_1,B_2)~.
\end{equation*}
Thus, upon checking the orientations, we see that $N_l$ and $N_r$ define respectively left and right earthquakes of strength $d_{\opAdS^3}(B_1,B_2)$ along $\Gamma$ from $\partial X_{12}$ into $\mathbb{H}^2$.
\par
Consider now the left and right Gauss maps of $\partial\opK_\theta^+$. Since $N_l$ and $N_r$ are equivariant with respect to $\theta_r$ and $\theta_l$, respectively, it follows that the right earthquake along the measured geodesic lamination, $L$, sends the point of Teichm\"uller space determined by $X/\theta(\Pi_1)$ to the point determined by $\mathbb{H}^2/\theta_l(\Pi_1)$, whilst the left earthquake along this measured geodesic lamination sends this point to the point determined by $\mathbb{H}^2/\theta_r(\Pi_1)$. We thereby recover the well known earthquake theorem.
\earthquakebis*
\begin{remark}In fact, any two of Theorems \ref{messAdS3}, \ref{messAdS4} and \ref{EarthquakeTheorem2} imply the third.\end{remark}
\noindent This construction is illustrated by Figure~\ref{diag2ads}.
\begin{figure}[h!]
\begin{center}
\includegraphics[scale=0.5]{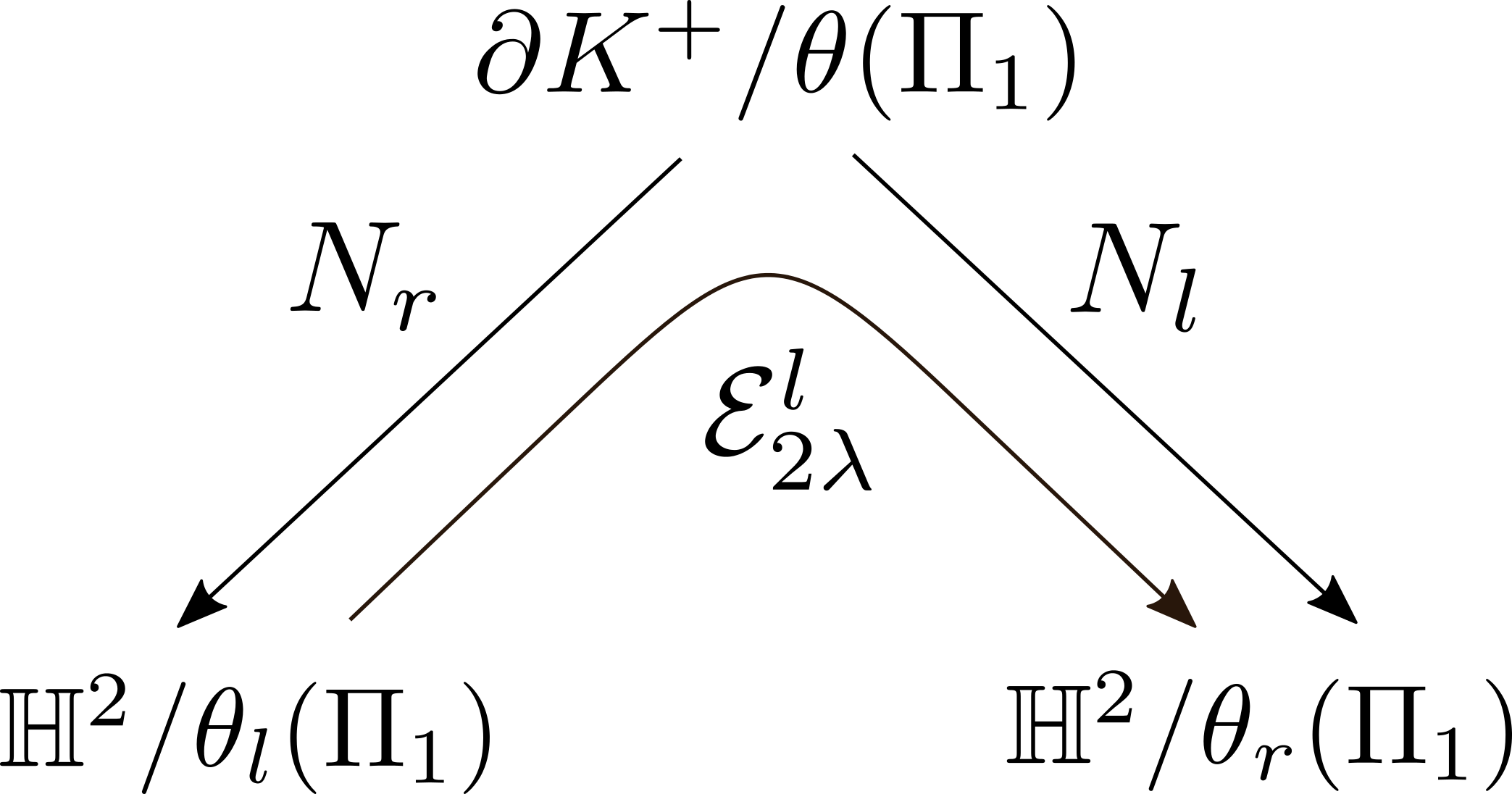}\caption{\textbf{The earthquake theorem I.} The upper vertex is here a future oriented, pleated convex surface, and the curved arrow joining the lower vertices is an earthquake given by twice the measured geodesic lamination of this pleated surface.}\label{diag2ads}
\end{center}
\end{figure}
\par
Theorem~\ref{EarthquakeTheorem2} is not symmetric in $h_1$ and $h_2$. Indeed, when the order of these two points is reversed, we obtain another point, $h'\in\Thyp$, and another measured geodesic lamination, $\lambda'$. However, by reversing the time orientation of $\opAdS^3$, it follows by uniqueness that the marked hyperbolic metric, $h'$, and the measured geodesic lamination, $\lambda'$, are precisely those determined by the past boundary component, $\partial\opK_\theta^-$, of $\opK_\theta$. This yields the situation illustrated in Figure~\ref{diag3ads}.
\begin{figure}[h!]
\begin{center}
\includegraphics[scale=0.5]{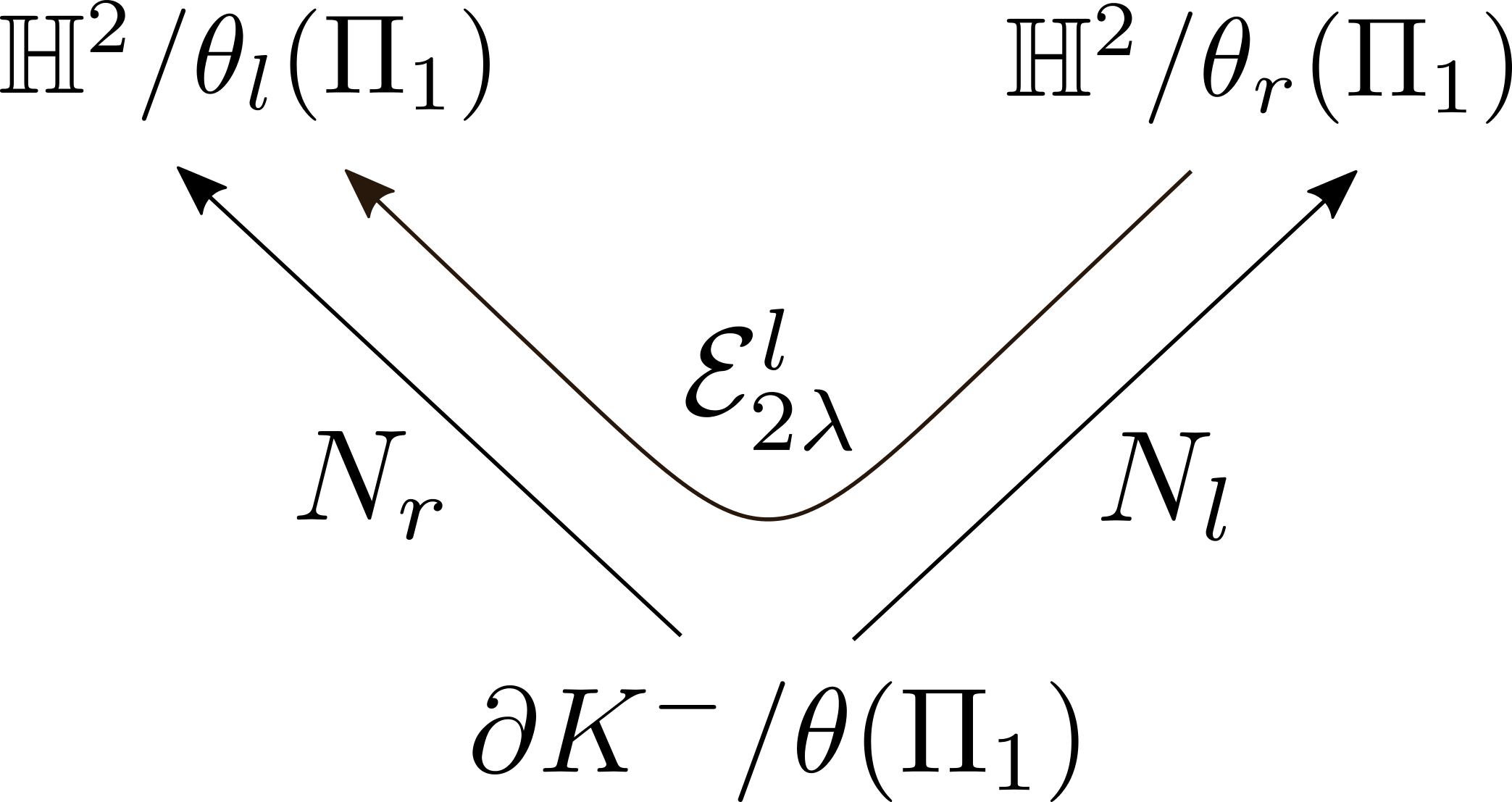}\caption{\textbf{The earthquake theorem II.} The lower vertex is here a past oriented, pleated convex surface, and the curved arrow joining the upper vertices is an earthquake given by twice the measured geodesic lamination of this pleated surface.}\label{diag3ads}
\end{center}
\end{figure}
\subsection{Fixed point theorems and other results}\label{FixedPointTheoremsAndOtherResults}
Consider the pair $(\opL^+,\opL^-)$. As in the Minkowski case, this map is not surjective, taking values instead in $\opML\times_{\operatorname{fill}}\opML$, which we recall is the subset of $\opML\times\opML$ consisting of those pairs that fill $S$ (c.f. Section~\ref{LaminationsAndTrees}). Denoting by $\opFuc_{-1}$ the space of Fuchsian AdS spacetimes, that is, those spacetimes in $\opGHMC_{-1}$ whose holonomies have equal left and right components, we now have
\begin{theorem}[Bonsante--Schlenker \cite{BS12}]\label{BSads}
\noindent The map $(\opL^+,\opL^-)$ defines a surjection from $\opGHMC_{-1}\setminus\opFuc_{-1}$ onto $\opML\times_{\operatorname{fill}}\opML$.
%the subset of $\opML\times\opML$ consisting of those pairs that fill $S$.
%
\end{theorem}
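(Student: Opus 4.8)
The plan is to recast the statement as a fixed-point problem for a composition of earthquakes, and then to solve that problem by combining a compactness estimate with an openness argument.

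\emph{Step 1 (reduction to earthquakes).} By Mess' theorem (Theorem~\ref{messAdS3}) a point of $\opGHMC_{-1}$ is the same as a pair $(h_l,h_r)\in\Thyp\times\Thyp$ of left and right holonomies, and by the earthquake description of the Gauss maps of $\partial\opK_\theta^{\pm}$ developed in Section~\ref{Earthquakes}, together with the earthquake theorem (Theorem~\ref{EarthquakeTheorem2}), the future boundary data $(\opI_0^{+},\opL^{+})=(m^{+},\lambda^{+})$ are characterised by $h_l=\mathcal{E}^r_{\lambda^{+}}(m^{+})$ and $h_r=\mathcal{E}^l_{\lambda^{+}}(m^{+})$; applying $\mathcal{E}^l_{\lambda^{+}}$ to the first identity and using the flow law gives $m^{+}=\mathcal{E}^l_{\lambda^{+}}(h_l)$ and $h_r=\mathcal{E}^l_{2\lambda^{+}}(h_l)$. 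Reversing the time orientation and applying the same discussion to the past boundary gives, symmetrically, $h_l=\mathcal{E}^l_{2\lambda^{-}}(h_r)$. Hence prescribing $(\opL^{+},\opL^{-})=(\lambda^{+},\lambda^{-})$ is exactly the same as producing a fixed point $h_l\in\Thyp$ of the composition of left earthquakes $\mathcal{F}_{\lambda^{+},\lambda^{-}}:=\mathcal{E}^l_{2\lambda^{-}}\circ\mathcal{E}^l_{2\lambda^{+}}$, the associated spacetime being the one with holonomy $(h_l,\mathcal{E}^l_{2\lambda^{+}}(h_l))$. Conversely, running this correspondence backwards, any such fixed point produces a genuine element of $\opGHMC_{-1}$ with these two bending laminations, and it is not Fuchsian because a Fuchsian spacetime has trivial bending whereas a filling pair has $\lambda^{+}\neq 0$. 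So the theorem is equivalent to the assertion that for every filling pair $(\lambda^{+},\lambda^{-})$ the map $\mathcal{F}_{\lambda^{+},\lambda^{-}}$ has a fixed point.

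\emph{Step 2 (surjectivity by a clopen image).} Consider the continuous map $\Phi:=(\opL^{+},\opL^{-}):\opGHMC_{-1}\setminus\opFuc_{-1}\to\opML\times_{\operatorname{fill}}\opML$. I would show that $\Phi$ is both proper and open; since $\opML\times_{\operatorname{fill}}\opML$ is connected and $\Phi$ has nonempty image, its image is then clopen and hence everything. For properness, if $\Phi(M_n)\to(\lambda^{+},\lambda^{-})$ with the limit still filling, then in the coordinates $(h_l,h_r)$ one has $h_r^{(n)}=\mathcal{E}^l_{2\lambda^{+}_n}(h_l^{(n)})$ and $h_l^{(n)}=\mathcal{E}^l_{2\lambda^{-}_n}(h_r^{(n)})$ with $\lambda^{\pm}_n$ bounded, so $h_l^{(n)}$ is an approximate fixed point of $\mathcal{F}_{\lambda^{+},\lambda^{-}}$; the filling condition (via the uniform lower bound $M^{+}(\langle\gamma\rangle)+M^{-}(\langle\gamma\rangle)>\epsilon$ recalled before the statement) should force $(h_l^{(n)},h_r^{(n)})$ to stay in a compact subset of $\Thyp\times\Thyp$, equivalently it forbids the convex cores from degenerating. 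For openness one must control how the bending laminations vary with the spacetime and conclude that $\Phi$ covers a neighbourhood of each of its values; the dimension count $\dim\opGHMC_{-1}=\dim(\opML\times\opML)=12\mathfrak g-12$ makes this plausible, but the real input is an infinitesimal statement to the effect that the bending deformations of $\partial\opK_\theta^{\pm}$ span enough directions. A tidier-looking route to the required fixed point, in the spirit of Thurston's topological proof of the earthquake theorem, is to extend $\mathcal{F}_{\lambda^{+},\lambda^{-}}$ to a continuous self-map of the Thurston compactification $\overline{\Thyp}$ (a closed ball), obtain a fixed point by Brouwer, and then rule out boundary fixed points: a projective lamination fixed by $\mathcal{F}_{\lambda^{+},\lambda^{-}}$ would have zero intersection number with both $\lambda^{+}$ and $\lambda^{-}$, contradicting filling.

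\emph{Main obstacle.} The hard part is the openness in Step~2 (equivalently, the boundary analysis in the Brouwer variant). The bending lamination of a pleated convex surface depends only continuously — at best tangentiably — on the holonomy, so the ordinary inverse function theorem does not apply, and one must supply a substitute: an infinitesimal rigidity argument for pleated surfaces in $\opAdS^3$, or a degree computation performed on a finite-dimensional approximation by rational laminations and then passed to the limit using the continuity of transverse measures (Theorem~\ref{ExistenceOfTransverseMeasure}). In the compactification variant the analogous difficulty is to describe the boundary behaviour of earthquakes on $\overline{\Thyp}$ sharply enough to force the fixed point to lie in the interior. Finally, one should note that neither approach yields injectivity of $\Phi$: whether a filling pair determines the spacetime — equivalently whether the earthquake fixed point is unique — remains open, which is why only a surjection is claimed.
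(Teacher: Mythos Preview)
The paper does not actually prove this theorem: it states it as a result of Bonsante--Schlenker \cite{BS12} and then, in the paragraph following the statement and in Figure~\ref{diag ads 4}, explains precisely the equivalence with the earthquake fixed-point theorem (Theorem~4.12, also attributed to \cite{BS12}). Your Step~1 reproduces that equivalence correctly, and is exactly what the paper does; your Step~2 is an attempt to go beyond the paper and supply the argument that the paper outsources to \cite{BS12}.

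On Step~2 there is a genuine gap in the Brouwer variant. Earthquake maps $\mathcal{E}^l_\mu$ do \emph{not} extend continuously to the Thurston compactification $\overline{\Thyp}$: the boundary behaviour of $\mathcal{E}^l_\mu(h_n)$ along a divergent sequence $h_n\to[\nu]\in\partial\overline{\Thyp}$ depends on how the lengths blow up relative to the transverse measure of $\mu$, and there is no single continuous self-map of the closed ball to which one can apply Brouwer. So the clean ``extend, apply Brouwer, rule out boundary fixed points by intersection numbers'' outline cannot be made to work as stated. The clopen route you sketch first is closer to what Bonsante--Schlenker actually do: they establish properness of $(\opL^+,\opL^-)$ over $\opML\times_{\operatorname{fill}}\opML$ via length/intersection estimates that genuinely use the filling hypothesis, and then run a degree argument. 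But the ``openness'' you flag as the main obstacle is not obtained by any inverse-function theorem (as you note, bending laminations are only tangentiable in the lamination variable); it comes instead from a global degree computation, anchored by an explicit calculation at a convenient point and propagated by the properness. Your proposal correctly isolates properness and a topological surjectivity mechanism as the two ingredients, but neither ingredient is supplied, and the specific Brouwer mechanism you propose does not hold.
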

\begin{remark} In \cite{Mess}, Mess asks whether this map is a bijection.
\end{remark}
\noindent In particular, given a pair $(\lambda^+,\lambda^-)$ of measured geodesic laminations which fills $S$, there exists a homomorphism, $\theta:=(\theta_l,\theta_r)$, such that the diagram in Figure~\ref{diag ads 4} commutes. In other words, Theorem~\ref{BSads} is equivalent to the following fixed point theorem of Teichm\"uller theory.
\begin{figure}[h!]
\begin{center}
\includegraphics[scale=0.5]{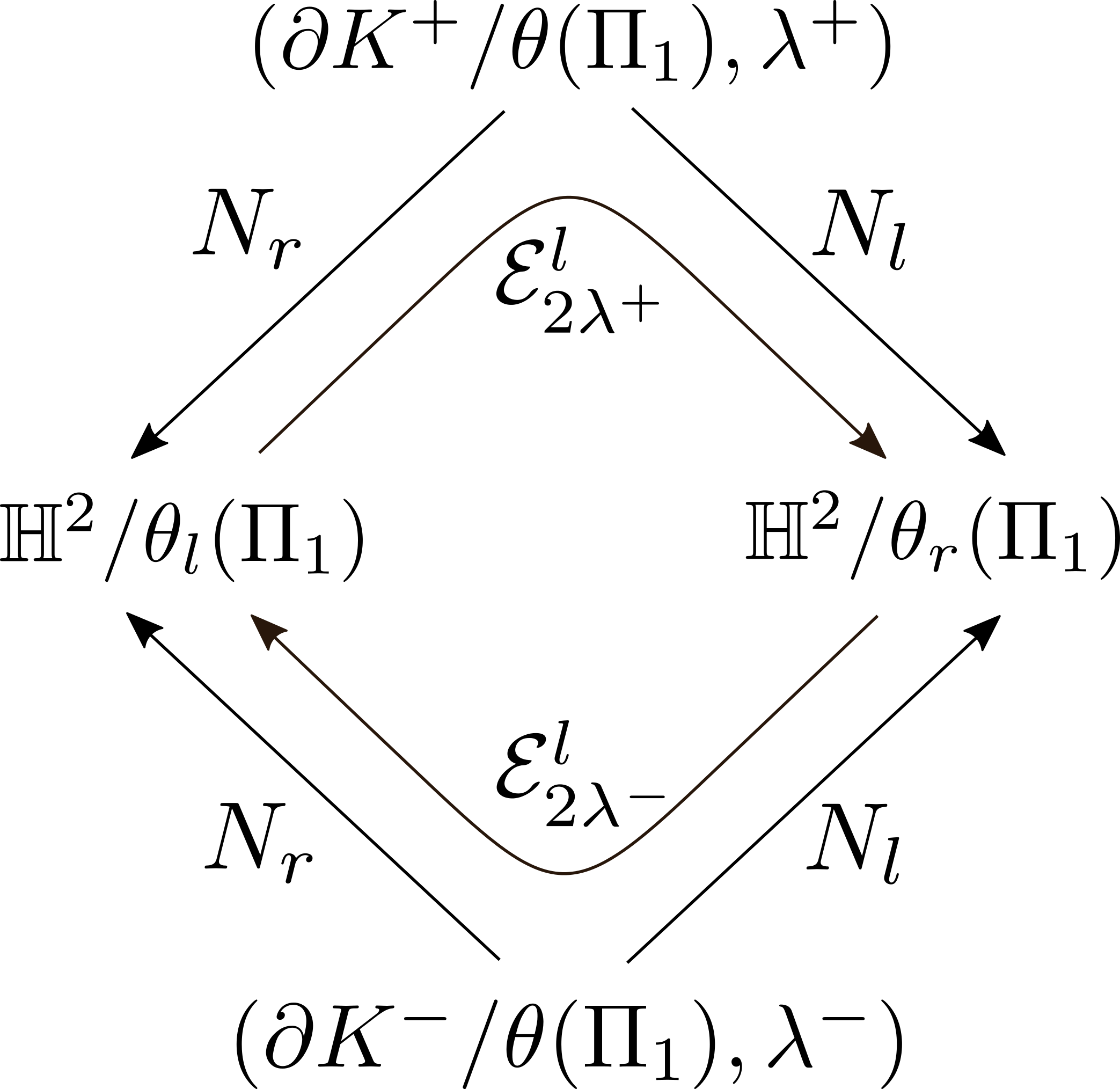}\caption{\textbf{The double earthquake theorem.} Here the upper vertex is a future oriented, pleated convex surface with measured geodesic lamination, $\lambda^+$, and the lower vertex is a past oriented, pleated convex surface with measured geodesic lamination, $\lambda^-$. The curved arrows joining the middle vertices are earthquakes given by $2\lambda^+$ and $2\lambda^-$ respectively.}\label{diag ads 4}
\end{center}
\end{figure}
\begin{theorem}[Bonsante--Schlenker \cite{BS12}]
\noindent Given any pair, $(\lambda,\mu)$ of measured geodesic laminations which fills $S$, the compositions $\mathcal{E}^l_\lambda\circ\mathcal{E}^l_\mu$ and $\mathcal{E}^r_\lambda\circ\mathcal{E}^r_\mu$ both have fixed points in $\Thyp$.
\end{theorem}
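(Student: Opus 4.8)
The plan is to obtain the statement as the easy half of the equivalence asserted just above with the surjectivity result of Bonsante and Schlenker (Theorem~\ref{BSads}); the key is to unwind the earthquake interpretation of the convex core of a ghmc anti de Sitter spacetime explained in Section~\ref{Earthquakes}. Recall that picture. If $\theta=(\theta_l,\theta_r)$ is the holonomy of a ghmc AdS spacetime, then the future component $\partial\opK_\theta^+$ of the boundary of its Nielsen kernel carries an intrinsic hyperbolic metric $m^+$ and a bending lamination $\lambda^+$, and the left and right generalised Gauss maps identify the left and right earthquakes of $m^+$ along $\lambda^+$ with the two factors of the holonomy; using the standard identification of $\Trep$ with $\Thyp$ and writing $[\theta_l],[\theta_r]\in\Thyp$ for the hyperbolic structures of $\theta_l,\theta_r$, this says
\[
\mathcal{E}^r_{\lambda^+}(m^+)=[\theta_l],\qquad \mathcal{E}^l_{\lambda^+}(m^+)=[\theta_r].
\]
Carrying out the same construction on the past component $\partial\opK_\theta^-$, with intrinsic metric $m^-$ and bending lamination $\lambda^-$, and observing (cf. Figures~\ref{diag3ads} and~\ref{diag ads 4}) that the past boundary data produce the earthquake identities with the roles of the two factors exchanged, one likewise gets
\[
\mathcal{E}^l_{\lambda^-}(m^-)=[\theta_l],\qquad \mathcal{E}^r_{\lambda^-}(m^-)=[\theta_r].
\]

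Now fix a filling pair $(\lambda,\mu)$. Since the filling condition is invariant under positive rescaling of each lamination, $(\tfrac12\lambda,\tfrac12\mu)$ is again filling, so Theorem~\ref{BSads} furnishes a holonomy $\theta=(\theta_l,\theta_r)$ in $\opGHMC_{-1}\setminus\opFuc_{-1}$ whose future and past convex cores have bending laminations $\lambda^+=\tfrac12\lambda$ and $\lambda^-=\tfrac12\mu$. Set $x:=[\theta_r]\in\Thyp$. Inverting the identities $\mathcal{E}^l_{\lambda^+}(m^+)=x$ and $\mathcal{E}^r_{\lambda^-}(m^-)=x$ (recall $\mathcal{E}^r_\nu=(\mathcal{E}^l_\nu)^{-1}$) gives $m^+=\mathcal{E}^r_{\lambda^+}(x)$ and $m^-=\mathcal{E}^l_{\lambda^-}(x)$; substituting these into the two expressions for $[\theta_l]$ and using the one-parameter group law of the earthquake flow along a fixed lamination, namely $\mathcal{E}^l_\nu\circ\mathcal{E}^l_\nu=\mathcal{E}^l_{2\nu}$ and $\mathcal{E}^r_\nu\circ\mathcal{E}^r_\nu=\mathcal{E}^r_{2\nu}$, yields
\[
\mathcal{E}^r_{2\lambda^+}(x)=[\theta_l]=\mathcal{E}^l_{2\lambda^-}(x),\qquad\text{that is}\qquad \mathcal{E}^r_{\lambda}(x)=\mathcal{E}^l_{\mu}(x).
\]
Applying $\mathcal{E}^l_\lambda$ to both sides, the left side collapses to $x$, so $x=(\mathcal{E}^l_\lambda\circ\mathcal{E}^l_\mu)(x)$ and $x=[\theta_r]$ is the sought fixed point of $\mathcal{E}^l_\lambda\circ\mathcal{E}^l_\mu$. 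The same computation with $x':=[\theta_l]$ in place of $[\theta_r]$ --- equivalently, with the time orientation of the same spacetime reversed --- gives $\mathcal{E}^l_\lambda(x')=\mathcal{E}^r_\mu(x')$, and applying $\mathcal{E}^r_\lambda$ to both sides shows $x'=(\mathcal{E}^r_\lambda\circ\mathcal{E}^r_\mu)(x')$, which settles the second composition.

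The real content --- and the one genuine obstacle --- is Theorem~\ref{BSads}, whose proof is substantial; granted it, the argument above is only formal bookkeeping with the earthquake flow and the combinatorics of the AdS convex core. A self-contained alternative would be a direct fixed-point argument on $\Thyp$ for the self-map $\mathcal{E}^l_\lambda\circ\mathcal{E}^l_\mu$ --- for instance a degree computation, using the filling hypothesis on $(\lambda,\mu)$ to show that this map is proper with no orbits escaping to infinity in Teichm\"uller space, so that a fixed point must exist --- but implementing this essentially amounts to reproving Theorem~\ref{BSads}, which is why the anti de Sitter route is the natural one here.
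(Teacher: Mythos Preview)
Your argument is correct and is exactly the approach the paper indicates: it states that Theorem~\ref{BSads} is equivalent to this fixed point theorem via the commutative diagram of Figure~\ref{diag ads 4}, and you have carefully spelled out that equivalence using the earthquake identities from Section~\ref{Earthquakes}. The rescaling trick $(\lambda^+,\lambda^-)=(\tfrac12\lambda,\tfrac12\mu)$ is the right way to absorb the factor of $2$ appearing in the diagram, and your bookkeeping with $x=[\theta_r]$ and $x'=[\theta_l]$ is clean.
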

In a similar vein, we have,
\begin{theorem}[Diallo \cite{Dia13,Dia14}]\label{diallo}
\noindent The map $(\opI_0^+,\opI_0^-)$ defines a surjection from $\opGHMC_{-1}$ onto $\Thyp\times\Thyp$.
\end{theorem}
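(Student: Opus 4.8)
The plan is to reduce the theorem, by way of Mess's parametrisation of $\opGHMC_{-1}$, to the surjectivity of a single continuous map from the space of measured geodesic laminations into Teichm\"uller space, and then to establish that surjectivity by a properness-and-degree argument.

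First I would fix a target pair $(h_1,h_2)\in\Thyp\times\Thyp$. By Theorem~\ref{messAdS4} the map $(\opI_0^+,\opL^+)$ is a bijection from $\opGHMC_{-1}$ onto $\Thyp\times\opML$, so every ghmc AdS spacetime is recovered uniquely from the intrinsic metric $h^+$ and bending lamination $\lambda^+$ of the future boundary component $\partial\opK_\theta^+$ of its convex core. Composing the inverse of this bijection with $\opI_0^-$ gives a continuous map $\Psi:\Thyp\times\opML\to\Thyp$, $\Psi(h^+,\lambda^+)=\opI_0^-$, and the theorem becomes the statement that for each fixed $h^+=h_1$ the partial map $f_{h_1}:=\Psi(h_1,\cdot):\opML\to\Thyp$ is onto. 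Equivalently, using the description of the two holonomy components as a right and a left earthquake of each convex-core boundary metric along its bending lamination (Section~\ref{Earthquakes}), the problem reformulates as the solvability, for given $h_1$ and $h_2$, of a ``square'' system of two earthquake equations --- one for each component of the holonomy --- in the two unknowns $\lambda^+$ and $\lambda^-$ in $\opML$; this is the natural AdS analogue of the double earthquake theorem (the Teichm\"uller form of Theorem~\ref{BSads}), and any of the techniques used there could be brought to bear.

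Next I would record the structure of $f_{h_1}$. It is continuous, being assembled from the homeomorphism of Theorem~\ref{messAdS4} and the continuous passage from an AdS structure to the induced metric on $\partial\opK_\theta^-$. It sends the zero lamination to $h_1$: a convex core whose future boundary carries zero bending lies in a single spacelike plane, which forces the limit curve $\Gamma_\theta$ to be a round circle and the spacetime to be Fuchsian, so that $\opI_0^-=\opI_0^+=h_1$. Since $\opML$ is homeomorphic to $\R^{6\mathfrak{g}-6}$, if in addition $f_{h_1}$ is proper then it is a proper continuous self-map of $\R^{6\mathfrak{g}-6}$ and has a well-defined Brouwer degree, and it is enough to show this degree is nonzero. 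I would compute it by analysing $f_{h_1}$ at infinity: passing to Thurston's projectivisation $\mathbb{P}\opML$ in the source and to Thurston's compactification of $\Thyp$ in the target, I would show that replacing $\lambda^+$ by $t\lambda^+$ with $t\to\infty$ forces $\opI_0^-$ to degenerate with a controlled projective limit, so that $f_{h_1}$ extends to a map of closed balls whose boundary map $\mathbb{P}\opML\to\mathbb{P}\opML$ has degree $\pm1$; hence $\deg f_{h_1}=\pm1$ and $f_{h_1}$ is surjective.

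The main obstacle is the properness of $f_{h_1}$ together with the asymptotic analysis it relies on. The subtlety is that when the future bending lamination blows up, the resulting degeneration of the AdS structure might a priori be absorbed entirely into the past bending lamination $\opL^-$ rather than into the past metric $\opI_0^-$; ruling this out requires quantitative control of the geometry of the convex core --- for instance of its width and of the relation between its two boundary components through the left and right generalised Gauss maps of Section~\ref{Earthquakes} --- to force $\opI_0^-$ itself to leave every compact set and to identify its limiting projective class. Once this control is available the degree computation is routine, and surjectivity follows; uniqueness is not expected, and is not asserted.
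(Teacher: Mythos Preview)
The paper does not contain a proof of this theorem; it is cited from Diallo \cite{Dia13,Dia14} and only the equivalent reformulation in terms of earthquake symmetries is stated afterwards. So there is no ``paper's own proof'' to compare against.

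Your strategy --- reducing via Theorem~\ref{messAdS4} to the surjectivity of $f_{h_1}:\opML\to\Thyp$, $\lambda^+\mapsto\opI_0^-$, and attacking this by a properness-plus-degree argument --- is the right shape for this kind of result and is close in spirit to how such theorems (including Diallo's) are typically proven. You correctly isolate the main obstacle as properness: one must show that if $\lambda^+$ leaves every compact set of $\opML$ then $\opI_0^-$ leaves every compact set of $\Thyp$. This is genuinely delicate, for exactly the reason you name: a priori the degeneration could be absorbed by $\opL^-$ rather than by $\opI_0^-$. Resolving it requires quantitative length-spectrum comparisons between the two boundary metrics, using the earthquake description of Section~\ref{Earthquakes} (for instance, controlling $\ell_{\opI_0^-}(\gamma)$ from below in terms of $\ell_{h_1}(\gamma)$ and the intersection of $\gamma$ with $\lambda^+$), and your sketch does not yet supply these estimates.

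The degree computation at infinity is also less routine than you suggest. Extending $f_{h_1}$ to a map between Thurston compactifications and showing the boundary map $\mathbb{P}\opML\to\mathbb{P}\opML$ has degree $\pm 1$ requires identifying the projective limit of $\opI_0^-$ as $t\lambda^+$ with $t\to\infty$, checking that this limit depends only on $[\lambda^+]\in\mathbb{P}\opML$, and verifying that the resulting self-map of $\mathbb{P}\opML$ is a homeomorphism (or at least of degree $\pm 1$). This is tightly coupled to the properness analysis and is not automatic. So the outline is sound and honest about its gaps, but both the properness step and the asymptotic degree step would need substantial additional argument before the proof is complete.
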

\begin{remark} It is not known whether this map is a bijection.
\end{remark}
\noindent In particular, given a pair $(g^+,g^-)$ of hyperbolic metrics, there exists a homomorphism, $\theta:=(\theta_l,\theta_r)$, such that the diagram in Figure~\ref{diag ads 5} commutes.
\begin{figure}[h!]
\begin{center}
\includegraphics[scale=0.5]{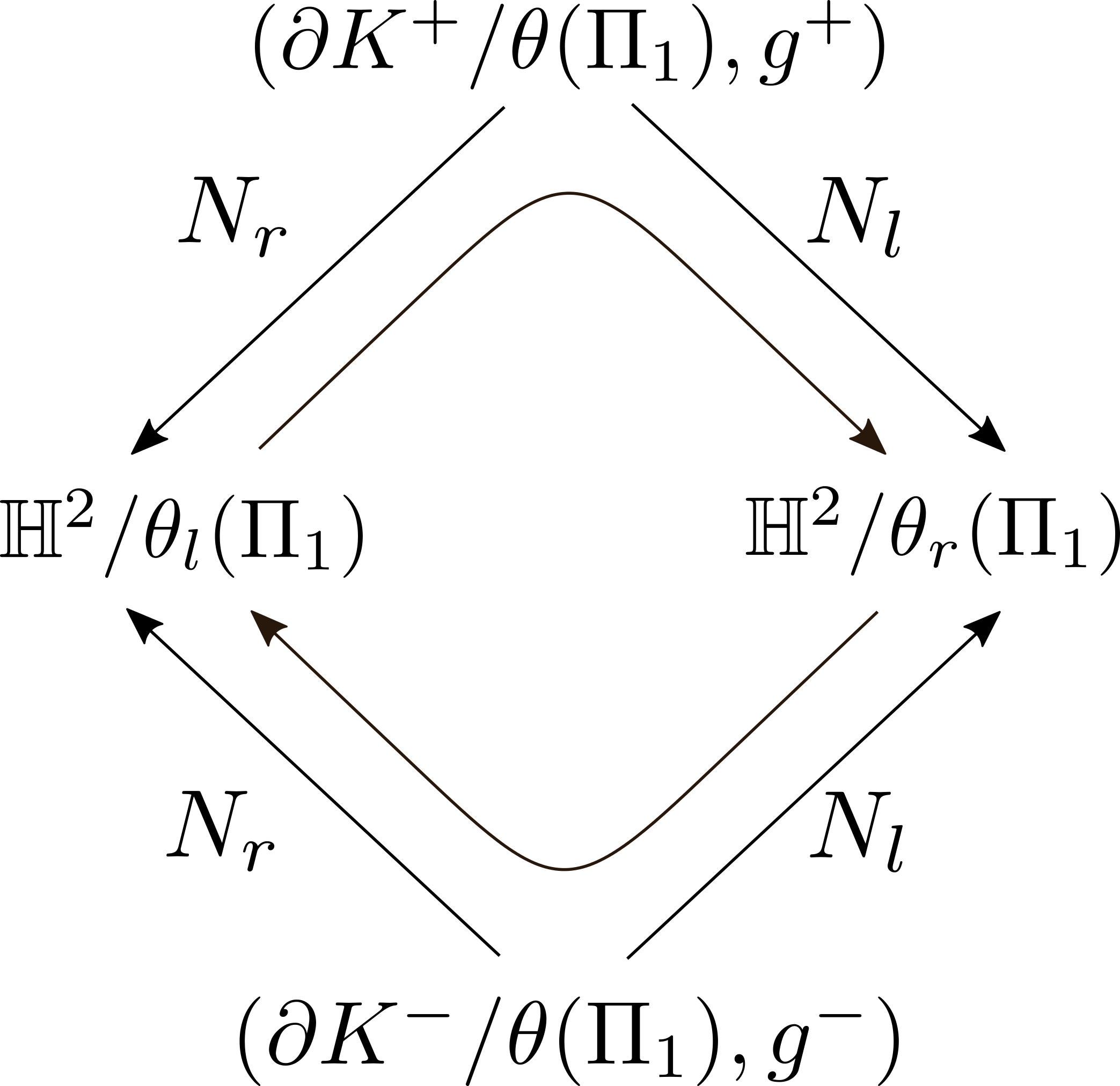}\caption{\textbf{The earthquake symmetry theorem.} Here the upper vertex is a future oriented, pleated convex surface with intrinsic metric, $g^+$, and the lower vertex is a past oriented, pleated convex surface with intrinsic metric, $g^-$. The curved arrows joining the middle vertices are earthquakes.}\label{diag ads 5}
\end{center}
\end{figure}
\par
Earthquake symmetries (c.f. \cite{BMS2}) present a nice interpretation of Theorem~\ref{diallo} within the framework of classical Teichm\"uller theory, and are defined as follows. First, by Theorem~\ref{EarthquakeTheorem2}, for all $g\in\Thyp$, the earthquake maps, $\mathcal{E}_g^l:\lambda\mapsto\mathcal{E}^l(g,\lambda)$ and $\mathcal{E}_g^r:\lambda\mapsto\mathcal{E}^r(g,\lambda)$, define bijections from $\opML$ into $\Thyp$. The {\sl left earthquake symmetry} of $\Thyp$ about the metric, $g$, is then defined by $\mathcal{ES}^l_g:=\mathcal{E}^l_g\circ(\mathcal{E}^r_g)^{-1}$. In other words, $h'=\mathcal{ES}^{l}_g(h)$ whenever there exists a measured geodesic lamination, $\lambda$, such that $\mathcal{E}^r_\lambda(g)=h$ and $\mathcal{E}^l_\lambda(g)=h'$. The {\sl right earthquake symmetry} is defined in a similar manner by $\mathcal{ES}^r_g:=\mathcal{E}^r_g\circ(\mathcal{E}^l_g)^{-1}$. As their names suggest, the two maps, $\mathcal{ES}^l_g$ and $\mathcal{ES}^r_g$, are both involutions of $\Thyp$ with $g$ as their unique fixed points. Theorem~\ref{diallo} is now equivalent to the following fixed point theorem.
\begin{theorem}[Diallo \cite{Dia13,Dia14}]
\noindent For any pair, $(g,h)$, of marked, hyperbolic metrics, the compositions, $\mathcal{ES}_g^l\circ\mathcal{ES}_h^l$ and $\mathcal{ES}_g^r\circ\mathcal{ES}_h^r$, both have fixed points in $\Thyp$.
\end{theorem}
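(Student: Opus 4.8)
The plan is to read the statement off Diallo's surjectivity theorem (Theorem~\ref{diallo}) via the dictionary between ghmc AdS spacetimes and earthquakes developed in Section~\ref{Earthquakes}. Recall that to a ghmc AdS spacetime, $M=\Omega_\theta/\theta(\Pi_1)$, one attaches the intrinsic hyperbolic metrics, $\opI_0^\pm(M)$, and the measured geodesic laminations, $\opL^\pm(M)$, of the two components, $\partial\opK_\theta^\pm$, of its convex core, together with the two points, $[\theta_l],[\theta_r]\in\Trep\cong\Thyp$, determined by the components of its holonomy. The computation of the left and right generalised Gauss maps carried out in Section~\ref{Earthquakes} shows that the right (resp. left) earthquake along the future-boundary lamination realises the passage from the future-boundary metric to the left (resp. right) holonomy point, so that
\begin{equation*}
\mathcal{E}^r_{\opL^+(M)}(\opI_0^+(M)) = [\theta_l], \qquad \mathcal{E}^l_{\opL^+(M)}(\opI_0^+(M)) = [\theta_r],
\end{equation*}
while, as explained around Figure~\ref{diag3ads}, reversing the time orientation of $\opAdS^3$ transposes the two $\opPSL(2,\R)$-factors, so that the past-boundary data satisfies
\begin{equation*}
\mathcal{E}^l_{\opL^-(M)}(\opI_0^-(M)) = [\theta_l], \qquad \mathcal{E}^r_{\opL^-(M)}(\opI_0^-(M)) = [\theta_r].
\end{equation*}

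Given two marked hyperbolic metrics, $g$ and $h$, I would first apply Theorem~\ref{diallo} to produce a ghmc AdS spacetime, $M$, with $\opI_0^+(M)=g$ and $\opI_0^-(M)=h$, and set $m:=[\theta_l]$, $m':=[\theta_r]$. Recalling from Theorem~\ref{EarthquakeTheorem2} that $\mathcal{E}^r_g$ is a bijection of $\opML$ onto $\Thyp$, the definition $\mathcal{ES}^l_g=\mathcal{E}^l_g\circ(\mathcal{E}^r_g)^{-1}$ together with the first displayed pair of identities gives $\mathcal{ES}^l_g(m)=m'$; likewise the second pair gives $\mathcal{ES}^l_h(m')=m$, hence $\mathcal{ES}^l_h(m)=m'$ because $\mathcal{ES}^l_h$ is an involution. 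Composing, and using once more that $\mathcal{ES}^l_g$ is an involution, yields $\mathcal{ES}^l_g\circ\mathcal{ES}^l_h(m)=\mathcal{ES}^l_g(m')=m$, so that $m=[\theta_l]$ is the desired fixed point of $\mathcal{ES}^l_g\circ\mathcal{ES}^l_h$. The assertion for $\mathcal{ES}^r_g\circ\mathcal{ES}^r_h$ follows from the same two pairs of identities, now using $\mathcal{ES}^r_g=\mathcal{E}^r_g\circ(\mathcal{E}^l_g)^{-1}$, and exhibits $[\theta_r]$ as a fixed point. For completeness I would also record the reverse implication — that a fixed point of $\mathcal{ES}^l_g\circ\mathcal{ES}^l_h$, together with the bijectivity of $(\opI_0^+,\opL^+)$ from Theorem~\ref{messAdS4}, rebuilds a ghmc AdS spacetime realising $(g,h)$ as its pair of convex-core metrics — which shows that the theorem is genuinely equivalent to Theorem~\ref{diallo}; but only the direction above is needed for the statement itself.

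Since Theorem~\ref{diallo} is available, the argument above is purely formal, and the only real hazard is bookkeeping: one must be scrupulous about which generalised Gauss map induces the left (resp. right) earthquake, and about the fact that passing from the future to the past component of the convex core swaps the roles of $\theta_l$ and $\theta_r$ — getting either convention backwards would interchange $\mathcal{ES}^l$ with $\mathcal{ES}^r$ throughout, or destroy the cancellations coming from the involution property. The genuine difficulty sits inside Theorem~\ref{diallo} itself: were one to prove it from scratch rather than cite Diallo, the hard part would be establishing the surjectivity of $(\opI_0^+,\opI_0^-)$ on $\opGHMC_{-1}$, which is a degree-theoretic (or properness) statement about the moduli space rather than a formal consequence of the earthquake correspondence.
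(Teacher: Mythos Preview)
Your argument is correct and is exactly the unpacking of the equivalence that the paper asserts but does not spell out: the paper simply states that this fixed point theorem ``is equivalent to'' Theorem~\ref{diallo}, and you have supplied the formal computation (via the earthquake identities of Section~\ref{Earthquakes} and the involution property of $\mathcal{ES}^{l/r}_g$) that realises this equivalence. The only slight redundancy is the detour through the involutions to switch from $\mathcal{ES}^l_h\circ\mathcal{ES}^l_g(m)=m$ to $\mathcal{ES}^l_g\circ\mathcal{ES}^l_h(m)=m$; you could equally note that the fixed-point sets of a bijection and of its inverse coincide, and that $(\mathcal{ES}^l_h\circ\mathcal{ES}^l_g)^{-1}=\mathcal{ES}^l_g\circ\mathcal{ES}^l_h$ since each factor is an involution.
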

Other scattering type theorems arise naturally from existing results. Indeed,
\begin{theorem}[\cite{mes+}]
\noindent For $\alpha\in\left\{l,r\right\}$ and $\epsilon\in\left\{+,-\right\}$, the map $(\Theta_\alpha,\opI_0^\epsilon)$ defines a bijection from $\opGHMC_{-1}$ into $\Trep\times\Thyp$.
\end{theorem}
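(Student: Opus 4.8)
The plan is to factor $(\Theta_\alpha,\opI_0^\epsilon)$ through Mess' parametrisation $(\opI_0^\epsilon,\opL^\epsilon)\colon\opGHMC_{-1}\to\Thyp\times\opML$ of Theorem~\ref{messAdS4}, using the description of the holonomy in terms of the bending data of the convex core obtained in Section~\ref{Earthquakes}, and then to conclude with Thurston's earthquake theorem. The only new point is the observation that the resulting change of coordinates on $\Thyp\times\opML$ is bijective, which is exactly the content of the earthquake theorem.

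I would start with the case $\alpha=l$, $\epsilon=+$. Given $M\in\opGHMC_{-1}$ with holonomy $\theta=(\theta_l,\theta_r)$, write $g^+:=\opI_0^+(M)$ for the intrinsic (hyperbolic) metric of $\partial\opK_\theta^+$ and $\lambda^+:=\opL^+(M)$ for its bending lamination. The computation of Section~\ref{Earthquakes}, which identifies the left and right generalised Gauss maps of $\partial\opK_\theta^+$ with earthquakes along $\lambda^+$, shows that, under the canonical identification $\iota\colon\Thyp\to\Trep$, one has $\iota\bigl(\mathcal{E}^r_{\lambda^+}(g^+)\bigr)=\Theta_l(M)$ and $\iota\bigl(\mathcal{E}^l_{\lambda^+}(g^+)\bigr)=\Theta_r(M)$. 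In other words, $(\Theta_l,\opI_0^+)=\Phi\circ(\opI_0^+,\opL^+)$, where $\Phi\colon\Thyp\times\opML\to\Trep\times\Thyp$ is given by $\Phi(g,\lambda):=\bigl(\iota(\mathcal{E}^r_\lambda(g)),g\bigr)$. Since $(\opI_0^+,\opL^+)$ is a bijection by Theorem~\ref{messAdS4}, it suffices to show that $\Phi$ is one. But $\Phi$ has the explicit inverse $(\rho,g)\mapsto\bigl(g,(\mathcal{E}^r_g)^{-1}(\iota^{-1}\rho)\bigr)$, which makes sense precisely because for each fixed $g\in\Thyp$ the right earthquake map $\lambda\mapsto\mathcal{E}^r_\lambda(g)$ is a bijection from $\opML$ onto $\Thyp$ --- this is Thurston's earthquake theorem, in the form recalled in Section~\ref{FixedPointTheoremsAndOtherResults} as a consequence of Theorem~\ref{EarthquakeTheorem2}. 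Hence $\Phi$, and therefore $(\Theta_l,\opI_0^+)$, is a bijection onto $\Trep\times\Thyp$.

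The remaining three cases are handled identically. For $(\alpha,\epsilon)=(r,+)$ one uses the companion relation $\iota(\mathcal{E}^l_{\lambda^+}(g^+))=\Theta_r(M)$ noted above, so that $(\Theta_r,\opI_0^+)=\Phi'\circ(\opI_0^+,\opL^+)$ with $\Phi'(g,\lambda):=\bigl(\iota(\mathcal{E}^l_\lambda(g)),g\bigr)$; this is a bijection because $\mathcal{E}^l_g$ is. For $\epsilon=-$ one repeats the discussion of Section~\ref{Earthquakes} for the past boundary $\partial\opK_\theta^-$ --- equivalently, one applies the time-reversing isometry $J\mapsto-J$ of $\opAdS^3$, which exchanges $\partial\opK_\theta^+$ with $\partial\opK_\theta^-$, c.f. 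Figure~\ref{diag3ads} --- to express $\Theta_l(M)$ and $\Theta_r(M)$ as a fixed left, respectively right, earthquake of $g^-:=\opI_0^-(M)$ along $\lambda^-:=\opL^-(M)$; combined with Theorem~\ref{messAdS4} for the past boundary, the same factorisation and the same appeal to the earthquake theorem give the result.

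Since the substance is carried by Theorem~\ref{messAdS4} and the earthquake theorem, I expect no serious obstacle. The one point requiring care is to pin down, in each of the four cases, which of $\mathcal{E}^l$ and $\mathcal{E}^r$ realises the passage from the intrinsic metric of the relevant boundary component to the relevant holonomy factor --- a matter of tracking orientations through the identification of $N_l$ and $N_r$ with earthquake maps in Section~\ref{Earthquakes}. Crucially, the conclusion that the change-of-coordinates map $\Phi$ is a bijection is insensitive to the outcome of this bookkeeping, since Thurston's theorem applies verbatim to both $\mathcal{E}^l_g$ and $\mathcal{E}^r_g$.
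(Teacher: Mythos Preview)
Your argument is correct and uses the same ingredients as the paper --- the earthquake relation of Section~\ref{Earthquakes} together with Thurston's earthquake theorem --- but factors the map differently. The paper composes with Mess' holonomy parametrisation $(\Theta_l,\Theta_r)$ of Theorem~\ref{messAdS3}: it defines $\Psi:\Trep\times\Thyp\to\Trep\times\Trep$ by $\Psi([\theta],g):=([\theta],(H^{-1}\mathcal{ES}_g H)([\theta]))$, observes that $\Psi$ is a bijection by the earthquake theorem, and checks that $(\Theta_l,\opI_0^+)=\Psi^{-1}\circ(\Theta_l,\Theta_r)$. You instead compose with the bending parametrisation $(\opI_0^+,\opL^+)$ of Theorem~\ref{messAdS4} and use the earthquake map $\mathcal{E}^r_g$ directly rather than the earthquake symmetry $\mathcal{ES}_g$. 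The two factorisations are dual to each other via the remark following Theorem~\ref{EarthquakeTheorem2} that any two of Theorems~\ref{messAdS3}, \ref{messAdS4} and \ref{EarthquakeTheorem2} imply the third; your route is arguably more direct since it bypasses the auxiliary notion of earthquake symmetry, while the paper's route makes the involution structure more visible.
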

\begin{proof} It suffices to consider the case $(\Theta_l,\opI_0^+)$, as the remaining cases are proven in a similar manner. Let $H:\Trep\rightarrow\Thyp$ be the canonical identification, and define $\Psi:\Trep\times\Thyp\rightarrow\Trep\times\Trep$ by
$$
\Psi([\theta],g) := ([\theta],(H^{-1}\mathcal{E}\mathcal{S}_g H)([\theta])).
$$
It follows by the earthquake theorem (Theorem \ref{EarthquakeTheorem}) that $\Psi$ is a bijection. However,
$$
(\Theta_l,\opI_0^+) = \Psi^{-1}\circ(\Theta_l,\Theta_r),
$$
and the result now follows by Theorem \ref{messAdS3}.\end{proof}
\noindent A similar argument yields,
\begin{theorem}[\cite{mes+}]
\noindent For $\alpha\in\left\{l,r\right\}$ and $\epsilon\in\left\{+,-\right\}$, the map $(\Theta_\alpha,L^\epsilon)$ defines a bijection from $\opGHMC_{-1}$ into $\Trep\times\opML$.
\end{theorem}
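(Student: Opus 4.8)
The plan is to imitate the proof of the preceding theorem, now using the bijection $(\opI_0^\epsilon,\opL^\epsilon)$ of Theorem~\ref{messAdS4} in place of the bijection $(\Theta_l,\Theta_r)$ of Theorem~\ref{messAdS3}, and using an honest earthquake in place of an earthquake symmetry. I would first treat the representative case $(\Theta_l,\opL^+)$; the case $(\Theta_r,\opL^+)$ then follows after interchanging the roles of the left and right earthquakes, and the two cases with $\epsilon=-$ follow after reversing the time orientation of $\opAdS^3$, which (as recalled in Section~\ref{Earthquakes}) interchanges the future and past boundary data $(\opI_0^+,\opL^+)$ and $(\opI_0^-,\opL^-)$.

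The key geometric input is the computation carried out in Section~\ref{Earthquakes}: for a ghmc AdS spacetime with holonomy $\theta=(\theta_l,\theta_r)$, the right earthquake along the bending lamination of $\partial\opK_\theta^+$ carries the hyperbolic structure of $\partial\opK_\theta^+/\theta(\Pi_1)$ to that of $\mathbb{H}^2/\theta_l(\Pi_1)$. In terms of the maps tabulated above, this is the identity
\begin{equation*}
H(\Theta_l)=\mathcal{E}^r(\opI_0^+,\opL^+)
\end{equation*}
of maps defined on $\opGHMC_{-1}$, where $H:\Trep\rightarrow\Thyp$ is the canonical identification.

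Granting this, I would define
\begin{equation*}
\Phi:\Thyp\times\opML\longrightarrow\Trep\times\opML,\qquad \Phi(g,\lambda):=\bigl(H^{-1}\mathcal{E}^r_\lambda(g),\,\lambda\bigr),
\end{equation*}
and observe that $(\Theta_l,\opL^+)=\Phi\circ(\opI_0^+,\opL^+)$ on $\opGHMC_{-1}$. For each fixed $\lambda$ the map $\mathcal{E}^r_\lambda=(\mathcal{E}^l_\lambda)^{-1}$ is a diffeomorphism of $\Thyp$ by the Earthquake theorem (Theorem~\ref{EarthquakeTheorem}), so $\Phi$ is a bijection, with inverse $(\theta,\lambda)\mapsto(\mathcal{E}^l_\lambda(H\theta),\lambda)$; and $(\opI_0^+,\opL^+)$ is a bijection by Theorem~\ref{messAdS4}. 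Hence $(\Theta_l,\opL^+)$ is a bijection, and the general statement follows.

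I do not expect a serious obstacle: once the displayed identity is in hand the argument is purely formal, and that identity is exactly the content of Section~\ref{Earthquakes}. The only delicate point is bookkeeping --- for each pair $(\alpha,\epsilon)$, matching the correct earthquake (left or right) and the correct boundary component (future or past) to the analogue of the displayed identity, and checking the orientation conventions --- but this requires no new ideas. Alternatively, three of the four cases can be recovered from the fourth by combining the time reversal $J\mapsto -J$ of $\opAdS^3$ with the evident symmetry of $\Trep\times\opML$.
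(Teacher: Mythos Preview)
Your proof is correct. The paper's own argument is only the phrase ``a similar argument yields,'' referring to the proof of the preceding theorem, so the intended route is to factor $(\Theta_l,\opL^+)$ through the bijection $(\Theta_l,\Theta_r)$ of Theorem~\ref{messAdS3}, using the relation $H(\Theta_r)=\mathcal{E}^l_{2\opL^+}\bigl(H(\Theta_l)\bigr)$ established in Section~\ref{Earthquakes} to build a bijection $\Trep\times\opML\to\Trep\times\Trep$, $([\theta],\lambda)\mapsto([\theta],H^{-1}\mathcal{E}^l_{2\lambda}H[\theta])$. You instead factor through the bijection $(\opI_0^+,\opL^+)$ of Theorem~\ref{messAdS4} and the single earthquake $\mathcal{E}^r_\lambda$. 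Both routes rest on the same geometric input from Section~\ref{Earthquakes} and the same formal pattern (compose a known parametrisation with an earthquake-built bijection); your choice is arguably the more economical one, since $\opL^+$ already appears as a coordinate in Theorem~\ref{messAdS4}, so no composition of earthquakes is needed. The bookkeeping for the remaining three cases is handled correctly.
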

\subsection{Smooth parametrisations}\label{SmoothParametrisationsAdSCase}
Our starting point for constructing smooth parametrisations of $\opGHMC_{-1}$ is the following result.
\begin{theorem}[Barbot--B\'eguin--Zeghib \cite{BBZ}]
\label{bbz const ads}
\noindent Let $\theta_l,\theta_r:\pi_1(S)\rightarrow\opPSL(2,\R)$ be homomorphisms which are injective and which act properly discontinuously on $\mathbb{H}^2$. For all $\kappa>0$, there exists a unique, smooth, spacelike, LSC surface $\Sigma_\kappa^\pm$, which is embedded in $\Omega_\theta^\pm$, is invariant under the action of $\theta:=(\theta_l,\theta_r)$, and has constant extrinsic curvature equal to $\kappa$. Furthermore, the family of all such surfaces foliates $\Omega_\theta^\pm$ as $\kappa$ varies over the interval $]0,\infty[$.
\end{theorem}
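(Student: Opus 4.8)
The plan is to follow the scheme already used for the Minkowski analogue (Theorem~\ref{BBZmink}): recast the problem as a Monge--Amp\`ere equation on the compact quotient, prove existence for each fixed $\kappa$ by the continuity method together with a priori estimates furnished by geometric barriers, prove uniqueness by the geometric maximum principle, and deduce the foliation property from monotonicity in $\kappa$. I would treat the future component $\Omega_\theta^+$, the past case being identical after reversing the time orientation.

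First I would set up the equation. A $\theta$-invariant, spacelike, LSC surface in $\Omega_\theta^+$ descends to an embedded, spacelike, convex Cauchy surface of the quotient ghmc spacetime $M:=\Omega_\theta/\theta(\Pi_1)$, and conversely, so one works directly on the compact surface $S$ and the equivariance requirement disappears. Writing such a surface locally as a graph (or, more globally, via its support function with respect to the cosmological time function on $\Omega_\theta^+$), the condition that the determinant of its shape operator equal $\kappa$ becomes a Monge--Amp\`ere type equation, which is (degenerate) elliptic precisely on the class of spacelike convex graphs; by the lorentzian Gauss equation (the formula $K=c-\opDet(A)$ of Theorem~\ref{FTS} with $c=-1$) any solution automatically has constant intrinsic curvature $-1-\kappa$, a fact that is convenient for the estimates. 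Uniqueness I would obtain purely geometrically: given two such surfaces of the same constant extrinsic curvature $\kappa$, each compact in the quotient and hence a graph over a fixed Cauchy surface, push one towards the future until it first meets the other; at a first point of contact the two surfaces are tangent with one locally on one side, so the strong maximum principle for the Monge--Amp\`ere operator on convex graphs forces them to agree near that point, hence everywhere by connectedness. The same touching argument shows that the surfaces for distinct values of $\kappa$ are pairwise disjoint and monotonically ordered in $\kappa$.

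The heart of the matter, and the step I anticipate to be hardest, is the a priori $C^0$, $C^1$ and $C^2$ estimates. These should come from barriers built out of the structure of $\Omega_\theta$ described in Section~\ref{sec:ads}: for each fixed $\kappa>0$ one wants $\theta$-invariant, spacelike, convex surfaces $\Sigma^-$ and $\Sigma^+$, with $\Sigma^-$ in the past of $\Sigma^+$, whose extrinsic curvatures are respectively $\geq\kappa$ and $\leq\kappa$ everywhere, obtained from the two boundary components of $\Omega_\theta^+$ --- the pleated convex-core boundary $\partial\opK_\theta^+$ and the singular boundary $\partial\Omega_\theta^+$ --- together with the equidistant surfaces of $\partial\opK_\theta^+$ and the level sets of the cosmological time. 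Trapping a solution between $\Sigma^-$ and $\Sigma^+$ confines it to a compact part of $\Omega_\theta^+$ (the $C^0$ bound) and keeps it uniformly away from the light cone (the $C^1$ bound); the interior $C^2$ bound for Monge--Amp\`ere on convex graphs is then classical, and Schauder theory yields all higher regularity once uniform ellipticity along the solution is known. This gradient/confinement estimate is the delicate point: unlike Minkowski space, $\opAdS^3$ carries no global time coordinate and a spacelike surface could a priori degenerate towards $\partial\Omega_\theta$, so the fine geometry of Mess' domain (Theorems~\ref{messAdS2} and~\ref{messAdS3}) and of its convex core must really be used.

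With these estimates in hand, existence for each $\kappa>0$ follows by the continuity method --- the set of admissible $\kappa$ being open by the implicit function theorem (the linearisation being an invertible Jacobi-type operator on $S$) and closed by the a priori bounds --- or, alternatively, by deforming from the Fuchsian locus $\{\theta_l=\theta_r\}$, where $\Omega_\theta$ is the domain of dependence of a totally geodesic plane and the constant extrinsic curvature surfaces are the explicit umbilic equidistant surfaces, along a path in the connected variety $\Trep\times\Trep$. Finally, the foliation property follows by combining uniqueness, monotonicity and the barrier estimates: the latter identify the limits of $\Sigma_\kappa^+$ as $\kappa\to0$ and $\kappa\to\infty$ with the two boundary components of $\Omega_\theta^+$ (namely $\partial\opK_\theta^+$ and $\partial\Omega_\theta^+$ respectively), so that the surfaces sweep out $\Omega_\theta^+$ and exactly one passes through each interior point, as one sees by comparing with the two nearest barriers.
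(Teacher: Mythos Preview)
The paper does not supply a proof of this theorem: it is stated as a result of Barbot--B\'eguin--Zeghib and simply cited from \cite{BBZ}, in exactly the same way as the Minkowski analogue (Theorem~\ref{BBZmink}) is stated without proof. There is therefore nothing in the paper to compare your proposal against.

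That said, your outline is broadly the correct architecture for how results of this type are proven in \cite{BBZ}: reduce to a Monge--Amp\`ere problem on the compact quotient, construct barriers from the geometry of the domain of dependence and its convex core, obtain a priori estimates, run the continuity method (or degree theory), and conclude uniqueness and the foliation via the maximum principle. Your identification of the gradient/confinement estimate as the delicate step is apt. One caution: your statement that solutions ``automatically have constant intrinsic curvature $-1-\kappa$'' is correct by the Gauss equation, but your claim that the equidistant surfaces of $\partial\opK_\theta^+$ serve as barriers needs care, since $\partial\opK_\theta^+$ is only $C^{0,1}$ and its equidistants are not in general smooth; in practice one needs smooth approximations or weak barrier formulations. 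As a sketch your proposal is sound, but it is a programme rather than a proof, and the paper itself does not attempt one.
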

Maps taking values in spaces of Teichm\"uller data are constructed using Theorem \ref{bbz const ads} as follows. For $\kappa>0$, consider the spacelike, LSC, embedded surface, $\Sigma_\kappa^\pm$, in $\Omega_\theta^\pm$ which is invariant with respect to $\theta$ and which has constant extrinsic curvature equal to $\kappa$. Let $I_\kappa^\pm$, $\II_\kappa^\pm$ and $\III_\kappa^\pm$ be its first, second and third fundamental forms respectively. The form, $(1+\kappa)I^\pm_\kappa$, defines a hyperbolic metric over $\Sigma_\kappa^\pm$. Furthermore, if we denote by $N$ the future oriented, unit, normal vector field over $\Sigma_\kappa^\pm$, then we can show that $N$ itself defines an equivariant immersion in $\Omega_\theta^\mp$ which, in fact, parametrises $\Sigma_{1/\kappa}^\mp$, so that $\kappa^{-1}(1+\kappa)\III^\pm_\kappa$ also defines a hyperbolic metric over $\Sigma_\kappa^+$. Next, by convexity, $\II_\kappa^\pm$ is also positive definite, and therefore also defines a metric over $\Sigma_\kappa$, but since it has no clear curvature properties, we consider it rather as defining a point in $\Thol$. In summary, we have three pairs of maps, each taking values in spaces of Teichm\"uller data of real dimension $(6\mathfrak{g}-6)$ (c.f. Table~\ref{table ds 1}).
\begin{table}[h!]
\begin{center}
\begin{tabular}{|c|c|c|}
\hline
\bf Map & \bf Description  &  \bf Codomain \\
\hline
$\I_\kappa^\pm$ & \multicolumn{1}{|l|}{\pbox{20cm}{The first fundamental form of $\Sigma_\kappa^\pm$}} & $\Thyp$\\
$\operatorname{II}_\kappa^\pm$& \multicolumn{1}{|l|}{\pbox{20cm}{The second fundamental form of $\Sigma_\kappa^\pm$}}&$\Thol$ \\
$\operatorname{III}_\kappa^\pm$& \multicolumn{1}{|l|}{\pbox{20cm}{The third fundamental form of $\Sigma_\kappa^\pm$ \\ (The first fundamental form of $\Sigma_{1/\kappa}^\mp$)}}&$\Thyp$\\
\hline
\end{tabular}
\end{center}
\caption{Maps taking values in spaces of real dimension $(6\mathfrak{g}-6)$.}\label{table ds 1}
\end{table}
\par
These maps are complemented to maps taking values in spaces of Teichm\"uller data of real dimension $(12\mathfrak{g}-12)$ as follows. First, the shape operator, $A_\kappa^\pm$, of $\Sigma_\kappa^\pm$ defines, up to a constant factor, a Labourie field of $I_\kappa^\pm$, whilst its inverse, $(A_\kappa^\pm)^{-1}$, defines a Labourie field of $\III_\kappa^\pm$, so that the pairs $(I_\kappa^\pm,A_\kappa^\pm)$ and $(\III_\kappa^\pm,(A_\kappa^\pm)^{-1})$ define points of $\opLab\Thyp$. Likewise, the Hopf differential, $\phi_\kappa^\pm$, of $I_\kappa^\pm$ with respect to the conformal structure of $\II_\kappa^\pm$ defines a holomorphic quadratic differential, so that the pair $(\II_\kappa^\pm,\phi_\kappa^\pm)$ yields a point of $\opT^*\Thol$. We thus have three pairs of maps taking values in spaces of Teichm\"uller data of real dimension $(12\mathfrak{g}-12)$ (c.f. Table~\ref{table ds2}).
\begin{table}[h!]
\begin{center}
\begin{tabular}{|c|c|c|}
\hline
\bf Map & \bf Description  &  \bf Codomain \\
\hline
$\opA_{\opI,\kappa}^\pm$ & \multicolumn{1}{|l|}{\pbox{20cm}{The first fundamental form of $\Sigma_\kappa^\pm$ \\ together with the Labourie field, $A_\kappa^\pm$}} & $\opLab\Thyp$\\
$\Phi_\kappa^\pm$& \multicolumn{1}{|l|}{\pbox{20cm}{The second fundamental form of $\Sigma_\kappa^\pm$ \\ together with the Hopf differential $\phi_\kappa^\pm$}} & $\opT^*\Thol$ \\
$\opA_{\opIII,\kappa}^\pm$&\multicolumn{1}{|l|}{\pbox{20cm}{The third fundamental form of $\Sigma_\kappa^\pm$ \\ together with the inverse of the Labourie field, $A_\kappa^\pm$}}& $\opLab\Thyp$  \\
\hline
\end{tabular}
\end{center}
\caption{Maps taking values in spaces of real dimension $(12\mathfrak{g}-12)$.}\label{table ds2}
\end{table}
\par
We readily show that these maps parametrise $\opGHMC_{-1}$. Indeed,
\begin{theorem}\label{FundamentalTheoremOfSurfacesADS}
\noindent The maps $\opA_{\opI,\kappa}^\pm$ and $\opA_{\opIII,\kappa}^\pm$ define real analytic diffeomorphisms from $\opGHMC_{-1}$ into $\opLab\Thyp$.
\end{theorem}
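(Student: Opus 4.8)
The plan is to follow the proof of the Minkowski analogue, Theorem~\ref{Lab mink}, almost verbatim, replacing the model spacetime $\R^{2,1}$ by the model spacetime $\opAdS^3$ of constant curvature $-1$ and invoking the fundamental theorem of surface theory, Theorem~\ref{FTS}, with $c=-1$. Concretely, I would exhibit explicit inverses of the four maps $\opA_{\opI,\kappa}^\pm$ and $\opA_{\opIII,\kappa}^\pm$, treating $\opA_{\opI,\kappa}^+$ in detail and deducing the remaining three by time reversal and by the anti de Sitter duality recorded just before the statement.

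For the inverse of $\opA_{\opI,\kappa}^+$, I would start from a point $(g,A)\in\opLab\Thyp$, that is, a marked hyperbolic metric $g$ together with one of its Labourie fields $A$. Set $\tilde g:=(1+\kappa)^{-1}g$, which has constant sectional curvature $-(1+\kappa)$, and $B:=\sqrt{\kappa}\,A$. Since the Levi-Civita connection is insensitive to rescaling the metric by a positive constant, $B$ is still a Codazzi field for $\tilde g$; moreover $B$ is positive definite with $\opDet(B)=\kappa$, so the identity $K_{\tilde g}=-1-\opDet(B)$ holds. Theorem~\ref{FTS} with $c=-1$ then supplies a locally strictly convex, spacelike equivariant immersion $e:(\tilde S,\tilde g)\rightarrow\opAdS^3$ with shape operator $B$ and holonomy $\theta=:(\theta_l,\theta_r)$, unique up to isometries of $\opAdS^3$; a standard continuity argument shows that $\theta$ takes values in the identity component $\opPSL(2,\R)\times\opPSL(2,\R)$.

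Next I would run this holonomy through Mess' machinery. Theorem~\ref{messAdS1} shows that $\theta_l$ and $\theta_r$ are injective with properly discontinuous images, so Theorem~\ref{messAdS2} attaches to $\theta$ a maximal invariant convex set $\Omega_\theta$, hence an element of $\opGHMC_{-1}$. The key point is then to identify the image of $e$ with the leaf $\Sigma_\kappa^+$ of the constant-extrinsic-curvature foliation of $\Omega_\theta^+$ furnished by Theorem~\ref{bbz const ads}: completeness of the induced metric $\tilde g$ should confine $e(\tilde S)$ to $\Omega_\theta$ by a barrier/maximality argument, positivity of $B$ relative to the future normal should place it in the future component $\Omega_\theta^+$, and then the uniqueness clause of Theorem~\ref{bbz const ads} identifies it with $\Sigma_\kappa^+$. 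Applying Theorem~\ref{FTS} in the reverse direction to a leaf $\Sigma_\kappa^+$ shows that the two constructions are mutually inverse, whence $\opA_{\opI,\kappa}^+$ is a bijection. Real analyticity of the map and of its inverse I would argue exactly as for Theorem~\ref{Lab mink}: the foliation of Theorem~\ref{bbz const ads} depends real-analytically on the holonomy, while the immersion produced by Theorem~\ref{FTS} is obtained by integrating a real-analytic, completely integrable system whose coefficients depend real-analytically on $(g,A)$.

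The map $\opA_{\opI,\kappa}^-$ is handled identically after reversing the time orientation of $\opAdS^3$ (for instance replacing $J$ by $-J$), equivalently by using $-B$ in place of $B$. For $\opA_{\opIII,\kappa}^\pm$ I would use the duality noted in the text: the future unit normal of $\Sigma_\kappa^\pm$ is itself an equivariant immersion parametrising $\Sigma_{1/\kappa}^\mp$, and under this correspondence the third fundamental form $\III_\kappa^\pm$ becomes the first fundamental form of $\Sigma_{1/\kappa}^\mp$ while $(A_\kappa^\pm)^{-1}$ becomes a constant multiple of its shape operator --- this is Corollary~\ref{SymmetryOfLabourieFields} realised at the level of surfaces. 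Thus $\opA_{\opIII,\kappa}^\pm$ factors as $\opA_{\opI,1/\kappa}^\mp$ post-composed with a real-analytic involution of $\opGHMC_{-1}$, and the statement follows from the first part. I expect the main obstacle to be the step identifying the abstractly constructed surface $e(\tilde S)$ with the BBZ leaf $\Sigma_\kappa^+$ --- that is, proving that a complete, locally strictly convex, $\theta$-invariant spacelike surface is automatically trapped inside $\Omega_\theta^+$; this is the only place where the global geometry of anti de Sitter space is genuinely used, every other step being a routine transcription of the Minkowski argument.
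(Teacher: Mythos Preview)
Your proposal is correct and follows essentially the same approach as the paper's own sketch: construct the inverse of $\opA_{\opI,\kappa}^\pm$ by applying the fundamental theorem of surface theory (Theorem~\ref{FTS}) to the rescaled data $((1+\kappa)^{-1}g,\sqrt{\kappa}A)$, and then deduce the result for $\opA_{\opIII,\kappa}^\pm$ from the duality $\opA_{\opIII,\kappa}^\pm\simeq\opA_{\opI,1/\kappa}^\mp$. You are considerably more explicit than the paper about the intermediate steps --- in particular the passage through Mess' theorems and the identification of the abstractly constructed surface with the Barbot--B\'eguin--Zeghib leaf $\Sigma_\kappa^+$ --- which the paper's sketch simply elides; your flagging of this identification as the one genuinely global step is accurate, though the paper treats it as routine.
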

\begin{proof}[Sketch of proof] Consider a hyperbolic metric, $g$, and a Labourie field, $A$. By the fundamental theorem of surface theory (Theorem~\ref{FTS}), there exists an LSC equivariant immersion, $e:(\tilde{S},(1+\kappa)^{-1}g)\rightarrow\opAdS^{3}$, with shape operator equal to $\sqrt{\kappa}A$, which is unique up to isometries of $\opAdS^{3}$. This yields a real analytic inverse of $\opA_{\opI,\kappa}^\pm$, and since $\opA_{\opIII,\kappa}^\pm$ is equivalent to $\opA_{\opI,1/\kappa}^\mp$, it also yields a real analytic inverse of $\opA_{\opIII,\kappa}^\pm$, and the result follows.\end{proof}
\begin{theorem}
\noindent The map $\Phi_\kappa^\pm$ defines a real analytic diffeomorphism from $\opGHMC_{-1}$ into $\opT^*\Thol$.
\end{theorem}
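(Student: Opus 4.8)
The plan is to follow the proof of Theorem~\ref{mink II} essentially verbatim, factoring $\Phi_\kappa^\pm$ through the Labourie field parametrisation $\opA_{\opI,\kappa}^\pm$ furnished by Theorem~\ref{FundamentalTheoremOfSurfacesADS}. First I would unwind the inverse of $\opA_{\opI,\kappa}^\pm$ as constructed in the sketch of proof of that theorem via the fundamental theorem of surface theory (Theorem~\ref{FTS}): a point $(g,A)$ of $\opLab\Thyp$ is sent to the ghmc AdS spacetime containing the constant extrinsic curvature surface $\Sigma_\kappa^\pm$ (furnished by Theorem~\ref{bbz const ads}) whose first fundamental form is $(1+\kappa)^{-1}g$ and whose shape operator is $\sqrt{\kappa}A$. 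Its second fundamental form is then $(1+\kappa)^{-1}\sqrt{\kappa}\,g(A\cdot,\cdot)$, whose conformal class is exactly the marked holomorphic structure, $\mathcal{H}$, conformal to $g(A\cdot,\cdot)$; and, since the Hopf form depends linearly on the underlying symmetric $2$-form (Section~\ref{HopfDifferentials}), the Hopf differential of the first fundamental form with respect to $\mathcal{H}$ is $(1+\kappa)^{-1}\phi(g|\mathcal{H})$.

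On the other hand, the sketch of proof of Theorem~\ref{AIsARealAnalDiff} identifies the induced map $\Phi\circ\mathcal{A}^{-1}:\opLab\Thyp\rightarrow\opT^*\Thol$ --- here $\mathcal{A}$ and $\Phi$ are the real analytic diffeomorphisms of Sections~\ref{LabourieFields} and \ref{HopfDifferentials} --- with the assignment $(g,A)\mapsto(\mathcal{H},\phi(g|\mathcal{H}))$, where $\mathcal{H}$ is conformal to $g(A\cdot,\cdot)$. Comparing this with the previous paragraph, I would conclude that $\Phi_\kappa^\pm\circ(\opA_{\opI,\kappa}^\pm)^{-1}$ coincides with $\Phi\circ\mathcal{A}^{-1}$ up to multiplication by the fixed scalar $(1+\kappa)^{-1}$ in each cotangent fibre, which is plainly a real analytic diffeomorphism of $\opT^*\Thol$. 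Hence $\Phi_\kappa^\pm$ is a composition of real analytic diffeomorphisms, and the result follows from Theorems~\ref{wolf}, \ref{AIsARealAnalDiff} and \ref{FundamentalTheoremOfSurfacesADS}; the argument for the two signs is identical, the past component being reduced to the future one by reversing the time orientation of $\opAdS^3$.

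The only genuine difficulty is bookkeeping with constants: one must keep track of the rescalings built into the definition of $\opA_{\opI,\kappa}^\pm$ --- the factor $(1+\kappa)$ making the base metric hyperbolic and the factor $\kappa^{-1/2}$ making the Labourie field have unit determinant --- and verify, through the Gauss equation $K=-1-\kappa$ for a spacelike surface of constant extrinsic curvature $\kappa$ in $\opAdS^3$, that these are compatible with the normalisation in Theorem~\ref{FTS}. Once the constants are pinned down, the identities $\II_\kappa^\pm=I_\kappa^\pm(A_\kappa^\pm\cdot,\cdot)$ and $\phi_\kappa^\pm=\phi(I_\kappa^\pm|\mathcal{H})$ match the conformal class of $\II_\kappa^\pm$ and the Hopf differential $\phi_\kappa^\pm$ with their images under $\Phi\circ\mathcal{A}^{-1}$ up to the single scalar factor above, and nothing more is needed. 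Alternatively, as in the sketch of proof of Theorem~\ref{mink II}, one may simply note that $\Phi_\kappa^\pm\circ(\opA_{\opI,\kappa}^\pm)^{-1}$ and $\Phi\circ\mathcal{A}^{-1}$ induce the same map on marked holomorphic structures and proportional holomorphic quadratic differentials, sidestepping the constants altogether.
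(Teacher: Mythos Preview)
Your proposal is correct and follows essentially the same route as the paper: factor $\Phi_\kappa^\pm$ through the Labourie field parametrisation $\opA_{\opI,\kappa}^\pm$ of Theorem~\ref{FundamentalTheoremOfSurfacesADS}, identify the resulting composition with $\Phi\circ\mathcal{A}^{-1}$, and conclude via Theorems~\ref{wolf} and~\ref{AIsARealAnalDiff}. Your attention to the scalar factor $(1+\kappa)^{-1}$ in the cotangent fibre is a refinement the paper glosses over, but it changes nothing substantive since fibrewise rescaling by a fixed nonzero constant is a real analytic diffeomorphism of $\opT^*\Thol$.
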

\begin{proof} Indeed, $\opI_\kappa^\pm\circ(\opA_\kappa^\pm)^{-1}$ coincides with $\Phi\circ\mathcal{A}^{-1}$, where $\Phi$ and $\mathcal{A}$ are defined as in Sections \ref{HopfDifferentials} and \ref{LabourieFields} respectively. The result now follows by Theorems \ref{wolf}, \ref{AIsARealAnalDiff} and \ref{FundamentalTheoremOfSurfacesADS}.\end{proof}
\subsection{Landslides, rotations and more symmetries}
In order to interpret Theorem~\ref{bbz const ads} in terms of classical Teichm\"uller theory, consider again the spacelike, LSC, embedded surface, $\Sigma_\kappa^+$. Let $A$ be its shape operator, and let $g:=(1+\kappa)I_\kappa^+$ be its hyperbolic metric, where $I_\kappa^+$ is its first fundamental form. If $N_l$ and $N_r$ are its left and right Gauss maps respectively, and if $h$ denotes the hyperbolic metric of $\mathbb{H}^2$, then a straightforward calculation yields (c.f. \cite{KS07,bar}),
\begin{eqnarray}\label{InFactTheyAreLandslides}
\ && N_l^*h := g\left(\opCos(t)J_0 + \opSin(t)\frac{1}{\sqrt{\kappa}}A,\opCos(t)J_0 + \opSin(t)\frac{1}{\sqrt{\kappa}}A\right)~,\ \text{and}\\
\  && N_r^*h := g\left(\opCos(t)J_0 - \opSin(t)\frac{1}{\sqrt{\kappa}}A,\opCos(t)J_0 - \opSin(t)\frac{1}{\sqrt{\kappa}}A\right)~,
\end{eqnarray}
where $J_0$ is the complex structure of $g$ compatible with the orientation, and $t$ satisfies,
$$
\opTan(t) = \sqrt{\kappa}~.
$$
It follows that $N_r$ and $N_l$ define landslides from $\Sigma_\kappa^+$ into $\mathbb{H}^2/\theta_l(\Pi_1)$ and $\mathbb{H}^2/\theta_r(\Pi_1)$ respectively, so that Theorem~\ref{bbz const ads} therefore yields the following ``landslide theorem''.
\landslide*
\begin{proof} Let $\kappa:=\opTan(t/2)^2$. Consider two marked hyperbolic metrics, $g_1,g_2\in\Thyp$. Denote $\theta_l:=H^{-1}(g_1)$ and $\theta_r:=H^{-1}(g_2)$, where $H:\Trep\rightarrow\Thyp$ is the canonical identification. Define
$$
(h^+,h^-) = (\opI_\kappa^+,\opIII_\kappa^+)\circ(\Theta_l,\Theta_r)^{-1}(\theta_l,\theta_r).
$$
It follows from \ref{InFactTheyAreLandslides} that $h^+$ and $h^-$ yield the desired landslide, thus proving existence. To prove uniqueness, let $A$ be the Labourie field of $h^+$ with respect to $h^-$. By the fundamental theorem of surface theory (Theorem~\ref{FTS}), there exists an equivariant immersion of $\tilde{S}$ into $\opAdS^3$ whose metric is $(1+\kappa)^{-1}h^+$ and whose shape operator is $\sqrt{\kappa}A$. Since this immersion is invariant with respect to $(\theta_l,\theta_r)$, the result now follows by uniqueness in Theorem~\ref{bbz const ads}.\end{proof}
\noindent Since landslides and rotations are equivalent, Theorem~\ref{LandslideTheorem} can also stated as follows.
\rotation*
\noindent This result is illustrated in Figure~\ref{fig:ads lisse1}.
\begin{figure}
\begin{center}
\includegraphics[scale=0.5]{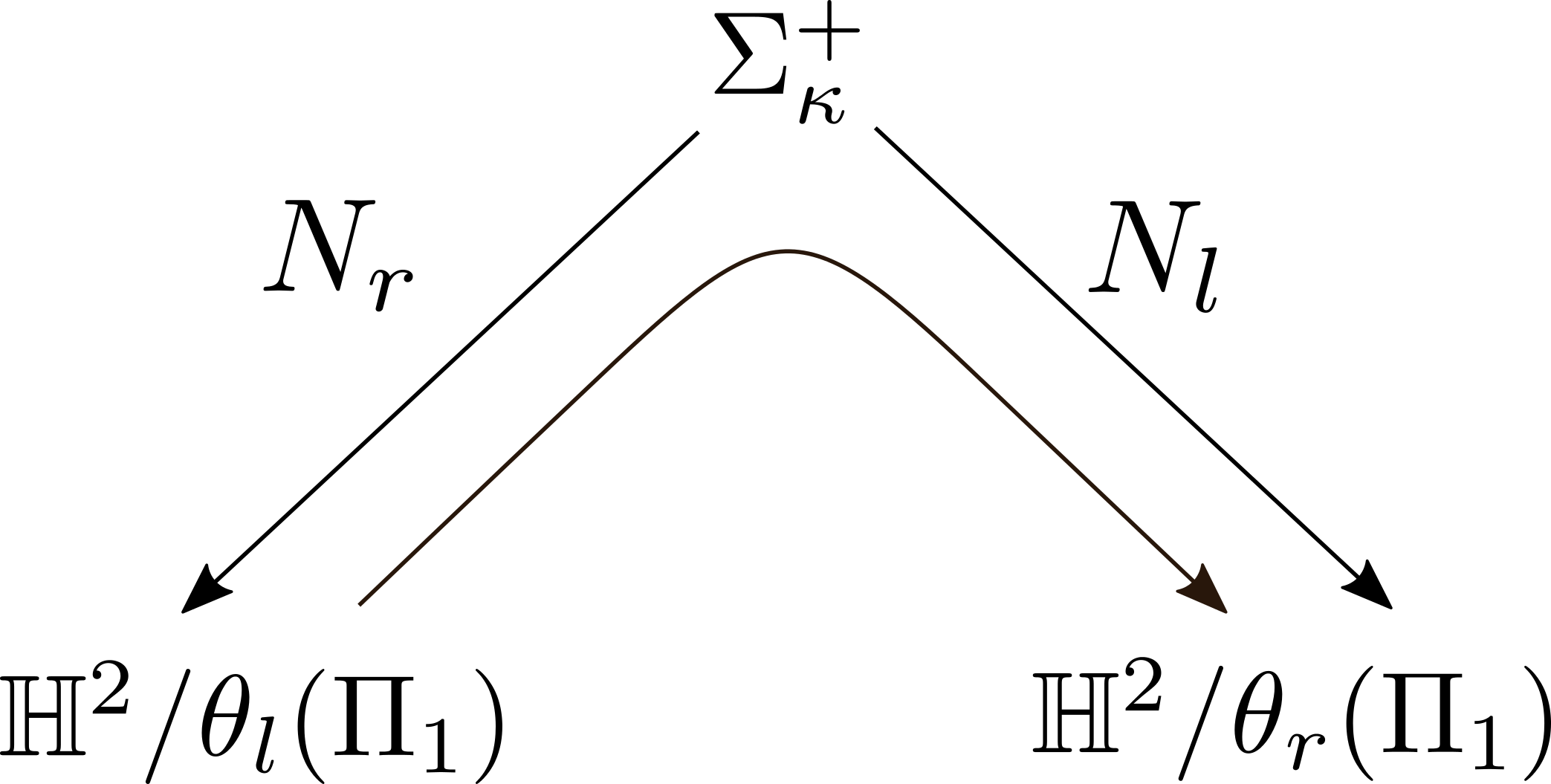}\caption{\textbf{The rotation theorem.} Here, the upper vertex is an equivariant immersion of constant extrinsic curvature equal to $\kappa:=\opTan(t/2)^2$ whose second fundamental form has the conformal class of $\mathcal{H}$, and the curved arrow joining the lower vertices is a rotation of angle $2t$ about $\mathcal{H}$.}\label{fig:ads lisse1}
\end{center}
\end{figure}
\par
Other results of scattering type yield new fixed point theorems in Teichm\"uller theory. Indeed, consider the pair $(\opI_\kappa^+,\opI_{\kappa'}^-)$. We have,
\begin{theorem}[Bonsante--Mondello--Schlenker \cite{BMS2}]\label{weyl ads}
\noindent For all $\kappa,\kappa'>0$, the map $(\opI_\kappa^+,\opI_{\kappa'}^-)$ defines a surjection from $\opGHMC_{-1}$ onto $\Thyp\times\Thyp$. Furthermore, when $\kappa\kappa'=1$, this map is bijection.
\end{theorem}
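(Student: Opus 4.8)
The plan is to prove the statement by a degree-theoretic argument, using the special case $\kappa\kappa'=1$ --- where the map has an explicit bijective structure --- to compute the degree in general. By Theorem~\ref{messAdS3}, $\opGHMC_{-1}$ is homeomorphic to $\Trep\times\Trep$, hence to a cell of real dimension $(12\mathfrak{g}-12)$, and $\Thyp\times\Thyp$ is a cell of the same dimension; the assignment $M\mapsto F_{\kappa'}(M):=(\opI_\kappa^+(M),\opI_{\kappa'}^-(M))$ depends continuously on $M$, by the continuous dependence of the Barbot--B\'eguin--Zeghib foliation (Theorem~\ref{bbz const ads}) on the ambient spacetime. It therefore suffices to show that $F_{\kappa'}$ is proper and has non-zero degree, since a proper map of non-zero degree between connected oriented manifolds of equal dimension is automatically surjective.

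First I would dispose of the case $\kappa\kappa'=1$. When $\kappa'=1/\kappa$, the Gauss-map duality recalled after Theorem~\ref{bbz const ads} identifies the first fundamental form of the past $(1/\kappa)$-surface with the third fundamental form of the future $\kappa$-surface, so that $F_{1/\kappa}=(\opI_\kappa^+,\opIII_\kappa^+)$. Both of these are functions of the single pair $(I_\kappa^+,A_\kappa^+)$ --- the first fundamental form and the shape operator of $\Sigma_\kappa^+$ --- since $\III_\kappa^+=I_\kappa^+(A_\kappa^+\cdot,A_\kappa^+\cdot)$; hence $F_{1/\kappa}=\Xi\circ\opA_{\opI,\kappa}^+$, where $\opA_{\opI,\kappa}^+\colon\opGHMC_{-1}\to\opLab\Thyp$ is the real analytic diffeomorphism of Theorem~\ref{FundamentalTheoremOfSurfacesADS} and $\Xi\colon\opLab\Thyp\to\Thyp\times\Thyp$ is (up to the fixed rescalings built into the definitions of $\opI_\kappa^+$ and $\opIII_\kappa^+$) the map $(g,A)\mapsto(g,g(A\cdot,A\cdot))$. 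The Labourie field theorem (Theorem~\ref{LabFieldthm}) is exactly the assertion that $\Xi$ is a bijection: given a target $(g_1,g_2)$, one is forced to take $g=g_1$, and then $A$ is the unique Labourie field of $g_1$ with $[g_1(A\cdot,A\cdot)]=g_2$. Thus $F_{1/\kappa}$ is a continuous bijection between cells, hence --- by invariance of domain --- a homeomorphism; in particular it is proper and of degree $\pm1$. This already proves the second assertion of the theorem.

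For general $\kappa'$ I would deform $\kappa'$ continuously to $1/\kappa$, keeping $\kappa$ fixed, and invoke homotopy invariance of the degree. Granted that the family $(\kappa',M)\mapsto F_{\kappa'}(M)$ is \emph{uniformly} proper for $\kappa'$ in a compact subinterval of $]0,\infty[$, this deformation is a proper homotopy, so $\deg F_{\kappa'}=\deg F_{1/\kappa}=\pm1$ for every $\kappa'>0$; combined with the properness of $F_{\kappa'}$ itself, this gives surjectivity and completes the proof. It is worth noting the Teichm\"uller-theoretic content hidden here: by \eqref{InFactTheyAreLandslides} and its time-reversed counterpart (applied to the spacetime of $M$ with the roles of $\theta_l$ and $\theta_r$ exchanged), $\opI_\kappa^+(M)$ and $\opI_{\kappa'}^-(M)$ are the landslide mid-points of the pair $(H(\theta_l),H(\theta_r))$ at angles $2\arctan\sqrt\kappa$ and $2\arctan\sqrt{\kappa'}$ respectively, so that the surjectivity of $F_{\kappa'}$ is a ``double landslide'' fixed-point theorem, directly parallel to Diallo's double-earthquake theorem (Theorem~\ref{diallo}) and its lamination form (Theorem~\ref{BSads}), and degenerating to the former as $\kappa,\kappa'\to0$.

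The main obstacle is the uniform properness of $F_{\kappa'}$: one must show that if $\opI_\kappa^+(M_n)$ and $\opI_{\kappa_n'}^-(M_n)$ both remain in a fixed compact subset of $\Thyp$, with $\kappa_n'$ bounded away from $0$ and $\infty$, then the left and right holonomies of $M_n$ remain in a compact subset of $\Trep$. The natural strategy is to compare the constant-curvature metrics $\opI_\kappa^\pm$ with the convex-core boundary metrics $\opI_0^\pm$, to which the surfaces $\Sigma_\kappa^\pm$ converge as $\kappa\to\infty$, establishing two-sided length comparisons $c(\kappa)\,\ell_{\opI_0^\pm}(\gamma)\le\ell_{\opI_\kappa^\pm}(\gamma)\le C(\kappa)\,\ell_{\opI_0^\pm}(\gamma)$ that are uniform on compact $\kappa$-intervals, and then to use that control on the two convex-core metrics --- equivalently, on the associated earthquake data, by Mess' construction --- controls the holonomy. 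Producing these estimates, i.e. uniform geometric control of the constant-curvature foliation as the ambient GHMC spacetime degenerates, is where the real work lies; everything else follows formally from results already available in the text.
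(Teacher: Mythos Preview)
The paper does not prove this theorem; it is quoted from \cite{BMS2} without argument, so there is no ``paper's own proof'' to compare against. That said, your outline matches the strategy of the original source, and your treatment of the case $\kappa\kappa'=1$ is complete and correct: the duality $\opIII_\kappa^+=\opI_{1/\kappa}^-$ is stated in the text after Theorem~\ref{bbz const ads}, and the factorisation $F_{1/\kappa}=\Xi\circ\opA_{\opI,\kappa}^+$ together with Theorem~\ref{FundamentalTheoremOfSurfacesADS} and Theorem~\ref{LabFieldthm} does give bijectivity outright.

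For the general case, the degree-and-properness scheme is the right one, and you are honest that the properness is the only genuine content left. Two remarks. First, a slip: the surfaces $\Sigma_\kappa^\pm$ approach the convex-core boundary $\partial\opK_\theta^\pm$ as $\kappa\to 0$, not as $\kappa\to\infty$ (the notation $\opI_0^\pm$ in Table~\ref{table ads 2} reflects this); as $\kappa\to\infty$ they collapse to the initial singularity $\partial\Omega_\theta^\pm$. Second, and more substantively, the length-comparison strategy you sketch is plausible but is precisely the hard analytic step of \cite{BMS2}; nothing in the present paper supplies it. In particular, the comparison between $\opI_\kappa^\pm$ and $\opI_0^\pm$ must be \emph{uniform over degenerating spacetimes}, which requires geometric control of the BBZ foliation that goes well beyond Theorem~\ref{bbz const ads}. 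So your proposal is a correct roadmap with the central estimate still to be supplied, rather than a proof.
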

\begin{remark}In the general case, it is not known whether this map is a bijection.
\end{remark}
\begin{figure}
\begin{center}
\includegraphics[scale=0.5]{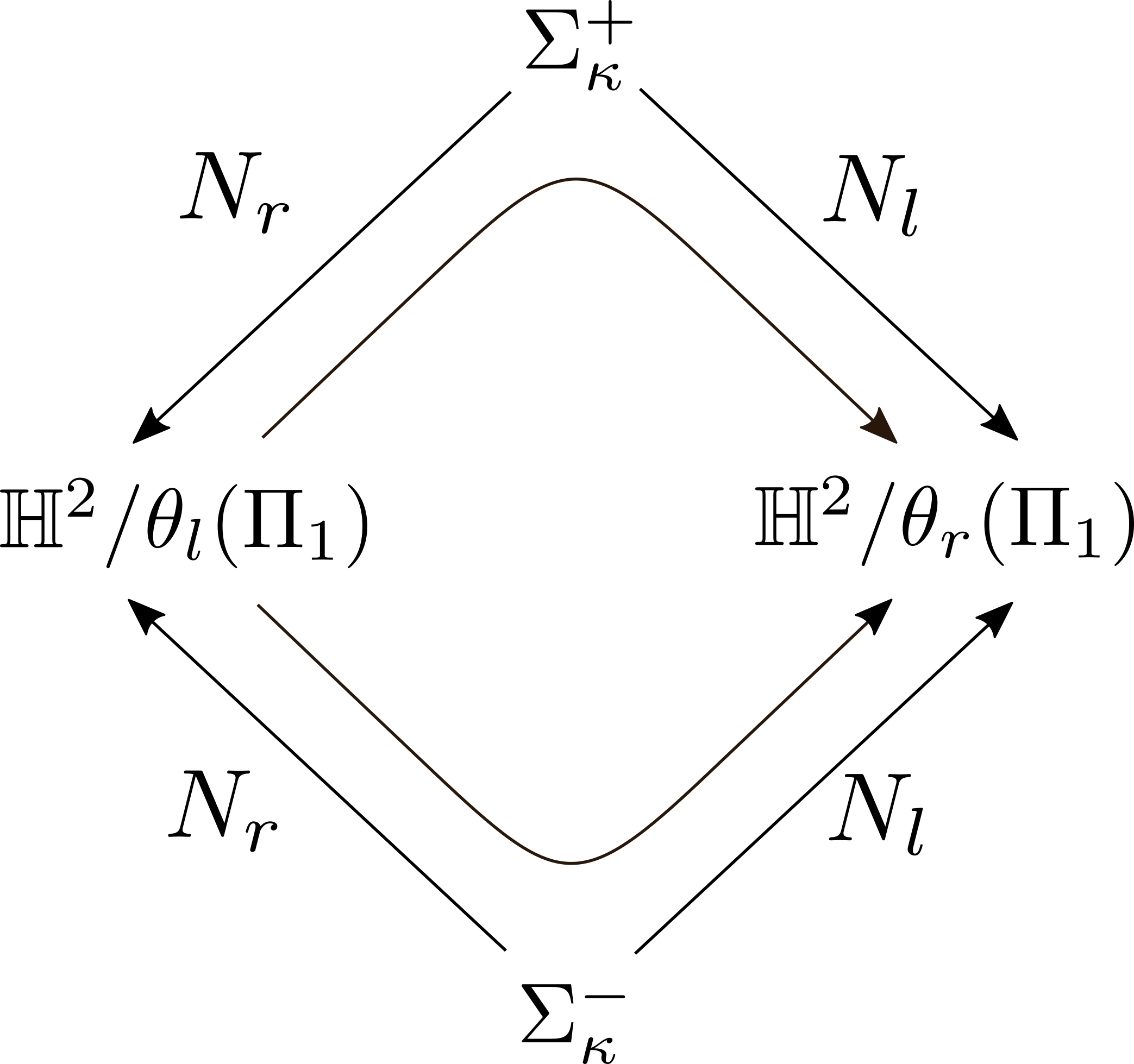}\caption{\textbf{The rotation symmetry theorem.} Here the upper vertex is a future oriented equivariant immersion of constant extrinsic curvature equal to $\kappa$ whose intrinsic metric is (up to a factor) equal to $g^+$, and the lower vertex is a past oriented equivariant immersion of constant extrinsic curvature equal to $\kappa'$ whose intrinsic metric is (also up to a factor) equal to $g^-$.}\label{fig:ads lisse2}
\end{center}
\end{figure}
\noindent Thus, given a pair $(g^+,g^-)$ of marked hyperbolic metrics, there exists a homomorphism, $\theta:=(\theta_l,\theta_r)$ such that the diagram in Figure~\ref{fig:ads lisse2} commutes.
\par
In analogy to the case of Theorem~\ref{BSads} and \ref{diallo}, rotation symmetries (c.f. \cite{BMS2}) present a nice interpretation of Theorem~\ref{weyl ads} within the framework of Teichm\"uller theory. They are defined as follows. First, by Theorem~\ref{rotations}, given $t\in]-\pi,\pi[$, and a marked hyperbolic metric, $g\in\Thyp$, the maps, $\mathcal{R}_{g,t}:\mathcal{H}\mapsto\mathcal{R}(t,g,\mathcal{H})$ and $\mathcal{R}_{g,-t}:\mathcal{H}\mapsto\mathcal{R}(-t,g,\mathcal{H})$, define bijections from $\Thol$ into $\Thyp$. The {\sl positive rotation symmetry} of magnitude $2t$ of $\Thyp$ about the marked hyperbolic metric, $g$, is then defined by $\mathcal{RS}^+_{g,t}:=\mathcal{R}_{g,t}\circ(\mathcal{R}_{g,-t})^{-1}$.
In other words, $h'=\mathcal{RS}^+_{g,t}(h)$ whenever there exists a marked holomorphic structure, $\mathcal{H}$, such that $\mathcal{R}(-t,g,\mathcal{H})=h$ and $\mathcal{R}(t,g,\mathcal{H})=h'$. The {\sl negative rotation symmetry} of magnitude $2t$ is defined in a similar manner by $\mathcal{RS}^-_{g,t}:=\mathcal{R}_{g,-t}\circ(\mathcal{R}_{g,t})^{-1}$. As their names suggest, the two maps, $\mathcal{RS}^+_{g,t}$ and $\mathcal{RS}^-_{g,t}$ are both involutions of $\Thyp$ with $g$ as their unique fixed point. We now see that Theorem~\ref{weyl ads} is equivalent to the following fixed point theorem of classical Teichm\"uller theory.
\begin{theorem}[Bonsante--Mondello--Schlenker \cite{BMS2}]
\noindent For any $t,t'\in]0,\pi[$, and for any pair $(g,g')$ of marked hyperbolic metrics, the compositions, $\mathcal{RS}_{g,t}^+\circ\mathcal{RS}_{g',-t'}^+$ and $\mathcal{RS}_{g,t}^-\circ\mathcal{RS}_{g',-t'}^-$, both have fixed points in $\Thyp$. Furthermore, when $t+t'=\pi$, these fixed points are unique.
\end{theorem}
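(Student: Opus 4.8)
The plan is to read this statement as the Teichm\"uller-theoretic shadow of the anti de Sitter scattering result, Theorem~\ref{weyl ads}, using the same dictionary between rotations (equivalently landslides) and constant extrinsic curvature surfaces in $\opGHMC_{-1}$ that was used to prove Theorems~\ref{LandslideTheorem} and~\ref{rotations}. Fix $t,t'\in\,]0,\pi[$ and set $\kappa:=\opTan(t/2)^2$ and $\kappa':=\opTan(t'/2)^2$, so that the condition $t+t'=\pi$ is equivalent to $\kappa\kappa'=1$. To a spacetime $\mathcal{M}\in\opGHMC_{-1}$ associate its future constant curvature $\kappa$ surface, $\Sigma^+_\kappa$, and its past constant curvature $\kappa'$ surface, $\Sigma^-_{\kappa'}$; let $g^+:=\opI_\kappa^+(\mathcal{M})$ and $g^-:=\opI_{\kappa'}^-(\mathcal{M})$ be their rescaled first fundamental forms, $\mathcal{H}^+:=\opII_\kappa^+(\mathcal{M})$ and $\mathcal{H}^-:=\opII_{\kappa'}^-(\mathcal{M})$ the conformal classes of their second fundamental forms, and $g_l:=H(\Theta_l(\mathcal{M}))$, $g_r:=H(\Theta_r(\mathcal{M}))$ the two boundary hyperbolic metrics of $\mathcal{M}$, where $H:\Trep\to\Thyp$ is the canonical identification. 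The key observation, obtained from formula~\ref{InFactTheyAreLandslides} through the identification of landslides with rotations (Section~\ref{RotationsAndLandslides}) and through Theorem~\ref{rotations}, is that the left and right Gauss maps of $\Sigma^+_\kappa$ realize $g_l$ and $g_r$ as $\mathcal{R}(t,g^+,\mathcal{H}^+)$ and $\mathcal{R}(-t,g^+,\mathcal{H}^+)$, and those of $\Sigma^-_{\kappa'}$ realize the same pair as $\mathcal{R}(t',g^-,\mathcal{H}^-)$ and $\mathcal{R}(-t',g^-,\mathcal{H}^-)$. Unwinding the definition of the rotation symmetries, this says exactly that $g_l$ is a fixed point of $\mathcal{RS}^+_{g^+,t}\circ\mathcal{RS}^+_{g^-,-t'}$ and that $g_r$ is a fixed point of $\mathcal{RS}^-_{g^+,t}\circ\mathcal{RS}^-_{g^-,-t'}$ (possibly after relabelling $l\leftrightarrow r$, which is immaterial).

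Granting this dictionary, the existence statement follows at once: given hyperbolic metrics $g$ and $g'$, apply the surjectivity half of Theorem~\ref{weyl ads} to produce $\mathcal{M}\in\opGHMC_{-1}$ with $\opI_\kappa^+(\mathcal{M})=g$ and $\opI_{\kappa'}^-(\mathcal{M})=g'$; then $H(\Theta_l(\mathcal{M}))$ is a fixed point of $\mathcal{RS}^+_{g,t}\circ\mathcal{RS}^+_{g',-t'}$ and $H(\Theta_r(\mathcal{M}))$ is a fixed point of $\mathcal{RS}^-_{g,t}\circ\mathcal{RS}^-_{g',-t'}$.

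For uniqueness when $t+t'=\pi$ the dictionary must be run in reverse. Starting from a fixed point $h$ of $\mathcal{RS}^+_{g,t}\circ\mathcal{RS}^+_{g',-t'}$ and writing $h':=\mathcal{RS}^+_{g',-t'}(h)$, the definitions of the two rotation symmetries produce marked holomorphic structures $\mathcal{H}$ and $\mathcal{H}'$ with $h=\mathcal{R}(t,g,\mathcal{H})$, $h'=\mathcal{R}(-t,g,\mathcal{H})$ and $h=\mathcal{R}(t',g',\mathcal{H}')$, $h'=\mathcal{R}(-t',g',\mathcal{H}')$. By the anti de Sitter content of Theorem~\ref{rotations}, the ordered pair $(h,h')$ is, via Mess' parametrisation (Theorem~\ref{messAdS3}), the boundary holonomy data of a spacetime $\mathcal{M}_h\in\opGHMC_{-1}$ whose future constant curvature $\kappa$ surface has first fundamental form $g$ and conformal second fundamental form $\mathcal{H}$, and whose past constant curvature $\kappa'$ surface has first fundamental form $g'$ and conformal second fundamental form $\mathcal{H}'$; in particular $(\opI_\kappa^+,\opI_{\kappa'}^-)(\mathcal{M}_h)=(g,g')$ and $h=H(\Theta_l(\mathcal{M}_h))$, so $h$ is recovered from $\mathcal{M}_h$. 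When $\kappa\kappa'=1$, Theorem~\ref{weyl ads} states that $(\opI_\kappa^+,\opI_{\kappa'}^-)$ is a bijection, so $\mathcal{M}_h$ depends only on $(g,g')$ and not on the particular fixed point $h$; consequently $h$ itself is uniquely determined. The same argument, with $\mathcal{RS}^-$ in place of $\mathcal{RS}^+$ and $\Theta_r$ in place of $\Theta_l$, settles the second composition.

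\textbf{Main obstacle.} Essentially all of the mathematical content is imported from Theorem~\ref{weyl ads}; the work remaining in the argument above is the orientation bookkeeping --- verifying that the left and right Gauss maps of $\Sigma^+_\kappa$ and $\Sigma^-_{\kappa'}$ produce rotations of precisely the angles $\pm t$ and $\pm t'$, and that with these signs it is exactly the compositions $\mathcal{RS}^{+}_{g,t}\circ\mathcal{RS}^{+}_{g',-t'}$ and $\mathcal{RS}^{-}_{g,t}\circ\mathcal{RS}^{-}_{g',-t'}$ (and not, say, mixed ones) that close up into cycles through $g_l$ and $g_r$. The reason $t+t'=\pi$ is the distinguished case is that then $\kappa'=1/\kappa$, so that, by the normal-map duality $\Sigma^\pm_\kappa\leftrightarrow\Sigma^\mp_{1/\kappa}$, the past constant curvature surface is the Gauss image of the future one; it is precisely this coincidence that upgrades the scattering map $(\opI_\kappa^+,\opI_{\kappa'}^-)$ from a surjection to a bijection, and hence promotes existence to existence and uniqueness.
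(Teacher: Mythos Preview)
Your proposal is correct and follows precisely the approach the paper itself takes: the paper does not give a standalone proof of this statement but simply declares it equivalent to Theorem~\ref{weyl ads} via the rotation/constant-curvature-surface dictionary summarised in Figure~\ref{fig:ads lisse2}, and you have written out that equivalence in detail. Your bookkeeping of the signs and of the identifications $\kappa=\opTan(t/2)^2$, $\kappa'=\opTan(t'/2)^2$, together with the observation that $t+t'=\pi\Leftrightarrow\kappa\kappa'=1$, is exactly what the paper leaves implicit.
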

Other scattering type results readily follow from existing results.
\begin{theorem}
\noindent For all $\alpha\in\left\{l,r\right\}$, for all $\epsilon\in\left\{+,-\right\}$, and for all $\kappa<-1$, the map $(\Theta_\alpha,\opI_{\kappa}^\epsilon)$ defines a bijection from $\opGHMC_{-1}$ into $\Trep\times\Thyp$.
\end{theorem}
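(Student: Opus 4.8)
The plan is to deduce the statement from Mess' parametrisation (Theorem~\ref{messAdS3}) by means of a bijective change of coordinates on $\Trep\times\Trep$, exactly as in the proof of the $\kappa=0$ case $(\Theta_\alpha,\opI_0^\epsilon)$ given above, but with rotations and rotation symmetries playing the role of earthquakes and earthquake symmetries. First I would reduce to the single case $(\Theta_l,\opI_\kappa^+)$: the cases with $\alpha=r$ follow by interchanging the two $\opPSL(2,\R)$-factors (matrix transposition exchanges the left and right foliations, hence the left and right Gauss maps of the constant curvature surface), and the cases with $\epsilon=-$ follow by reversing the time orientation of $\opAdS^3$ (replacing $J$ by $-J$), which interchanges $\Omega_\theta^+$ with $\Omega_\theta^-$, and the corresponding constant curvature surfaces with one another.

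The key geometric input is already recorded in equations~\eqref{InFactTheyAreLandslides} and in the proof of the landslide theorem (Theorem~\ref{LandslideTheorem}). Given $M\in\opGHMC_{-1}$ with holonomy $\theta=(\theta_l,\theta_r)$, write $g_\alpha:=H(\Theta_\alpha(M))$ for the hyperbolic metric determined by $\theta_\alpha$, where $H:\Trep\to\Thyp$ is the canonical identification, let $\Sigma$ be the constant curvature surface of Theorem~\ref{bbz const ads} lying in $\Omega_\theta^+$, let $h^+:=\opI_\kappa^+(M)$ be its (rescaled, hyperbolic) first fundamental form, and let $\mathcal{H}$ be the conformal class of its second fundamental form. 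Then the left and right Gauss maps of $\Sigma$ exhibit $g_l$ and $g_r$ as $\mathcal{R}(-t,h^+,\mathcal{H})$ and $\mathcal{R}(t,h^+,\mathcal{H})$ respectively, for the angle $t\in\,]0,\pi[$ that is associated to $\kappa$ as in the proof of Theorem~\ref{LandslideTheorem}. In particular $g_r=\mathcal{RS}^+_{h^+,t}(g_l)$.

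Now define $\Psi:\Trep\times\Thyp\to\Trep\times\Trep$ by $\Psi([\theta],g):=\bigl([\theta],\,(H^{-1}\circ\mathcal{RS}^+_{g,t}\circ H)([\theta])\bigr)$. The previous paragraph gives $\Psi\circ(\Theta_l,\opI_\kappa^+)=(\Theta_l,\Theta_r)$, that is, $(\Theta_l,\opI_\kappa^+)=\Psi^{-1}\circ(\Theta_l,\Theta_r)$, so it remains to check that $\Psi$ is a bijection. For surjectivity, given $([\theta],[\theta'])$ set $g_1:=H([\theta])$ and $g_2:=H([\theta'])$; by the rotation theorem (Theorem~\ref{rotations}) there exist $h\in\Thyp$ and $\mathcal{H}\in\Thol$ with $g_1=\mathcal{R}(-t,h,\mathcal{H})$ and $g_2=\mathcal{R}(t,h,\mathcal{H})$, whence $\mathcal{RS}^+_{h,t}(g_1)=g_2$ and $\Psi([\theta],h)=([\theta],[\theta'])$. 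For injectivity, if $\Psi([\theta],g)=\Psi([\theta],g')$ then $g$ and $g'$ both carry $g_1:=H([\theta])$ onto the same metric $g_2$ under their respective rotation symmetries of angle $t$, and the uniqueness clause of Theorem~\ref{rotations}, applied to the pair $(g_1,g_2)$ and the fixed angle $t$, forces $g=g'$. Since $(\Theta_l,\Theta_r)$ is a bijection onto $\Trep\times\Trep$ by Theorem~\ref{messAdS3}, so is $(\Theta_l,\opI_\kappa^+)=\Psi^{-1}\circ(\Theta_l,\Theta_r)$.

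The only non-formal step is the identification in the second paragraph, namely that the intrinsic hyperbolic metric of $\Sigma$ is carried onto the two holonomy metrics $g_l$ and $g_r$ by a single rotation about a common holomorphic structure, with angle a fixed function of $\kappa$; but this is precisely equations~\eqref{InFactTheyAreLandslides}, so no new work is needed beyond being careful with signs and orientations (which, at worst, replaces $\mathcal{RS}^+$ by $\mathcal{RS}^-$ throughout). The one genuine constraint is the admissible range of $\kappa$: one needs $t\neq 0$ so that the maps $\mathcal{R}_{g,\pm t}:\Thol\to\Thyp$ are bijections and so that Theorem~\ref{rotations} applies, which is exactly why the degenerate value — the one corresponding to the pleated boundary of the convex core, already treated separately by earthquakes — is excluded from the hypotheses.
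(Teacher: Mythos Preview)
Your argument is correct and follows exactly the paper's own proof: reduce to $(\Theta_l,\opI_\kappa^+)$, define the change of coordinates $\Psi([\theta],g)=([\theta],(H^{-1}\mathcal{RS}_{g,t}H)([\theta]))$ with $t$ determined by $\kappa$, use the rotation theorem to see that $\Psi$ is a bijection, and conclude via $(\Theta_l,\opI_\kappa^+)=\Psi^{-1}\circ(\Theta_l,\Theta_r)$ together with Mess' parametrisation. The only difference is that you spell out the injectivity and surjectivity of $\Psi$ explicitly, whereas the paper simply invokes Theorem~\ref{rotations} in one line.
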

\begin{proof} It suffices to consider the case $(\Theta_l,\opI_{\kappa}^+)$, as the remaining three cases are proven in a similar manner. Let $H:\Trep\rightarrow\Thyp$ be the canonical identification, and define $\Psi:\Trep\times\Thyp\rightarrow\Trep\times\Trep$ by
$$
\Psi([\theta],g) := ([\theta],(H^{-1}\mathcal{R}\mathcal{S}_{g,t}H)([\theta])),
$$
where $t:=2\opArcTan(\sqrt{\kappa})$. By the rotation theorem (Theorem \ref{rotations}), $\Psi$ is a bijection. However,
$$
(\Theta_l,\opI_\kappa^+) = \Psi^{-1}\circ(\Theta_l,\Theta_r),
$$
and the result now follows by Theorem \ref{messAdS3}.\end{proof}
\noindent In a similar manner, we obtain.
\begin{theorem}
\noindent For all $\alpha\in\left\{l,r\right\}$, for all $\epsilon\in\left\{+,-\right\}$, and for all $\kappa<-1$, the map $(\Theta_\alpha,\opII_{\kappa}^\epsilon)$ defines a bijection from $\opGHMC_{-1}$ into $\Trep\times\Thyp$.
\end{theorem}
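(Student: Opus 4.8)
The plan is to follow the pattern of the proof of the preceding theorem, but with the rotation symmetries replaced by a direct appeal to the rotation theorem (Theorem~\ref{rotations}), exploiting the fact --- recorded in the discussion around equation~\ref{InFactTheyAreLandslides} and Figure~\ref{fig:ads lisse1} --- that the holomorphic structure $\opII_\kappa^+$ of the second fundamental form of the constant curvature surface $\Sigma_\kappa^+$ is precisely the centre of the rotation carrying one component of the holonomy to the other. By the left--right symmetry of $\opAdS^3$ (matrix transposition, which exchanges the two $\opPSL(2,\R)$-factors while preserving all intrinsic and conformal data) and by reversal of the time orientation (replacing $J$ by $-J$, which interchanges $\Omega_\theta^+$ and $\Omega_\theta^-$), it suffices to treat the map $(\Theta_l,\opII_\kappa^+)$, which, $\opII_\kappa^+$ being valued in $\Thol$, I read as taking values in $\Trep\times\Thol$.

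First I would unwind the smooth parametrisation of Section~\ref{SmoothParametrisationsAdSCase}. Let $M\in\opGHMC_{-1}$ have holonomy $(\theta_l,\theta_r)$, let $\Sigma_\kappa^+$ be the surface of constant extrinsic curvature $\kappa$ furnished by Theorem~\ref{bbz const ads}, with shape operator $A$, and set $h_c:=(1+\kappa)I_\kappa^+$ (its hyperbolic first fundamental form) and $\mathcal{H}:=\opII_\kappa^+(M)$. Since $\II_\kappa^+(X,Y)=I_\kappa^+(AX,Y)$ and $\frac{1}{\sqrt{\kappa}}A$ is a Labourie field of $h_c$ (it is Codazzi, positive definite, and of determinant $1$), the class $\mathcal{H}$ is exactly the conformal class of $h_c(\frac{1}{\sqrt{\kappa}}A\cdot,\cdot)$ that enters the landslide--rotation dictionary. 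Combining equation~\ref{InFactTheyAreLandslides} with the equivalence of landslides and rotations, one obtains that, writing $\tau:=2\opArcTan(\sqrt{\kappa})$ and letting $H:\Trep\to\Thyp$ be the canonical identification,
\begin{equation*}
H([\theta_l]) = \mathcal{R}(-\tau,h_c,\mathcal{H}) \qquad\text{and}\qquad H([\theta_r]) = \mathcal{R}(\tau,h_c,\mathcal{H}),
\end{equation*}
so that $(h_c,\mathcal{H})$ is exactly the pair produced by the rotation theorem applied to $(H([\theta_l]),H([\theta_r]))$ at angle $\tau$. The hypothesis on $\kappa$ enters only to guarantee that $\tau$ lies in $]-\pi,\pi[$, so that Theorem~\ref{rotations} applies and determines $(h_c,\mathcal{H})$ uniquely.

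Granting this, I would assemble the required bijection $\Psi:\Trep\times\Trep\to\Trep\times\Thol$ as a composite of three bijections: first $(H,H):\Trep\times\Trep\to\Thyp\times\Thyp$; then the rotation-theorem bijection $\Thyp\times\Thyp\to\Thyp\times\Thol$ sending $(g_1,g_2)$ to the unique pair $(h,\mathcal{H})$ with $g_1=\mathcal{R}(-\tau,h,\mathcal{H})$ and $g_2=\mathcal{R}(\tau,h,\mathcal{H})$; and finally the map $\Thyp\times\Thol\to\Trep\times\Thol$, $(h,\mathcal{H})\mapsto(H^{-1}(\mathcal{R}(-\tau,h,\mathcal{H})),\mathcal{H})$, which is a bijection because for each fixed $\mathcal{H}$ the rotation $\mathcal{R}(-\tau,\cdot,\mathcal{H}):\Thyp\to\Thyp$ is a bijection with inverse $\mathcal{R}(\tau,\cdot,\mathcal{H})$ --- indeed, by Theorem~\ref{wolf}, in Hopf-differential coordinates about $\mathcal{H}$ this map is just multiplication by $e^{-i\tau}$. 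The displayed identities then yield $\Psi\circ(\Theta_l,\Theta_r)=(\Theta_l,\opII_\kappa^+)$; since $(\Theta_l,\Theta_r)$ is a bijection onto $\Trep\times\Trep$ by Theorem~\ref{messAdS3}, it follows that $(\Theta_l,\opII_\kappa^+)=\Psi\circ(\Theta_l,\Theta_r)$ is a bijection. The cases $\alpha=r$ and $\epsilon=-$ then follow from the two symmetries noted in the first paragraph.

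I expect the main obstacle to be the verification of the two displayed identities: that $\mathcal{H}=\opII_\kappa^+(M)$ is genuinely the rotation centre and that the constant is $\tau=2\opArcTan(\sqrt{\kappa})$ (equivalently, that the rescaled first fundamental form $h_c$ is the metric being rotated). This amounts to matching the formulae of equation~\ref{InFactTheyAreLandslides} against the definitions of landslide and of rotation, together with the Codazzi and determinant checks making $\frac{1}{\sqrt{\kappa}}A$ a Labourie field; the admissible range for $\kappa$ is then exactly the one for which $\tau\in]-\pi,\pi[$. Everything else is a formal chase through bijections already established in the paper.
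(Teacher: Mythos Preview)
Your proof is correct and follows the same pattern as the paper's proof of the preceding theorem; the paper itself gives no explicit argument here, only the phrase ``in a similar manner, we obtain''. You have correctly identified (and flagged) that the codomain should read $\Trep\times\Thol$ rather than $\Trep\times\Thyp$, and your key identification --- that $\opII_\kappa^+$ is precisely the holomorphic structure $\mathcal{H}$ serving as the rotation centre in Theorem~\ref{rotations}, with angle $\tau=2\opArcTan(\sqrt{\kappa})$ --- is exactly what the ``similar manner'' amounts to.

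One remark on your decomposition. The most literal analogue of the preceding proof would be to define a single map $\Psi:\Trep\times\Thol\to\Trep\times\Trep$ by $\Psi([\theta],\mathcal{H}):=([\theta],H^{-1}\mathcal{R}(2\tau,H([\theta]),\mathcal{H}))$ and then argue that $(\Theta_l,\opII_\kappa^+)=\Psi^{-1}\circ(\Theta_l,\Theta_r)$. This would require knowing that $\mathcal{H}\mapsto\mathcal{R}(2\tau,g,\mathcal{H})$ is a bijection $\Thol\to\Thyp$; since $2\tau$ ranges over $]0,2\pi[$ as $\kappa$ ranges over $]0,\infty[$, this does not fall directly under the statement recorded in Section~\ref{FixedPointTheoremsAndOtherResults} (which only asserts bijectivity of $\mathcal{R}_{g,t}$ for $t\in]-\pi,\pi[$). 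Your three-step decomposition avoids this issue entirely: you invoke Theorem~\ref{rotations} only at the half-angle $\tau\in]0,\pi[$, and for the remaining step you use the much easier fact that $h\mapsto\mathcal{R}(-\tau,h,\mathcal{H})$ is a bijection of $\Thyp$ for \emph{fixed} $\mathcal{H}$, which follows immediately from Theorem~\ref{wolf}. In this sense your route is slightly more careful than a naive reading of ``in a similar manner'' would suggest.
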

Finally, we observe that invariant surfaces may also be constructed subject to other curvature conditions. For example, in \cite{BBZ2}, foliations by surfaces of constant mean curvature are constructed. Of particular interest is
\begin{theorem}[Barbot--B\'eguin--Zeghib \cite{BBZ2}]\label{bbz max}
\noindent There exists a unique space-like maximal surface $\Sigma_{\mathrm{max}}(\theta)$ in $\Omega$ which is invariant under the action of $\theta(\pi_1(S))$.
\end{theorem}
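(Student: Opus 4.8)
The plan is to deduce the statement from the existence and uniqueness of minimal lagrangian diffeomorphisms (Theorem~\ref{MinimalLagrangianDiffeomorphisms}) by passing through the left and right Gauss maps, and then to indicate the more hands-on argument by barriers and the continuity method. Throughout, a $\theta$-invariant maximal surface means a spacelike, LSC equivariant immersion $e\colon\tilde S\to\opAdS^3$ with holonomy $\theta=(\theta_l,\theta_r)$ whose shape operator $B$ is trace free; note first that, by Theorems~\ref{messAdS2} and~\ref{messAdS3}, the ambient convex set is forced to be $\Omega_\theta$ as soon as the holonomy is fixed, so it suffices to produce exactly one equivariant maximal immersion with holonomy $\theta$.

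\emph{Maximal surfaces versus minimal lagrangian maps.} Given such an $e$, its left and right Gauss maps $N_{l,e},N_{r,e}\colon\tilde S\to\mathbb{H}^2$ descend, as recalled just before Theorem~\ref{messAdS1}, to marking-preserving diffeomorphisms onto $\mathbb{H}^2/\theta_r(\Pi_1)$ and $\mathbb{H}^2/\theta_l(\Pi_1)$ respectively. A computation analogous to the one leading to~\eqref{InFactTheyAreLandslides} expresses the two hyperbolic metrics $N_{l,e}^*h$ and $N_{r,e}^*h$, pulled back to $S$, in terms of the first fundamental form $I$, its complex structure, and $B$, in such a way that the transition endomorphism $m$ between them is self-adjoint, of determinant one, and Codazzi \emph{precisely} when $\operatorname{tr}B=0$; here the Codazzi condition comes from the Codazzi equation for $B$ on the surface, while the fact that the two metrics have curvature $-1$ comes from the Gauss equation $K_I=-1-\det B$ (Theorem~\ref{FTS} with $c=-1$). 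In other words, $e$ is maximal if and only if $N_{l,e}\circ N_{r,e}^{-1}\colon\mathbb{H}^2/\theta_l(\Pi_1)\to\mathbb{H}^2/\theta_r(\Pi_1)$ is minimal lagrangian, by the Labourie field characterisation of Theorem~\ref{LabFieldthm}. Conversely, a marking-preserving minimal lagrangian diffeomorphism $f$ between these two hyperbolic surfaces provides, through Theorem~\ref{LabFieldthm}, a Labourie field $m$; inverting the above relation recovers a trace-free Codazzi field $B$ and a metric $I$ with $K_I=-1-\det B$, and Theorem~\ref{FTS} then yields a spacelike LSC equivariant immersion into $\opAdS^3$, with shape operator $B$ and holonomy $(\theta_l,\theta_r)$, unique up to ambient isometry. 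These two constructions are mutually inverse, so $\theta$-invariant maximal surfaces are in bijection with marking-preserving minimal lagrangian diffeomorphisms $\mathbb{H}^2/\theta_l(\Pi_1)\to\mathbb{H}^2/\theta_r(\Pi_1)$.

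\emph{Conclusion, and an alternative.} Since, by Theorem~\ref{MinimalLagrangianDiffeomorphisms}, there is exactly one such minimal lagrangian diffeomorphism, there is exactly one $\theta$-invariant maximal surface $\Sigma_{\mathrm{max}}(\theta)$; Corollary~\ref{SymmetryOfLabourieFields} shows that the construction is symmetric in $\theta_l$ and $\theta_r$, as it should be under time reversal. Alternatively one can argue directly: the constant curvature leaves $\Sigma_\kappa^{\pm}$ of Theorem~\ref{bbz const ads} are locally strictly convex, hence mean-convex toward the interior while a maximal surface is not, so a maximum-principle comparison of mean curvatures confines any invariant maximal surface to the Nielsen kernel $\opK_\theta$ as $\kappa\to 0$; because $\opK_\theta/\theta(\Pi_1)$ is compact and stays at positive distance from the boundary at infinity of $\opAdS^3$, this yields uniform $C^0$ control and uniform spacelikeness, hence — by elliptic estimates for the quasilinear, uniformly elliptic maximal surface equation — the compactness needed to run the continuity method from the Fuchsian representation (where the solution is a totally geodesic plane) to the given $\theta$, the linearisation being the Jacobi operator of a maximal spacelike surface, which is negative definite (spacelike surfaces locally maximise area) and hence invertible. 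Uniqueness then follows from the strong maximum principle, applied at a point realising the maximal relative time function of two candidate solutions.

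\emph{Main obstacle.} In both approaches the real work is the same a priori control: ensuring that the surface cannot degenerate toward a lightlike surface or concentrate curvature. For the second approach this is exactly the closedness step of the continuity method; for the first it is hidden in the claim that the two Gauss-map constructions are genuinely inverse, where one must check that the reconstructed immersion is embedded and actually lies in $\Omega_\theta$, and not merely that it has the right holonomy. In either case it is precisely the positive distance of $\opK_\theta$ from $\partial_\infty\opAdS^3$, guaranteed by the compact quotient, that makes the estimate work.
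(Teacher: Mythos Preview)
The paper does not prove this theorem; it cites \cite{BBZ2} and then \emph{uses} it, together with the Gauss-map calculation, to recover Theorem~\ref{MinimalLagrangianDiffeomorphisms}. Your proposal runs the implication in the opposite direction: you take Theorem~\ref{MinimalLagrangianDiffeomorphisms} (proved independently by Labourie and Schoen) as input and deduce existence and uniqueness of the maximal surface from the bijection between maximal surfaces and minimal lagrangian maps. This reversal is legitimate and is essentially the argument of \cite{KS07}; it buys a proof that avoids the barrier/continuity machinery of \cite{BBZ2}, at the cost of importing the analysis hidden in Theorem~\ref{MinimalLagrangianDiffeomorphisms}.

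There is, however, a genuine gap. You define a $\theta$-invariant maximal surface as a spacelike \emph{LSC} equivariant immersion with trace-free shape operator. These two conditions are incompatible: if $B$ is trace free and positive definite then $B=0$. A non--totally-geodesic maximal surface has principal curvatures $\pm\lambda$ with $\lambda>0$, hence is never LSC. This is not just terminology: the argument you invoke from the paragraph preceding Theorem~\ref{messAdS1}, showing that $N_{l,e}$ and $N_{r,e}$ are global diffeomorphisms, uses local strict convexity in an essential way. For a maximal surface the differentials of the left and right Gauss maps are (up to identification) $\mathrm{Id}\pm J_0 B$, which are invertible and orientation-preserving precisely when $|\lambda|<1$. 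That bound is a non-trivial a priori estimate on closed maximal surfaces in ghmc AdS spacetimes; without it you cannot conclude that the Gauss maps are even local diffeomorphisms, let alone that $N_{l,e}\circ N_{r,e}^{-1}$ is a well-defined marking-preserving diffeomorphism to which Theorem~\ref{LabFieldthm} applies. Your ``main obstacle'' paragraph gestures at degeneration control, but does not isolate this specific estimate, which is exactly what is needed to close the forward direction of your bijection. The converse direction (minimal lagrangian $\Rightarrow$ maximal surface via Theorem~\ref{FTS}) is fine, so existence goes through; it is uniqueness that is incomplete.
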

\begin{figure}
\begin{center}
\includegraphics[scale=0.5]{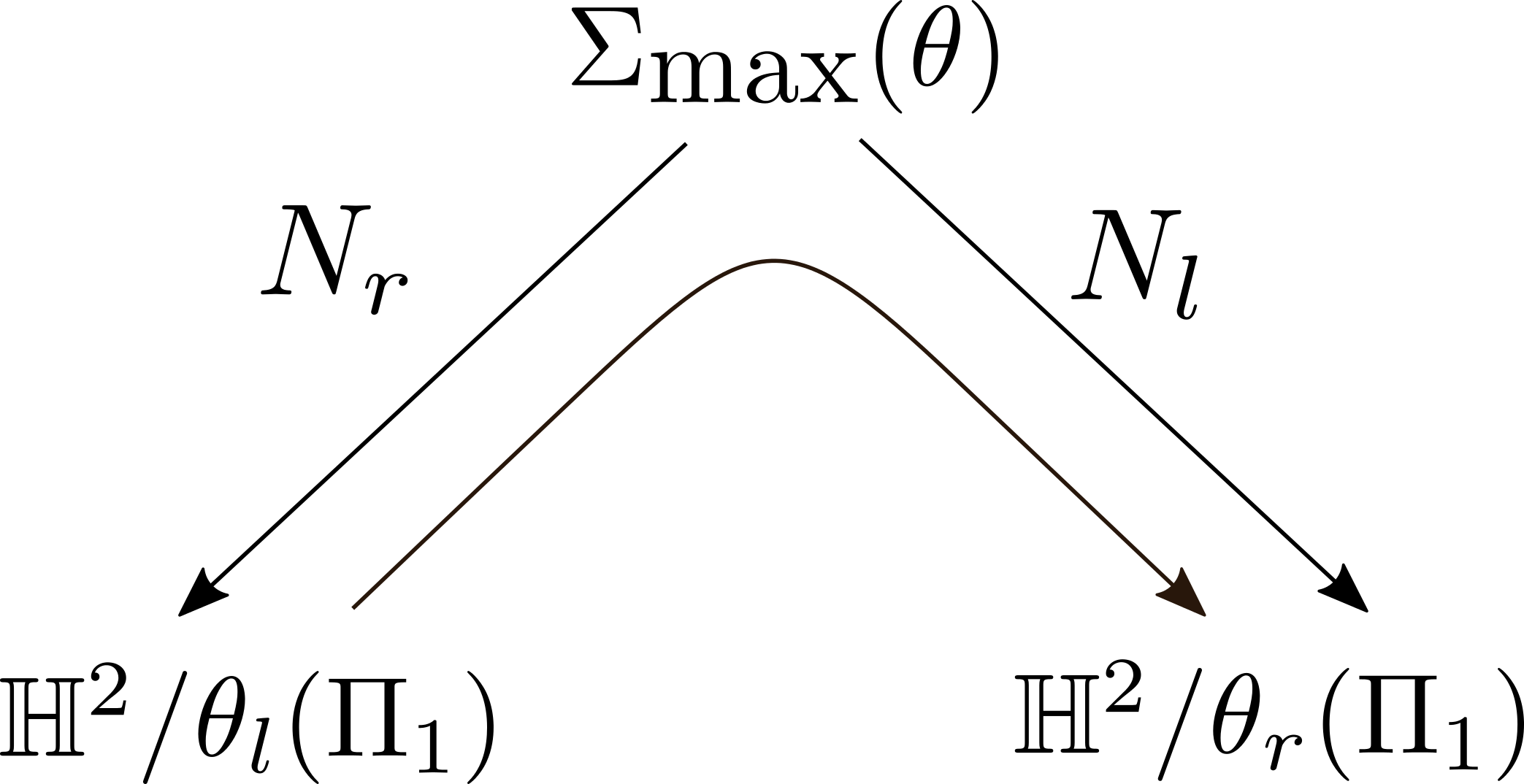}
\caption{\textbf{Minimal lagrangian diffeomorphisms.} The upper vertex is a maximal surface, and  the curved arrow joining the lower two vertices is a minimal lagrangian diffeomorphism}\label{fig:ads max}
\end{center}
\end{figure}
\noindent A straightforward calculation reveals that in this case, the composition $N_l \circ N_r^{-1}$ is a minimal Lagrangian diffeomorphism. This is illustrated in Figure~\ref{fig:ads max}. In particular (c.f. \cite{KS07}), we recover
\MLD*
\section{De Sitter space}\label{DeSitterSpaceRiemannSphereHyperbolicSpace}
\subsection{De Sitter space}\label{DeSitterSpace}
{\sl De Sitter space}, which will be denoted by $\opdS^3$, is defined to be the projective quotient of the one sheeted hyperboloid in $\R^{3,1}$, that is,
$$
\opdS^3 := \left\{ x\ |\ x_1^2 - x_2^2 - x_3^2 - x_4^2 = -1\right\}/\left\{\pm\opId\right\}~.
$$
Although de Sitter space is orientable, it is not time orientable. Its isometry group identifies with $\opPO(3,1)$, that is, the projective quotient of $\opO(3,1)$. This group has $2$ connected components, determined by whether they preserve or reverse the orientation, and its identity component identifies with $\opPSO(3,1)$.
\par
The study of equivariant immersions in de Sitter space is less straightforward than in the two preceding cases, mainly because, here, equivariant immersions are not always embedded. For this reason, we will introduce below the class of {\sl quasi-Fuchsian} immersions. Since all equivariant immersions in this class are embedded, the corresponding moduli spaces present structures similar to those studied in Sections \ref{MinkowskiSpace} and \ref{sec:ads}. For the didactic purposes of this paper, we will not consider equivariant immersions outside this class, but the reader should understand that a far richer theory nontheless exists for non-quasi-Fuchsian equivariant immersions (c.f. \cite{sca99,Mess,mes+}).
\par
In order to define quasi-Fuchsian immersions, we first study how $\opPSO(3,1)$ acts on the Riemann sphere. Recall the projective model introduced in Section~\ref{ads 1}. It follows from the definition that $\opdS^3$ projects diffeomorphically onto an open subset of $\opRP^3$. Consider now the light cone,
$$
\hat{\opC} := \left\{x\ |\ x_1^2 = x_2^2 + x_3^2 + x_4^2\right\}~,
$$
and let $\opC$ be its projective quotient. This subset is a smoothly embedded sphere whose complement consists of two connected components. One of these components identifies with $\opdS^3$, as we have already seen. The other identifies with hyperbolic space, $\mathbb{H}^3$. Indeed, recall that hyperbolic space can be defined as the projective quotient of the unit pseudosphere in $\R^{3,1}$, that is,
$$
\mathbb{H}^3 := \left\{x\ |\ x_1^2 - x_2^2 - x_3^2 - x_4^2 = 1\right\}/\left\{\pm\opId\right\}~,
$$
which naturally projects diffeomorphically onto an open subset of $\opRP^3$. In this manner, we see that $\opC$ naturally identifies as the boundary both of $\opdS^3$ and of $\mathbb{H}^3$.
\par
The Minkowski metric over $\R^{3,1}$ restricts to a degenerate metric over $\hat{\opC}$ having one null direction, namely the radial direction, and two spacelike directions. Since the radial direction is collapsed by projection onto $\opRP^3$, this degenerate metric projects to a conformal class of non-degenerate metrics over $\opC$, giving this submanifold the holomorphic structure of the Riemann sphere. Furthermore, the group, $\opPSO(3,1)$, acts faithfully on $\opC$ by conformal maps. It therefore embeds naturally as a subgroup of the M\"obius group, $\opPSL(2,\mathbb{C})$, and since these Lie groups are both connected, have the same dimension, and have the same fundamental groups, this embedding is, in fact, an isomorphism.
\par
Consider now a homomorphism, $\theta:\Pi_1\rightarrow\opPSO(3,1)$. This homomorphism is said to be {\sl quasi-Fuchsian} whenever it preserves each of the two connected components of the complement of some Jordan curve, $\Gamma_\theta$, in $\opC$, and acts properly discontinuously on each of these components. Suppose that $\Gamma_\theta$ is oriented, and let $\opC_\theta^l$ and $\opC_\theta^r$ be the components of its complement in $\opC$ which lie respectively to its left and to its right. The two quotients $\opC_\theta^l/\theta(\Pi_1)$ and $\opC_\theta^r/\theta(\Pi_1)$ then each define marked compact Riemann surfaces of genus $\mathfrak{g}$, that is, points of $\Thol$. Furthermore, if $\theta':\Pi_1\rightarrow\opPSO(3,1)$ is another homomorphism such that $\theta'=\alpha\theta\alpha^{-1}$, for some $\alpha\in\opPSO(3,1)$, then $\theta'$ preserves each of $\alpha(\opC_\theta^l)$ and $\alpha(\opC_\theta^r)$, and acts properly discontinuously on each of these sets, so that it is also a quasi-Fuchsian homomorphism. Since the quotient surfaces, $\alpha(\opC_\theta^l)/\theta'(\Pi_1)$ and $\alpha(\opC_\theta^r)/\theta'(\Pi_1)$, are conformally equivalent to $\opC_\theta^l/\theta(\Pi_1)$ and $\opC_\theta^r/\theta(\Pi_1)$ respectively, we see that the point of $\Thol\times\Thol$ defined by $\theta$ only depends on its conjugacy class.
\par
In this manner, we obtain two maps from the space of quasi-Fuchsian homomorphisms into a space of Teichm\"uller data of real dimension $(6\mathfrak{g}-6)$ (c.f. Table~\ref{table hyp 1}). Bers' theorem tells us that these maps parametrise the space of quasi-Fuchsian homomorphisms up to conjugation.
\begin{table}[h!]
\begin{center}
\begin{tabular}{|c|c|c|}
\hline
\bf Map & \bf Description  &  \bf Codomain \\
\hline
$\opH^{l/r}$& \multicolumn{1}{|l|}{The holomorphic structure to the left/right of $\Gamma_\theta$}& $\Thol$\\
\hline
\end{tabular}
\end{center}
\caption{Maps taking values in spaces of real dimension $(6\mathfrak{g}-6)$.}\label{table hyp 1}
\end{table}
\begin{theorem}[Bers \cite{Ber60}]
\noindent $(\opH^l,\opH^r)$ defines a bijection between the space of conjugacy classes of quasi-Fuchsian homomorphisms of $\Pi_1$ into $\opPSO(3,1)$ and $\Thol\times\Thol$.
\end{theorem}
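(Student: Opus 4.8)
The plan is to deduce this from the measurable Riemann mapping theorem (Ahlfors--Bers), working throughout with the identification $\opPSO(3,1)\cong\opPSL(2,\C)$ acting by M\"obius transformations on $\opC\cong\widehat{\C}$ established above. Recall, moreover, that a quasi-Fuchsian homomorphism is in fact a quasiconformal deformation of a Fuchsian group, so that its limit curve $\Gamma_\theta$ is a quasicircle; this fact will be used only in the proof of injectivity.

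For \textbf{surjectivity}, fix $(\mathcal H_1,\mathcal H_2)\in\Thol\times\Thol$. By uniformization, choose a Fuchsian group $G<\opPSL(2,\R)$, together with a marking-preserving isomorphism $\Pi_1\cong G$, such that $\mathbb{H}^-/G$ realises $\mathcal H_2$; then $\mathbb{H}^+/G$ realises the complex-conjugate surface $\overline{\mathcal H_2}$. Pick a marking-preserving quasiconformal homeomorphism $\overline{\mathcal H_2}\to\mathcal H_1$ and pull its Beltrami coefficient back through the covering $\mathbb{H}^+\to\mathbb{H}^+/G$ to obtain a $G$-invariant Beltrami differential $\mu$ on $\mathbb{H}^+$ with $\|\mu\|_\infty<1$; extend $\mu$ to $\widehat{\C}$ by declaring it to vanish on $\mathbb{H}^-$. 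Since $G$ preserves $\widehat{\R}$ and each half-plane, the extended $\mu$ is still $G$-invariant, so we may solve the Beltrami equation and obtain a quasiconformal homeomorphism $w\colon\widehat{\C}\to\widehat{\C}$ with Beltrami coefficient $\mu$. For each $\gamma\in G$ the composition $w\circ\gamma\circ w^{-1}$ has Beltrami coefficient $0$, as $\gamma$ is conformal and $\mu$ is $\gamma$-invariant, and hence is M\"obius; thus $\theta(\gamma):=w\circ\gamma\circ w^{-1}$ defines a homomorphism $\theta\colon\Pi_1\cong G\to\opPSL(2,\C)=\opPSO(3,1)$. This $\theta$ is quasi-Fuchsian, with Jordan curve $\Gamma_\theta=w(\widehat{\R})$ (oriented as the image of $\widehat{\R}$), and with $\opC_\theta^l=w(\mathbb{H}^+)$ and $\opC_\theta^r=w(\mathbb{H}^-)$. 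Since $w$ restricts to a conformal map on $\mathbb{H}^-$, we get $\opC_\theta^r/\theta(\Pi_1)\cong\mathcal H_2$, while on $\mathbb{H}^+$ the map $w$ descends to a quasiconformal homeomorphism with the same Beltrami coefficient as the chosen marking-preserving map $\overline{\mathcal H_2}\to\mathcal H_1$, whence $\opC_\theta^l/\theta(\Pi_1)\cong\mathcal H_1$. Therefore $(\opH^l,\opH^r)([\theta])=(\mathcal H_1,\mathcal H_2)$.

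For \textbf{injectivity}, suppose $\theta_1,\theta_2$ are quasi-Fuchsian with $\opH^l(\theta_1)=\opH^l(\theta_2)$ and $\opH^r(\theta_1)=\opH^r(\theta_2)$. Lifting the marking-preserving conformal equivalences of the quotient surfaces to the universal covers produces conformal isomorphisms $f_l\colon\opC_{\theta_1}^l\to\opC_{\theta_2}^l$ and $f_r\colon\opC_{\theta_1}^r\to\opC_{\theta_2}^r$ satisfying $f_\bullet\circ\theta_1(\gamma)=\theta_2(\gamma)\circ f_\bullet$ for all $\gamma\in\Pi_1$. Each of these is a conformal isomorphism of a Jordan domain, hence of a domain with locally connected boundary, so by Carath\'eodory's theorem $f_l$ and $f_r$ extend to homeomorphisms of the respective closures, and in particular restrict to homeomorphisms $\Gamma_{\theta_1}\to\Gamma_{\theta_2}$. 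By equivariance, both boundary maps send the attracting, respectively repelling, fixed point of $\theta_1(\gamma)$ to the corresponding fixed point of $\theta_2(\gamma)$, for every $\gamma\neq 1$; since these fixed points are dense in the limit set $\Gamma_{\theta_1}$, the two boundary extensions coincide on $\Gamma_{\theta_1}$. Consequently $f_l$ and $f_r$ glue to a homeomorphism $F\colon\widehat{\C}\to\widehat{\C}$ that conjugates $\theta_1(\Pi_1)$ to $\theta_2(\Pi_1)$ and is conformal on the complement of $\Gamma_{\theta_1}$. Since $\Gamma_{\theta_1}$ is a quasicircle, and quasicircles are conformally removable, $F$ is conformal on all of $\widehat{\C}$, that is, $F=\alpha$ for some $\alpha\in\opPSL(2,\C)=\opPSO(3,1)$; hence $\theta_2=\alpha\,\theta_1\,\alpha^{-1}$, so $[\theta_1]=[\theta_2]$.

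The \textbf{main obstacle} is the final step of the injectivity argument: upgrading the gluing homeomorphism $F$ --- a priori merely a homeomorphism that is conformal off the limit set --- to a M\"obius transformation. This rests on the conformal removability of quasicircles, equivalently on the quasiconformal rigidity of quasi-Fuchsian groups. One may sidestep this difficulty by working entirely within the measurable Riemann mapping theorem: parametrise quasi-Fuchsian representations directly by pairs consisting of a Beltrami differential on $\mathbb{H}^+$ and one on $\mathbb{H}^-$, taken modulo the relation of differing by a conformal change of coordinate over the quotient surfaces, check that this parametrisation is precisely $(\opH^l,\opH^r)$, and then read off injectivity from the uniqueness clause of the measurable Riemann mapping theorem.
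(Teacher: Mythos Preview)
The paper does not supply its own proof of this statement; it is quoted as Bers' classical simultaneous uniformization theorem with the citation \cite{Ber60} and then used as a black box. There is therefore nothing to compare your argument against.

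That said, your proposal is a faithful outline of the classical proof. The surjectivity half is exactly Bers' construction via the measurable Riemann mapping theorem, and is correct as written. For injectivity, your argument is also correct, but let me sharpen the removability step, since you flag it yourself as the main obstacle. The cleanest way to see that the glued map $F$ is M\"obius is not to invoke conformal removability of quasicircles as a separate fact, but to observe that $F$ is globally quasiconformal: each $\theta_i$ is, by hypothesis, a quasiconformal conjugate $w_i G_i w_i^{-1}$ of a Fuchsian group, and the composition $w_2^{-1}\circ F\circ w_1$ conjugates $G_1$ to $G_2$ while being conformal off $\widehat{\mathbb R}$, hence is M\"obius on each half-plane and a homeomorphism globally, hence quasiconformal; pushing back, $F$ is quasiconformal. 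Since its Beltrami coefficient vanishes off $\Gamma_{\theta_1}$, which has planar measure zero (being a quasicircle), $F$ is $1$-quasiconformal, hence M\"obius. This is essentially the alternative you sketch in your final paragraph, and it avoids having to cite removability of quasicircles as an independent ingredient.

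One small caution: your use of Carath\'eodory requires that $\Gamma_{\theta_i}$ be a Jordan curve, which is guaranteed by quasi-Fuchsianness (the limit set is a quasicircle), so this is fine---but it is worth being explicit that this is where the quasi-Fuchsian hypothesis, and not merely convex-cocompactness in some weaker sense, is used.
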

An equivariant immersion, $[e,\theta]$, in $\opdS^3$ is now said to be {\sl quasi-Fuchsian} whenever it is embedded and its holonomy, $\theta$, is quasi-Fuchsian. The theory of quasi-Fuchsian equivariant immersions is now developed in the same way as before. Significantly, not all equivariant immersions with quasi-Fuchsian holonomy are themselves actually quasi-Fuchsian. Indeed, given a quasi-Fuchsian equivariant immersion, $[e,\theta]$, a large family of non-embedded equivariant immersions with the same holonomy can be constructed by so called grafting operations, although these are essentially the only ones (c.f. \cite{Gol87}).
\par
As in the preceding two sections, the converse problem of recovering the quasi-Fuchsian equivariant immersion from the holonomy is studied in terms of ghmc de Sitter spacetimes. We first require a good notion of convex subsets for de Sitter space. In the present context, it will be useful to say that a subset, $X$, of $\opdS^3$ is {\sl convex} whenever it coincides with the intersection of $\opdS^3$ with some convex subset, $\hat{X}$, of $\opRP^3$. Likewise, duality of convex subsets of $\opRP^3$ is defined with reference to the Minkowski metric, $\langle\cdot,\cdot\rangle_{3,1}$, so that, given a homogeneous convex cone, $\Lambda$, in $\R^{3,1}$, its dual cone is given by,
$$
\Lambda^* := \left\{ y\ |\ \langle y,x\rangle_{3,1} \leq 0\ \forall x\in\Lambda\right\}~,
$$
and this notion of duality projects to a notion of duality for convex subsets of $\opRP^3$.
\par
We now have
\begin{theorem}\label{max convex QF}
\noindent Given a quasi-Fuchsian homomorphism, $\theta:\Pi_1\rightarrow\opPSO(3,1)$, there exists a unique convex subset, $\Omega_\theta$, of $\opdS^3$, which is maximal with respect to inclusion, over the interior of which $\theta$ acts freely and properly discontinuously.
\end{theorem}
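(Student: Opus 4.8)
The plan is to reproduce Mess' construction, as reviewed in Section~\ref{MessConstruction} and Theorem~\ref{messAdS2} for the anti de Sitter case, inside the projective model of Section~\ref{DeSitterSpace}; the replacement data are the limit set of $\theta$, its convex hull in $\mathbb{H}^3$, and the projective duality defined with respect to $\langle\cdot,\cdot\rangle_{3,1}$.

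First I would record what quasi-Fuchsianness provides. To $\theta$ is attached the invariant Jordan curve $\Gamma_\theta\subset\opC$, which coincides with the closure of the set of attractive and repulsive fixed points of $\theta(\gamma)$ as $\gamma$ ranges over the non-trivial elements of $\Pi_1$, and which separates $\opC$ into the two invariant discs $\opC_\theta^l$ and $\opC_\theta^r$. Since $\theta$ acts properly discontinuously on $\opC_\theta^l$ with quotient a closed surface of genus $\mathfrak{g}$, it is discrete and injective, and, $\Pi_1$ being torsion free, $\theta(\Pi_1)$ acts freely and properly discontinuously on $\mathbb{H}^3$, the quotient being a quasi-Fuchsian hyperbolic $3$-manifold homeomorphic to $S\times\R$.

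Next, let $\opK_\theta$ be the convex hull of $\Gamma_\theta$ in $\opRP^3$. Working in an affine chart in which $\Gamma_\theta$ is in convex position, one checks that $\opK_\theta$ lies in $\overline{\mathbb{H}^3}$ and meets $\opC$ precisely along $\Gamma_\theta$; it is the closure of the convex core of $\mathbb{H}^3/\theta(\Pi_1)$, it is $\theta$-invariant, $\theta$ acts on it freely and properly discontinuously, the quotient $\opK_\theta/\theta(\Pi_1)$ is compact, and $\partial\opK_\theta$ consists of the two pleated surfaces $\partial\opK_\theta^l$, $\partial\opK_\theta^r$ facing $\opC_\theta^l$, $\opC_\theta^r$. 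One then sets
\begin{equation*}
\Omega_\theta := \opK_\theta^* = \left\{\, y\in\opRP^3 \ \middle|\ \langle y,x\rangle_{3,1}\leq 0\ \text{ for all }x\in\opK_\theta \,\right\}
\end{equation*}
and verifies: (i) $\Omega_\theta$ is closed and convex, its interior lies in $\opdS^3$, and $\Omega_\theta\cap\opC=\Gamma_\theta$; concretely, $p\in\opdS^3$ belongs to $\Omega_\theta$ exactly when the spacelike plane $p^\perp$ bounds a closed half-space containing $\opK_\theta$, and the two components of $\partial\Omega_\theta\setminus\Gamma_\theta$ are the duals of $\partial\opK_\theta^l$ and $\partial\opK_\theta^r$; (ii) $\Omega_\theta$ is $\theta$-invariant, since $g\cdot(\opK^{*})=(g\cdot\opK)^{*}$ whenever $g$ preserves $\langle\cdot,\cdot\rangle_{3,1}$ and $\opK_\theta$ is $\theta$-invariant; (iii) $\theta$ acts freely and properly discontinuously on $\mathrm{int}(\Omega_\theta)$. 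For proper discontinuity I would produce a $\theta$-equivariant proper map from $\mathrm{int}(\Omega_\theta)$ to the convex core $\opK_\theta$ --- for instance, by sending $p$ to the nearest-point projection onto $\opK_\theta$ of the geodesic plane $p^\perp\cap\mathbb{H}^3$ --- whence proper discontinuity descends from the action on $\opK_\theta$, which is properly discontinuous with compact quotient. Freeness follows from discreteness and torsion-freeness: the fixed locus in $\opRP^3$ of a non-trivial $\theta(\gamma)$ is contained in the union of $\Gamma_\theta$ with the projective dual of its axis, and since the axis lies in $\opK_\theta$ this dual meets $\Omega_\theta$ at most along $\partial\Omega_\theta$ --- which is precisely why the statement must restrict to the interior, as is already visible in the Fuchsian case.

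Finally, maximality and uniqueness. Let $\Omega'$ be any $\theta$-invariant convex subset of $\opdS^3$ on whose interior $\theta$ acts freely and properly discontinuously. The argument is to show that proper discontinuity on $\mathrm{int}(\Omega')$ forces the null plane tangent to $\opC$ at each point of $\Gamma_\theta$ to bound a closed half-space containing $\overline{\Omega'}$ --- otherwise an orbit converging to that limit point would accumulate inside $\mathrm{int}(\Omega')$ --- so that $\Gamma_\theta\subseteq(\Omega')^{*}$, hence $\opK_\theta\subseteq(\Omega')^{*}$ by convexity, and dualising once more gives $\Omega'=(\Omega')^{**}\subseteq\opK_\theta^{*}=\Omega_\theta$. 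The expected main obstacle is exactly this interplay between the indefinite-signature duality and proper discontinuity, namely the precise forms of (i), (iii) and the maximality step: unlike the riemannian ball/dual-ball picture, one must keep track of which support planes of $\opK_\theta$ are spacelike, timelike or null, the null ones --- tangent to $\opC$ along $\Gamma_\theta$ --- being those where proper discontinuity genuinely degenerates. Granting these points, the rest is a routine transcription of Mess' anti de Sitter construction.
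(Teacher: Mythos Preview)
Your overall strategy --- take the convex hull $\opK_\theta$ of the limit curve $\Gamma_\theta$ in $\mathbb{H}^3$ and dualise --- is exactly what the paper does. The paper does not give a formal proof of Theorem~\ref{max convex QF}; it simply states, in the paragraph immediately following, that $\Omega_\theta$ ``is defined via its dual, $\opK_\theta$, \ldots\ which is simply the convex hull in $\mathbb{H}^3$ of the invariant Jordan curve, $\Gamma_\theta$''. Your sketch supplies the missing verifications (proper discontinuity via an equivariant retraction onto the convex core, maximality via duality), and these are the right ingredients.

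There is, however, a genuine error in your step (i). You set $\Omega_\theta:=\opK_\theta^*$ and then assert that its interior lies in $\opdS^3$ and that $\Omega_\theta\cap\opC=\Gamma_\theta$. Both of these claims are false. Since $\opK_\theta$ lifts to a cone of future-causal vectors, every past-pointing timelike or lightlike vector $y$ satisfies $\langle y,x\rangle_{3,1}\leq 0$ for all $x$ in that cone; hence $\overline{\mathbb{H}^3}\subseteq\opK_\theta^*$. This is precisely what the paper says in the very next sentence: ``$\Omega_\theta$ is the intersection of $\opdS^3$ with a convex subset of $\opRP^3$ which contains the closure of $\mathbb{H}^3$.'' The correct definition is therefore $\Omega_\theta:=\opK_\theta^*\cap\opdS^3$, using the paper's convention that a subset of $\opdS^3$ is \emph{convex} when it is the trace of a convex subset of $\opRP^3$. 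Your concrete description --- $p\in\opdS^3$ belongs to $\Omega_\theta$ exactly when $p^\perp$ supports $\opK_\theta$ --- is already the right one, and is the description of $\opK_\theta^*\cap\opdS^3$, not of $\opK_\theta^*$. Once you make this correction, note also that $\Omega_\theta$ is disconnected (two components, one for each side of $\Gamma_\theta$), which your phrasing of (i) obscures; this matters for the subsequent discussion of $\Omega_\theta^l$ and $\Omega_\theta^r$.
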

In fact, $\Omega_\theta$ is the intersection of $\opdS^3$ with a convex subset of $\opRP^3$ which contains the closure of $\mathbb{H}^3$. As usual, it is defined via its dual, $\opK_\theta$, which we henceforth refer to as its {\sl Nielsen kernel}, and which is simply the convex hull in $\mathbb{H}^3$ of the invariant Jordan curve, $\Gamma_\theta$. $\Omega_\theta$ itself has two disjoint connected components, each of which intersects $\opC$ along the closure of one of the connected components of the complement of $\Gamma_\theta$. Let $\Omega_\theta^l$ and $\Omega_\theta^r$ be the components lying to the left and right of $\Gamma_\theta$ respectively (c.f. Figure~\ref{fig:desitter}). Each of these components now carries a well defined time orientation for which it is future complete. Observe that reversing the orientation of $\Gamma_\theta$ exchanges $\Omega_\theta^l$ and $\Omega_\theta^r$, so that it is sufficient for much that follows to consider only $\Omega_\theta^l$.
\par
The quotient, $\Omega_\theta^l/\theta(\Pi_1)$, is a  future-complete ghmc dS spacetime, and we say that a ghmc dS spacetime is {\sl quasi-Fuchsian} whenever it can be constructed in this manner. We denote the space of quasi-Fuchsian ghmc dS spacetimes by $\opGHMC_1^\opqf$. By Theorem~\ref{max convex QF}, this space is parametrised by a certain subset of the space of homomorphisms of $\Pi_1$ into $\opPSO(3,1)$, and since the image is an open set (c.f. the Ehresmann--Thurston theorem \cite{gol06}), we use this parametrisation to furnish $\opGHMC_1^{\opqf}$ with the structure of a complex manifold.
\begin{figure}
\begin{center}
\includegraphics[scale=0.9]{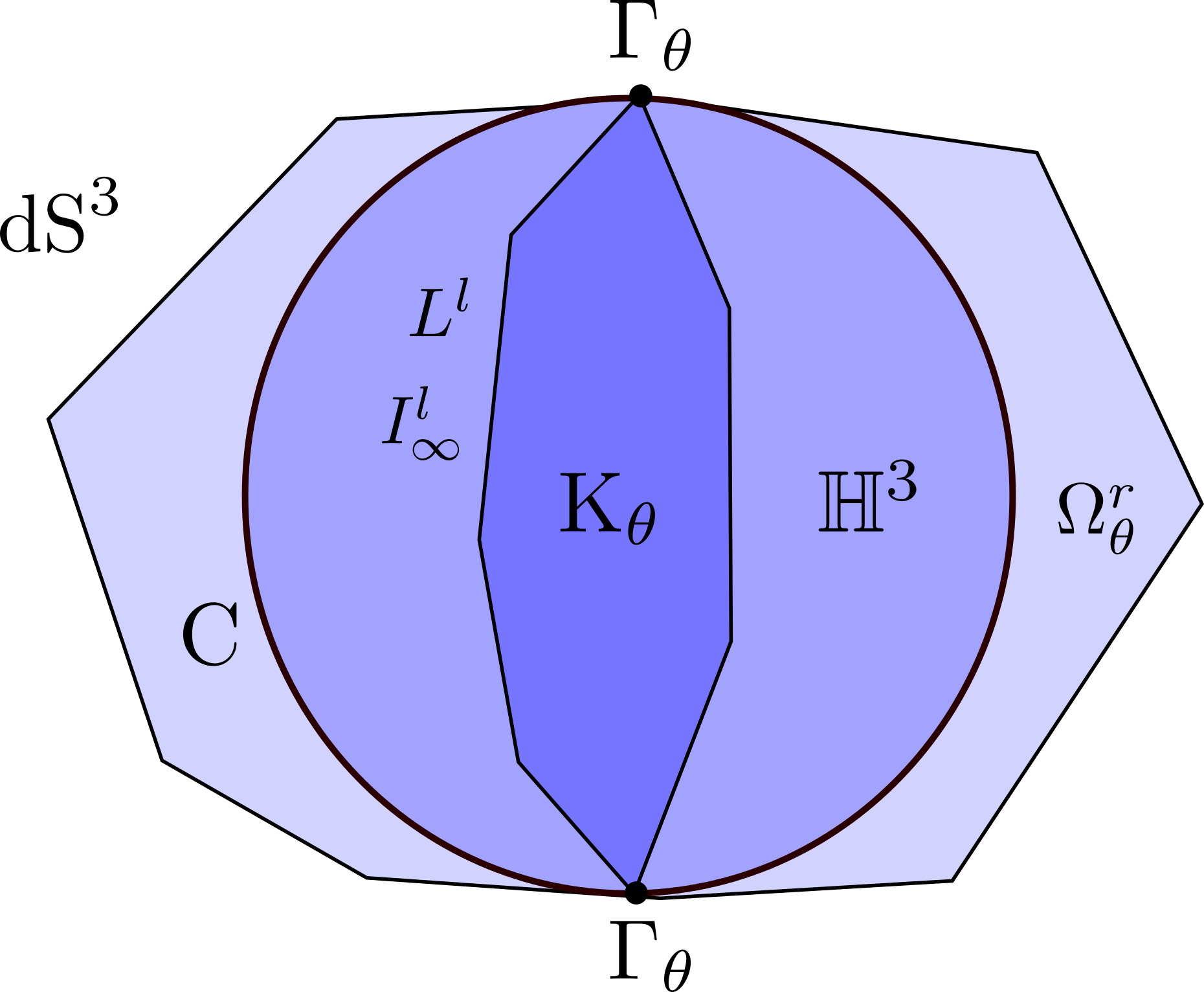}\caption{\textbf{The structure of a ghmc de Sitter spacetime.}\ De Sitter space can be realised in a projective chart as the exterior of the unit ball in $\Bbb{R}^3$ plus a copy of $\opRP^2$ at infinity. The interior of this unit ball identifies with hyperbolic space, and its boundary identifies with the Riemann sphere. $\Omega_\theta$ contains the unit ball, whilst $\opK_\theta$ is contained within it. They both meet the boundary of the unit ball along a Jordan curve, $\Gamma_\theta$. $\opK_\theta$ is, in fact, the convex hull of this curve. The intrinsic metric of $\partial\opK_\theta^\pm$ is hyperbolic, and its singular set is a measured geodesic lamination. The intrinsic metric of $\partial\Omega_\theta$ is a real tree.}\label{fig:desitter}
\end{center}
\end{figure}
\subsection{Laminations and trees}\label{LaminationsAndTrees2}
As usual, measured geodesic laminations are constructed using the boundary of the Nielsen kernel, $\opK_\theta$. In order to visualise this construction, consider first a spacelike, unit vector, $x_1$, in $\R^{3,1}$. Observe that $x_1$ projects to an element of $\opdS^3$. Furthermore, the intersection of its orthogonal complement with $\mathbb{H}^3$ is a totally geodesic embedded submanifold, isometric to $\mathbb{H}^2$. In addition, at every point of this subspace, $x_1$ defines a tangent line to $\mathbb{H}^3$ which is normal to this subspace at this point. By abuse of terminology, we denote this subspace by $x_1^\perp$, and we refer to it as the {\sl plane} orthogonal to $x_1$.
\par
Consider now another spacelike vector, $x_2$, in $\R^{3,1}$, chosen so that $x_1^\perp$ and $x_2^\perp$ intersect along a shared geodesic, $\Gamma_{12}$. Observe that this geodesic divides each of $x_1^\perp$ and $x_2^\perp$ into two half-planes with geodesic boundary. Let $X_{12}$ be one of the four connected components of the complement of the union of $x_1^\perp$ and $x_2^\perp$ in $\Bbb{H}^3$. The boundary of this set is the union of two half-planes, one in $x_1^\perp$, and one in $x_2^\perp$, which meet along $\Gamma_{12}$. Recall now that the intrinsic metric of $\partial X_{12}$ is defined by
$$
d(x,y) := \inf_{\gamma} l(\gamma)~,
$$
where $\gamma$ ranges over all continuous curves in $\partial X_{12}$ starting at $x$ and ending at $y$, and $l(\gamma)$ is its length with respect to the Minkowski metric $\langle\cdot,\cdot\rangle_{3,1}$. It is now a straightforward matter to show that $\partial X_{12}$, furnished with this metric, is isometric to $\mathbb{H}^2$, and that the bending locus, $\Gamma_{12}$, is a complete geodesic.
\par
Consider now a third element, $x_3$, chosen in such a manner that the planes, $x_1^\perp$, $x_2^\perp$ and $x_3^\perp$ have trivial intersection in $\mathbb{H}^3$. The complement of the union of these three planes has at least one connected component, which we will denote by $X_{123}$, which meets each one of the planes, $x_1^\perp$, $x_2^\perp$ and $x_3^\perp$. As before, its boundary, $\partial X_{123}$, furnished with the intrinsic metric, is isometric to $\mathbb{H}^2$, and two of the three curves, $\Gamma_{12}$, $\Gamma_{13}$ and $\Gamma_{23}$, will define complete, non-intersecting geodesics in this space.
\par
Now, since $\opK_\theta$ is a convex hull, it behaves much like the intersection of a finite configuration of spacelike planes, no three of which share a common point in $\mathbb{H}^3$. In particular, the boundary component, $\partial \opK_\theta^l$, furnished with the intrinsic metric, is isometric to $\mathbb{H}^2$, and since it is invariant under the action of $\theta(\Pi_1)$, it defines a compact, hyperbolic surface, $\partial \opK_\theta^l/\theta(\Pi_1)$.
\par
As before, the measured geodesic lamination is now constructed using supporting planes. Indeed, given a convex set, $X$, in $\mathbb{H}^3$ and a boundary point, $x$, a plane, $y^\perp$, passing through $x$ is said to be a {\sl supporting plane} to $X$ at that point whenever $X$ lies entirely to one side of it. Since $\opK_\theta$ is a convex hull, any supporting plane to $\partial\opK_\theta^l$ meets this set either along a complete geodesic or along a non-trivial ideal polygon (with possibly infinitely many sides). The lamination, $L^l$, of $\partial\opK_\theta^l$, is now defined to be the union of all complete geodesics determined by intersections of this set with supporting hyperplanes. In analogy to the case of a finite configuration of geodesics, no two geodesics in $L^l$ intersect, so that this set is indeed a lamination.
\par
Given a convex subset, $X$, of $\H^3$, and a boundary point, $x$, an element, $y$, of $\opdS^3$ is said to be a {\sl supporting normal} to $X$ at $x$ whenever the plane, $y^\perp$, is a supporting plane to $X$ at this point. It is a straightforward matter to show that if $z$ is a point of $\partial\opK_\theta^l$ not lying on the lamination $L^l$, then $\partial\opK_\theta^l$ has a unique supporting normal at that point. Now, given a short curve, $c$, in $\partial\opK_\theta^l$, with end points not in $L^l$, its mass is approximated by the length of the shortest spacelike curve in $\opdS^3$ joining the respective supporting normals of its two end points, and the mass of an arbitrary curve, $c$, compatible with $L$ is now determined in the usual manner by summing over short segments and taking a limit. This defines a measured geodesic lamination, $\lambda$, over $\partial K^l_\theta$. Since $\lambda$ is invariant under the action of $\theta(\Pi_1)$, it projects to a measured geodesic lamination over the hyperbolic surface, $\partial\opK_\theta^l/\theta(\Pi_1)$. Likewise, as in the Minkowski and anti de Sitter cases, the minimal, short action of $\theta(\Pi_1)$ on a real tree is determined by the intrinsic metric of the boundary component, $\partial\Omega_\theta^l$ of $\Omega_\theta$. In summary, bearing in mind that the same constructions also apply to the right hand sides, $\partial\opK_\theta^r$ and $\partial\Omega_\theta^r$, of $\opK_\theta$ and $\Omega_\theta$, respectively, we obtain three pairs of maps, each taking values in spaces of Teichm\"uller data of real dimension $(6\mathfrak{g}-6)$ (c.f. Table~\ref{ds table 2}).
\begin{table}[h!]
\begin{center}
\begin{tabular}{|c|c|c|}
\hline
\bf Map & \bf Description  &  \bf Codomain \\
\hline
$\opI_\infty^{l/r}$&\multicolumn{1}{|l|}{ The intrinsic metric of $\partial\opK_\theta^{l/r}$} & $\Thyp$\\
$\opL^{l/r}$& \multicolumn{1}{|l|}{The measured geodesic lamination of $\partial\opK_\theta^{l/r}$} & $\opML$\\
$\opT^{l/r}$& \multicolumn{1}{|l|}{The minimal, short action of $\theta$ on the real tree $\partial\Omega_\theta^{l/r}$} & $\opRT$\\
\hline
\end{tabular}
\end{center}
\caption{Maps taking values in spaces of real dimension $(6\mathfrak{g}-6)$.}\label{ds table 2}
\end{table}
\par
The relationship between $\opI_\infty^{l/r}$, $\opL^{l/r}$ and $\opH^{l/r}$ can be illustrated via the {\sl generalised Gauss map}, $N$, defined as follows. Consider a convex set, $X$, in $\mathbb{H}^3$. Let $x$ be a boundary point of $X$, and let $y\in\opdS^3$ be a supporting normal to $X$ at this point. Let $\nu_X(x,y)$ be the end-point in $\opC$ of the geodesic leaving $X$ at $x$ in the direction of $y$. Observe that $\nu_X(x,y)$ can also be defined in the following manner, more compatible with our projective viewpoint. Let $P:=\langle x,y\rangle$ be the linear plane in $\mathbb{R}^{3,1}$ generated by $x$ and $y$. The intersection of $P$ with the light cone, $\hat{\operatorname{C}}$, defines two distinct lines, which project to distinct points, $z^-$ and $z^+$, in $\operatorname{C}$. Without loss of generality, $z^+$ lies on the opposite side of the plane, $y^\perp$, as $X$, and we set $\nu_X(x,y):=z^+$. The generalised Gauss map of $X$ is now defined by
\begin{equation*}
N(x) := \left\{ \nu_X(x,y)\ |\ y \mbox{ a supporting normal to } X \mbox{  at } x \right\}~.
\end{equation*}
\par
In order to understand the geometry of $N$, consider first an element, $y_1$, in $\opdS^3$. Observe that $y_1^\perp$ intersects $\opC$ along a circle. Let $D_1$ be one of the connected components of the complement of this circle. Let $X_1$ be the connected component of the complement of $y_1^\perp$ in $\Bbb{H}^3$ which lies on the other side of $y_1^\perp$ from $D_1$, and let $N$ be the generalised Gauss map of $X_1$. Since $\partial X_1$ has a unique supporting normal at every point, $x$, the map, $N$, defines a diffeomorphism from $\partial X_1$ into $D_1$, which is, in fact, holomorphic.
\par
Now consider another element, $y_2$, of $\opdS^3$, such that $y_2^\perp$ meets $y_1^\perp$ along a shared geodesic, $\Gamma$. With $D_2$ and $X_2$ defined as before, denote $X_{12}:=X_1\cap X_2$, and let $N$ be its generalised Gauss map. Since $\partial X_{12}$ has a unique supporting normal at every point, $x$, not lying on $\Gamma$, $N$ defines a holomorphic diffeomorphism from the complement of $\Gamma$ in $\partial X_{12}$ into the symmetric difference, $D_1\Delta D_2$, of $D_1$ and $D_2$. However, at every point, $x$, of $\Gamma$, $\partial X_{12}$ has an entire continuum of supporting normals. The set, $N(x)$, is a circular arc in $\opC$. Furthermore these circular arcs trace out the intersection, $D_1\cap D_2$, as $x$ moves along $\Gamma$. In this way, we see that $N$ identifies with the grafting map sending the hyperbolic structure of $\partial X_{12}$ into the conformal structure of $D_1\cup D_2$.
\par
More generally, let $\opH^{l/r}/\theta(\Pi_1)$. Since $\opK_\theta$ behaves like the intersection of a finite configuration of half spaces in $\Bbb{H}^3$, we now see that the generalised Gauss map, $N$, of the boundary component, $\partial\opK_\theta^{l/r}$, defines the grafting map along the measured geodesic lamination, $L_\theta^{l/r}$, which sends the marked hyperbolic metric, $I_\infty^{l/r}$, into the marked holomorphic structure, $H_\theta^{l/r}$.
\par
We now obtain scattering type results for $\opdS^3$. First, we have
\begin{theorem}
\noindent The map $(\opI_{\infty}^l,\opI_{\infty}^r)$ defines a surjection from $\opGHMC_1^\opqf$ onto $\Thyp\times\Thyp$.
\end{theorem}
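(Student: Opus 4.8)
The plan is to recognise this statement as a restatement of the classical existence theorem, due to Labourie (building on the convex-core formalism of Thurston and Epstein--Marden), for hyperbolic metrics prescribed on the boundary of the convex core of a quasi-Fuchsian hyperbolic $3$-manifold.

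First I would spell out the dictionary between $\opGHMC_1^\opqf$ and the space of quasi-Fuchsian hyperbolic $3$-manifolds. By definition $\opK_\theta$ is the dual of $\Omega_\theta$, and, as observed after Theorem~\ref{max convex QF}, it is precisely the convex hull in $\mathbb{H}^3$ of the invariant Jordan curve, $\Gamma_\theta$; thus $\opK_\theta$ is the convex core of the quasi-Fuchsian manifold $M_\theta := \mathbb{H}^3/\theta(\Pi_1)$, and its two boundary components, $\partial\opK_\theta^l$ and $\partial\opK_\theta^r$, are the two boundary components of that convex core. Since every quasi-Fuchsian homomorphism $\theta:\Pi_1\rightarrow\opPSO(3,1)\cong\opPSL(2,\mathbb{C})$ arises as the holonomy of some such spacetime, the assignment $\Omega_\theta^l/\theta(\Pi_1)\mapsto M_\theta$ is a bijection from $\opGHMC_1^\opqf$ onto the space of quasi-Fuchsian hyperbolic $3$-manifolds, which by Bers' theorem is in turn parametrised by $\Thol\times\Thol$. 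Under this bijection $(\opI_\infty^l,\opI_\infty^r)$ is identified with the map assigning to a quasi-Fuchsian manifold the pair of induced path metrics of the two boundary surfaces of its convex core; as recorded in Section~\ref{LaminationsAndTrees2} these metrics are hyperbolic, so the map does take values in $\Thyp\times\Thyp$.

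Granting this, surjectivity follows directly from Labourie's theorem: for any pair of marked hyperbolic metrics $g^l,g^r$ over $S$ there is a quasi-Fuchsian manifold whose convex core has boundary components isometric to $(S,g^l)$ and $(S,g^r)$. Pulling this manifold back through the dictionary of the previous step yields a point of $\opGHMC_1^\opqf$ on which $(\opI_\infty^l,\opI_\infty^r)$ takes the value $(g^l,g^r)$. It is worth recording separately that the Fuchsian locus maps onto the diagonal of $\Thyp\times\Thyp$ --- there the convex core degenerates to a totally geodesic copy of $\mathbb{H}^2$ whose two ``sides'' carry the same metric --- so that the diagonal is covered as well.

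The single substantial input is Labourie's existence result, which rests on a continuity-method argument together with a priori curvature estimates, and which in this survey is treated as a black box; everything else --- identifying $\opK_\theta$ with the convex core of $M_\theta$, matching up the left/right labels and the two time orientations, and disposing of the degenerate Fuchsian case --- is routine bookkeeping. For context, one could also attempt to attack the problem intrinsically within this section: the generalised Gauss map $N$ of $\partial\opK_\theta^{l/r}$ realises the grafting of the induced metric $\opI_\infty^{l/r}$ along the bending lamination $\opL^{l/r}$, with image the conformal structure $\opH^{l/r}$, so that, by Bers' theorem, a point of $\opGHMC_1^\opqf$ is pinned down by the two pairs $(\opI_\infty^{l/r},\opL^{l/r})$; surjectivity of $(\opI_\infty^l,\opI_\infty^r)$ then reduces to solving, for each $(g^l,g^r)$, for a compatible pair of bending laminations, and it is precisely this compatibility that Labourie's argument secures (c.f. \cite{mes+} and the references therein). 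Finally, whether the map is actually a \emph{bijection} is Thurston's long-standing conjecture, which is why only surjectivity is asserted here.
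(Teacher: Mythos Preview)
Your proposal is correct and aligns with the paper's own treatment: the paper does not give an argument either, but simply records in a remark that the result is folklore, that Epstein--Marden attribute it to a theorem of Sullivan, and that the only complete proof the authors know is due to Slutskiy \cite{slu}, following Labourie \cite{Lab92}. Your reduction to the prescribed-metric problem on the boundary of the convex core of a quasi-Fuchsian hyperbolic $3$-manifold is exactly the identification the paper has in mind; the only refinement worth making is to the attribution---credit Slutskiy for the complete argument rather than Labourie alone.
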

\begin{remark} This theorem is considered as part of the folklore by experts in the field. In \cite{EM1}, Epstein \& Marden indicate that it follows from a theorem of Sullivan. The only complete proof that we are aware of is provided by Slutskiy in \cite{slu}, following the work, \cite{Lab92}, of Labourie.\end{remark}
\noindent Consider now the map $(\operatorname{L}^l,\operatorname{L}^r)$. For $r>0$, let $\opML_r\subseteq\opML$ be the set of those measured geodesic laminations all of whose compact leaves have weight strictly less than $r$. Surprisingly, this is not an open condition over $\opML$, and it is not known whether $\opML_r$ is itself an open set (c.f. \cite{BO04}). However, it is straightforward to show that $(\operatorname{L}^l,\operatorname{L}^r)$ takes values in $\opML_\pi\times_{\text{fill}}\opML_\pi$. Now let $\opFuc_1$ denote the space of Fuchsian ghmc dS spacetimes. This is the set of all spacetimes in $\opGHMC_1$ whose holonomy preserves a circle in $C$, or, equivalently, whose holonomy is conjugate to a homomorphism taking values in $\opPSL(2,\Bbb{R})$.
\begin{theorem}[Bonahon--Otal \cite{BO04}]
\noindent The map $(\opL^l,\opL^r)$ defines a surjection from $\opGHMC_1^\opqf\setminus\opFuc_1$ onto $\opML_\pi\times_{\operatorname{fill}}\opML_\pi$.
%the subset of  consisting of those pairs that fill $S$.
%
\end{theorem}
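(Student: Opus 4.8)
The plan is to translate the statement, by means of Bers' theorem together with the grafting description of the generalised Gauss map, into the classical problem --- solved by Bonahon and Otal in \cite{BO04} --- of prescribing the bending lamination of the convex core of a quasi-Fuchsian hyperbolic $3$-manifold, and then to establish surjectivity by approximation through rational laminations. Recall first that, by Bers' theorem, $\opGHMC_1^\opqf$ is parametrised by conjugacy classes of quasi-Fuchsian homomorphisms $\theta:\Pi_1\rightarrow\opPSO(3,1)\cong\opPSL(2,\C)$, that the Nielsen kernel, $\opK_\theta$, is the convex hull in $\mathbb{H}^3$ of the invariant Jordan curve, $\Gamma_\theta$, and that its two pleated boundary components, $\partial\opK_\theta^{l}$ and $\partial\opK_\theta^{r}$, are bent along $\opL^l(\theta)$ and $\opL^r(\theta)$. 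The theorem therefore asserts exactly that every pair $(\lambda^l,\lambda^r)$ in $\opML_\pi\times_{\operatorname{fill}}\opML_\pi$ arises as the pair of bending laminations of such a convex core. Note also that, by the identification of the generalised Gauss map with grafting established above, one has $\opH^{l/r}(\theta)=\mathcal{G}(\opI_\infty^{l/r}(\theta),\opL^{l/r}(\theta))$, so that $\theta$, the boundary conformal structures $\opH^{l/r}$, the convex-core hyperbolic metrics $\opI_\infty^{l/r}$ and the bending laminations $\opL^{l/r}$ all determine one another; in particular, since $\lambda\mapsto\mathcal{G}(g,\lambda)$ is injective for fixed $g$, recovering $\theta$ from $(\lambda^l,\lambda^r)$ amounts to finding hyperbolic metrics $(g^l,g^r)$ whose images under $\mathcal{G}(\cdot,\lambda^{l/r})$ are the conformal boundaries of a quasi-Fuchsian group bent exactly along $(\lambda^l,\lambda^r)$.

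Before treating surjectivity I would check that the restricted map genuinely takes values in the stated target and that excising $\opFuc_1$ is forced. An embedded pleated plane in $\mathbb{H}^3$ cannot bend through an angle $\ge\pi$ along a closed geodesic leaf without $\partial\opK_\theta$ ceasing to bound a convex body, so every closed leaf of $\opL^{l/r}(\theta)$ has weight $<\pi$; and if some nontrivial free homotopy class, $\langle\gamma\rangle$, had vanishing mass for both $\opL^l(\theta)$ and $\opL^r(\theta)$, then $\gamma$ would be isotopic into the totally geodesic part of both pleated surfaces, which --- by a standard incompressible-annulus argument together with the criterion of \cite{ser12} recalled above --- cannot happen for a quasi-Fuchsian manifold, so that $(\opL^l(\theta),\opL^r(\theta))$ fills $S$. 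Finally, on $\opFuc_1$ the convex core is totally geodesic, so both bending laminations vanish, and since $(0,0)$ does not fill $S$ this locus has to be removed from the domain.

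For surjectivity I would follow the two-stage argument of \cite{BO04}. In the rational case, where $\lambda^l$ and $\lambda^r$ are weighted multicurves filling $S$ with all weights in $]0,\pi[$, one constructs a quasi-Fuchsian realisation directly: on the finite-dimensional family of quasi-Fuchsian structures whose pleating loci are carried by the given curves, the map recording the bending angles takes values in a product of open intervals $]0,\pi[$, and one shows --- again using the filling condition and the weight bounds --- that this map is proper and of non-zero degree, hence surjective. For a general pair $(\lambda^l,\lambda^r)$ one picks rational pairs $(\lambda^l_m,\lambda^r_m)$ filling $S$, with all weights $<\pi$, that converge to $(\lambda^l,\lambda^r)$, chooses quasi-Fuchsian $\theta_m$ realising each of them, and invokes a compactness statement: the uniform filling hypothesis together with the uniform bound on closed-leaf weights prevents $(\theta_m)$ from degenerating out of quasi-Fuchsian space, so that, after passing to a subsequence, $\theta_m\rightarrow\theta$ with $\theta$ quasi-Fuchsian; continuity of the convex-core bending lamination under such convergence then yields $(\opL^l(\theta),\opL^r(\theta))=(\lambda^l,\lambda^r)$.

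The hard part is this last compactness --- equivalently, the properness of the bending map --- since one must simultaneously rule out the escape of $(\theta_m)$ to the boundary of quasi-Fuchsian space and the collapse of its convex core, and it is precisely here that the filling hypothesis and the $<\pi$ bound are genuinely exploited; this is the technical heart of \cite{BO04}. It is also why only surjectivity, and not bijectivity, is obtained: uniqueness of the preimage is at present known only when the laminations are rational, so whether $(\opL^l,\opL^r)$ is in fact a bijection --- the de Sitter counterpart of Mess' question in the anti de Sitter case --- remains open.
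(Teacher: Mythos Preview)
The paper does not prove this theorem; it simply attributes it to Bonahon--Otal \cite{BO04} and records Thurston's conjecture on injectivity as a remark. Your proposal is therefore not competing with any argument in the paper, but is rather a sketch of the original \cite{BO04} proof, and as such it is broadly faithful: the reduction to prescribing the pair of bending laminations of the convex core of a quasi-Fuchsian manifold, the rational case via a proper map of non-zero degree, and the passage to general laminations by approximation and compactness are indeed the main steps of that paper.

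Two comments. First, the digression in your opening paragraph about the grafting identity $\opH^{l/r}=\mathcal{G}(\opI_\infty^{l/r},\opL^{l/r})$ and about ``recovering $\theta$ from $(\lambda^l,\lambda^r)$'' is not used anywhere in the argument and muddies the logic; the proof proceeds entirely on the hyperbolic side, and Bers' theorem plays no role beyond identifying $\opGHMC_1^\opqf$ with the space of quasi-Fuchsian groups. Second, your verification that the image lies in $\opML_\pi\times_{\operatorname{fill}}\opML_\pi$ is a little loose: showing that no single free homotopy class has vanishing mass for both laminations is not literally the filling condition (which asks for a uniform lower bound), so one should instead invoke the geometric characterisation --- every complementary component lifts to a bounded ideal polygon --- which does follow from the fact that a non-Fuchsian convex core has no totally geodesic boundary piece meeting both sides. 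These are matters of presentation; the skeleton of the argument is correct, and you rightly flag the properness/compactness step as the genuine content of \cite{BO04}.
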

\begin{remark} It has been conjectured by Thurston that this map is injective. In particular, by invariance of the domain, this would imply openness of the set $\opML_\pi\times_{\operatorname{fill}}\opML_\pi$.\end{remark}
Finally, we recall the following two scattering type results of Lecuire.
\begin{theorem}[Lecuire \cite{Lec06}]
\noindent The maps $(\opI_\infty^l,\opL^r)$ and $(\opI_\infty^r,\opL^l)$ defines surjections from $\opGHMC_1^\opqf$ onto $\Thyp\times\opML_\pi$.
\end{theorem}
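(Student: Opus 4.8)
The plan is to prove the statement for $(\opI_\infty^l,\opL^r)$; the statement for $(\opI_\infty^r,\opL^l)$ then follows by reversing the orientation of the invariant Jordan curve $\Gamma_\theta$, which interchanges the roles of the two sides of $\opK_\theta$ and of $\Omega_\theta$. So fix a target pair $(g,\mu)\in\Thyp\times\opML_\pi$. I would argue by a continuity method: realise $(g,\mu)$ first when $\mu$ is rational, then pass to a limit.

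Suppose first that $\mu=\sum_{i=1}^k a_ic_i$, with the $c_i$ pairwise disjoint, pairwise non-isotopic simple closed curves and $a_i\in[0,\pi[$. Then $\partial\opK_\theta^r$ is required to be a pleated surface with pleating locus $\bigcup c_i$ and exterior dihedral angles $a_i$, i.e.\ a surface assembled from totally geodesic pieces glued along the $c_i$. Let $\mathcal{Q}_{\{c_i\}}\subset\opGHMC_1^\opqf$ be the locus of quasi-Fuchsian spacetimes whose right bending lamination is supported on $\bigcup c_i$; this is a manifold of dimension $(6\mathfrak{g}-6)+k$, having the Fuchsian locus $\opFuc_1$ in its closure, where all $a_i=0$, both components of $\partial\opK_\theta$ coincide with the totally geodesic surface $\mathbb{H}^2/\theta(\Pi_1)$, and $\opI_\infty^l$ already realises every $g\in\Thyp$. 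On $\mathcal{Q}_{\{c_i\}}$ consider the map $\theta\mapsto\bigl(\opI_\infty^l(\theta),a_1(\theta),\dots,a_k(\theta)\bigr)$ into $\Thyp\times[0,\pi[^{\,k}$, a space of the same dimension. An infinitesimal rigidity statement — injectivity of the relevant derivatives, in the spirit of Bonahon and of Bonahon--Otal \cite{BO04} — shows this map is a local diffeomorphism, and a properness argument shows its image is exactly the open box $\Thyp\times[0,\pi[^{\,k}$: any failure of properness over the box forces some $a_i\uparrow\pi$, where the totally geodesic pieces degenerate, and this is precisely where the hypothesis ``weight $<\pi$'' is compelled. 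This realises $(g,\mu)$ for every rational $\mu\in\opML_\pi$; note that no ``filling'' hypothesis on $\mu$ is needed, since prescribing the full metric $g$ on the left already rigidifies the spacetime.

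For general $\mu\in\opML_\pi$, choose rational laminations $\mu_n\to\mu$ in $\opML_\pi$ and let $\theta_n$ be quasi-Fuchsian holonomies, supplied by the previous step, with $\opI_\infty^l(\theta_n)=g$ and $\opL^r(\theta_n)=\mu_n$. It suffices to extract a subsequence $\theta_n\to\theta$ with $\theta$ quasi-Fuchsian: then $\opI_\infty^l(\theta)=g$ by continuity of the induced metric on the convex-core boundary, and $\opL^r(\theta)=\mu$ by continuity of the transverse measure (Theorem~\ref{ExistenceOfTransverseMeasure}). The required compactness is the crux. Holding $\opI_\infty^l(\theta_n)\equiv g$ pins down the left convex-core boundary once and for all; together with the bilipschitz comparison of Epstein--Marden \cite{EM1} between the convex-core boundary and the boundary at infinity, and with the collar lemma, this bounds the geometry on one side, while the uniform bound $\mu_n\in\opML_\pi$ together with Lecuire's estimates on bending measures \cite{Lec06} (notably a lower bound on the length of the bending locus in terms of the angles, valid precisely when the weights stay below $\pi$) prevents the pleated surface $\partial\opK_\theta^r$ from collapsing. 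These estimates confine the $\theta_n$ to a compact subset of the space of quasi-Fuchsian representations, yielding the limit.

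The main obstacle, in both the rational case and the limiting step, is exactly this properness/compactness. It is not a formal consequence of the earlier scattering results of this paper (such as those for $(\opL^l,\opL^r)$ or for $(\opI_\infty^l,\opI_\infty^r)$): it genuinely requires the comparison geometry of convex cores of quasi-Fuchsian hyperbolic $3$-manifolds, and it is here that the restriction to laminations whose compact leaves have weight strictly less than $\pi$ plays its decisive role.
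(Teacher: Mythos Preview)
The paper does not supply its own proof of this theorem: it is stated with attribution to Lecuire \cite{Lec06} and no argument is given. There is therefore nothing to compare your proposal against in the paper itself.

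As a sketch of how such a result is proved, your outline is broadly in the right spirit --- a continuity/deformation argument for rational laminations followed by a limiting step, with properness as the crux --- and this is indeed close to the strategy Lecuire employs. A couple of points deserve care. First, your dimension count for $\mathcal{Q}_{\{c_i\}}$ presupposes that this locus is a manifold of dimension $(6\mathfrak{g}-6)+k$; this is not automatic and requires a transversality or infinitesimal-rigidity input (as you acknowledge), so the argument is not self-contained at that step. Second, the Epstein--Marden bilipschitz comparison you invoke controls the relation between $\opI_\infty^l$ and $\opH^l$, not directly the compactness of the sequence $\theta_n$; the actual compactness in Lecuire's work goes through length estimates for the bending lamination and control of short geodesics in the convex core, and your sketch gestures at this but does not pin down which estimate does the work. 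These are not errors so much as places where your outline defers to \cite{Lec06} for the substance, which is fair given that the paper itself does the same.
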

\begin{theorem}[Lecuire \cite{Lec06}]
\noindent The maps $(\opH^l,\opL^r)$ and $(\opH^r,\opL^l)$ define surjections from $\opGHMC_1^\opqf$ onto $\Thol\times\opML_\pi$.
\end{theorem}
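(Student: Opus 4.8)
By reversing the orientation of the invariant Jordan curve $\Gamma_\theta$, which interchanges the left- and right-hand objects, it is enough to treat the map $(\opH^l,\opL^r)$. By Bers' theorem, $\opGHMC_1^\opqf$ is parametrised by $(\opH^l,\opH^r)\in\Thol\times\Thol$, and since the first coordinate of $(\opH^l,\opL^r)$ is then literally a projection, the assertion reduces to the following: for each fixed $c\in\Thol$, the \emph{right bending lamination map} $L_c:\Thol\to\opML$, sending $c'$ to the measured bending lamination $\opL^r$ of the convex core of the quasi-Fuchsian manifold with conformal data $(c,c')$ at infinity, has image exactly $\opML_\pi$. The plan is to prove this by a continuity method: one shows that $\mathrm{Im}(L_c)$ is non-empty, relatively closed and relatively open in $\opML_\pi$, and concludes using that $\opML_\pi$ is connected --- indeed star-shaped about the zero lamination, since scaling by $t\in[0,1]$ cannot increase the weight of any compact leaf. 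Non-emptiness and the inclusion $\mathrm{Im}(L_c)\subseteq\opML_\pi$ are the easy points: the weight of a closed leaf of the bending lamination of $\partial\opK_\theta^r$ is an exterior dihedral angle of the pleated boundary of a convex body in $\H^3$, hence lies in $]0,\pi[$, while non-compact leaves carry no atoms; and the Fuchsian element of $\opFuc_1$ whose left conformal structure is $c$ has totally geodesic convex core boundary, so $0\in\mathrm{Im}(L_c)$. Continuity of $L_c$ follows from the continuity of the convex core and of its bending lamination under variation of the holonomy inside the quasi-Fuchsian locus.

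The substance of the argument is the a priori compactness statement underlying relative closedness: if $c'_n\in\Thol$ with $L_c(c'_n)=\mu_n\to\mu\in\opML_\pi$, then, writing $\theta_n$ for the holonomy of the corresponding manifold, the $\theta_n$ stay in a compact subset of the quasi-Fuchsian locus and $L_c$ of the limit equals $\mu$. First, the bending lines are geodesics of the $3$-manifold lying on the pleated surface $\partial\opK_\theta^r$, so $\ell_{\theta_n}(\mu_n)$ equals the length of $\mu_n$ in the hyperbolic metric $\opI_\infty^r$ of that surface, which is bounded above by a constant depending only on the topology of $S$ (Bridgeman's estimate on the length of the bending lamination of a convex core boundary). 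Next, fix any lamination $\lambda$ filling $S$; then $\ell_{\theta_n}(\lambda)$ is bounded because $\opH^l=c$ is fixed (a Bers-type inequality bounds lengths in the $3$-manifold by lengths in a conformal boundary component), and since $\lambda$ already fills, the pair $(\lambda,\mu)$ fills, so Thurston's double limit theorem provides an algebraically convergent subsequence $\theta_n\to\theta_\infty$. It remains to exclude that $\theta_\infty$ lies on the Bers boundary: if a simple closed curve $\gamma$ were pinched --- equivalently $\gamma$ became short in the $3$-manifold, which by Sullivan's bi-Lipschitz comparison of the convex core boundary with the conformal boundary also covers the case of $\gamma$ pinched at infinity --- then either $\gamma$ crosses the support of $\mu$, so that $\ell_{\theta_n}(\mu_n)\to\infty$ because the collar of $\gamma$ on $\partial\opK_\theta^r$ becomes arbitrarily wide, contradicting the bound above; or $\gamma$ occurs as a leaf of $\mu_n$ with weight tending to $\pi$, contradicting $\mu\in\opML_\pi$; or $\gamma$ is disjoint from $\mu$, which is the delicate remaining case, to be ruled out using the relation $\opH^l=\mathcal{G}_{\opL^l}(\opI_\infty^l)$ together with the fact that $\opH^l=c$ is a fixed non-degenerate conformal structure. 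Hence $\theta_\infty$ is quasi-Fuchsian, and continuity gives $L_c$ of its right conformal structure equal to $\mu$.

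For relative openness one argues first at a rational lamination $\mu$, i.e. a weighted multicurve with all weights in $]0,\pi[$: the set of quasi-Fuchsian manifolds with left conformal structure $c$ whose right bending lamination is carried by that multicurve is a manifold of the expected dimension, and the linearisation there of $c'\mapsto\opL^r$ is an isomorphism --- Bonahon's computation of the derivative of the bending cocycle, with the non-degeneracy of $c$ entering to make the linearised problem invertible --- so the implicit function theorem makes $\mu$ interior to $\mathrm{Im}(L_c)$. For a general $\mu\in\opML_\pi$ one approximates by rational laminations $\mu_k\to\mu$, which lie in $\mathrm{Im}(L_c)$ by the rational case, and combines this with the a priori compactness statement above and a limiting argument carried out with respect to the piecewise-linear structure on $\opML_\pi$ --- since this space need not be locally Euclidean at irrational laminations --- to conclude that $\mu$ is interior as well. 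Having shown $\mathrm{Im}(L_c)$ non-empty, closed and open in the connected space $\opML_\pi$, one gets $\mathrm{Im}(L_c)=\opML_\pi$, which is the desired surjectivity.

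I expect the main obstacle to be the last case of the a priori compactness argument, namely excluding that a curve disjoint from the limiting bending lamination is pinched: this is the one place where one must genuinely exploit that the data imposed on the $l$-side is a true conformal structure rather than, say, a lamination, and it is the reason the target must be $\opML_\pi$ and not all of $\opML$. The openness step at irrational laminations is a secondary difficulty, caused by the possibly pathological local structure of $\opML_\pi$, which forces the argument to be conducted in piecewise-linear rather than differentiable terms.
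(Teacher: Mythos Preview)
The paper does not give a proof of this theorem: it is stated with attribution to Lecuire \cite{Lec06} and nothing more. There is therefore no ``paper's own proof'' to compare your proposal against; the result is simply imported from the literature.

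That said, your outline is broadly faithful to the strategy of Lecuire's actual paper (and of the closely related Bonahon--Otal argument for $(\opL^l,\opL^r)$): a continuity method on the space of laminations, with the serious work concentrated in the properness/compactness step. You have correctly located the two genuine difficulties. The first is the case, in your closedness argument, of a simple closed curve $\gamma$ disjoint from the limiting lamination $\mu$ being pinched; your gesture towards ``the relation $\opH^l=\mathcal{G}_{\opL^l}(\opI_\infty^l)$'' is not yet an argument, and in Lecuire's treatment this is handled by a careful analysis of how the geometry of the convex core degenerates relative to the fixed end invariant, not by a one-line appeal to grafting. The second is openness at irrational laminations: your plan to approximate by rational laminations and pass to a limit does not by itself give an \emph{open} neighbourhood in the image, and the actual proofs proceed instead via transversality/cohomological arguments for the bending cocycle (Bonahon) that work uniformly, or via a degree-theoretic argument, rather than by pointwise approximation. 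As written, then, your proposal is a correct high-level sketch of the known proof with the hard steps honestly flagged but not carried out.
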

\subsection{Smooth parametrisations}\label{SmoothParametrisations2}
Our starting point for the construction of smooth parametrisations of $\opGHMC_{-1}$ is the following result.
\begin{theorem}[Labourie \cite{Lab91}]
\noindent Let $\theta:\Pi_1\rightarrow\opPSO(3,1)$ be quasi-Fuchsian. For all $\kappa\in]1,\infty[$, and for each $\alpha\in\left\{l,r\right\}$, there exists a unique, smooth, spacelike, LSC surface, $\Sigma^\alpha_\kappa$ which is embedded in $\Omega_\theta^\alpha$, which is invariant under the action of $\theta$, and which has constant extrinsic curvature equal to $\kappa$. Furthermore, the family of all such surfaces foliates $\partial\Omega_\theta$ as $\kappa$ varies over the interval $]1,\infty[$.
\end{theorem}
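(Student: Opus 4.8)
The plan is to prove this along the lines of Theorems~\ref{BBZmink} and \ref{bbz const ads}, the only genuinely new feature being that the ambient space now has positive curvature. I would lift everything to the universal cover and work with complete, $\theta$-invariant, spacelike, locally strictly convex surfaces contained in $\Omega_\theta^\alpha\subset\opdS^3$. Since such a surface is, in a suitable projective chart, a graph over a plane, and since $\Omega_\theta^\alpha$ is convex by Theorem~\ref{max convex QF}, the requirement of constant extrinsic curvature equal to $\kappa$ becomes an elliptic Monge--Amp\`ere equation for the graphing function, which I would attack by a priori estimates combined with the continuity method. The guiding identity throughout is the Gauss equation $K_{\mathrm{int}}=1-\kappa$: for $\kappa>1$ the induced metric is strictly negatively curved, which is precisely what makes the relevant linearised operator invertible and what confines the parameter to the interval $]1,\infty[$.

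First I would establish the a priori estimates. For the $C^0$ (position) estimate I would use the two boundary components of $\Omega_\theta^\alpha$ --- the pleated surface $\partial\Omega_\theta^\alpha$, which is dual to $\partial\opK_\theta^\alpha$, on one side, and the conformal boundary $\opC_\theta^\alpha$ on the other --- as barriers for the extrinsic-curvature operator, so that any invariant constant-curvature-$\kappa$ surface is trapped in a fixed compact region of $\Omega_\theta^\alpha$, uniformly for $\kappa$ in compact subintervals of $]1,\infty[$. The $C^1$ estimate is a uniform spacelikeness bound, obtained by controlling the timelike angle of the surface against a fixed family of barrier planes via the maximum principle together with the position estimate. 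The hard part will be the $C^2$ estimate, namely a two-sided bound on the principal curvatures: as is standard for curvature equations of this type, I would apply the maximum principle to a test function built from the second fundamental form, the support function and the timelike angle, controlling the resulting Simons-type identity using that the ambient curvature is constant, while strict convexity already keeps the curvatures away from zero. I expect this step to be the main technical obstacle; the remaining ingredients are comparatively soft.

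Existence would then follow by the continuity method. Since the space of quasi-Fuchsian homomorphisms is connected (being parametrised by $\Thol\times\Thol$), I would join $\theta$ to a Fuchsian representation $\theta_0$ by a path of quasi-Fuchsian homomorphisms; for $\theta_0$ the surface $\Sigma_\kappa^\alpha$ is the explicit umbilic copy of $\mathbb{H}^2$ of constant principal curvature $\sqrt{\kappa}$ invariant under $\theta_0(\Pi_1)$. Openness of the set of parameters for which a solution exists would follow from the implicit function theorem, the linearisation being the Jacobi operator of the surface, which on sections over the compact quotient surface is an isomorphism, the absence of kernel coming from strict convexity together with the negativity of $K_{\mathrm{int}}=1-\kappa$; closedness would follow from the a priori estimates, Schauder theory and Arzel\`a--Ascoli, the estimates being uniform on the compact surface $\Sigma_\kappa^\alpha/\theta(\Pi_1)$. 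Here one may also invoke the fundamental theorem of surface theory, Theorem~\ref{FTS}, to pass between abstract data and immersions.

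Finally, uniqueness and the foliation property I would deduce from the comparison principle. Given two invariant solutions for the same $\kappa$, sliding one toward the other along the timelike direction until first contact produces an interior tangency at which the strong maximum principle for the extrinsic-curvature operator forces the two surfaces to coincide; the same argument applied to solutions with $\kappa_1<\kappa_2$ shows that they are disjoint and nested, with the larger curvature lying closer to $\partial\Omega_\theta^\alpha$. A Harnack-plus-compactness argument then shows that $\kappa\mapsto\Sigma_\kappa^\alpha$ is continuous and strictly monotone, and that $\Sigma_\kappa^\alpha$ converges to the pleated surface $\partial\Omega_\theta^\alpha$ as $\kappa\to\infty$ --- consistent with $K_{\mathrm{int}}=1-\kappa\to-\infty$ forcing concentration onto the bending lamination --- while it recedes to the conformal boundary $\opC_\theta^\alpha$ as $\kappa\to 1^+$. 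Hence the surfaces $\Sigma_\kappa^\alpha$, $\kappa\in]1,\infty[$, are pairwise disjoint, vary continuously, and exhaust $\Omega_\theta^\alpha$ between its two boundary components, which is the asserted foliation.
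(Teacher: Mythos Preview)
The paper does not prove this theorem at all: it is quoted with attribution to Labourie \cite{Lab91} and then used as a black box to build the maps of Tables~\ref{table ds 3} and \ref{hyp table 4}. So there is nothing in the paper against which to check your argument.

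That said, a few remarks on your sketch. First, Labourie's original result \cite{Lab91} is proved on the hyperbolic side: it concerns constant Gauss curvature surfaces foliating the ends of a quasi-Fuchsian hyperbolic $3$-manifold, and the de Sitter statement given here is obtained from it by the projective duality between $\Bbb{H}^3$ and $\opdS^3$ described in Section~\ref{DeSitterSpace}. Your framing (``the only new feature being positive ambient curvature, proceed as for Theorems~\ref{BBZmink} and \ref{bbz const ads}'') inverts the history --- Labourie's result predates the BBZ foliations --- and also misses the most efficient route: working in $\Bbb{H}^3$ one has a Riemannian ambient space, a genuine compact convex core with two pleated boundary components serving as barriers, and a well-developed elliptic theory, whereas directly in $\opdS^3$ the ``outer barrier'' you invoke, namely $\opC_\theta^\alpha$, lies at conformal infinity and is not a barrier in the usual sense.

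Second, your outline is a plausible template for a direct proof, but the steps you flag as routine are not. The $C^2$ estimate for Monge--Amp\`ere type curvature equations in this setting is the heart of the matter and does not follow from a generic ``Simons-type identity plus maximum principle'' without substantial work; likewise, showing that the foliation exhausts $\Omega_\theta^\alpha$ (in particular, that the leaves escape to the conformal boundary as $\kappa\to 1^+$) requires a genuine argument, not just Harnack-plus-compactness. If you want to write a self-contained proof rather than cite \cite{Lab91}, the cleanest path is to prove the hyperbolic statement and then dualise.
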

Maps taking values in spaces of Teichm\"uller data are constructed as follows. For $\kappa\in]1,\infty[$ and $\alpha\in\left\{l,r\right\}$, consider the spacelike, LSC, embedded surface, $\Sigma_\kappa^\alpha$, in $\Omega_\theta^\alpha$ which is invariant with respect to $\theta$ and which has constant extrinsic curvature equal to $\kappa$. Let $I_\kappa^\alpha$, $\II_\kappa^\alpha$ and $\III_\kappa^\alpha$ be its first, second and third fundamental forms respectively. The forms, $(\kappa-1)^{-1}I_\kappa^\alpha$ and $\kappa^{-1}(\kappa-1)\III_\kappa^\alpha$, each define marked hyperbolic metrics over $\Sigma_\kappa^\alpha$, thus defining points in $\Thyp$. Next, by convexity, $\II_\kappa^\alpha$ is also positive definite, and therefore also defines a metric over $\Sigma_\kappa^\alpha$, but since this metric has no clear curvature properties, we consider it rather as defining a point in $\Thol$. In summary, we have three pairs of maps, each taking values in spaces of Teichm\"uller data of real dimension $(6\mathfrak{g}-6)$ (c.f. Table~\ref{table ds 3}).
\begin{table}[h!]
\begin{center}
\begin{tabular}{|c|c|c|}
\hline
\bf Map & \bf Description  &  \bf Codomain \\
\hline
$\opI_\kappa^{l/r}$& \multicolumn{1}{|l|}{The first fundamental form of $\Sigma_\kappa^{l/r}$} & $\Thyp$\\
$\opII_\kappa^{l/r}$& \multicolumn{1}{|l|}{The second fundamental form of $\Sigma_\kappa^{l/r}$} & $\Thol$\\
$\opIII_\kappa^{l/r}$& \multicolumn{1}{|l|}{The third fundamental form of $\Sigma_\kappa^{l/r}$} & $\Thyp$\\
\hline
\end{tabular}
\end{center}
\caption{Maps taking values in spaces of real dimension $(6\mathfrak{g}-6)$.}\label{table ds 3}
\end{table}
\par
These maps are complemented to maps taking values in spaces of Teichm\"uller data of real dimension $(12\mathfrak{g}-12)$ as follows. First, the shape operator, $A_\kappa^\alpha$, of $\Sigma_\kappa^\alpha$ defines, up to a constant factor, a Labourie field of $I_\kappa^\alpha$, whilst its inverse $(A_\kappa^\alpha)^{-1}$, defines a Labourie field of $\III_\kappa^\alpha$, so that the pairs, $(I_\kappa^\alpha,\opA_\kappa^\alpha)$ and $(\III_\kappa^\alpha,(\opA_\kappa^\alpha)^{-1})$, define points of $\opLab\Thyp$. Likewise, the Hopf differential, $\phi^\alpha_\kappa$, of $I_\kappa^\alpha$ with respect to the conformal structure of $\II_\kappa^\alpha$ defines a holomorphic quadratic differential, so that the pair $(\II_\kappa^\alpha,\phi_\kappa^\alpha)$ yields a point of $\opT^*\Thol$. We thus have three pairs of maps taking values in spaces of Teichm\"uller data of real dimension $(12\mathfrak{g}-12)$ (c.f. Table~\ref{hyp table 4}).
\begin{table}[h!]
\begin{center}
\begin{tabular}{|c|c|c|}
\hline
\bf Map & \bf Description  &  \bf Codomain \\
\hline
$\opA_{\opI,\kappa}^{l/r}$ & \multicolumn{1}{|l|}{\pbox{20cm}{The first fundamental form of $\Sigma_\kappa^{l/r}$ \\ together with the Labourie field, $A_\kappa^{l/r}$}} & $\opLab\Thyp$\\
$\Phi_\kappa^{l/r}$& \multicolumn{1}{|l|}{\pbox{20cm}{The second fundamental form of $\Sigma_\kappa^{l/r}$ \\ together with the Hopf differential $\phi_\kappa^{l/r}$} }& $\opT^*\Thol$ \\
$\opA_{\opIII,\kappa}^{l/r}$&\multicolumn{1}{|l|}{\pbox{20cm}{The third fundamental form of $\Sigma_\kappa^{l/r}$\\ together with the Labourie field, $A_\kappa^{l/r}$}}& $\opLab\Thyp$  \\
\hline
\end{tabular}
\end{center}
\caption{Maps taking values in spaces of real dimension $(12\mathfrak{g}-12)$.}\label{hyp table 4}
\end{table}
\par
It follows from existing results that these maps parametrise $\opGHMC_1^\opqf$. Indeed,
\begin{theorem}\label{AIsRealAnalDiffDSCase}
\noindent For each $\alpha\in\left\{l,r\right\}$, the maps $\opA_{\opI,\kappa}^\alpha$ and $\opA_{\opIII,\kappa}^\alpha$ define real analytic diffeomorphisms from $\opGHMC_1^\opqf$ onto open subsets of $\opLab\Thyp$ containing the zero section.
\end{theorem}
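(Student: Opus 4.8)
The plan is to follow the pattern of Theorems~\ref{Lab mink} and~\ref{FundamentalTheoremOfSurfacesADS}, using the lorentzian fundamental theorem of surface theory (Theorem~\ref{FTS}) with ambient curvature $c=1$ to reconstruct a ghmc dS spacetime from its constant-curvature data; the novelty is that the image is only open, not all of $\opLab\Thyp$, because the relevant condition on $\opPSO(3,1)$-representations --- being quasi-Fuchsian --- is open but proper. Concretely, I would show that $\opA_{\opI,\kappa}^\alpha$ is a real analytic injection which is also a local diffeomorphism, deduce by invariance of domain that it is a real analytic diffeomorphism onto an open subset of $\opLab\Thyp$, and finally check that this subset contains the zero section.

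Real analyticity and injectivity are the routine part. Real analyticity holds because, by Labourie's foliation theorem \cite{Lab91}, the surface $\Sigma_\kappa^\alpha$, and hence its first fundamental form $I_\kappa^\alpha$ and shape operator $A_\kappa^\alpha$, depend real analytically on the spacetime. Injectivity follows from Theorem~\ref{FTS}: the data $(I_\kappa^\alpha,A_\kappa^\alpha)$ (equivalently the rescaled hyperbolic metric $g$ and the associated Labourie field $A$) determine $\Sigma_\kappa^\alpha$ as an equivariant immersion into $\opdS^{3}$ up to ambient isometry, hence determine its holonomy $\theta$ up to conjugacy, and $\theta$ in turn determines the spacetime $\Omega_\theta^\alpha/\theta(\Pi_1)$ by Theorem~\ref{max convex QF}.

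The point at which Theorem~\ref{FTS} is used to manufacture a local inverse is the verification that $\opA_{\opI,\kappa}^\alpha$ is a local diffeomorphism. Given a spacetime $M_0$ with $\opA_{\opI,\kappa}^\alpha(M_0)=(g_0,A_0)$ and holonomy $\theta_0$, and given $(g,A)$ near $(g_0,A_0)$, choose the constants $a(\kappa),b(\kappa)>0$ forced by the Gauss equation $K=1-\opDet(\cdot)$ so that $b(\kappa)A$ is a Codazzi field of $a(\kappa)g$ with $K_{a(\kappa)g}=1-\opDet(b(\kappa)A)$ and $\opDet(b(\kappa)A)=\kappa$; by Theorem~\ref{FTS} there is an isometric, spacelike, locally strictly convex, $\theta$-equivariant immersion $e\colon(\tilde S,a(\kappa)g)\rightarrow\opdS^{3}$ of constant extrinsic curvature $\kappa$, with holonomy $\theta$ close to $\theta_0$. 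Since $\theta_0$ is quasi-Fuchsian and this is an open condition by the Ehresmann--Thurston theorem \cite{gol06}, $\theta$ is quasi-Fuchsian, so $\Omega_\theta^\alpha/\theta(\Pi_1)$ is a well defined quasi-Fuchsian ghmc dS spacetime by Theorem~\ref{max convex QF}, varying continuously with $\theta$. As $e$ is a $C^\infty$-small perturbation of the embedded Cauchy surface $\Sigma_\kappa^\alpha(M_0)$, which, being a Cauchy surface of $\Omega_{\theta_0}^\alpha/\theta_0(\Pi_1)$, lies in the interior of $\Omega_{\theta_0}^\alpha$, its image lies in $\Omega_\theta^\alpha$ and is again an embedded graph, hence an embedded compact spacelike Cauchy surface of $\Omega_\theta^\alpha/\theta(\Pi_1)$; being of constant extrinsic curvature $\kappa$, it coincides with $\Sigma_\kappa^\alpha$ of that spacetime by uniqueness in Labourie's theorem. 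Thus $(g,A)\mapsto\Omega_\theta^\alpha/\theta(\Pi_1)$ is a real analytic local inverse. An injective local diffeomorphism between manifolds of the same dimension is a diffeomorphism onto an open image, by invariance of domain. Finally, that image contains the zero section: for $A=\opId$ the shape operator $b(\kappa)\opId$ is umbilic, so $e$ parametrises a totally umbilic round surface whose holonomy preserves a round circle in $\partial\opdS^{3}$, hence is Fuchsian and a fortiori quasi-Fuchsian, and the associated Fuchsian dS spacetime is carried by $\opA_{\opI,\kappa}^\alpha$ to $(g,\opId)$.

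For $\opA_{\opIII,\kappa}^\alpha$ I would exploit the identity $\III_\kappa^\alpha=I_\kappa^\alpha(A_\kappa^\alpha\cdot,A_\kappa^\alpha\cdot)$: after renormalisation the third fundamental form of $\Sigma_\kappa^\alpha$ is again a hyperbolic metric carrying $(A_\kappa^\alpha)^{-1}$ as a Labourie field (Corollary~\ref{SymmetryOfLabourieFields}), so $\opA_{\opIII,\kappa}^\alpha=\Phi\circ\opA_{\opI,\kappa}^\alpha$, where $\Phi\colon(g,A)\mapsto(g(A\cdot,A\cdot),A^{-1})$, suitably rescaled in $\kappa$, is --- by the Labourie field theorem (Theorem~\ref{LabFieldthm}) and Corollary~\ref{SymmetryOfLabourieFields} --- a real analytic involution of $\opLab\Thyp$ fixing the zero section; hence $\opA_{\opIII,\kappa}^\alpha$ is a real analytic diffeomorphism onto the open set $\Phi(U)$ containing the zero section, where $U$ denotes the image of $\opA_{\opI,\kappa}^\alpha$. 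The main obstacle, and the point where the argument genuinely leaves the Minkowski and anti de Sitter template, is the perturbation step of the third paragraph: one must know that the immersion produced by Theorem~\ref{FTS} from data with quasi-Fuchsian holonomy is an embedded Cauchy surface inside $\Omega_\theta^\alpha$ --- and not one of the non-embedded grafted equivariant immersions sharing that holonomy (c.f. \cite{Gol87}) --- which is why one works locally, near an already embedded example, rather than attempting to identify the image of $\opA_{\opI,\kappa}^\alpha$ directly as the full preimage of the quasi-Fuchsian locus.
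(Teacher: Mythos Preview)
Your argument is correct and, for $\opA_{\opI,\kappa}^\alpha$, follows the same route as the paper's sketch --- reconstruct the equivariant immersion from $(g,A)$ via Theorem~\ref{FTS} --- while being considerably more careful than the paper about the two points that distinguish the de Sitter case: that the holonomy produced by Theorem~\ref{FTS} is only \emph{locally} guaranteed to be quasi-Fuchsian (whence the image is merely open), and that one must avoid the non-embedded grafted immersions sharing that holonomy. The paper's sketch simply asserts that the construction ``yields a real analytic inverse'' and leaves these issues implicit; your perturbation argument near an already embedded leaf, together with Ehresmann--Thurston, makes them explicit. (A minor remark: once you have an injective local diffeomorphism you do not need invariance of domain --- openness of the image is immediate.)

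For $\opA_{\opIII,\kappa}^\alpha$ your route genuinely differs from the paper's. The paper invokes the Gauss-map duality between $\opdS^3$ and $\Bbb{H}^3$: the unit normal of $\Sigma_\kappa^\alpha$ parametrises a convex surface in $\Bbb{H}^3$ whose first fundamental form is $\III_\kappa^\alpha$ and whose shape operator is $(A_\kappa^\alpha)^{-1}$, so one rebuilds the inverse using the \emph{riemannian} fundamental theorem of surface theory for immersions into $\Bbb{H}^3$. You instead observe the purely algebraic identity $\opA_{\opIII,\kappa}^\alpha=\Phi\circ\opA_{\opI,\kappa}^\alpha$, where $\Phi(g,A)=(g(A\cdot,A\cdot),A^{-1})$ is a real analytic involution of $\opLab\Thyp$ fixing the zero section (Corollary~\ref{SymmetryOfLabourieFields} plus a curvature computation showing $g(A\cdot,A\cdot)$ is again hyperbolic when $\opDet(A)=1$). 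Your reduction is cleaner in that it avoids introducing $\Bbb{H}^3$ and a second ambient space; the paper's approach, on the other hand, makes the geometric origin of $\opA_{\opIII,\kappa}^\alpha$ transparent and connects directly to the classical theory of convex hypersurfaces in hyperbolic space underlying \cite{Lab91,Lab92}.
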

\begin{remark} We are not aware of any explicit descriptions of the images of these maps.\end{remark}
\begin{proof}[Sketch of proof] Consider a hyperbolic metric, $g$, a Labourie field, $A$, and a real number, $\kappa>1$. By the fundamental theorem of surface theory (Theorem~\ref{FTS}), there exists an LSC equivariant immersion $e:(\tilde{S},(\kappa-1)^{-1}g)\rightarrow\opdS^3$, with shape operator equal to $\sqrt{\kappa}A$, which is unique up to isometries of $\opdS^3$. This yields a real analytic inverse of $\opA_{\opI,\kappa}^\alpha$. The real analytic inverse of $\opA_{\opIII,\kappa}^\alpha$ is constructed in a similar manner using equivariant immersions into $\Bbb{H}^3$, and the result follows.\end{proof}
\begin{theorem}
\noindent For each $\alpha\in\left\{l,r\right\}$, the map $\Phi_\kappa^\alpha$ defines a real analytic diffeomorphism from $\opGHMC_1^\opqf$ onto an open subset of $\opT^*\Thol$.
\end{theorem}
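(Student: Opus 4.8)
The plan is to reduce the statement to Theorem~\ref{AIsRealAnalDiffDSCase} together with the results of Sections~\ref{HopfDifferentials} and \ref{LabourieFields}, exactly as in the Minkowski case (Theorem~\ref{mink II}) and its anti de Sitter analogue. By Theorem~\ref{AIsRealAnalDiffDSCase}, the map $\opA_{\opI,\kappa}^\alpha$ is already a real analytic diffeomorphism from $\opGHMC_1^\opqf$ onto an open subset $U$ of $\opLab\Thyp$. It therefore suffices to show that $\Phi_\kappa^\alpha\circ(\opA_{\opI,\kappa}^\alpha)^{-1}$ is the restriction to $U$ of a real analytic diffeomorphism from $\opLab\Thyp$ onto $\opT^*\Thol$; the image of $\Phi_\kappa^\alpha$ will then be the image of the open set $U$ under that diffeomorphism, hence an open subset of $\opT^*\Thol$.

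The diffeomorphism in question is $\Phi\circ\mathcal{A}^{-1}$, where $\mathcal{A}\colon\Thol\times\Thyp\to\opLab\Thyp$ and $\Phi\colon\Thol\times\Thyp\to\opT^*\Thol$ are the maps of Sections~\ref{LabourieFields} and \ref{HopfDifferentials}; these are real analytic diffeomorphisms by Theorems~\ref{AIsARealAnalDiff} and \ref{wolf}, so $\Phi\circ\mathcal{A}^{-1}$ is a real analytic diffeomorphism of $\opLab\Thyp$ onto $\opT^*\Thol$. Unwinding the definitions, $\mathcal{A}^{-1}$ sends $(g,A)\in\opLab\Thyp$ to $(\mathcal{H},g)$, where $\mathcal{H}$ is the holomorphic structure conformal to $g(A\cdot,\cdot)$, and $\Phi$ then returns the pair $(\mathcal{H},\phi(g\,|\,\mathcal{H}))$, the Hopf differential being holomorphic by Lemma~\ref{CharacterisationOfLabourieField}. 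On the other hand, $\opA_{\opI,\kappa}^\alpha$ sends a spacetime to the pair $(g,A)$ in which $g$ is the hyperbolic rescaling of $I_\kappa^\alpha$ and $A$ the unit-determinant rescaling of the shape operator $A_\kappa^\alpha$ of $\Sigma_\kappa^\alpha$; since $\II_\kappa^\alpha=I_\kappa^\alpha(A_\kappa^\alpha\cdot,\cdot)$, the metric $g(A\cdot,\cdot)$ is conformal to $\II_\kappa^\alpha$, so $\mathcal{A}^{-1}$ recovers the holomorphic structure of $\II_\kappa^\alpha$, and $\phi(g\,|\,\mathcal{H})$ equals the Hopf differential $\phi_\kappa^\alpha$ up to a positive constant depending only on $\kappa$. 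Hence $\Phi_\kappa^\alpha$ coincides with $(\Phi\circ\mathcal{A}^{-1})\circ\opA_{\opI,\kappa}^\alpha$ composed with a fibrewise constant rescaling of $\opT^*\Thol$ --- itself a real analytic diffeomorphism --- and the result follows.

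The main obstacle is not conceptual but a matter of bookkeeping, as in the Minkowski and anti de Sitter cases: one must match the intrinsic and extrinsic data $(I_\kappa^\alpha,\II_\kappa^\alpha,A_\kappa^\alpha)$ of $\Sigma_\kappa^\alpha$ --- constrained by the fundamental theorem of surface theory (Theorem~\ref{FTS}) and the Gauss--Codazzi equations --- against the harmonic-map description of $\mathcal{A}$ (via $-J_0f_*J$) and of $\Phi$, keeping track of the normalising constants that intervene when passing between $I_\kappa^\alpha$, its hyperbolic rescaling, and $A_\kappa^\alpha$. The only ingredient genuinely specific to de Sitter space is Theorem~\ref{AIsRealAnalDiffDSCase}, and it is precisely because the image of $\opA_{\opI,\kappa}^\alpha$ is merely an open subset of $\opLab\Thyp$ --- rather than all of it, as in the Minkowski and anti de Sitter cases --- that the conclusion here is only that $\Phi_\kappa^\alpha$ is a diffeomorphism onto an open subset of $\opT^*\Thol$.
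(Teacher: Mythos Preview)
Your proof is correct and follows essentially the same approach as the paper: the paper also observes that $\Phi_\kappa^\alpha\circ(\opA_{\opI,\kappa}^\alpha)^{-1}$ coincides with $\Phi\circ\mathcal{A}^{-1}$ and then invokes Theorems~\ref{wolf}, \ref{AIsARealAnalDiff} and \ref{AIsRealAnalDiffDSCase}. Your write-up simply makes the bookkeeping (conformal class of $\II_\kappa^\alpha$, constant rescalings, and why the image is only open) more explicit than the paper does.
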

\begin{remark} We are not aware of any explicit description of the image of this map.\end{remark}
\begin{proof} Indeed, observe that $\Phi_\kappa^\alpha\circ(\opA_{\opI,\kappa}^\alpha)^{-1}$ coincides with $\Phi\circ\mathcal{A}^{-1}$, where $\Phi$ and $\mathcal{A}$ are defines as in Sections \ref{HopfDifferentials} and \ref{LabourieFields} respectively. The result now follows by Theorems~\ref{wolf}, \ref{AIsARealAnalDiff} and \ref{AIsRealAnalDiffDSCase}.\end{proof}
Finally, considering pairs of maps taking values in spaces of Teichm\"uller data of real dimension $(6\mathfrak{g}-6)$, we have the following two scattering type results.
\begin{theorem}[Schlenker \cite{sch06}]
\noindent For all $\kappa>0$, the map, $(\opI_\kappa^l,\opI_\kappa^r)$, defines a bijection from $\opGHMC_1^\opqf$ into $\Thyp\times\Thyp$.
\end{theorem}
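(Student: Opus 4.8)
The plan is to prove that $F := (\opI_\kappa^l,\opI_\kappa^r)$ is a proper local diffeomorphism from the connected manifold $\opGHMC_1^\opqf$ onto the contractible manifold $\Thyp\times\Thyp$, and therefore a diffeomorphism. Both spaces have real dimension $12\mathfrak{g}-12$: for $\opGHMC_1^\opqf$ this follows from Theorem~\ref{max convex QF} together with Bers' theorem \cite{Ber60}, which identify it with the (connected) space of quasi-Fuchsian representations, hence with $\Thol\times\Thol$; for the target it is clear since $\Thyp\cong\R^{6\mathfrak{g}-6}$. Smooth dependence on the holonomy of the foliation furnished by Labourie \cite{Lab91} shows that $F$ is real-analytic, so the two things left to establish are that $dF$ is everywhere injective and that $F$ is proper.

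It is worth first translating the problem into hyperbolic $3$-space by projective duality. The constant extrinsic curvature $\kappa$ surface $\Sigma_\kappa^\alpha\subset\Omega_\theta^\alpha$ is dual to a smooth, strictly convex surface $\widehat\Sigma^\alpha$ lying in the end, on the $\alpha$-side of the convex core, of the quasi-Fuchsian hyperbolic manifold $M_\theta := \H^3/\theta(\Pi_1)$; under this duality the shape operator is inverted, so that the constant extrinsic curvature $\kappa$ of $\Sigma_\kappa^\alpha$ corresponds to a shape operator of constant determinant $1/\kappa$ on $\widehat\Sigma^\alpha$, while the first fundamental form of $\Sigma_\kappa^\alpha$ coincides with the third fundamental form of $\widehat\Sigma^\alpha$. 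Thus, after the rescalings that make these forms hyperbolic, $F$ becomes the map sending $M_\theta$ to the pair of induced metrics of the duals of its two constant-curvature surfaces, and the statement is equivalent to the assertion that this pair is a global coordinate on the quasi-Fuchsian deformation space. Either formulation may be used; one advantage of the hyperbolic side is that the relevant infinitesimal rigidity is treated there by Schlenker \cite{sch06}.

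For the local diffeomorphism property it suffices to prove $dF$ injective at each point. An infinitesimal deformation $\dot\theta$ with $dF(\dot\theta)=0$ leaves both induced metrics $\opI_\kappa^l$ and $\opI_\kappa^r$ unchanged as points of $\Thyp$. Since the pair $(\opI_\kappa^\alpha,A_\kappa^\alpha)$ determines the equivariant immersion $\Sigma_\kappa^\alpha$ up to ambient isometry (the fundamental theorem of surface theory, Theorem~\ref{FTS}), and since by Theorem~\ref{AIsRealAnalDiffDSCase} the shape operator $A_\kappa^\alpha$ is recovered from $\opI_\kappa^\alpha$ together with its Hopf differential, the vanishing of $dF(\dot\theta)$ reduces to a deformation of the pair of Hopf differentials $(\phi_\kappa^l,\phi_\kappa^r)$ with both metrics held fixed, and one must show that the only such deformation is trivial. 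I expect this infinitesimal rigidity to be the main obstacle. I would attack it through the support-function formalism: an infinitesimal isometric deformation of $\Sigma_\kappa^l$ in $\opdS^3$ that is compatible both with an infinitesimal isometric deformation of $\Sigma_\kappa^r$ and with an infinitesimal change of the ambient holonomy yields a section of a suitable bundle over $S$ satisfying a linear elliptic equation, which a maximum-principle argument forces to vanish; equivalently, one transports the question across the duality above and invokes the infinitesimal rigidity of constant curvature surfaces in quasi-Fuchsian hyperbolic manifolds due to Labourie and Schlenker.

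Finally, for properness, take $M_n\in\opGHMC_1^\opqf$ with $F(M_n)\to(g_1,g_2)$ in $\Thyp\times\Thyp$; one must show the $M_n$ subconverge in $\opGHMC_1^\opqf$. Because $\opDet(A_\kappa^\alpha)=\kappa$ is fixed and the rescaled induced metrics $(\kappa-1)^{-1}\opI_\kappa^\alpha$ converge, the surfaces $\Sigma_\kappa^{l,n}$ and $\Sigma_\kappa^{r,n}$ have uniformly controlled intrinsic geometry, and the constant-curvature equation then bounds their second fundamental forms; a Cheeger--Gromov-type compactness argument extracts limiting equivariant surfaces together with a limiting holonomy $\theta_\infty$. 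The remaining point is to check that $\theta_\infty$ is still quasi-Fuchsian, that is, that the limit does not escape the quasi-Fuchsian locus; this is precisely where the hypothesis that $g_1$ and $g_2$ are honest hyperbolic metrics (rather than degenerating) enters. Granting injectivity of $dF$ and properness of $F$, the map is a proper local homeomorphism from a connected manifold onto the simply connected manifold $\Thyp\times\Thyp$, hence a covering map, hence a homeomorphism, and being a local diffeomorphism it is a diffeomorphism.
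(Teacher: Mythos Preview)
The paper does not actually prove this theorem: it is stated with attribution to Schlenker \cite{sch06} and no argument is given, as is the case for most of the results cited in this survey. So there is no ``paper's own proof'' to compare against; what one can compare to is Schlenker's original argument.

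Your outline is the right strategy, and it is essentially the one Schlenker uses: translate via the $\opdS^3/\H^3$ duality to a problem about convex surfaces in a quasi-Fuchsian hyperbolic $3$-manifold, prove infinitesimal rigidity to get a local diffeomorphism, prove properness, and conclude by a covering argument onto the contractible target $\Thyp\times\Thyp$. Your identification of the duality (first fundamental form in $\opdS^3$ $\leftrightarrow$ third fundamental form in $\H^3$, shape operator inverted) is correct, and this is indeed why the result lands in \cite{sch06}, which treats prescription of the dual metric on the boundary of a hyperbolic convex core.

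That said, what you have written is an outline, not a proof, and you say so yourself. The two genuine gaps are exactly where you flag them. For infinitesimal rigidity, the reduction you suggest (``vanishing of $dF(\dot\theta)$ reduces to a deformation of the pair of Hopf differentials with both metrics held fixed'') is not quite right as stated: fixing $\opI_\kappa^\alpha$ as a point of $\Thyp$ does \emph{not} by itself pin down the Labourie field via Theorem~\ref{AIsRealAnalDiffDSCase}, since that theorem takes the full pair $(g,A)$ as input; the actual argument in \cite{sch06} is a Pogorelov-type infinitesimal rigidity for the pair of surfaces simultaneously, using the Schl\"afli formula and a maximum principle, and it does real work. For properness, the delicate point---which you correctly isolate---is that the limiting holonomy remains quasi-Fuchsian; this requires an honest argument (control of the convex core from control of the $\kappa$-surfaces), not just Cheeger--Gromov compactness of the surfaces themselves. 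Both steps are carried out in \cite{sch06}, and your sketch would need to import them to become a proof.
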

\begin{theorem}[Labourie \cite{Lab92}]
\noindent For all $\kappa>0$, the map, $(\opIII_\kappa^l,\opIII_\kappa^r)$, defines a bijection from $\opGHMC_1^\opqf$ into $\Thyp\times\Thyp$.
\end{theorem}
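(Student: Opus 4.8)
The plan is to transfer the assertion, via the projective duality between $\opdS^3$ and $\mathbb{H}^3$ that underlies this whole section, into a statement about convex surfaces of constant extrinsic curvature in the two ends of a quasi-Fuchsian hyperbolic $3$-manifold, and then to invoke Labourie's solution of that problem in \cite{Lab92}. First I would recall that, given a quasi-Fuchsian ghmc de Sitter spacetime with holonomy $\theta$, and given the constant curvature surface $\Sigma_\kappa^\alpha$ ($\alpha\in\{l,r\}$) inside $\Omega_\theta^\alpha$ furnished by Labourie's foliation theorem, the assignment of its future-pointing unit normal to each point of $\Sigma_\kappa^\alpha$ defines a $\theta$-equivariant map into $\mathbb{H}^3$. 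Equivalently, the convex body cut off by $\Sigma_\kappa^\alpha$ on the appropriate side of $\Omega_\theta^\alpha$ has, in $\opRP^3$, a dual convex body which lies in $\mathbb{H}^3$ and contains the Nielsen kernel $\opK_\theta$; its boundary is a smooth, strictly convex, $\theta$-invariant surface $\hat\Sigma$. By the classical description of projective duality for smooth convex surfaces, $\hat\Sigma$ has shape operator $A_\kappa^{-1}$, hence constant extrinsic curvature $\kappa^{-1}\in\,]0,1[$, and its first fundamental form coincides with the third fundamental form of $\Sigma_\kappa^\alpha$.

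Next I would observe that this correspondence is reversible: the dual of any complete, smooth, strictly convex, $\theta$-invariant surface of constant extrinsic curvature in $\mathbb{H}^3$ that bounds a region containing $\opK_\theta$ is a spacelike, LSC, $\theta$-invariant constant curvature surface in one of the components of $\Omega_\theta$. Consequently, $\Sigma_\kappa^l$ and $\Sigma_\kappa^r$ are dual to the two constant curvature surfaces of extrinsic curvature $\kappa^{-1}$ foliating the two ends of the quasi-Fuchsian hyperbolic $3$-manifold $(\mathbb{H}^3\setminus\opK_\theta)/\theta(\Pi_1)$. Combining this with Bers' theorem, which identifies $\opGHMC_1^\opqf$ with the space of quasi-Fuchsian hyperbolic $3$-manifolds, the map $(\opIII_\kappa^l,\opIII_\kappa^r)$ --- once each component is multiplied by the constant that renders it a hyperbolic metric (c.f. the discussion preceding Table~\ref{table ds 3}) --- is identified with the map sending a quasi-Fuchsian hyperbolic $3$-manifold to the ordered pair of hyperbolic metrics induced on the constant curvature $\kappa^{-1}$ surfaces in its two ends. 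The theorem to be proven is therefore exactly the statement that this last map is a bijection onto $\Thyp\times\Thyp$, and this is the content of Labourie's result in \cite{Lab92}.

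The hard part is this final input. Labourie's bijectivity theorem is genuinely deep --- it depends on the analysis of the associated Monge--Amp\`ere equation together with properness and degree-theoretic arguments, rather than on the elementary surface theory invoked elsewhere in this section --- so, assuming it, the remaining work is only the orientation- and normalisation-sensitive bookkeeping needed to match the de Sitter picture with the hyperbolic one through the duality, and in particular to verify that the left component is carried to the left and the right to the right.
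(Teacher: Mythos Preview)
The paper does not actually prove this theorem; it merely states it with attribution to \cite{Lab92} and moves on. Your proposal supplies exactly the bridge that the paper leaves implicit: the projective duality between $\opdS^3$ and $\mathbb{H}^3$ sends the invariant constant-curvature surface $\Sigma_\kappa^\alpha$ to an invariant convex surface of constant extrinsic curvature $\kappa^{-1}\in\,]0,1[$ in the corresponding end of the quasi-Fuchsian hyperbolic $3$-manifold, with the third fundamental form on the de Sitter side becoming the first fundamental form on the hyperbolic side. This is consistent with the paper's own hint in the sketch of Theorem~\ref{AIsRealAnalDiffDSCase}, where the inverse of $\opA_{\opIII,\kappa}^\alpha$ is said to be built ``using equivariant immersions into $\Bbb{H}^3$''. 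Your reduction to Labourie's theorem in \cite{Lab92} on prescribing the pair of induced metrics on the two constant-curvature leaves of a quasi-Fuchsian manifold is precisely the intended content of the citation, and your caveat that this input is the genuinely hard analytic step is well placed.
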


%
%hyperbolic ends: \cite{KP94}  \cite{smi} grafting: \cite{SW02}\cite{DW08}
%smooth grafting:
%\cite{BMS2}

%\cite{sca99}

%%%%%%%%%%%%%%%%%%%%%%%%%%%%
\bibliographystyle{alpha}
\bibliography{survey-thm}
\end{document}